\title[Smooth Cartan triples]{Smooth Cartan triples and Lie twists over Hausdorff \'etale Lie groupoids}
\author{Anna Duwenig}
\address{Department of Mathematics, KU Leuven, Leuven, Belgium}
\email{anna.duwenig@kuleuven.be}
\author{Aidan Sims}
\address{School of Mathematics and Applied Statistics, University of Wollongong, Wollongong, Australia}
\email{asims@uow.edu.au}
\subjclass[2020]{46L87 (primary); 58H05, 58B34 (secondary)}
\keywords{manifold, groupoid, \'etale groupoid, Lie groupoid, principal circle bundle, Cartan subalgebra, Cartan pair}
\date{\today}
\thanks{This research was supported by Australian Research Council Discovery Project DP220101631. Anna Duwenig was supported by Methusalem grant METH/21/03 –- long term structural funding of the Flemish Government, and an FWO Senior Postdoctoral Fellowship (project number 1206124N). We thank Valentina Wheeler for a number of helpful discussions, and we thank Sven Raum and Jonathan Taylor for careful reading and helpful discussions that helped us fix some mistakes.
}
    \newcommand{\fp}[2]{{\mskip -2.0mu}\tensor*[_{#1}]{\ast}{_{#2}}{\mskip -2.0mu}} 
    \newcommand{\fpsr}{\fp{}{}}
\newcommand{\PB}[3]{
#2 \colon #1 \to #3
}
			\def\namedlabel#1#2{\begingroup#2%
			\def\@currentlabel{#2}%
			\phantomsection\label{#1}\endgroup
		}
\newcommand{\cA}{\mathcal{A}} 
\newcommand{\cB}{\mathcal{B}} 
\newcommand{\cR}{\mathcal{R}} 
\newcommand{\cS}{\mathcal{S}} 
\newcommand{\z}{^{\scalebox{.7}{$\scriptstyle (0)$}}} 
\newcommand{\inv}{^{\scalebox{.7}{$\scriptstyle -1$}}}
\newcommand{\comp}{^{\scalebox{.7}{$\scriptstyle (2)$}}}
\newcommand{\vecrm}[1]{\vec{\mathrm{#1}}}
\newcommand{\dd}{\mathrm{d}} 
\DeclareMathOperator{\dom}{dom}
\DeclareMathOperator{\supp}{supp}
\DeclareMathOperator{\Ph}{Ph} 
\newcommand{\etale}{{\'e}tale}
\newcommand{\Etale}{{\'E}tale}
\let\mbb\mathbb
\theoremstyle{plain} 
    \newtheorem{thm}{Theorem}[section]
    \newtheorem{prop}[thm]{Proposition}
    \newtheorem{lemma}[thm]{Lemma}
    \newtheorem{corollary}[thm]{Corollary}
\theoremstyle{remark}
    \newtheorem{remark}[thm]{Remark}
\theoremstyle{definition} 
    \newtheorem{example}[thm]{Example}
    \newtheorem{defn}[thm]{Definition}
    \newtheorem{ntn}[thm]{Notation}
\numberwithin{equation}{section}
\renewcommand{\paragraph}{\@startsection{paragraph}{4}    {0ex}
   {0ex}
   {-3.25ex plus -1ex minus -0.2ex}
   {\normalfont\normalsize\itshape}
   }
\newcommand{\suppo}{\supp^{\mathrm{o}}}
\newcommand{\twist}{E}
\newcommand{\condExp}{P}
\newcommand{\ses}{ \mbb{T} \times G\z \overset{\iota}{\to} \twist \overset{\pi}{\to} G } 
\newcommand{\sestriple}{\ses}
\begin{document}
\newcommand{\PropE}{\ref{property:U infty}} 
\newcommand{\PropB}{\ref{property:S infty}} 
\newcommand{\PropC}{\ref{property:M infty}} 
\newcommand{\PropCC}{\ref{property:M infty B}} 
\newcommand{\PropD}{\ref{property:I infty}} 

\newcommand{\PropEB}{\ref{property:S infty T}}
\newcommand{\PropEC}{\ref{property:M infty T}}
\newcommand{\PropED}{\ref{property:I infty T}}

\newcommand{\PropBz}{\ref{property:S infty,etale}}
\newcommand{\PropCz}{\ref{property:M infty,etale}}
\newcommand{\PropDz}{\ref{property:I infty,etale}}

\newcommand{\PropEstar}{\ref{property:U,C*}}
\newcommand{\PropBstar}{\ref{property:S,C*}}
\newcommand{\PropCstar}{\ref{property:M,C*}}
\newcommand{\PropDstar}{\ref{property:I,C*}}

\begin{abstract}
We describe how to recover a Lie structure on a twist over a Hausdorff
\etale\ groupoid from functional-analytic data in the spirit of Connes'
reconstruction theorem for manifolds. We first characterise when a smooth
structure on the unit space of a Hausdorff \etale\ groupoid can be extended
to a Lie-groupoid structure on the whole groupoid. We introduce Lie twists
over Hausdorff Lie groupoids, building on Kumjian's notion of a twist over
a topological groupoid. We establish necessary and sufficient conditions on
a family of sections of a twist over a Lie groupoid under which the twist
can be made into a Lie twist so that all the specified sections are smooth.
We use these results in the setting of twists over \etale\ groupoids to
describe conditions on a Cartan pair of $C^{*}$-algebras and a family of
normalisers of the subalgebra, under which Renault's Weyl twist for the
pair can be made into a Lie twist for which the given normalisers
correspond to smooth sections.
\end{abstract}

\maketitle

\section{Introduction}

The motivating idea of this paper is that Connes' reconstruction theorem for
manifolds \cite{Connes:manifolds} and  Kumjian--Renault theory for twists
over \'etale groupoids \cite{Kum:Diags, Renault:Cartan} can be employed in
tandem to understand the extent to which Lie-groupoid structures on the
components of topological twists over \etale\ groupoids can be recovered from
spectral and functional-analytic data.

Connes' theorem says that a spectral triple $(\mathscr{A}, \mathcal{H}, D)$
that satisfies slightly strengthened versions of the first five axioms laid
out in \cite{Connes:gravity} and in which the algebra $\mathscr{A}$ is unital
and commutative, determines a manifold $M$ that realises $\mathscr{A}$ as the
algebra $C^{\infty}(M)$ of smooth functions. This can be regarded as a deep
geometric strengthening of Gelfand duality for commutative $C^{*}$-algebras:
the Gelfand--Naimark Theorem says that every unital commutative
$C^{*}$-algebra $A$ is the algebra of continuous functions on a compact
Hausdorff space $X$; and, very roughly speaking, Connes' theorem describes
when this $X$ can be endowed with the structure of a manifold in terms of
data that specifies a subalgebra $\mathscr{A}$ of $A$ that is to consist of
smooth functions and an unbounded operator associated to $A$ that is to be a
Dirac-type differential operator.

Kumjian's and Renault's theorems can also be regarded as a strengthening of
the Gelfand--Naimark Theorem. They describe under what circumstances a given
commutative subalgebra $B$ of a $C^{*}$-algebra $A$ ``coordinatises $A$'' (to
borrow a phrase of Muhly's) in the sense that there is a topological
\emph{twist} $\ses$ (often called the \emph{Weyl twist}) over an effective
Hausdorff \'etale groupoid $G$ (called the \emph{Weyl groupoid}) with unit
space the Gelfand spectrum $\widehat{B}$ of $B$ such that the Gelfand
isomorphism $B \cong C_{0}(\widehat{B})$ extends to an isomorphism of $A$
onto the reduced twisted groupoid $C^{*}$-algebra $C^{*}_{r}(G; \twist)$. The
characterisation is in terms of {\em normalisers} of $B$ in $A$: elements $n
\in A$ satisfying $n^{*}Bn \cup nBn^{*} \subseteq B$. The point is that the
partial-isometric factors of polar decompositions of normalisers determine
partial homeomorphisms of $\widehat{B}$, which can be glued together into the
desired groupoid $G$; the twist $\twist$ then measures the phase-differences
between the partial-isometric factors of pairs of normalisers that determine
the same partial homeomorphisms. Proposition~4.1 of \cite{DGNRW:Cartan} shows
that any family of normalisers that densely spans $A$ is ``big enough" to
recover $G$ and~$\twist$.

Taken together, the results described in the preceding two paragraphs suggest
a natural question. Suppose that we are given a twist $\twist$ over a
Hausdorff effective \'etale groupoid $G$, and hence a corresponding Cartan
pair $B\subseteq A$ of $C^{*}$-algebras. Suppose in addition that $G$ and
$\twist$ carry smooth structures under which they are Lie groupoids and such
that the bundle map $\pi \colon \twist \to G$ is a submersion and the
inclusion $\iota \colon \mbb{T} \times G\z \to \twist$ is smooth---that is,
the pair constitutes an $S^1$-central extension in the sense of
\cite[Definition~4.1]{BX:2003}, with the additional property that the
reduction of the extension to the unit space is a trivial smooth
$\mbb{T}$-bundle. There is then, in addition to the smooth subalgebra
$B^{\infty} = C^{\infty}(G\z)\cap C_{0}(G\z)$ of $B$, a natural collection of
``smooth normalisers" that densely span $C^{*}_{r}(G; \twist)$, namely the
smooth $\mbb{T}$-equivariant functions on $\twist$ whose open supports are
preimages of open bisections. The natural question, which we answer here, is,
``to what extent is this process reversible?" That is, given a Cartan pair
$B\subseteq A$ and a family $\mathscr{N}$ of normalisers that densely spans
$A$ and for which $\mathscr{N} \cap B$ is a dense $^{*}$-subalgebra of $B$,
what properties characterise the existence of a Lie-twist structure on the
Weyl twist for which $\mathscr{N}$ is a family of smooth sections and
$\mathscr{N} \cap B$ consists of smooth functions on $G\z$? Connes'  theorem
does the heavy lifting of reconstructing a geometric structure on
$\widehat{B}$ from spectral data, so it is reasonable to take as given that
$\widehat{B}$ comes already endowed with a smooth-manifold structure. We thus
have in hand the algebra $B^{\infty} = C^{\infty}(G\z)\cap C_{0}(G\z)$ of
smooth functions on $\widehat{B}$, and ask what is required of $\mathscr{N}$
to recover the remaining geometry on $G$ and $\twist$.

There are two parts to our answer to this question. The first part deals with
whether the Weyl groupoid $G$ can be made into a Lie groupoid. This is
reasonably straightforward. We prove in Section~\ref{sec:etale smooth units}
that if $G$ is an \'etale groupoid and $G\z$ is a manifold, then there is a
unique smooth structure on $G$ with respect to which the range map is smooth.
The source map is also smooth with respect to this structure if and only if
the partial homeomorphisms $r(\gamma) \mapsto s(\gamma)$, $\gamma \in U$,
determined by open bisections $U$ of $G$ are in fact diffeomorphisms, and in
this case $G$ is a Lie groupoid. Translating via Renault's theory into
statements about normalisers, this is equivalent to the condition that for
each normaliser of the Cartan subalgebra $B$, the partial homeomorphism on
$\widehat{B}$ determined by the partial-isometric factor in its polar
decomposition is a partial \emph{diffeomorphism}, and we prove that when
$n^*n$ and $nn^*$ are smooth, this can be characterised in terms of
conjugation by $n$ and $n^*$ carrying appropriate collections of smooth
functions to smooth functions.

The second part is more subtle and deals with the existence of a smooth
structure on the twist~$\twist$. We begin by analysing in
Section~\ref{sec:twists} what families $\{\sigma_{\alpha}\}_{\alpha}$ of
local sections of a twist $\ses$ over a Lie groupoid $G$ can be the smooth
sections for a Lie-twist structure on~$\twist$. Since our definition of a Lie
twist requires that $\iota$ be smooth, we require that whenever the domain
$U_{\alpha}$ of $\sigma_{\alpha}$ intersects $G\z$, the restriction of
$\sigma_{\alpha}$ to $U_{\alpha} \cap G\z$ is the image under $\iota$ of a
smooth section of $\mbb{T} \times G\z$; we call this condition \PropE, the
letter U standing for ``units." This provides a means of evaluating whether
the difference between various combinations of the $\sigma_{\alpha}$ are
smooth, and the remaining conditions exploit this. We require that on
intersections $U_{\alpha} \cap U_{\alpha}'$ the difference
$\sigma_{\alpha}(\gamma)\sigma_{\alpha'}(\gamma)\inv$ is the image under
$\iota$ of a smooth function (Condition~\PropB, for ``smooth structure"); and
that when $U_{\alpha_{1}} U_{\alpha_{2}}$ intersects $U_{\alpha}$, the
difference
$\sigma_{\alpha_{1}}(\gamma_{1})\sigma_{\alpha_{2}}(\gamma_{2})\sigma_{\alpha}(\gamma_{1}\gamma_{2})\inv$
is the image of a smooth function on its domain (Condition~\PropC, for
``multiplication"). Our first main theorem says, roughly speaking, that
Conditions \PropE~and~\PropB\ guarantee that there is a unique smooth
structure on $\twist$ under which it is a smooth principal $\mbb{T}$-bundle
and the $\sigma_{\alpha}$ are all smooth; and that, for this smooth
structure, the maps $\iota, \pi$ in the twist and the subspaces $\twist\z$
and $\pi\inv (G\z)$ all satisfy suitable smoothness conditions. We then prove
that $\twist$ is a Lie twist over $G$ with this smooth structure precisely
if, in addition, Condition~\PropC\ is satisfied. In Subsection~\ref{ssec:Lie
over etale}, we then specialise to the situation where $G$ is an \'etale Lie
groupoid. We use the \'etale property to re-cast Conditions~\PropE, \PropB,
and~\PropC\ in terms of smoothness of maps from open subsets of $G\z$ into
$\mbb{T}$.

To translate this into properties of families of normalisers, we start
Section~\ref{sec:Cstar} by making explicit the relationship between
$\mbb{T}$-equivariant functions on the twist $\twist$, which are dense in the
associated $C^*$-algebra, and sections of the twist itself. Specifically,
given a $\mbb{T}$-equivariant function $n$ on $\twist$, the map that sends a
point $\gamma$ in the image under $\pi$ of the open support of $n$ to the
unique point $e \in \pi\inv (\gamma)$ such that $n(e) > 0$ is a section
$\sigma_n$ of $\twist$, characterised by the formula $e = \Ph(n(e)) \cdot
\sigma_n(\pi(e))$. The sections $\sigma_{n}$ and $\sigma_{n'}$ coincide
precisely if $n$ and $n'$ differ by a positive continuous function on $G\z$.
Combining this with the faithful conditional expectation $\condExp \colon
C^{*}_{r}(G; \twist) \to C_{0}(G\z)$ extending restriction of functions, we
identify conditions on a family $\{n_{\alpha}\}_{\alpha}$ of normalisers,
phrased in terms of membership of images under $\condExp$ of combinations of
the $n_{\alpha}$ in algebras of smooth bounded functions on open subsets of
$G\z$, that are equivalent to Conditions~\PropE, \PropB, and~\PropC\ for the
associated sections. We show further that smoothness of these images can be
characterised by their behaving as multipliers of appropriate subalgebras of
$C^{\infty}_{c}(G\z)$.

With all this in hand, we are able to define what we call a \emph{smooth
Cartan triple}: a triple $(A, B,  \mathscr{N})$ such that $B\subseteq A$ is a
Cartan pair, $\widehat{B}$ is a smooth manifold, $C^{\infty}_{c}(\widehat{B})
\subseteq \mathscr{N} \cap B \subseteq C^{\infty}(G\z)\cap C_{0}(G\z)$ and
$\mathscr{N}$ is a family of normalisers of $B$ that also normalise
$B^{\infty}$ and have the property that $\Ph(\condExp(n))$,
$\Ph(\condExp(mk^{*}))$, and $\Ph(\condExp(nmk^{*}))$ are multipliers of the
appropriate subalgebras of $C^{\infty}_{c}(\widehat{B})$ for all $k,m,n \in
\mathscr{N}$. Our culminating theorem says that if $\ses$ is a Lie twist over
an effective Lie groupoid, then there is a smooth Cartan triple
$(C^{*}_{r}(G; \twist), C_{0}(G\z), \mathscr{N})$ such that $\mathscr{N}$
consists of smooth functions (for example, taking $\mathscr{N}$ to be all
smooth $\mathbb{T}$-equivariant functions compactly supported on preimages of
bisections suffices); and conversely that if $(A, B, \mathscr{N})$ is a
smooth Cartan triple, then there is a unique smooth structure on the Weyl
twist of  $B\subseteq A$, compatible with the manifold structure on
$\widehat{B}$, under which it becomes a Lie twist and the elements of
$\mathscr{N}$ are smooth. We finish with some brief remarks: one on how our
main theorem can be combined with Connes' reconstruction theorem to obtain a
correspondence between Lie twists and purely functional-analytic data; and
one on the effect of the choice of the family $\mathscr{N}$ on the resulting
smooth structure on~$\twist$.

We include an appendix on differential geometry where we establish our
notation and conventions, and collect a number of standard results that we
need along the way. It is organised into two sections: one about smooth
manifolds and their submanifolds in general; and one specifically relating to
smooth principal bundles.

\section{Preliminaries: Lie groupoids}\label{sec:prelims}

In this brief section, we introduce some preliminary material on the notions
of topological groupoids and Lie groupoids and establish a few of their basic
properties. In particular, when the groupoid in question is an \etale\ Lie
groupoid, we give an explicit description of an atlas for the natural smooth
structure on its space of composable pairs.

We will work with the following definition of a topological groupoid; we will
frequently drop the adjective and just say {\em groupoid}. In this paper,
groupoids are \textbf{always} locally compact Hausdorff topological
groupoids.

\begin{defn}[{cf.\ \cite[Remark~8.1.5]{Sims:gpds}}]
\label{dfn:Top gpd}
    A {\em (Hausdorff) topological groupoid} consists of
\begin{enumerate}[label= (G\arabic*), itemsep=0ex]
    \item\label{item:gpd:spaces} two locally compact Hausdorff spaces $G$
        and $G\z$,
    \item\label{item:gpd:inclusion map} an injection $G\z \hookrightarrow
        G$ that is a homeomorphism onto its range (under the subspace
        topology), so that we may regard $G\z$ as a subspace, called the
        \emph{unit space}, of $G$,
    \item\label{item:gpd:s and r} two continuous surjections \(s,r \colon
        G \to G\z  \) called the {\em source map} and the {\em range map},
    \item\label{item:gpd:multiplication map}  a continuous {\em
        composition} map \( G\comp = G \fpsr G \to G,
    (\gamma,\eta)\mapsto \gamma\eta,\text{ and} \)
    \item\label{item:gpd:inversion map}   a continuous {\em inversion} map
        \( G \to G,
    \gamma\mapsto \gamma\inv \),
\end{enumerate}
such that
\begin{enumerate}[label= (G\arabic*), resume, itemsep=0ex]
\item\label{item:gpd:s and r of comp} \(s(\gamma\eta) = s(\eta) \) and
    \(r(\gamma\eta) = r(\gamma) \) for every \((\gamma,\eta) \in G\comp\),
\item\label{item:gpd:associative} composition is associative,
\item\label{item:gpd:u unit} \(s(x) = r(x) = x \) for every \(x \in G\z \)
    and \(\gamma s(\gamma) = \gamma \) and \(r(\gamma) \gamma = \gamma \)
    for every \(\gamma \in G\),\text{ and}
\item\label{item:gpd:inverse} \(\gamma\inv \gamma = s(\gamma) \) and
    \(\gamma\gamma\inv = r(\gamma) \) for every \(\gamma \in G\).
\end{enumerate}
A groupoid is called {\em \etale} if its range and source maps are local
homeomorphisms.
\end{defn}

Our notation for groupoids follows \cite{Renault:gpd-approach, Sims:gpds}. We
will typically denote groupoids by $G$, except for groupoids that are twists
over another base groupoid, which we denote by $\twist$.

Given sets $X, Y, Z$ and functions $f \colon X \to Z$ and $g \colon Y \to Z$,
we write $X \fp{g}{f} Y$ for the fibred product
\[
    X \fp{f}{g} Y \coloneqq \{(x,y) \in X \times Y : f(x) = g(y)\}.
\]
If $X,Y$ are topological spaces, we give $X \fp{f}{g} Y$ the relative
topology inherited from the product space. In particular, if $G$ is a
groupoid, then for subsets $U,V \subseteq G$, the collection of composable
pairs in $U \times V$ is the fibred product over the range and source maps.
That is,
\[
U \fp{s}{r} V = (U \times V) \cap G\comp.
\]
In this specific instance, we will drop the $s$ and $r$ in the notation, and
just write $U \fpsr V \coloneqq U \fp{s}{r} V$; we will always retain the
subscripts for fibred products that are not of this specific form.

We write $UV$ for the collection of products of pairs in $U \fpsr V$:
\[
UV \coloneqq \{\gamma\eta : (\gamma,\eta) \in U \fpsr V\}.
\]
If $G$ is an \etale\ groupoid, then it admits a base of open sets $U
\subseteq G$ such that $r|_U$ and $s|_U$ are homeomorphisms onto open subsets
of its unit space $G\z$. In keeping with terminology from groupoid
$C^*$-algebras \cite{Renault:gpd-approach, Renault:Cartan} (as opposed to the
terminology used for Lie groupoids in, for example, \cite[Definitions 1.4.1,
1.4.8]{Mackenzie:2005:book}), we call such sets $U$ {\em open bisections} (or
sometimes just {\em bisections}). By \cite[Proposition~3.8]{Exel:BBMS}, if
$U$ and $V$ are open bisections, then $UV$ is also a (possibly empty) open
bisection.

We will be particularly interested in topological groupoids that come with
additional smooth structure. In the following, we will refer to a topological
manifold with a smooth structure simply as a ``manifold" rather than as a
``smooth manifold." Our terminology and notation for manifolds follow the
conventions of \cite{Lee:Intro}, and are detailed in
Appendex~\ref{sec:appendix} for ease of reference.

\begin{defn}[{\cite[Definitions 7.1.1 and 7.1.6]{Dufour2005}}]\label{dfn:Lie gpd}
    A {\em Lie groupoid} is a topological groupoid $G$ such that
\begin{enumerate}[label= (L\arabic*), itemsep=0ex]
    \item\label{item:Liegpd:mfds} $G$ and $G\z$ are smooth manifolds (under
        their given topological structures),
    \item\label{item:Liegpd:inclusion map}  the inclusion \(  G\z
        \hookrightarrow G \) is an embedding of manifolds (that is, in
        addition to~\ref{item:gpd:inclusion map} above, it is a smooth
        immersion),
    \item\label{item:Liegpd:s and r} the maps $s,r \colon G \to G\z$ are
        submersions,
    \item\label{item:Liegpd:multiplication map} the  composition map
        \(G\comp \to G, (\gamma,\eta)\mapsto \gamma\eta
    \), is smooth, and
    \item\label{item:Liegpd:inversion map}  the inversion map $G\to
        G,\gamma \mapsto \gamma\inv$,
    is smooth.
\end{enumerate}
A Lie groupoid is called {\em \etale} if its range and source maps are local
diffeomorphisms.
\end{defn}
The above definition makes sense: Since the fibred product $M\fp{f}{g}N$ of
two manifolds along transverse maps $f\colon M\to K$ and $g\colon N\to K$ is
itself a manifold (Proposition~\ref{prop:transverse maps implies fibred
product is mfd}) and since a submersion of manifolds is transverse to any map
(Remark~\ref{rmk:submersion implies transverse}), the subspace $G\comp$ of
$G\times G$ is indeed a manifold in its own right.\label{page:G2 submanifold}

\begin{remark}\label{rmk:inversion auto-smooth}
    As shown in \cite[Proposition 1.1.5]{Mackenzie:2005:book}, Assumption~\ref{item:Liegpd:inversion map} is redundant: the other conditions already imply that the inversion map is a diffeomorphism. This observation will simplify many of our proofs later on.
\end{remark}

\begin{remark}
    For \etale\ Lie groupoids, the manifold dimension of the unit space $G\z$ and of the morphism space $G$ are identical. Consequently, this dimension also coincides with the manifold dimension of the space  $G\comp$ of composable pairs: given smooth functions $f\colon M\to K$ and $g\colon N\to K$, the dimension of the fibred product $M\fp{f}{g}N$ is $\dim(M) + \dim(N) - \dim(K)$; see Proposition~\ref{prop:transverse maps implies fibred product is mfd} for a proof. In particular, for \etale\ $G$, we have $\dim(G\comp) = 2\dim(G)-\dim(G\z)=\dim(G)$.
\end{remark}

For practical reasons, we will now consider objects that are more general
than Lie groupoids. We will always assume that smooth structures on these
objects are compatible with the given underlying topologies where applicable.

\begin{defn}[cf.\ {\cite[p.\ 207]{Dufour2005}}]\label{dfn:smooth bisection}
    Suppose that $G$ is a topological groupoid and that both $G$ and $G\z$ are manifolds. An open bisection $B$ of $G$ is called a {\em smooth} bisection if $r|_{B}\colon B\to r(B)$ and  $s|_{B}\colon B\to s(B)$ are diffeomorphisms.
\end{defn}

 Suppose that $G$ is
 an \'etale
 groupoid and that both $G$ and $G\z$ are manifolds. Assume that both $r$ and $s$ are local diffeomorphisms (in particular, they are submersions by Lemma~\ref{lem:local diffeo is submersion}), so that $G\comp$ is an embedded submanifold of $G\times G$ by
 Proposition~\ref{prop:transverse maps implies fibred product is mfd}.
 We will now describe the charts of $G\comp$.

 Let $\mathcal{U}$ denote the set of smooth bisections of $G$. For a fixed
    $(\gamma_{1},\gamma_{2})\in G\comp$,
    fix $B_i'\in \mathcal{U}$ containing $\gamma_i$,
 and let $W\coloneqq s(B_{1}')\cap r(B_{2}')$. Notice that
    \begin{align*}
        B_{1}\coloneqq B_{1}'W
        =
        B_{1}'\cap s\inv (W)
    \quad\text{and}\quad
        B_{2}\coloneqq
        WB_{2}'
        =
        r\inv(W)\cap B_{2}'
    \end{align*}
    are open, since $r, s$ are continuous open maps (\cite[Corollary 11.6]{Tu:Intro}). As subsets of smooth bisections, they are therefore likewise smooth bisections, and we clearly have
    $s(B_{1}) = W= r(B_{2})$.

Now fix a chart $\varphi_{1}$ around $\gamma_{1}$ in $G$; by potentially
shrinking sets, we can assume without loss of generality that the domain of
$\varphi_{1}$ is exactly $B_{1}$. Writing $n$ for the manifold dimension of
$G$, we define
\[
    \varphi_{2}\coloneqq\varphi_{1}\circ (s|_{B_{1}})\inv \circ r|_{B_{2}}\colon B_{2}\approx \mbb{R}^{n},
\]
Since $s$ and $r$ are local diffeomorphisms, the map $\varphi\coloneqq
\varphi_{1}\times\varphi_{2}$ is a smooth chart around
    $(\gamma_{1},\gamma_{2})$,
    and it is easy to check that the set $\varphi\big(G\comp \cap (B_{1}\times B_{2})\big)=\varphi(B_{1} \fpsr B_{2})$ is exactly the diagonal $\{(\vecrm{v}, \vecrm{v}):\vecrm{v} \in \varphi(B_{1})\}$ in $\mbb{R}^{n}\times\mbb{R}^{n}$.
Thus, composing $\varphi$ with the diffeomorphism
$\mbb{R}^{2n}\to\mbb{R}^{2n},(\vecrm{x}, \vecrm{y})\mapsto
(\vecrm{x},\vecrm{x}-\vecrm{y})$, yields a chart that realises $B_{1} \fpsr
B_{2}$ as an $n$-dimensional embedded submanifold of $B_{1}\times B_{2}$.

Consequently, if $\mathrm{pr}_{1}$ denotes the projection onto the first
component of $G\times G$, then $(\varphi_{1}\circ \mathrm{pr}_{1}, B_{1}
\fpsr B_{2})$ is a chart for $G\comp$, and the collection of all such charts
forms a smooth atlas. Since the smooth structures of $G$ and $G\z$ are
compatible (meaning, we can turn one into the other by use of $r$ or $s$), we
can instead also describe an atlas on $G\comp$ as follows.

    \begin{lemma}\label{lem:structure on Gcomp in general}
       Suppose that $G$ is a topological groupoid and that both $G$ and $G\z$ are manifolds. Let $\cA=\{(W_{\alpha}, \psi_{\alpha})\}_{\alpha\in \mathfrak{A}}$ be a maximal smooth atlas of $G\z$. Assume that both $r$ and $s$ are local diffeomorphisms
       and let $\mathcal{U}$ denote the set of smooth bisections of $G$.  Let
       \[
            \mathfrak{C} \coloneqq
            \{
                (\alpha,B_{1},B_{2})\in \mathfrak{A}\times\mathcal{U}\times\mathcal{U}
                :
                s(B_{1}) = W_{\alpha}= r(B_{2})
            \}.
        \]
        For $\chi= (\alpha, B_{1}, B_{2})\in \mathfrak{C}$, let $V_{\chi}\coloneqq B_{1} \fpsr B_{2}$ and define $\Phi_{\chi}\colon V_{\chi}\to \mbb{R}^{n}$ by $\Phi_{\chi}(\gamma_{1},\gamma_{2})\coloneqq \psi_{\alpha}(s(\gamma_{1}))=\psi_{\alpha}(r(\gamma_{2}))$.
        Then the collection
        \(
            \mathcal{C}=\{(V_{\chi}, \Phi_{\chi})\}_{\chi\in \mathfrak{C}}
        \)
        is a smooth atlas for the standard manifold structure of $G\comp$.
    \end{lemma}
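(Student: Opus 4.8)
The plan is to show that $\mathcal{C}$ is a subfamily of the standard smooth atlas of $G\comp$ constructed in the paragraph immediately preceding the lemma, whose domains still cover $G\comp$; once this is established, it follows at once that $\mathcal{C}$ is itself a smooth atlas inducing the same (standard) smooth structure. The point is that I do not need to compute transition maps within $\mathcal{C}$ by hand: it is enough to recognise each $(V_\chi,\Phi_\chi)$ as a chart already produced by the standard construction.

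First I would check that each $\Phi_\chi$ is a well-defined chart of the standard atlas. For $(\gamma_{1},\gamma_{2})\in V_\chi = B_1\fpsr B_2$ we have $s(\gamma_1)=r(\gamma_2)$, so the two expressions defining $\Phi_\chi$ agree. The crucial observation is that, since $B_1$ is a smooth bisection with $s(B_1)=W_\alpha$, the map $s|_{B_1}\colon B_1\to W_\alpha$ is a diffeomorphism; composing it with the chart $\psi_\alpha\colon W_\alpha\to\mbb{R}^n$ of $G\z$ shows that $\varphi_1\coloneqq \psi_\alpha\circ s|_{B_1}$ is a smooth chart of $G$ with domain exactly $B_1$. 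By the construction preceding the lemma, $(\varphi_1\circ\mathrm{pr}_1,\,B_1\fpsr B_2)$ is then a chart of the standard atlas of $G\comp$; and since $\varphi_1\circ\mathrm{pr}_1(\gamma_1,\gamma_2)=\psi_\alpha(s(\gamma_1))=\Phi_\chi(\gamma_1,\gamma_2)$, this chart is precisely $(V_\chi,\Phi_\chi)$. Hence every $(V_\chi,\Phi_\chi)$ lies in the (maximal) smooth atlas underlying the standard structure.

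Next I would verify that the domains $\{V_\chi\}_{\chi\in\mathfrak{C}}$ cover $G\comp$. Given $(\gamma_1,\gamma_2)\in G\comp$, I would choose smooth bisections $B_1',B_2'$ containing $\gamma_1,\gamma_2$ (these exist since the smooth bisections form a base for the topology of $G$), set $W'\coloneqq s(B_1')\cap r(B_2')$, and use maximality of $\cA$ to find a chart $(W_\alpha,\psi_\alpha)\in\cA$ with $s(\gamma_1)=r(\gamma_2)\in W_\alpha\subseteq W'$. Putting $B_1\coloneqq B_1'\cap s\inv(W_\alpha)$ and $B_2\coloneqq B_2'\cap r\inv(W_\alpha)$ then yields smooth bisections with $s(B_1)=W_\alpha=r(B_2)$, using that $s|_{B_1'}$ and $r|_{B_2'}$ are homeomorphisms onto sets containing $W_\alpha$. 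Thus $\chi=(\alpha,B_1,B_2)\in\mathfrak{C}$ and $(\gamma_1,\gamma_2)\in V_\chi$. Combining this with the previous step, $\mathcal{C}$ is a collection of charts each belonging to the standard smooth atlas of $G\comp$ whose domains cover $G\comp$, so $\mathcal{C}$ is a smooth atlas for the standard manifold structure.

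I expect the only genuinely delicate point to be this covering step, namely arranging $s(B_1)=W_\alpha=r(B_2)$ \emph{exactly} rather than merely $s(B_1)\subseteq W_\alpha$; this is what forces one to shrink $W_\alpha$ inside $W'$ first (using maximality of $\cA$) and then pull back along $s,r$, rather than intersecting bisection images with a fixed chart domain. Everything else is clean, because the identification $\varphi_1=\psi_\alpha\circ s|_{B_1}$ converts the transition maps between charts of $\mathcal{C}$ into the transition maps $\psi_{\alpha'}\circ\psi_\alpha\inv$ of the unit-space atlas $\cA$ (the factors $s\circ\mathrm{pr}_1=r\circ\mathrm{pr}_2$ cancelling with their inverses), whence smoothness is automatic.
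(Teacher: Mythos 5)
Your proposal is correct and follows essentially the same route as the paper: the paper leaves the lemma's proof implicit as a reformulation of the preceding construction (charts $(\varphi_{1}\circ\mathrm{pr}_{1}, B_{1}\fpsr B_{2})$ on $G\comp$, together with the remark that the smooth structures of $G$ and $G\z$ can be converted into one another via $r$ and $s$), and your argument makes exactly this precise by taking $\varphi_{1}=\psi_{\alpha}\circ s|_{B_{1}}$ and checking the covering. Your extra care in shrinking $W_{\alpha}$ inside $W'$ via maximality of $\cA$, so that $s(B_{1})=W_{\alpha}=r(B_{2})$ holds exactly, correctly fills in a detail the paper glosses over.
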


\section{\Etale\ groupoids with unit space a manifold}\label{sec:etale smooth units}

The goal of this section is to prove that if $G$ is an \etale\ groupoid in
which the unit space $G\z$ is a manifold, then there is a smooth structure on
$G$ making it into a Lie groupoid if and only if the open bisections of $G$
induce smooth maps between open subsets of $G\z$; and moreover that this
smooth structure is unique and admits an explicit atlas easily described in
terms of the range map and a given atlas for $G\z$.

\begin{prop}\label{prop:cR}
    Suppose that $G$ is an \etale\ groupoid and that $G\z$ is a manifold. For a smooth atlas $\{(W_{\alpha}, \psi_{\alpha}) : \alpha \in \mathfrak{A}\}$ for $G\z$, let
    \[
        \mathfrak{R} \coloneqq \left\{
            (B,\alpha
            ): B\text{ is an open bisection of $G$ such that }
                r(B)
                \subseteq
                W_{\alpha}
        \right\}
    \]
    and for each $\rho = (B,\alpha) \in \mathfrak{R}$ define $U_\rho = B$ and
    \[
        \varphi_{\rho}\colon U_\rho \to \mbb{R}^n,
        \quad
            \varphi_\rho (\gamma) \coloneqq  \psi_{\alpha} (r(\gamma)).
    \]
    Then $\cR \coloneqq \{(U_\rho, \varphi_\rho) : \rho \in \mathfrak{R}\}$ is an atlas for a smooth structure on $G$. Moreover, any other smooth structure $\cB$ on $G$ with respect to which $r\colon (G,\cB)\to G\z$ is a local diffeomorphism is compatible with $\cR$.
\end{prop}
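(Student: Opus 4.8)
The plan is to prove the two assertions separately, guided throughout by the single observation that each proposed chart factors as $\varphi_\rho = \psi_\alpha \circ r|_B$, i.e.\ as the pullback along the range map of a chart of the given atlas on $G\z$. For the first assertion I verify the axioms of a smooth atlas directly; for the second I show that any competing structure $\cB$ already contains every $\varphi_\rho$ in its maximal atlas, which simultaneously delivers compatibility and uniqueness.

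First I would check the covering and chart conditions. Given $\gamma \in G$, the \etale\ property supplies an open bisection $B_0 \ni \gamma$, and choosing $\alpha$ with $r(\gamma) \in W_\alpha$, the set $B \coloneqq B_0 \cap r\inv(W_\alpha)$ is an open bisection with $r(B) \subseteq W_\alpha$; hence $(B,\alpha) \in \mathfrak{R}$ and $\gamma \in U_{(B,\alpha)}$, so the $U_\rho$ cover $G$. Each $\varphi_\rho$ is then the composite of the homeomorphism $r|_B \colon B \to r(B)$ (since $B$ is a bisection) with the restriction of the chart $\psi_\alpha$ to the open set $r(B) \subseteq W_\alpha$, and is therefore a homeomorphism of $U_\rho$ onto the open set $\psi_\alpha(r(B)) \subseteq \mbb{R}^{n}$, where $n = \dim G\z$.

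The substance of the first assertion is the transition maps. For overlapping charts $\rho=(B,\alpha)$ and $\rho'=(B',\alpha')$, the intersection $B \cap B'$ is again an open bisection, and for $\gamma \in B \cap B'$ one has $r(\gamma) \in W_\alpha \cap W_{\alpha'}$. Unwinding the definition of $\varphi_\rho\inv$ via injectivity of $r|_B$ shows that $\varphi_{\rho'} \circ \varphi_\rho\inv$ sends $\psi_\alpha(r(\gamma))$ to $\psi_{\alpha'}(r(\gamma))$; that is, it coincides with the transition map $\psi_{\alpha'} \circ \psi_\alpha\inv$ of the atlas on $G\z$, restricted to the open set $\psi_\alpha(r(B \cap B'))$. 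Smoothness of these maps is therefore inherited verbatim from the hypothesis that $\{(W_\alpha,\psi_\alpha)\}$ is a smooth atlas, and $\cR$ is a smooth atlas.

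For the uniqueness assertion, let $\cB$ be a smooth structure on $G$ making $r\colon (G,\cB) \to G\z$ a local diffeomorphism; as smooth structures are taken compatible with the given topology, each bisection is $\cB$-open. It suffices to show that every $(U_\rho,\varphi_\rho) \in \cR$ is a $\cB$-chart. The step requiring the most care is to upgrade the local-diffeomorphism hypothesis to a genuine diffeomorphism on $B$: since $r$ is a local diffeomorphism and $r|_B$ is in addition injective (as $B$ is a bisection), $r|_B$ is a bijective local diffeomorphism onto the open set $r(B)$, and hence a diffeomorphism. Composing with $\psi_\alpha|_{r(B)}$ then exhibits $\varphi_\rho = \psi_\alpha \circ r|_B$ as a diffeomorphism of $(U_\rho, \cB|_{U_\rho})$ onto an open subset of $\mbb{R}^{n}$, so $\varphi_\rho$ lies in the maximal atlas of $\cB$. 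Thus $\cR$ is compatible with $\cB$; and since $\cR$ already covers $G$, the two generate the same maximal atlas, giving the asserted uniqueness. I expect this last upgrade to be the only genuine obstacle, everything else being bookkeeping once the pullback structure of the charts is recognised.
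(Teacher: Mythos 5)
Your proposal is correct and follows essentially the same route as the paper's proof: verify the covering and homeomorphism axioms via the bisection structure, observe that the transition maps of $\cR$ reduce to the transition maps $\psi_{\alpha}\circ\psi_{\alpha'}\inv$ of the atlas on $G\z$, and obtain compatibility with $\cB$ from smoothness of $r$ and its local inverses in $\cB$-charts. Your explicit upgrade of $r|_B$ from an injective local diffeomorphism to a diffeomorphism onto $r(B)$ is just a more carefully spelled-out version of the paper's final paragraph; the only item the paper proves that you skip is that $r$ is a local diffeomorphism with respect to $\cR$ itself, which is not needed for the two assertions as stated (and is immediate, since in the charts $\varphi_\rho$ the map $r$ reads as the identity).
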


\begin{proof}
    We claim that $\cR$ is a $C^{\infty}$-atlas for $G$
    in the sense of Definition~\ref{dfn:atlas}.
    In what follows, we will use the standard convention that for $\rho,\rho' \in \mathfrak{R}$, we write $U_{\rho,\rho'}$ for the intersection $U_\rho \cap U_{\rho'}$.

    To check~\ref{item:atlas:cover}, fix $\gamma\in G$.
    Since $G$ is \etale, it has a base of open bisections, so  we may pick a bisection $B'$ that contains
    $\gamma$.
    Since $\{W_{\alpha}\}_{\alpha}$ covers $G\z$, we may pick
    $\alpha$
    such that
    $r(\gamma)\in W_{\alpha}$.
    Now let $B\coloneqq B'\cap r\inv(W_{\alpha}).$ The set $B$ is open since
    $W_{\alpha}$
    and $B'$ are open and since $r$ is continuous; it is nonempty as it contains
    $\gamma$;
    it is a bisection because $B'$ is a bisection. Since
    $r(B)\subseteq W_{\alpha}$, we have $(B,\alpha)\in \mathfrak{R}$, and so $U_{(B,\alpha)}=B$ contains $\gamma$, which proves that $\{U_\rho\}_{\rho\in \mathfrak{R}}$ covers $G$.

    To check~\ref{item:atlas:homeo}, note that by construction, $\varphi_\rho$ is the composition of the homeomorphism $r|_{B}\colon B\to W_{\alpha}$ and the homeomorphism $\psi_{\alpha}\colon W_{\alpha} \to \psi_{\alpha}(W_{\alpha}) \subseteq\mbb{R}^n$; hence $\varphi_\rho$ is a homeomorphism onto its image.

    For~\ref{item:atlas:smooth}, fix $\rho=(B,\alpha)$ and $\rho'=(B',\alpha')\in\mathfrak{R}$ such that $U_{\rho,\rho'}\neq \emptyset$.  We have
    \begin{align*}
        \varphi_\rho|_{U_{\rho,\rho'}} \circ (\varphi_{\rho'}|_{U_{\rho,\rho'}})\inv
        &
        =
        (\psi_{\alpha} \circ r|_{B\cap B'}) \circ (\psi_{\alpha'} \circ r|_{B\cap B'})\inv
        \\
        &=
        \psi_{\alpha} \circ (r|_{B\cap B'}\circ r_{B\cap B'}\inv)  \circ \psi_{\alpha'}\inv
        =
        \psi_{\alpha} \circ \psi_{\alpha'}\inv,
    \end{align*}
    which is smooth by assumption on the atlas on $G\z$.

    To see that $r$ is a local diffeomorphism,
    we must check that, for any given open bisection $B$, the map $r|_{B}$ and its inverse are not only continuous but also smooth. By replacing $B$ by a smaller neighbourhood around any given point in $B$, we may assume without loss of generality that there exists $\alpha\in\mathfrak{A}$ such that
    $(B, \alpha)$ is an element of $\mathfrak{R}$. Then the map $\varphi_\rho \circ r|_{B}\circ \psi_{\alpha}\inv$, defined between open subsets of $\mbb{R}^n$, is given by
    \[
        \psi_{\alpha} \circ r|_{B} \circ \varphi_\rho\inv
        =
        \psi_{\alpha} \circ r|_{B} \circ (\psi_{\alpha} \circ r|_{B})\inv
        =
        \mathrm{id}.
    \]
    Similarly,
        \[
        \varphi_\rho \circ  r|_{B}\inv\circ \psi_{\alpha}\inv
        =
        (\psi_{\alpha} \circ r|_{B}) \circ  r|_{B}\inv\circ \psi_{\alpha}\inv
        =
        \mathrm{id}.
    \]
    Since it
    suffices to prove smoothness for {\em one} set of charts per point,
    we deduce that $r|_{B}$ and its inverse are smooth.

    Lastly, suppose that $\mathcal{B}=\{(V_\beta,\phi_\beta)\}_{\beta\in\mathfrak{B}}$ is another smooth structure on $G$ with respect to which $r$ is a local diffeomorphism; we
    claim that $\mathcal{B}$ and $\mathcal{R}$ are  compatible, meaning that their union is again a smooth atlas.
    By assumption on $\mathcal{B}$, whenever $\rho = (B,\alpha) \in \mathfrak{R}$ and $\beta \in \mathfrak{B}$ satisfy $V_\beta \subseteq r(B)$, the map
    \(
        \psi_{\alpha} \circ r|_{B} \circ \phi_\beta\inv
    \)
    is a smooth function between open subsets of $\mbb{R}^n$. That is,
    \[
        \varphi_{\rho}\circ \phi_\beta\inv
        =
        (\psi_{\alpha}\circ r|_{B})\circ \phi_\beta\inv
    \]
    is smooth. So the two atlases are compatible.
\end{proof}

\begin{remark}\label{rmk:cS}
    Analogously, we could have asked for the source map $s$ to become a local diffeomorphism. We do this by defining
    \begin{align*}
        \mathfrak{S}\coloneqq
        \left\{
            (B,\alpha
            ): B\text{ open bisection of $G$ such that }
                s(B)\subseteq W_{\alpha}
        \right\}
        .
    \end{align*}
    For each $\sigma=(B,\alpha
    )$, we let $V_{\sigma}\coloneqq  B$ and define
    \[
        \phi_{\sigma}\colon V_{\sigma} \to \mbb{R}^n,
        \quad
        \phi_\sigma (\gamma) \coloneqq  \psi_{\alpha} (s(\gamma)).
    \]
    Then by symmetry, the argument for $\cR$ yields that  $\cS\coloneqq \{(U_{\sigma},\phi_{\sigma})\}_{\sigma\in\mathfrak{S}}$ is likewise a $C^{\infty}$-atlas for $G$ (but {\em a priori} for a different smooth structure). In this case, $s$ is a local diffeomorphism.
\end{remark}

If $G$ is an \etale\ {\em Lie} groupoid, then for every  smooth open
bisection $B$ of $G$ in the sense of Definition~\ref{dfn:smooth bisection},
the composition $s|_{B} \circ  r|_{B}\inv$ is a diffeomorphism between open
subsets of $G\z$. The definition of a smooth bisection, of course, only makes
sense when $G$ carries a smooth structure, but the condition just described
does not; so it is a necessary condition for the existence of a smooth
structure on $G$ making it into a Lie groupoid. We make this formal with the
following definition.

\begin{defn}
    Suppose that $G$ is an \etale\ groupoid such that $G\z$ is a manifold. We say that an open bisection $B \subseteq G$ {\em acts smoothly on $G\z$} if the map
    \[
        s|_{B} \circ  r|_{B}\inv \colon r(B)\to s(B)
    \]
    is a diffeomorphism. We say that {\em $G$ acts smoothly on $G\z$} if it admits a cover by open bisections that act smoothly on $G\z$.
\end{defn}

\begin{remark}
    If $G$ is an \etale\ groupoid such that $G\z$ is a manifold and $G$ acts smoothly on $G\z$, then we can choose a base $\mathfrak{U}$ of open bisections $B$ for which $s|_{B} \circ  r|_{B}\inv$ is a diffeomorphism in such a way that each of $r(\mathfrak{U})$ and $s(\mathfrak{U})$ is an atlas for the unit space.
\end{remark}

\begin{lemma}\label{lem:lcdiff holds for every bisection}
    Suppose that $G$ is an \etale\ groupoid and that $G\z$ is a manifold. Then $G$ acts smoothly on $G\z$
    if and only if \emph{every} open bisection $B \subseteq G$ acts smoothly on $G\z$.
\end{lemma}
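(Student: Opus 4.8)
The plan is to dispatch the forward implication---that \emph{every} open bisection acts smoothly---by a purely local argument, since the reverse implication is immediate: because $G$ is \etale, it has a base (and in particular a cover) of open bisections, so if every open bisection acts smoothly then this cover witnesses that $G$ acts smoothly.

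So assume $G$ acts smoothly, fix an arbitrary open bisection $B$, and write $f_B \coloneqq s|_B \circ r|_B\inv \colon r(B) \to s(B)$. This $f_B$ is automatically a homeomorphism (both $r|_B$ and $s|_B$ are homeomorphisms onto open subsets of $G\z$), so I only need to check that $f_B$ and $f_B\inv$ are smooth, and smoothness is local. The key observation is that on overlaps the partial homeomorphism induced by $B$ coincides with the one induced by a smoothly-acting bisection from the cover. Concretely, by hypothesis there is a cover $\mathfrak{U}$ of $G$ by open bisections that act smoothly. Fix $x \in r(B)$ and let $\gamma = r|_B\inv(x) \in B$; choose $U \in \mathfrak{U}$ with $\gamma \in U$. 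Then $B \cap U$ is an open bisection containing $\gamma$, and I claim that $f_B$ and $f_U \coloneqq s|_U \circ r|_U\inv$ agree on the open neighbourhood $r(B \cap U)$ of $x$. Indeed, for $y \in r(B \cap U)$ there is a unique $\delta \in B \cap U$ with $r(\delta) = y$; since $\delta$ lies in both bisections, uniqueness of preimages gives $r|_B\inv(y) = \delta = r|_U\inv(y)$, whence $f_B(y) = s(\delta) = f_U(y)$.

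Because $U$ acts smoothly, $f_U$ is a diffeomorphism, and its restriction to the open set $r(B \cap U)$ is again a diffeomorphism onto its (open) image; by the displayed equality $f_B$ agrees with this restriction near $x$, so $f_B$ is smooth at $x$. As $x \in r(B)$ was arbitrary, $f_B$ is smooth. Running the identical argument with the roles of $r$ and $s$ interchanged---fixing $x' \in s(B)$, setting $\gamma = s|_B\inv(x')$, and choosing $U \in \mathfrak{U}$ containing $\gamma$---shows that $f_B\inv = r|_B \circ s|_B\inv$ is smooth, this time using that the inverse of the diffeomorphism $f_U$ is smooth. Hence $f_B$ is a diffeomorphism and $B$ acts smoothly. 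I expect the only point requiring care to be the verification that the two partial homeomorphisms genuinely agree on the overlap, which hinges on the injectivity of $r$ (and of $s$) on a bisection; once that germ-level agreement is in place, the smoothness conclusion follows formally from the locality of smoothness and the fact that restrictions of diffeomorphisms to open sets remain diffeomorphisms onto their images.
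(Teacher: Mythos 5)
Your proposal is correct and takes essentially the same approach as the paper: the easy direction is witnessed by the cover of \emph{all} open bisections, and the hard direction proceeds by intersecting an arbitrary bisection $B$ with a smoothly-acting bisection $U$ from the cover and noting that the two induced partial homeomorphisms agree on the open overlap $r(B\cap U)$, so that smoothness of $s|_B\circ r|_B\inv$ and of its inverse follows locally. The only difference is cosmetic: you spell out, via injectivity of $r$ on a bisection, the germ-level agreement that the paper simply asserts.
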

\begin{proof}
    Clearly if every bisection acts smoothly, then the collection of all open bisections is a cover of $G$ by open bisections that act smoothly. Conversely, assume that $\mathcal{B}$ is a cover of $G$ consisting of open bisections of $G$ that act smoothly on $G\z$. Let $U$ be {\em any} open bisection of $G$, and fix $\gamma \in U$. Since $\mathcal{B}$ covers $G$, there exists $B \in \mathcal{B}$ with $\gamma \in B$. Let $V \coloneqq U \cap B$. Then $V$ is a neighbourhood of $\gamma$. The homeomorphism $s|_U \circ  r|_{U}\inv$ agrees with the diffeomorphism $s|_{B} \circ  r|_{B}\inv$ on the open neighbourhood $r(V)$ of $r(\gamma)$, so it is differentiable at $r(\gamma)$; similarly its inverse is differentiable at $s(\gamma)$.
\end{proof}

\begin{lemma}\label{lem:lcdiff-implies-compatible}
    Suppose that $G$ is an \etale\ groupoid and that $G\z$ is a manifold. If $G$ acts smoothly on $G\z$, then the atlases $\cR$ of Proposition~\ref{prop:cR} and $\cS$ of Remark~\ref{rmk:cS} are
    compatible.
\end{lemma}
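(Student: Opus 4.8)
The plan is to show that the union $\cR \cup \cS$ is again a smooth atlas on $G$. Since Proposition~\ref{prop:cR} and Remark~\ref{rmk:cS} already establish that each of $\cR$ and $\cS$ is a smooth atlas, all that remains is to verify that every chart of $\cR$ is smoothly compatible with every chart of $\cS$; the reverse compatibility will follow by the symmetric computation.

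To carry this out, I would fix $\rho = (B,\alpha) \in \mathfrak{R}$ and $\sigma = (B',\alpha') \in \mathfrak{S}$ whose chart domains overlap, and set $B'' \coloneqq B \cap B'$. This is again an open bisection of $G$, being an intersection of two open bisections. On this overlap the chart maps factor as $\varphi_\rho|_{B''} = \psi_\alpha \circ r|_{B''}$ and $\phi_\sigma|_{B''} = \psi_{\alpha'} \circ s|_{B''}$, so the transition map unwinds to
\[
    \varphi_\rho \circ \phi_\sigma\inv = \psi_\alpha \circ r|_{B''} \circ (s|_{B''})\inv \circ \psi_{\alpha'}\inv,
\]
with domain the open subset $\psi_{\alpha'}(s(B'')) \subseteq \mbb{R}^n$. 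The only factor in this composition that is not manifestly smooth is the middle map $r|_{B''} \circ (s|_{B''})\inv \colon s(B'') \to r(B'')$.

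The crux of the argument is to recognise this middle map as a diffeomorphism between open subsets of $G\z$. Indeed, it is precisely the inverse of $s|_{B''} \circ r|_{B''}\inv$, which is a diffeomorphism because $G$ acts smoothly on $G\z$ and hence, by Lemma~\ref{lem:lcdiff holds for every bisection}, the open bisection $B''$ acts smoothly on $G\z$. Consequently $\varphi_\rho \circ \phi_\sigma\inv$ is a composition of the chart $\psi_{\alpha'}\inv$, a diffeomorphism of $G\z$, and the chart $\psi_\alpha$, all smooth (using also that $\psi_\alpha$ and $\psi_{\alpha'}$ belong to the same smooth atlas of $G\z$). The reverse transition $\phi_\sigma \circ \varphi_\rho\inv = \psi_{\alpha'} \circ s|_{B''} \circ r|_{B''}\inv \circ \psi_\alpha\inv$ is handled identically, now invoking directly that $s|_{B''} \circ r|_{B''}\inv$ is a diffeomorphism.

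I expect no genuine obstacle here: once Lemma~\ref{lem:lcdiff holds for every bisection} is used to promote the hypothesis from a covering family of bisections to \emph{all} bisections—in particular to $B''$—the verification is routine. The only point requiring care is to restrict every map to the common bisection $B''$ before composing, so that the ranges and domains of $r|_{B''}$ and $s|_{B''}$ match up and the displayed composition is well-defined on exactly $\psi_{\alpha'}(s(B''))$.
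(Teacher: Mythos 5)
Your proposal is correct and follows essentially the same route as the paper's own proof: compute the transition map $\varphi_\rho \circ \phi_\sigma\inv = \psi_{\alpha}\circ (r|_{B''}\circ s|_{B''}\inv)\circ \psi_{\alpha'}\inv$ on the overlap and recognise the middle factor as the inverse of the diffeomorphism $s|_{B''}\circ r|_{B''}\inv$. If anything, you are slightly more explicit than the paper, which justifies the middle step merely ``by assumption'' where you correctly invoke Lemma~\ref{lem:lcdiff holds for every bisection} to pass from a covering family of smoothly acting bisections to the particular bisection $B''$.
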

\begin{proof}
    Take $\rho=(B,\alpha)\in \mathfrak{R}$ and $\sigma=(B', \beta)\in \mathfrak{S}$ with $U\coloneqq B\cap B'\neq\emptyset$.
    Then
    \begin{align*}
        \varphi_\rho|_{U} \circ (\phi_{\sigma}|_{U})\inv
        &
        =
        (\psi_{\alpha} \circ r|_{U}) \circ (\psi_{\beta} \circ s|_{U})\inv
        \\
        &
        =
        \psi_{\alpha} \circ (r|_{U}\circ s|_{U}\inv)  \circ \psi_{\beta}\inv
        .
    \end{align*}
    This is a diffeomorphism since $(r|_{U}\circ s|_{U}\inv)= \left(s|_{U}\circ r|_{U}\inv\right)\inv$ is a diffeomorphism by assumption and since the maps $\psi_{\alpha},\psi_\beta$ are smooth charts of $G\z$. Analogously, the composition of $\phi_{\sigma}$ with $\varphi_\rho\inv$ is a diffeomorphism.
\end{proof}

Our next goal is to verify that $G$ is a Lie groupoid with respect to the
smooth structures we have considered.

\begin{prop}\label{prop:lcdiff iff etale Lie gpd}
Suppose that $G$ is an \etale\ groupoid and that $G\z$ is a manifold. Then
the following are equivalent:
\begin{enumerate}[label=\textup{(\arabic*)}]
\item\label{item:TFAE:lcdiff} $G$ acts smoothly on $G\z$;
\item\label{item:TFAE:Lie gpd} there is a manifold structure on $G$ with
    respect to which it is an \etale\ Lie groupoid; and
\item\label{item:TFAE:explicit Lie gpd} the manifold structure on $G$
    obtained from Proposition~\ref{prop:cR} is the unique manifold
    structure on $G$ with respect to which it is an \etale\ Lie groupoid.
\end{enumerate}
\end{prop}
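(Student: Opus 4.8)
The plan is to establish the cycle of implications \ref{item:TFAE:explicit Lie gpd}~$\Rightarrow$~\ref{item:TFAE:Lie gpd}~$\Rightarrow$~\ref{item:TFAE:lcdiff}~$\Rightarrow$~\ref{item:TFAE:explicit Lie gpd}. The first implication is immediate, since \ref{item:TFAE:explicit Lie gpd} asserts in particular that the structure $\cR$ of Proposition~\ref{prop:cR} is an \etale\ Lie-groupoid structure on $G$. For \ref{item:TFAE:Lie gpd}~$\Rightarrow$~\ref{item:TFAE:lcdiff}, I would start from an \etale\ Lie-groupoid structure on $G$, so that $r$ and $s$ are local diffeomorphisms and every point lies in an open bisection $B$ on which both $r|_{B}$ and $s|_{B}$ are diffeomorphisms onto open subsets of $G\z$; such a $B$ is smooth, and $s|_{B}\circ r|_{B}\inv$, being a composite of diffeomorphisms, witnesses that $B$ acts smoothly. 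The one point needing care here is that these diffeomorphisms are measured against the \emph{given} smooth structure on $G\z$: since $G\z$ is open in the \etale\ groupoid $G$ and the inclusion is an embedding by~\ref{item:Liegpd:inclusion map}, the submanifold structure that any \etale\ Lie-groupoid structure induces on $G\z$ coincides with the given one, so the notion ``acts smoothly'' is being tested against the correct structure. The resulting cover of $G$ by smoothly-acting bisections gives \ref{item:TFAE:lcdiff}.

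The substance is in \ref{item:TFAE:lcdiff}~$\Rightarrow$~\ref{item:TFAE:explicit Lie gpd}. Assuming $G$ acts smoothly on $G\z$, I would verify that the atlas $\cR$ of Proposition~\ref{prop:cR} makes $G$ an \etale\ Lie groupoid and then read off uniqueness from the final clause of that proposition. Proposition~\ref{prop:cR} already supplies~\ref{item:Liegpd:mfds} and gives that $r\colon(G,\cR)\to G\z$ is a local diffeomorphism. By Lemma~\ref{lem:lcdiff-implies-compatible} the atlas $\cR$ is compatible with the atlas $\cS$ of Remark~\ref{rmk:cS}, so they determine the same smooth structure; since $s$ is a local diffeomorphism for $\cS$, it is one for $\cR$ as well, which yields the \etale\ condition and~\ref{item:Liegpd:s and r}. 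For~\ref{item:Liegpd:inclusion map} I would use that $G\z$ is open in $G$ and that the $\cR$-chart built from a bisection inside $G\z$ restricts on $G\z$ to the given chart $\psi_{\alpha}$ (because $r|_{G\z}=\mathrm{id}$), so that the inclusion is an open embedding inducing exactly the given structure on $G\z$.

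It then remains to check~\ref{item:Liegpd:multiplication map} and~\ref{item:Liegpd:inversion map} in coordinates. For composition I would use the atlas of Lemma~\ref{lem:structure on Gcomp in general} on $G\comp$ together with an $\cR$-chart on the target bisection $B_{1}B_{2}$ (again an open bisection); writing the chart value of $(\gamma_{1},\gamma_{2})$ as $\psi_{\alpha}(s(\gamma_{1}))$ and that of $\gamma_{1}\gamma_{2}$ as $\psi_{\alpha'}(r(\gamma_{1}))$, the coordinate expression of multiplication becomes $\psi_{\alpha'}\circ(r|_{B_{1}}\circ s|_{B_{1}}\inv)\circ\psi_{\alpha}\inv$, which is smooth because $r|_{B_{1}}\circ s|_{B_{1}}\inv$ is the inverse of the diffeomorphism witnessing that $B_{1}$ acts smoothly; crucially, Lemma~\ref{lem:lcdiff holds for every bisection} guarantees that \emph{every} bisection, in particular $B_{1}$, acts smoothly once $G$ does. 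For inversion, using an $\cS$-chart on a bisection $B$, an $\cR$-chart on $B\inv$, and $r(\gamma\inv)=s(\gamma)$, the coordinate expression collapses to a transition map $\psi_{\alpha}\circ\psi_{\beta}\inv$ of $G\z$, which is smooth. This gives~\ref{item:Liegpd:mfds}--\ref{item:Liegpd:inversion map}, so $(G,\cR)$ is an \etale\ Lie groupoid. Finally, any other \etale\ Lie-groupoid structure $\cB$ on $G$ makes $r$ a local diffeomorphism onto $G\z$ with its given structure (once more by~\ref{item:Liegpd:inclusion map} applied to the open set $G\z$), so the uniqueness clause of Proposition~\ref{prop:cR} forces $\cB$ to be compatible with $\cR$, establishing \ref{item:TFAE:explicit Lie gpd}.

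I expect the main obstacle to be the coordinate verification of smoothness of composition, where the charts on $G\comp$ and on the target bisection must be chosen compatibly so that the transition is recognised as the smooth action $r|_{B_{1}}\circ s|_{B_{1}}\inv$; the recurring secondary subtlety, appearing in both \ref{item:TFAE:Lie gpd}~$\Rightarrow$~\ref{item:TFAE:lcdiff} and the uniqueness step, is ensuring that every \etale\ Lie-groupoid structure on $G$ induces the given smooth structure on $G\z$, for which openness of $G\z$ and the embedding axiom~\ref{item:Liegpd:inclusion map} are the decisive inputs.
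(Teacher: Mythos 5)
Your proposal is correct and takes essentially the same approach as the paper: the cycle \ref{item:TFAE:explicit Lie gpd}${}\Rightarrow{}$\ref{item:TFAE:Lie gpd}${}\Rightarrow{}$\ref{item:TFAE:lcdiff}${}\Rightarrow{}$\ref{item:TFAE:explicit Lie gpd}, with the substance being the verification of \ref{item:Liegpd:mfds}--\ref{item:Liegpd:inversion map} for the atlas of Proposition~\ref{prop:cR} via Lemmas~\ref{lem:lcdiff holds for every bisection}, \ref{lem:lcdiff-implies-compatible} and~\ref{lem:structure on Gcomp in general}, and uniqueness read off from the final clause of Proposition~\ref{prop:cR}. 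The only (immaterial) difference is symmetric bookkeeping: in the coordinate computations for multiplication and inversion you put the $\cR$-chart on the target and factor through $r|_{B_{1}}\circ s|_{B_{1}}\inv$, whereas the paper uses an $\cS$-chart there and factors through $s|_{B_{2}}\circ r|_{B_{2}}\inv$.
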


\begin{proof}
Clearly \ref{item:TFAE:explicit Lie gpd}~implies~\ref{item:TFAE:Lie gpd}.
That \ref{item:TFAE:Lie gpd}~implies~\ref{item:TFAE:lcdiff} is immediate: If
$G$ is an \etale\ Lie groupoid, then $r$ and $s$ are local diffeomorphisms,
meaning that every point of $G$ has a neighbourhood $U$ such that $U$ is
diffeomorphic to $r(U)$ via $r$ and to $s(U)$ via $s$. So the collection of
such bisections covers $G$, and hence $G$ acts smoothly on $G\z$.

For \ref{item:TFAE:lcdiff}~implies~\ref{item:TFAE:explicit Lie gpd}, suppose
that $G$ acts smoothly on $G\z$. We first argue that the manifold structure
on $G$ obtained from  Proposition~\ref{prop:cR} makes it into an \etale\ Lie
groupoid. We will argue uniqueness at the end. We have already verified
Condition~\ref{item:Liegpd:mfds} of Definition~\ref{dfn:Lie gpd}.
    It remains to show the conditions revolving around differentiability, so we start with a
    smooth atlas $\cA=\{(W_{\alpha}, \psi_{\alpha})\}_{\alpha\in\mathfrak{A}}$  of $G\z$, and we equip $G$ with the smooth structure described in Proposition~\ref{prop:cR}.

For~\ref{item:Liegpd:inclusion map}, fix $x\in G\z$ and let $(W_{\alpha},
\psi_{\alpha})$ be a chart around $x$. If $B'$ is any open bisection around
$x$, then $B\coloneqq  B'\cap G\z\cap W_{\alpha}$ is also an open bisection
around $x$, because $G$ is \etale\ and so $G\z$ is open in $G$. Since
$B\subseteq G\z$, the map $r|_{B}$ is the identity on $B$; in particular,
$r(B)=B\subseteq W_{\alpha}$, and so $(B,\alpha)\in\mathfrak{R}$. It follows
that $\varphi_{(B,\alpha)}=\psi_{\alpha},$ and
\[
    \varphi_{(B,\alpha)}\circ i \circ \psi_{\alpha}\inv
\]
is the identity map on the open set $\psi_{\alpha}(B) \subseteq\mbb{R}^n$.
Thus $i$ is smooth and its differential is everywhere injective, so $i$ is an
immersion.

For~\ref{item:Liegpd:s and r}, note that the range and source maps are
surjective by assumption on the topological groupoid $G$. We have constructed
the smooth structure on $G$ exactly so that $r$ and $s$ are local
diffeomorphisms, and so they are submersions by Lemma~\ref{lem:local diffeo
is submersion}. (This also explains why, once we have proved that $G$
satisfies \ref{item:Liegpd:multiplication map}, it will follow that it is an
{\em \etale} Lie groupoid.)

For~\ref{item:Liegpd:multiplication map}, the idea of the proof is this: by
construction,  the smooth structure on $G$ locally looks like that on $G\z$,
and the same is true for the smooth structure on $G\comp$. A fibred product
$B_{1}\fpsr B_{2}$ of two bisections corresponds smoothly to (a subset of)
$B_{1}$ in $ G$ which corresponds to $s(B_{1})$ in $ G\z$. On the other hand,
we can identify the bisection $B_{1}B_{2}$ in $G$ with
$s(B_{1}B_{2})\subseteq s(B_{2})$ in $G\z$, and under these local
identifications, the multiplication map $G\comp\to G$ just becomes the map
\[
    s|_{B_{2}}\circ r|_{B_{2}}\inv\colon s(B_{1})\cap r(B_{2}) \to s(B_{1}B_{2})\subseteq s(B_{2}),
\]
which is smooth by assumption. To make this idea more precise, fix
$(\gamma_{1},\gamma_{2})\in G\comp$.
    Let $(W, \psi)$ be a chart around
    $s(\gamma_{1})=r(\gamma_{2})$.
    By shrinking $W$, we may assume without loss of generality that there exist smooth open bisections $B_{i}$ around
    $\gamma_{i}$
    such that $s(B_{1})=W=r(B_{2})$. Since we have already shown that $s$ and $r$ are local diffeomorphisms, Lemma~\ref{lem:structure on Gcomp in general} describes an atlas for $G\comp$, and in particular shows that $\Phi\colon B_{1} \fpsr B_{2}\to \mbb{R}^n$ defined by
        $\Phi(\eta_{1},\eta_{2})\coloneqq \psi(s(\eta_{1}))=\psi(r(\eta_{2}))$ is a chart around $(\gamma_{1}, \gamma_{2})$.

    Let $(W',\psi')$ be any chart around $s(\gamma_{2})$.
    By shrinking its domain and by shrinking $B_{2}$, we can assume without loss of generality that $\dom(\psi')=s(B_{2})=s(B_{1}B_{2})$.\footnote{To maintain the equality $s(B_{1})=W=r(B_{2})$, we must likewise shrink $B_{1}$ and $W$, and we adjust $\Phi$ accordingly. None of this changes that $B_{i}$ is an open bisection around $\gamma_{i}$: For $i=2$, this is by construction, and for $i=1$, since $\gamma_{2}\in B_{2}$, we have $r(\gamma_{2})\in r(B_{2}) = s(B_{1}) $, so $\gamma_{1}$
    remains an element of the (potentially shrunken) $B_{1}$.} Since the set $B_{1}B_{2}$ is also an open bisection of $G$ \cite[Proposition~3.8]{Exel:BBMS}, the map $\varphi\colon B_{1}B_{2}\to \mbb{R}^m$ given by $\gamma\mapsto \psi'(s(\gamma))$, is a smooth chart around $\gamma_{1}\gamma_{2}$
    by Remark~\ref{rmk:cS}. (This does not require that $B_{1}B_{2}$ is a smooth bisection.)

    Now, to see that the multiplication map $M \colon B_{1} \fpsr B_{2} \to B_{1}B_{2}$ is smooth at  $(\gamma_{1},\gamma_{2})$, consider the composition
    \[
    \begin{tikzcd}[row sep = tiny]
        \varphi \circ M \circ \Phi\inv \colon &[-3.5em] \mbb{R}^n
        \ar[r, "\Phi\inv"]
        &
        B_{1} \fpsr B_{2}
        \ar[r, "M"]
        &
        B_{1}B_{2}
        \ar[r, "\varphi"]
        &\mbb{R}^n
        \\
        &\psi(r(\eta_{2}))
        \ar[r, mapsto]
        &
        (\eta_{1}, \eta_{2})
        \ar[r, mapsto]
        &
        \eta_{1}\eta_{2}
        \ar[r, mapsto]
        &
        \psi'(s(\eta_{1}\eta_{2}))=
        \psi'(s(\eta_{2})).
    \end{tikzcd}
    \]
    In short, this is the map from $\psi(r(B_{2}))$ to $\psi'(s(B_{2}))$ satisfying  $\psi(r(\eta_{2}))\mapsto \psi'(s(\eta_{2}))$ for all $\eta_{2} \in B_{2}$.
    By assumption, $G$ acts smoothly on $G\z$, so by Lemma~\ref{lem:lcdiff holds for every bisection}, the map
    \[
        s|_{B_{2}} \circ r|_{B_{2}}\inv \colon r(B_{2})\to s(B_{2})
    \]
    is a diffeomorphism. Since $\psi$ and $\psi'$ are smooth by assumption, we conclude that the composition $\varphi\circ M \circ \Phi\inv$ is a smooth map. Since $\Phi$ is a chart around  $(\gamma_{1},\gamma_{2})$ and $\varphi$ is a chart around $\gamma_{1}\gamma_{2} = M(\gamma_{1},\gamma_{2})$,
    this proves that $M$ is a smooth map.
By \cite[Proposition 1.1.5]{Mackenzie:2005:book}, the inversion map is
smooth, and therefore $G$ is a Lie groupoid.

For uniqueness of this smooth structure, observe that any smooth structure on
$G$ under which it becomes an \etale\ Lie groupoid is in particular a smooth
structure on $G$ for which $r \colon G \to G\z$ is a local diffeomorphism; so
the uniqueness follows from Proposition~\ref{prop:cR}.
\end{proof}

\begin{lemma}\label{lem:lcdiff => all bisections are smooth}
    Suppose that $G$ is an \etale\ Lie groupoid.
    Then every open bisection of $G$ is a smooth bisection with respect to this smooth structure. In particular,
   if $B_{1}$ and $B_{2}$ are smooth bisections such that $s(B_{1}) \cap r(B_{2}) \not= \emptyset$, then $B_{1}B_{2}$ is also a smooth bisection.
\end{lemma}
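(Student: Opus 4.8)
The plan is to prove the two assertions of Lemma~\ref{lem:lcdiff => all bisections are smooth} separately, since they are of genuinely different character: the first is about a single arbitrary open bisection, while the second is a closure statement about products of smooth bisections.

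For the first assertion, I would argue that every open bisection of an \etale\ Lie groupoid $G$ is automatically a smooth bisection in the sense of Definition~\ref{dfn:smooth bisection}. The key input is that, since $G$ is a Lie groupoid, the range and source maps $r, s \colon G \to G\z$ are local diffeomorphisms (this is part of \ref{item:Liegpd:s and r} together with the \etale\ hypothesis, and is exactly the property encoded in the construction underlying Proposition~\ref{prop:lcdiff iff etale Lie gpd}). So fix an open bisection $B$. By definition $r|_B$ and $s|_B$ are homeomorphisms onto open subsets of $G\z$; I must upgrade these to diffeomorphisms. Smoothness of a map between manifolds is a local property, so it suffices to check it near each point. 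Given $\gamma \in B$, I would choose a smaller open bisection $B' \subseteq B$ containing $\gamma$ on which $r$ (respectively $s$) restricts to a diffeomorphism---such $B'$ exists precisely because $r$ and $s$ are local diffeomorphisms---and then observe that $r|_B$ agrees with $r|_{B'}$ on the neighbourhood $B'$ of $\gamma$, hence is smooth at $\gamma$ with smooth local inverse. Running the same argument for $s$ gives that $B$ is a smooth bisection. (This is morally the same move as in the proof of Lemma~\ref{lem:lcdiff holds for every bisection}.)

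For the second assertion, I would combine the first with results already in hand. Given smooth bisections $B_1, B_2$ with $s(B_1) \cap r(B_2) \neq \emptyset$, the product $B_1 B_2$ is a (nonempty) open bisection of $G$ by \cite[Proposition~3.8]{Exel:BBMS}. But then the first assertion applies directly: since $G$ is an \etale\ Lie groupoid, \emph{every} open bisection is a smooth bisection, and in particular $B_1 B_2$ is. So once the first part is established, the second is immediate.

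I expect the only real subtlety to lie in the first assertion, specifically in handling the local-to-global passage cleanly. The potential pitfall is that a global homeomorphism $r|_B$ that is smooth and smoothly invertible \emph{locally at each point} is indeed a diffeomorphism, but one should be careful that the locally-defined smooth inverses glue to the genuine global inverse $r|_B\inv$---this is automatic because they all agree with the single continuous inverse $r|_B\inv$ on overlaps, and differentiability is checked pointwise. No transversality or atlas computation is needed beyond this. Everything else reduces to citing the \etale\ Lie structure and \cite[Proposition~3.8]{Exel:BBMS}.
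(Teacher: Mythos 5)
Your proof is correct, and it reaches the first assertion by a genuinely different mechanism than the paper. The paper proves smoothness of $r|_{B}$ by a chart computation: invoking Proposition~\ref{prop:lcdiff iff etale Lie gpd}, \mbox{\ref{item:TFAE:Lie gpd}${}\implies{}$\ref{item:TFAE:explicit Lie gpd}}, it identifies the given smooth structure on $G$ with the one built from the range map in Proposition~\ref{prop:cR}, so that $\eta\mapsto\psi(r(\eta))$ is a legitimate chart around any $\gamma\in B$ (where $(W,\psi)$ is a chart of $G\z$ around $r(\gamma)$); in these coordinates $r|_{B}$ and its inverse both become identity maps of open subsets of $\mbb{R}^n$, hence smooth. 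The paper also treats $s$ asymmetrically, writing $s|_{B}=r|_{B\inv}\circ I$ and using smoothness of inversion in the Lie groupoid. You instead use only the defining property of an \etale\ Lie groupoid---that $r$ and $s$ are local diffeomorphisms---together with locality of smoothness: near each $\gamma\in B$ the map $r|_{B}$ agrees with the restriction of a local diffeomorphism of $G$ to the open bisection $B'=U\cap B$, and $(r|_{B})\inv$ agrees on $r(B')$ with the corresponding local smooth inverse, since injectivity of $r|_{B}$ forces the local inverses to coincide with the global one; the same argument applies verbatim to $s$. Your route is more elementary and self-contained: it needs neither the uniqueness assertion of Proposition~\ref{prop:lcdiff iff etale Lie gpd} nor smoothness of inversion, and the gluing subtlety you flag is exactly the right one and is correctly dispatched. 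What the paper's detour buys is explicitness: it exhibits concrete charts---those of the atlas $\cR$---in which $r|_{B}$ is literally the identity, consistent with the coordinate descriptions used throughout Section~\ref{sec:etale smooth units}. The second assertion is handled identically in both proofs: $B_{1}B_{2}$ is an open bisection by \cite[Proposition~3.8]{Exel:BBMS}, so the first assertion applies.
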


\begin{proof}
    Fix an open bisection $B$. We must show that $r|_{B}\colon B\to r(B)$ is smooth with smooth inverse. It then follows immediately that, for each $B$, $s|_{B}\colon B\to s(B)$ is smooth with smooth inverse since $ I (B)=B\inv$ is also an open bisection and since $s|_{B}=r|_{B\inv}\circ  I $ and $(s|_{B})\inv = I  \circ (r|_{B\inv})\inv$ are compositions of smooth maps.

    Take any $\gamma\in B$, and let $(W,\psi)$ be a chart around $r(\gamma)$; we can assume without loss of generality that $W\subseteq r(B)$, and we let $B'\coloneqq r\inv (W) \cap B$, which is an open bisection around $\gamma$. By Proposition~\ref{prop:lcdiff iff etale Lie gpd}, \mbox{\ref{item:TFAE:Lie gpd}${}\implies{}$\ref{item:TFAE:explicit Lie gpd}}, the map $\varphi\colon B'\to \mbb{R}^n$ given by $\eta\mapsto \psi(r(\eta))$ is a chart around $\gamma$.     Consider the composition
    \[
    \begin{tikzcd}[row sep =  tiny]
        \psi \circ r|_{B'} \circ \varphi\inv \colon
        &[-3.5em]
        \mbb{R}^n \supseteq \varphi(B')
        \ar[r,"\varphi\inv"]
        &
        B'
        \ar[r, "r|_{B'}"]
        &
        r(B') \subseteq W
        \ar[r,"\psi"]
        &
        \mbb{R}^n
        \\
        &\psi(r(\eta))
        \ar[r,mapsto]
        &
        \eta
        \ar[r, mapsto]
        &
        r(\eta)
        \ar[r,mapsto]
        &
        \psi(r(\eta)).
    \end{tikzcd}
    \]
    This is just the identity map, which is trivially smooth. We have shown that, for each point  $\gamma$ in the domain of $r|_{B}$, there exists a neighbourhood $B'$ of $\gamma$ and smooth charts $\psi$ around $r(\gamma)$ and $\varphi$ around $\gamma$ such that $\psi\circ r|_{B'}\circ \varphi\inv$ is smooth; so by definition, $r|_{B}$ is a smooth map between manifolds.

    Since $B$ is a bisection, we can play the same game ``backwards'': start with an arbitrary $u\in r(B)$ with chart $(W,\psi)$, lift $u$ to a unique $\gamma\in B$,
    let $W'\coloneqq  r(B)\cap W$, and then consider the composition
    \[
    \begin{tikzcd}[row sep =  tiny]
        \varphi \circ r|_{B}\inv \circ \psi\inv \colon
        &[-3.5em]
        \mbb{R}^n
        &
        \ar[l,"\varphi"']
        B
        &
        \ar[l, " r|_{B}\inv"']
        r(B) \supset W '
        &
        \ar[l,"\psi\inv"']
        \psi(W')\subseteq \mbb{R}^n
        \\
        &\psi(r(\eta))
        \ar[r,mapsfrom]
        &
        h
        \ar[r, mapsfrom]
        &
        r(\eta)
        \ar[r,mapsfrom]
        &
        \psi(r(\eta))
    \end{tikzcd}
    \]
    This shows that $ r|_{B}\inv$ is smooth.

    The final claim follows since products of open bisections are open bisections \cite[Proposition~3.8]{Exel:BBMS}.
\end{proof}

\section{Twists over Lie groupoids}\label{sec:twists}

This section contains the bulk of the technical work of the paper. We consider topological twists $\ses$ in which $G$ is a Lie groupoid. Although our primary application is when $G$ is \etale, we do not require this hypothesis for most of the results in this section. We define what it means for the twist to be a {\em Lie twist} and establish a number of structural consequences of the definition. We show, as a reality check, that the topologically-trivial twist over a Lie groupoid determined by a smooth normalised $\mbb{T}$-valued $2$-cocycle is a Lie twist.%
\footnote{%
    We remind the reader that the group $\mathbb{T}$ admits a unique smooth structure making it a Lie group, namely the one determined by local logarithm functions (see, for example \cite[Exercise~20-11(c)]{Lee:Intro}); we always regard it as carrying that smooth structure.%
}%

We identify three conditions \PropB, \PropE~and~\PropC\ satisfied by any
collection of local smooth sections of a Lie twist (see
Definition~\ref{dfn:theProps}). Our main technical result, Theorem~\ref{thm:E
is mfd}, shows that, conversely, any family of local sections
satisfying~\PropB\ and with support covering $G$, uniquely determines a
smooth principal bundle structure on the twist. Our main result about Lie
twists, Theorem~\ref{thm:E is Lie} builds on this to shows that Conditions
\PropE~and~\PropC\ characterise when the twist is a Lie twist over $G$ under
this smooth principal bundle structure.

In a brief final subsection, Section~\ref{ssec:Lie over etale}, we restrict
attention to the situation where $G$ is an \etale\ Lie groupoid, and
reinterpret~\PropB, \PropC~and~\PropD\ in terms of smoothness of functions
defined on $G\z$ when the local sections in question are all supported on
bisections.

 \begin{defn}\label{dfn:twist2} Let $G$ be a locally compact Hausdorff groupoid, and regard $\mbb{T} \times G\z$ as a trivial group bundle with fibres $\mbb{T}$.  A {\em twist} 
 over $G$ consists of a locally compact Hausdorff groupoid $\twist$ and groupoid homomorphisms $\iota$, $\pi$ such that
  \[\ses\]
is a central groupoid extension, which means that
\begin{enumerate}[label=\textup{(T\arabic*)}]
    \item   $\iota\colon \mbb{T} \times G\z\to \pi\inv (G\z)$ is a
        homeomorphism that satisfies $\iota(1,\pi(y)) =y$ for all $y\in
        \twist\z$, where $\pi\inv (G\z)$ has the subspace topology from
        $\twist$;
    \item $\pi$ is a continuous, open surjection; and
    \item $\iota(z, \pi(r(e)))e = e\iota(z, \pi(s(e)))$ for all $e\in
        \twist$ and $z\in\mbb{T}$.
\end{enumerate}
\end{defn}

We remind the reader that the above definition implies that $\pi$ restricts
to a homeomorphism of~$\twist\z$ onto $G\z$ with inverse given by
$\iota|_{\{1\} \times G\z}$. It is well known that $\pi$ is a proper map and
a topological principal $\mbb{T}$-bundle map in the sense of
Definition~\ref{dfn:principal bdl}; see the discussion in
\cite{CDGaHV:2024:Nuclear}. The existence of local trivialisations around all
points of $ G$ implies that {\em any} continuous section of $\pi$ gives rise
to a topological local trivialisation, as explained in
Lemma~\ref{lem:sections => triv's}.

We will be interested in understanding what we call Lie twists over Lie
groupoids. The fundamental idea is that a Lie twist should be to its
underlying Lie groupoid as a topological twist is to its underlying
topological groupoid. So, roughly speaking, a Lie twist should be a
topological twist in which both groupoids are Lie groupoids, and the maps
respect their smooth structures. Our definition is related to, but slightly
more rigid than, \cite[Definition~4.1]{BX:2003} as discussed below.

\begin{defn}\label{dfn:Lie twist}
Let $G$ be a Lie groupoid. Regard $\mbb{T} \times G\z$ as a trivial smooth
group bundle with fibres~$\mbb{T}$. A {\em Lie twist}\, $\sestriple$ over $G$
consists of a Lie groupoid $\twist$ and groupoid homomorphisms $\iota \colon
\mbb{T} \times G\z \to \twist$ and $\pi \colon \twist \to G$ under which
$\twist$ is a topological twist as above and, in addition,
\begin{enumerate}[label=\textup{(LT\arabic*)}]
    \item\label{item:LT:pi submersion} the map $\pi \colon \twist \to G$ is
        a submersion, and
    \item\label{item:LT:iota smooth} the map $\iota \colon \mbb{T} \times
        G\z\to\twist$ is smooth.
\end{enumerate}
\end{defn}

Our definition is more restrictive than the notion of an $S^1$-central
extension in \cite[Definition~4.1]{BX:2003} because our definition implies
that $\iota \colon \mbb{T} \times G\z \to \pi\inv (G\z)$ is a diffeomorphism,
so $\PB{\pi\inv (G\z)}{\pi}{G\z}$ is the {\em trivial} principal
$\mbb{T}$-bundle over $G\z$ (see Lemma~\ref{lem:free stuff about Lie twists}
below). There are also cosmetic differences; for example, we do not insist
that $G\z$ and $\twist\z$ are equal, but identify them via $\pi$.

Our definition does not explicitly insist that $\pi\inv (G\z)$ is an embedded
submanifold, that $\iota$ is a diffeomorphism onto this embedded submanifold
or that $\PB{\twist}{\pi}{G}$ is a smooth principal $\mbb{T}$-bundle with
respect to the natural $\mbb{T}$-action, but this follows as we demonstrate
in the next two lemmas.

\begin{lemma}\label{lem:smooth principal bundle}
    Suppose that $ \sestriple $ is a topological twist over a groupoid $G$, and that $\twist$, $G\z$, and $G$ are manifolds (not necessarily Lie groupoids). Suppose that the left $\mbb{T}$-action on $\twist$ given by $z \cdot e = \iota(z, r(e))e$ is smooth and that $\pi$ is a submersion.
    Then $\PB{\twist}{\pi}{G}$ is a {\em smooth} principal $\mbb{T}$-bundle.
\end{lemma}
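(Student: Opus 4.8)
The plan is to construct $\mbb{T}$-equivariant smooth local trivialisations of $\pi$ directly out of smooth local sections, which exist precisely because $\pi$ is a submersion. Throughout I use that, topologically, $\PB{\twist}{\pi}{G}$ is already a principal $\mbb{T}$-bundle for the action $z\cdot e = \iota(z,r(e))e$, so that the fibres of $\pi$ are exactly the $\mbb{T}$-orbits and the action is \emph{free}: if $z\cdot e = e$, then multiplying on the right by $e\inv$ gives $\iota(z,r(e)) = r(e) = \iota(1,r(e))$, whence $z=1$ by injectivity of $\iota$. Since the action is free and smooth and $\mbb{T}$ is one-dimensional, each orbit is a one-dimensional immersed submanifold; as these orbits are the fibres of the submersion $\pi$, counting dimensions yields $\dim\twist = \dim G + 1$.

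Now fix $g_{0}\in G$. By the standard fact that a submersion admits smooth local sections, there is an open neighbourhood $U$ of $g_{0}$ and a smooth map $\beta\colon U\to\twist$ with $\pi\circ\beta = \mathrm{id}_{U}$. I would then define
\[
    \Psi\colon U\times\mbb{T}\to\pi\inv(U), \qquad \Psi(g,z)\coloneqq z\cdot\beta(g)=\iota\big(z,r(\beta(g))\big)\beta(g).
\]
This $\Psi$ is smooth, being a composite of $\beta$ with the (assumed smooth) action; it satisfies $\pi\circ\Psi = \mathrm{pr}_{U}$; and it is $\mbb{T}$-equivariant because $z\cdot e$ is a genuine left action (one checks $r(z\cdot e)=r(e)$ and then uses that $\iota$ is a homomorphism on each fibre to get $(wz)\cdot e = w\cdot(z\cdot e)$). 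Since each fibre of $\pi$ over $U$ is a single free $\mbb{T}$-orbit and $\beta(g)$ lies in the fibre over $g$, the map $\Psi$ is a bijection onto $\pi\inv(U)$.

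It remains to see that $\Psi$ is a diffeomorphism, and this is the crux. I would prove $\Psi$ is an immersion and then invoke equidimensionality. Writing $d\Psi_{(g,z)}(v,w)=d_{1}\Psi(v)+d_{2}\Psi(w)$ for the two partial differentials, the identity $\pi\circ\Psi=\mathrm{pr}_{U}$ gives $d\pi\big(d\Psi_{(g,z)}(v,w)\big)=v$, since the $z$-partial is tangent to the vertical orbit and so is killed by $d\pi$. Hence $d\Psi_{(g,z)}(v,w)=0$ forces $v=0$; and then $d_{2}\Psi(w)=0$, so $w=0$ because the orbit map $z'\mapsto z'\cdot\beta(g)$ is an injective immersion — this is exactly where freeness is used, via the vanishing of the infinitesimal stabiliser. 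Thus $d\Psi_{(g,z)}$ is injective, and since $\dim(U\times\mbb{T})=\dim\twist=\dim\pi\inv(U)$, it is an isomorphism, so $\Psi$ is a local diffeomorphism; being bijective, it is a diffeomorphism. As $g_{0}$ was arbitrary, the maps $\Psi$ furnish an $\mbb{T}$-equivariant smooth local trivialisation around every point, commuting with the projections, which is precisely what is required by Definition~\ref{dfn:principal bdl} for $\PB{\twist}{\pi}{G}$ to be a smooth principal $\mbb{T}$-bundle. The main obstacle is the smoothness of $\Psi\inv$, which the immersion-plus-equidimension argument resolves cleanly provided freeness has been established first.
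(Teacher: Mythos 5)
Your proof is correct, but it takes a genuinely different route from the paper's. The paper never constructs trivialisations by hand: it invokes the Quotient Manifold Theorem \cite[Theorem~21.10]{Lee:Intro} (using compactness of $\mbb{T}$, so that the smooth free action is automatically proper) to equip $\mbb{T}\backslash\twist$ with the unique smooth structure for which the quotient map $q$ is a submersion, with $\PB{\twist}{q}{\mbb{T}\backslash\twist}$ then a smooth principal $\mbb{T}$-bundle; it then notes that $\pi$ descends to a homeomorphism $\tilde{\pi}\colon \mbb{T}\backslash\twist \to G$, that pulling the smooth structure of $G$ back through $\tilde{\pi}$ also makes $q = \tilde{\pi}\inv\circ\pi$ a submersion, and concludes from the uniqueness clause of the Quotient Manifold Theorem that $\tilde{\pi}$ is a diffeomorphism, identifying the bundle $q$ with $\pi$. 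You instead build the smooth local trivialisations directly: smooth local sections of the submersion $\pi$ exist, the action gives smooth bijective equivariant maps $\Psi\colon U\times\mbb{T}\to\pi\inv(U)$, and freeness (trivial infinitesimal stabiliser, hence immersive orbit maps) together with $\dim\twist=\dim G+1$ upgrades injectivity of $d\Psi$ to invertibility, so $\Psi$ is a diffeomorphism. The paper's route is shorter because all the hard analysis (properness, slices, compatibility of charts) is packaged inside the Quotient Manifold Theorem; your route is more elementary and self-contained, produces exactly the trivialisations required by Definition~\ref{dfn:principal bdl}, and in effect reproves in this special case the rigidity statement (a smooth equivariant bundle homeomorphism between such bundles is a diffeomorphism) that the paper isolates as Lemma~\ref{lem:principal bundles are rigid} and reuses elsewhere. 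One point worth tightening: your dimension count deserves a sentence of justification --- the fibre of the submersion is an embedded submanifold of dimension $\dim\twist-\dim G$, the orbit map from the compact group $\mbb{T}$ is an injective immersion and hence an embedding onto the same subset, and uniqueness of embedded-submanifold structures then forces $\dim\twist-\dim G = 1$; as written, ``counting dimensions yields'' glosses over this identification of the two submanifold structures.
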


\begin{proof}
    By the Quotient Manifold Theorem \cite[Theorem~21.10]{Lee:Intro}, since $\mbb{T}$ is compact and acts smoothly and freely on $\twist$, the quotient space $\mbb{T}\backslash \twist$ has a manifold structure with respect to which $\PB{\twist}{q}{\mbb{T}\backslash \twist}$ is a smooth principal $\mbb{T}$-bundle. Moreover, the smooth structure on $\mbb{T}\backslash \twist$ is the {\em unique} smooth structure on $\mbb{T}\backslash\twist$ for which the quotient map $q\colon \twist\to \mbb{T}\backslash \twist$ is a submersion.

    The map $\pi$ induces a homeomorphism $\tilde{\pi}\colon \mbb{T}\backslash \twist\approx G$ given by $\tilde{\pi}(\mbb{T}\cdot e) = \pi(e)$.
    Let $M$ denote the set $\mbb{T}\backslash \twist$ when equipped with the manifold structure that the homeomorphism $\tilde{\pi}$ induces from $G$, i.e., $\tilde{\pi}\colon M\to G$ is a diffeomorphism by construction.
    Consider the following commutative diagram:
    \[
    \begin{tikzcd}
        & \twist \ar[ld, dashed, "\tilde{\pi}\inv\circ \pi"']\ar[rd, "\pi\text{  submersion}"]&
        \\
        M \ar[rr, "\tilde{\pi}", "\text{diffeo}"', <->] && G
    \end{tikzcd}
    \]
    By definition of the smooth structure on $M$, the map $\tilde{\pi}\inv\circ \pi$ is then also a submersion. By definition of $\tilde{\pi}$, we have $(\tilde{\pi}\inv\circ \pi)(e) = \mbb{T}\cdot e$. That is, $\tilde{\pi}\inv\circ \pi=q$ is the quotient map. By uniqueness of the smooth structure on $\mbb{T}\backslash \twist$, we conclude that $\mbb{T}\backslash \twist=M$, i.e., $\tilde{\pi}\colon \mbb{T}\backslash \twist\approx G$ is a diffeomorphism, not just a homeomorphism. Since $\PB{\twist}{q}{\mbb{T}\backslash \twist}$  is a smooth principal bundle, it follows that $\PB{\twist}{\pi}{G}$ is a smooth principal bundle.
\end{proof}

\begin{lemma}\label{lem:free stuff about Lie twists}
Suppose that $ \ses $ is a Lie twist. Then the following hold:
    \begin{enumerate}[label=\textup{(LT\arabic*)}, start=3]
    \item\label{item:LT:smooth action} The $\mbb{T}$-action on $\twist$
        given by $z \cdot e = \iota(z, r(e))e$ is smooth;
    \item\label{item:LT:principal bundle} $\PB{\twist}{\pi}{G}$ is a smooth
        principal $\mbb{T}$-bundle with respect to this action;
    \item\label{item:LT:embedded submfd} $\pi\inv (G\z)$ is an embedded
        submanifold of $\twist$; and
    \item\label{item:LT:iota diffeo} $\iota$ is a diffeomorphism of the
        trivial principal $\mbb{T}$-bundle onto this embedded submanifold.
    \end{enumerate}
\end{lemma}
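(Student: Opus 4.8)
The plan is to prove the four conclusions in the order listed, since each uses the previous ones.

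For~\ref{item:LT:smooth action}, I would exhibit the action map $\mbb{T} \times \twist \to \twist$, $(z,e) \mapsto \iota(z, \pi(r(e)))e$, as a composite of smooth maps. Because $\twist$ is a Lie groupoid, its range map $r$ is smooth, and $\pi$ is smooth by~\ref{item:LT:pi submersion}; as the image of $\pi \circ r$ lies in the embedded submanifold $G\z \subseteq G$, the map $\pi \circ r \colon \twist \to G\z$ is smooth. Hence $(z,e) \mapsto (z, \pi(r(e)))$ is a smooth map into $\mbb{T} \times G\z$, and composing with the smooth map $\iota$ of~\ref{item:LT:iota smooth} gives a smooth map $(z,e) \mapsto \iota(z, \pi(r(e)))$. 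A short computation with the twist axioms (using that $\iota$ is a homomorphism) shows $s(\iota(z, \pi(r(e)))) = r(e)$, so $(z,e) \mapsto (\iota(z, \pi(r(e))), e)$ is a smooth map into the embedded submanifold $\twist\comp \subseteq \twist \times \twist$; composing with the smooth multiplication map of the Lie groupoid $\twist$ yields the action, which is therefore smooth.

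For~\ref{item:LT:principal bundle}, I would simply invoke Lemma~\ref{lem:smooth principal bundle}: the spaces $\twist, G\z, G$ are manifolds (as $\twist$ and $G$ are Lie groupoids), $\pi$ is a submersion by~\ref{item:LT:pi submersion}, and the action is smooth by the previous step, so the lemma gives that $\PB{\twist}{\pi}{G}$ is a smooth principal $\mbb{T}$-bundle. For~\ref{item:LT:embedded submfd}, I would note that $G\z$ is an embedded submanifold of $G$ (as $G$ is a Lie groupoid) and that a submersion is transverse to any map by Remark~\ref{rmk:submersion implies transverse}; thus $\pi$ is transverse to the inclusion $G\z \hookrightarrow G$, and $\pi\inv(G\z)$, as the preimage of an embedded submanifold under a submersion, is an embedded submanifold of $\twist$ (the preimage theorem for transverse maps underlying Proposition~\ref{prop:transverse maps implies fibred product is mfd}).

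The crux is~\ref{item:LT:iota diffeo}. Since $\iota$ is smooth with image the embedded submanifold $\pi\inv(G\z)$, it is smooth as a map $\mbb{T} \times G\z \to \pi\inv(G\z)$; it is also a homeomorphism by~(T1), so it remains only to show that its inverse is smooth. I would argue this locally using the principal-bundle structure from~\ref{item:LT:principal bundle}. Restricting a smooth local trivialisation of $\PB{\twist}{\pi}{G}$ to the embedded submanifold $G\z$ gives, over a suitable open $U \subseteq G\z$, a smooth $\mbb{T}$-equivariant trivialisation $\tau \colon \pi\inv(U) \to \mbb{T} \times U$. A direct check shows $\iota$ is $\mbb{T}$-equivariant (namely $z \cdot \iota(w,x) = \iota(zw,x)$) and covers $\mathrm{id}_{G\z}$, so writing $\tau(\iota(1,x)) = (g(x), x)$ defines a smooth map $g \colon U \to \mbb{T}$, and equivariance forces $(\tau \circ \iota)(z,x) = (z\,g(x), x)$. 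This map has the manifestly smooth inverse $(w, x) \mapsto (w\,g(x)\inv, x)$, so $\iota = \tau\inv \circ (\tau \circ \iota)$ is a diffeomorphism over $U$. As such neighbourhoods cover $G\z$ and $\iota$ is already a global homeomorphism, $\iota$ is a diffeomorphism onto $\pi\inv(G\z)$.

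The main obstacle is this final step: smoothness of $\iota$ and the homeomorphism property are handed to us, but upgrading to a diffeomorphism genuinely requires the principal-bundle structure, and the cleanest route is the equivariant local-trivialisation computation above. One must take care that the transition function $g$ is smooth rather than merely continuous (which follows from smoothness of $\iota$ and $\tau$) and that the locally defined inverses patch into a global smooth inverse, which is automatic because $\iota$ is known to be a global bijection.
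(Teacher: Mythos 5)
Your proof is correct, and for \ref{item:LT:smooth action}--\ref{item:LT:embedded submfd} it is essentially the paper's own argument: the action is a composite of smooth maps (using smoothness of $\iota$, of $\pi$, and of the range and multiplication maps of the Lie groupoid $\twist$), Lemma~\ref{lem:smooth principal bundle} then gives \ref{item:LT:principal bundle}, and the preimage of the embedded submanifold $G\z$ under the submersion $\pi$ is embedded --- the paper cites \cite[Corollary~6.31]{Lee:Intro} where you combine Remark~\ref{rmk:submersion implies transverse} with Theorem~\ref{thm:Generalized Preimage Thm}\ref{item:Generalized Preimage Thm}, but these are the same fact.

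Where you genuinely diverge is \ref{item:LT:iota diffeo}. The paper observes that $\iota$ is smooth as a map into the embedded submanifold $\pi\inv(G\z)$ (restriction of codomain, \cite[Theorem~5.29]{Lee:Intro}) and then invokes Lemma~\ref{lem:principal bundles are rigid} as a black box: a smooth, homeomorphic, $\mbb{T}$-equivariant bundle map between smooth principal $\mbb{T}$-bundles over the same base is automatically a diffeomorphism. You instead restrict a trivialisation of $\PB{\twist}{\pi}{G}$ to the bundle over $G\z$, write $\tau\circ\iota$ as $(z,x)\mapsto (z\,g(x),x)$ with $g$ smooth, and invert by hand. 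This is valid, but it is in substance a re-derivation of the proof of Lemma~\ref{lem:principal bundles are rigid}, whose appendix proof is exactly this equivariant transition-function computation, specialised here to the map $\iota$. Note also that both routes quietly use the fact, recorded in Remark~\ref{rmk:pi submersion, restr bdl}, that the restricted bundle $\PB{\pi\inv(G\z)}{\pi}{G\z}$ is again a smooth principal $\mbb{T}$-bundle: the paper needs it so that the rigidity lemma applies to both bundles, and you need it to know that your restricted map $\tau$ is a \emph{smooth} trivialisation (your one-line justification of this restriction step deserves the extra sentence: smoothness of $\tau$ and $\tau\inv$ follows by restricting domain and codomain of the ambient trivialisation to embedded submanifolds). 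The trade-off is the usual one: the paper's citation is shorter and reuses a lemma needed elsewhere (e.g.\ in Theorem~\ref{thm:Sigma is mfd} and Corollary~\ref{p:PropCD}), while your inline argument is self-contained and makes visible exactly where smoothness of the transition function comes from.
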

\begin{proof}
Since $\twist$ is a Lie groupoid, multiplication in $\twist$ and the range
map are smooth. Since $\iota$ is smooth by assumption, the $\mbb{T}$-action
is a composition of smooth maps, and hence itself smooth. Since $\pi$ is a
submersion by assumption, it then follows from Lemma~\ref{lem:smooth
principal bundle} that $\PB{\twist}{\pi}{G}$ is a smooth principal bundle.
Since $G\z \subseteq G$ is an embedded submanifold, it follows from
\cite[Corollary 6.31]{Lee:Intro} that $\pi\inv (G\z)$ is an embedded
submanifold. By \cite[Theorem~5.29]{Lee:Intro}, the map $\iota$ is smooth
from $\mbb{T} \times G\z$ to $\pi\inv (G\z)$. Since $\iota$ is also a
homeomorphism between these principle $\mbb{T}$-bundles,
Lemma~\ref{lem:principal bundles are rigid} implies that it is a
diffeomorphism.
\end{proof}

The following lemma and corollary serve as a ``reality-check" that the
definition of a Lie twist, and the assumptions of Lemma~\ref{lem:smooth
principal bundle}, are reasonable, and in particular that smooth $2$-cocycles
on Lie groupoids give rise to Lie twists.

Recall that a (continuous) \emph{$2$-cocycle} on a topological groupoid $G$
is a continuous function $\mathbf{c} \colon G\comp \to \mbb{T}$ such that
$\mathbf{c}(\alpha,\beta)\mathbf{c}(\alpha\beta,\gamma) =
\mathbf{c}(\alpha,\beta\gamma)\mathbf{c}(\beta,\gamma)$ for all composable
triples $(\alpha,\beta,\gamma)$. The cocycle $\mathbf{c}$ is
\emph{normalised} if $\mathbf{c}(r(\gamma),\gamma) = 1 = \mathbf{c}(\gamma,
s(\gamma))$ for all $\gamma$.

\begin{lemma}\label{lem:cocycle:smooth principal bundle}
    Suppose that $G$ is a topological groupoid that is a manifold (but not necessarily a Lie groupoid), that $G\z$ is an embedded submanifold, and that $\mathbf{c}\colon G\comp \to \mbb{T}$ is a normalised $2$-cocycle. Let $\twist_{\mathbf{c}}\coloneqq \mbb{T} \times G$ be the groupoid with multiplication defined by
    \[
        (z_{1}, \gamma_{1})(z_{2}, \gamma_{2})
        =
        \bigl(\mathbf{c}(\gamma_{1},\gamma_{2})z_{1}z_{2}, \gamma_{1}\gamma_{2}\bigr)
        \text{ for } (\gamma_{1},\gamma_{2})\in G\comp
    \]
    and inversion by
    \[
        (z, \gamma)\inv =
        \bigl(
            \overline{\mathbf{c}(\gamma,\gamma\inv)z}, \gamma\inv
        \bigr).
    \]
    \begin{enumerate}[label=\textup{(\arabic*)}]
        \item\label{item:cocycle:smooth principal bundle} If
            $\twist_{\mathbf{c}}$ is equipped with the product-manifold
            structure, then its left $\mbb{T}$-action defined by $z \cdot
            (w, \gamma) = (zw, \gamma)$
    is smooth and the natural projection map $\pi\colon
    \twist_{\mathbf{c}}\to G$ is a submersion.
    \item\label{item:cocycle:twist Lie}  If $G$ is a Lie groupoid and
        $\mathbf{c}$ is smooth, then  $\twist_{\mathbf{c}}$ is also a Lie
        groupoid.
    \end{enumerate}
\end{lemma}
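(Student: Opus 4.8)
For claim~\ref{item:cocycle:smooth principal bundle} the plan is to read everything off the product manifold structure on $\twist_{\mathbf{c}} = \mbb{T} \times G$. Up to reordering factors, the action $z \cdot (w,\gamma) = (zw,\gamma)$ is $m_{\mbb{T}} \times \mathrm{id}_G$, where $m_{\mbb{T}}$ denotes multiplication in $\mbb{T}$; since $\mbb{T}$ is a Lie group this is smooth, and hence so is the action. The projection $\pi$ is projection onto the second factor of a product of manifolds, which is a surjective submersion. Nothing more is required here.

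For claim~\ref{item:cocycle:twist Lie}, I would verify the five conditions of Definition~\ref{dfn:Lie gpd} in order, after first recording the structure maps explicitly. A short computation using that $\mathbf{c}$ is normalised (together with $\mathbf{c}(\gamma,\gamma\inv) = \mathbf{c}(\gamma\inv,\gamma)$, which follows from the cocycle identity) gives $r(z,\gamma) = (1,r(\gamma))$, $s(z,\gamma) = (1,s(\gamma))$, and unit space $\twist_{\mathbf{c}}\z = \{1\} \times G\z$. Conditions~\ref{item:Liegpd:mfds} and~\ref{item:Liegpd:inclusion map} then follow because $\{1\} \times G\z$ is a product of embedded submanifolds (using the hypothesis that $G\z \hookrightarrow G$ is embedded), hence embedded in $\mbb{T} \times G$; and~\ref{item:Liegpd:s and r} follows because, modulo the diffeomorphism $\twist_{\mathbf{c}}\z \cong G\z$, the maps $r$ and $s$ on $\twist_{\mathbf{c}}$ are the range and source maps of $G$ precomposed with the projection $\twist_{\mathbf{c}} \to G$, and so are composites of submersions.

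The crux is condition~\ref{item:Liegpd:multiplication map}, and the plan hinges on identifying the manifold of composable pairs. Once~\ref{item:Liegpd:s and r} is in hand, Proposition~\ref{prop:transverse maps implies fibred product is mfd} makes $\twist_{\mathbf{c}}\comp$ an embedded submanifold of $\twist_{\mathbf{c}} \times \twist_{\mathbf{c}}$. I would then observe that composability of $((z_1,\gamma_1),(z_2,\gamma_2))$ is exactly the condition $(\gamma_1,\gamma_2) \in G\comp$, so that under the reordering diffeomorphism $\twist_{\mathbf{c}} \times \twist_{\mathbf{c}} \cong (\mbb{T} \times \mbb{T}) \times (G \times G)$ the set $\twist_{\mathbf{c}}\comp$ is carried onto $(\mbb{T} \times \mbb{T}) \times G\comp$. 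Since $G\comp$ is embedded in $G \times G$ (see page~\pageref{page:G2 submanifold}), this is an embedded submanifold with the same underlying set as the fibred product, and uniqueness of the smooth structure on an embedded submanifold forces the two structures to agree. In these product coordinates the composition map becomes $((z_1,z_2),(\gamma_1,\gamma_2)) \mapsto (\mathbf{c}(\gamma_1,\gamma_2) z_1 z_2,\ \gamma_1\gamma_2)$, whose second component is the composition map of $G$ after a projection, and whose first component is a product in $\mbb{T}$ of $\mathbf{c}$ (smooth by hypothesis) with $(z_1,z_2) \mapsto z_1 z_2$; both are smooth, so multiplication is smooth. I expect this identification step to be the only place demanding genuine care.

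Condition~\ref{item:Liegpd:inversion map} is then routine: the inversion $(z,\gamma) \mapsto (\overline{\mathbf{c}(\gamma,\gamma\inv)z}, \gamma\inv)$ is smooth because inversion in $G$ is smooth, the map $\gamma \mapsto (\gamma,\gamma\inv) \in G\comp$ is smooth, $\mathbf{c}$ is smooth, and conjugation and multiplication in $\mbb{T}$ are smooth. Having checked~\ref{item:Liegpd:mfds}--\ref{item:Liegpd:inversion map}, I would conclude that $\twist_{\mathbf{c}}$ is a Lie groupoid.
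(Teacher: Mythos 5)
Your proposal is correct and takes essentially the same route as the paper: both read part~\ref{item:cocycle:smooth principal bundle} off the product-manifold structure, observe that $\twist_{\mathbf{c}}$ shares its manifold, unit space, range and source maps with the product Lie groupoid $\mbb{T}\times G$ (so \ref{item:Liegpd:mfds}--\ref{item:Liegpd:s and r} come for free), and then check multiplication and inversion as composites of smooth maps. Your explicit identification of $\twist_{\mathbf{c}}\comp$ with $(\mbb{T}\times\mbb{T})\times G\comp$, and the computation of the structure maps via $\mathbf{c}(\gamma,\gamma\inv)=\mathbf{c}(\gamma\inv,\gamma)$, simply spell out what the paper compresses into ``rearranging coordinates'' and ``none of these maps distinguish between $\mbb{T}\times G$ and $\twist_{\mathbf{c}}$.''
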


\begin{remark}
It is important that $\mathbf{c}$ is normalised. Otherwise $(1, r(\gamma))(1,
\gamma) = (\mathbf{c}(r(\gamma),\gamma), \gamma) \not= (1, \gamma)$, so the
inclusion $\iota \colon \mathbb{T} \times G\z \to \twist_{\mathbf{c}}$ does
not carry units to units.
\end{remark}

Applying  Lemma~\ref{lem:smooth principal bundle}, we get:
\begin{corollary}\label{cor:cocycle:smooth principal bundle}
     Suppose that $G$ is a topological groupoid that is a manifold, that $G\z$ is an embedded submanifold, and that $\mathbf{c}\colon G\comp \to \mbb{T}$ is a normalised continuous $2$-cocycle.
    \begin{enumerate}[label=\textup{(\arabic*)}]
        \item $\PB{\twist_{\mathbf{c}}}{\pi}{G}$ is a smooth principal
            $\mbb{T}$-bundle.
    \item If $G$ is a Lie groupoid and $\mathbf{c}$ is smooth, then $
        \mbb{T}\times G\z \overset{\iota}{\to} \twist_{\mathbf{c}}
        \overset{\pi}{\to} G$ is a Lie twist.
    \end{enumerate}
\end{corollary}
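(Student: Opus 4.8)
The plan is to read off both parts of the corollary directly from Lemma~\ref{lem:cocycle:smooth principal bundle} and Lemma~\ref{lem:smooth principal bundle}, so that the proof is essentially bookkeeping plus one small compatibility check. First I would record the standing setup: with the maps $\iota(z,x) = (z,x)$ and $\pi(w,\gamma) = \gamma$, the groupoid $\twist_{\mathbf{c}} = \mbb{T} \times G$ is a topological twist over $G$, i.e.\ it satisfies (T1)--(T3) of Definition~\ref{dfn:twist2}. This is the standard verification for a cocycle twist; the key point is that the normalisation of $\mathbf{c}$ forces the units of $\twist_{\mathbf{c}}$ to be precisely the points $(1,x)$ with $x \in G\z$ (as flagged in the Remark following Lemma~\ref{lem:cocycle:smooth principal bundle}), so that $\iota$ does carry units to units. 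Since $G$ and $\mbb{T}$ are manifolds, $\twist_{\mathbf{c}}$ is a manifold under the product structure, and $G\z$ is an embedded submanifold of $G$ by hypothesis.

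For part (1), the only nontrivial step is to match the two descriptions of the $\mbb{T}$-action. I would first compute the range map of $\twist_{\mathbf{c}}$, using $(w,\gamma)\inv = (\overline{\mathbf{c}(\gamma,\gamma\inv)w}, \gamma\inv)$ and $|w| = |\mathbf{c}(\gamma,\gamma\inv)| = 1$, to get $r(w,\gamma) = (w,\gamma)(w,\gamma)\inv = (1, r(\gamma))$. Evaluating the action $z \cdot e = \iota(z, r(e))e$ of Lemma~\ref{lem:smooth principal bundle} then gives
\[
    z \cdot (w,\gamma) = (z, r(\gamma))(w,\gamma) = \bigl(\mathbf{c}(r(\gamma),\gamma)\,zw,\ \gamma\bigr) = (zw, \gamma),
\]
where the last equality uses that $\mathbf{c}$ is normalised. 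Hence this action coincides with the action $z \cdot (w,\gamma) = (zw,\gamma)$ of Lemma~\ref{lem:cocycle:smooth principal bundle}\ref{item:cocycle:smooth principal bundle}, which that lemma guarantees is smooth, and the same lemma gives that $\pi$ is a submersion. All hypotheses of Lemma~\ref{lem:smooth principal bundle} are therefore met, and it yields that $\PB{\twist_{\mathbf{c}}}{\pi}{G}$ is a smooth principal $\mbb{T}$-bundle.

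For part (2), I would add the hypotheses that $G$ is a Lie groupoid and $\mathbf{c}$ is smooth. Lemma~\ref{lem:cocycle:smooth principal bundle}\ref{item:cocycle:twist Lie} then makes $\twist_{\mathbf{c}}$ a Lie groupoid, so it remains only to verify Conditions~\ref{item:LT:pi submersion} and~\ref{item:LT:iota smooth} of Definition~\ref{dfn:Lie twist}. Condition~\ref{item:LT:pi submersion} is immediate, since $\pi$ is a submersion by Lemma~\ref{lem:cocycle:smooth principal bundle}\ref{item:cocycle:smooth principal bundle}. For Condition~\ref{item:LT:iota smooth}, I note that $\iota = \mathrm{id}_{\mbb{T}} \times \iota_{G\z}$, where $\iota_{G\z} \colon G\z \hookrightarrow G$ is the inclusion, which is smooth because $G\z$ is an embedded submanifold; so $\iota$ is smooth as a product of smooth maps. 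Together with the topological-twist structure already noted, this shows that $\mbb{T}\times G\z \overset{\iota}{\to} \twist_{\mathbf{c}} \overset{\pi}{\to} G$ is a Lie twist.

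I do not expect a genuine obstacle here: once the two lemmas are in hand, the corollary is a short deduction. The only place that requires any care is the identification of the two $\mbb{T}$-actions in part (1), which is precisely where the normalisation of $\mathbf{c}$ enters; everything else is a direct appeal to the preceding results.
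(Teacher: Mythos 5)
Your proposal is correct and follows the paper's own route exactly: the paper presents the corollary as an immediate consequence of combining Lemma~\ref{lem:cocycle:smooth principal bundle} with Lemma~\ref{lem:smooth principal bundle}, which is precisely your argument. The details you supply — the computation $r(w,\gamma)=(1,r(\gamma))$ showing the two $\mbb{T}$-actions agree via normalisation of $\mathbf{c}$, and the verification of \ref{item:LT:pi submersion} and \ref{item:LT:iota smooth} — are the correct (and in the paper, implicit) bookkeeping needed to make that application legitimate.
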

\begin{proof}[Proof of Lemma~\ref{lem:cocycle:smooth principal bundle}]
    As a manifold, $\twist_{\mathbf{c}}$ is the Cartesian-product manifold, so it is a (trivial) smooth principal $\mbb{T}$-bundle for the action $z \cdot (w, g) = (zw,g)$ irrespective of the properties of $\mathbf{c}$. In particular, $\pi$ is a smooth fibration and the action is smooth; and
    $\pi$ is a submersion since, in charts, it is just projection onto the first $m-1$ coordinates in Euclidean space.

    It is well-known that $ \twist_{\mathbf{c}}$ is a topological groupoid, so we only need to check the conditions about differentiability.
    Since the Cartesian product of two Lie groupoids is again a Lie groupoid, we know that the inclusion of $\{1\}\times G\z = (\mbb{T}\times G)\z$ into $\mbb{T}\times G$ is an immersion and  that the source map $\mbb{T}\times G \to G\z$, $(z,\gamma)\mapsto s(\gamma)$, is a submersion, just like the range map. Since none of these maps distinguish between $\mbb{T}\times G$ and $\twist_{\mathbf{c}}$, we conclude that $\twist_{\mathbf{c}}$ satisfies Conditions~\ref{item:Liegpd:mfds}, \ref{item:Liegpd:inclusion map}, and~\ref{item:Liegpd:s and r} of Definition~\ref{dfn:Lie gpd}.

    The  composition map \(\twist_{\mathbf{c}}\comp \to \twist_{\mathbf{c}}\) is smooth because it is built out of smooth functions: rearranging coordinates, multiplication in $G$, applying the smooth $2$-cocycle $\mathbf{c}$, and multiplication in $\mbb{T}$.
    This proves Condition~\ref{item:Liegpd:multiplication map}.
\end{proof}

    Given a family $ \{\sigma_{\alpha} \colon U_{\alpha} \to \pi\inv (U_{\alpha})\}_{ \alpha\in\mathfrak{A}}$ of sections of a map $\pi$,
        we will often write $\{(U_{\alpha},\sigma_{\alpha})\}_{\alpha}$ for short. If there is no ambiguity regarding the sets $U_{\alpha}$, we will even write $\{\sigma_{\alpha}\}_{\alpha}$.

        The following is our key technical definition. As we shall see in Theorem~\ref{thm:E is mfd} and Theorem~\ref{thm:E is Lie}, it lays out conditions on a family of sections of a twist over a Lie groupoid under which they determine a compatible Lie-groupoid structure on the twist.

\begin{defn}\label{dfn:theProps}
    Suppose that $ \ses $ is a topological twist over a Lie groupoid $G$, that $\{U_{\alpha}\}_{\alpha\in\mathfrak{A}}$ is a collection of open subsets of $G$, and that  $ \{\sigma_{\alpha} \colon U_{\alpha} \to \pi\inv (U_{\alpha})\}_{ \alpha\in\mathfrak{A}}$ is a family of continuous sections  of $\pi$.
    We define the following properties for the family $\{\sigma_{\alpha}\}_{\alpha\in\mathfrak{A}}$.
    \begin{itemize}
        \item[\namedlabel{property:U infty}{\normalfont(U$^{\infty}$)}] For
            each $\alpha$, there exists a smooth map $k_{\alpha}\colon
            U_{\alpha}\cap G\z\to\mbb{T}$ such that $\sigma_{\alpha} (x) =
            \iota(k_{\alpha}(x), x)$ for all $x\in U_{\alpha} \cap G\z$.
        \item[\namedlabel{property:S infty}{\normalfont(S$^{\infty}$)}] For
            each $\alpha,\alpha'$, the map  $U_{\alpha}\cap U_{\alpha'}\to
            \mbb{T} \times G\z$ given by $\gamma\mapsto \iota\inv
            (\sigma_{\alpha}(\gamma)\sigma_{\alpha'}(\gamma)\inv )$ is
            smooth.
        \item[\namedlabel{property:M infty}{\normalfont(M$^{\infty}$)}] For
            each $\alpha,\alpha_{1},\alpha_{2}\in\mathfrak{A}$, the map
            $\big\{(\gamma_{1}, \gamma_{2}) \in U_{\alpha_{1}} \fpsr
            U_{\alpha_{2}} : \gamma_{1}\gamma_{2} \in U_{\alpha}\big\} \to
            \mbb{T} \times G\z$ given by $(\gamma_{1},\gamma_{2})\mapsto
            \iota\inv \bigl(\sigma_{\alpha_{1}}
            (\gamma_{1})\sigma_{\alpha_{2}} (\gamma_{2})\sigma_{\alpha}
            (\gamma_{1}\gamma_{2})\inv \bigr)$ is smooth.
        \item[\namedlabel{property:I infty}{\normalfont(I$^{\infty}$)}] For
            each $\alpha,\alpha'\in\mathfrak{A}$, the map $U_{\alpha}\cap
            U_{\alpha'}\inv
            \to \mbb{T} \times G\z$ given by $\gamma\mapsto \iota\inv
            \bigl(\sigma_{\alpha}(\gamma)\sigma_{\alpha'}
            (\gamma\inv)\bigr)$ is smooth.
    \end{itemize}
\end{defn}

\begin{remark}\label{rmk:T-conditions}
Resume the notation of Definition~\ref{dfn:theProps}. Consider the
$\mbb{T}$-valued functions $f_{\alpha,\alpha'}  \colon U_{\alpha}\cap
U_{\alpha'}\to \mbb{T}$, $g_{\alpha, \alpha_{1}, \alpha_{2}} \colon
\big\{(\gamma_{1}, \gamma_{2}) \in U_{\alpha_{1}} \fpsr U_{\alpha_{2}} :
\gamma_{1}\gamma_{2} \in U_{\alpha}\big\} \to \mbb{T}$, and
$h_{\alpha,\alpha'} \colon U_{\alpha} \cap U_{\alpha'}\inv \to \mbb{T}$
defined implicitly by
\begin{align*}
\iota\bigl(f_{\alpha,\alpha'}(\gamma), r(\gamma)\bigr)
    &= \sigma_{\alpha}(\gamma)\sigma_{\alpha'}(\gamma)\inv,
    \\
\iota\bigl(g_{\alpha, \alpha_{1}, \alpha_{2}}(\gamma_{1},\gamma_{2}), r(\gamma_{1})\bigr)
    &= \sigma_{\alpha_{1}} (\gamma_{1})\sigma_{\alpha_{2}} (\gamma_{2}) \sigma_{\alpha}(\gamma_{1}\gamma_{2})\inv,\text{ and}
    \\
\iota\bigl(h_{\alpha,\alpha'}(\gamma), r(\gamma)\bigr)
    &= \sigma_{\alpha}(\gamma)\sigma_{\alpha'} (\gamma\inv).
\end{align*}
Then the map of~\PropB\ (respectively \PropC, respectively \PropD) is smooth
if and only if $f_{\alpha,\alpha'}$ (respectively $g_{\alpha, \alpha_{1},
\alpha_{2}}$, respectively $h_{\alpha,\alpha'}$) is smooth. In particular,
\PropB\ (respectively \PropC,
respectively \PropD)
holds if and only if condition~\PropEB\ (respectively \PropEC,
respectively \PropD)
 below holds:
\begin{itemize}
    \item[\namedlabel{property:S infty
        T}{\normalfont(S$^{\infty}_{\mathbb{T}}$)}] For all
        $\alpha,\alpha'$, the map  $f_{\alpha,\alpha'}$ is smooth.
    \item[\namedlabel{property:M infty
        T}{\normalfont(M$^{\infty}_{\mathbb{T}}$)}] For all
        $\alpha,\alpha_{1},\alpha_{2}\in\mathfrak{A}$, the map $g_{\alpha,
        \alpha_{1}, \alpha_{2}}$ is smooth.
    \item[\namedlabel{property:I infty
        T}{\normalfont(I$^{\infty}_{\mathbb{T}}$)}] For all
        $\alpha,\alpha'$, the map $h_{\alpha,\alpha'}$ is smooth.
\end{itemize}
Since Condition~\PropE\ is already is given in terms of $\mbb{T}$-valued
functions, any reasonable definition of a
Condition~(U$^{\infty}_{\mathbb{T}}$) would coincide with~\PropE.
\end{remark}

\begin{remark}
Intuitively, each of the four conditions in Definition~\ref{dfn:theProps}
represents a property that the $\sigma_{\alpha}$ must satisfy if $\twist$ is
a Lie twist and the $\sigma_{\alpha}$ are all smooth. Firstly, \PropE\
corresponds to the requirement that $\iota$ is a diffeomorphism onto
$\pi\inv(G\z)$: it says that if we use $\iota$ to identify $\pi\inv(G\z)$
with $\mbb{T} \times G\z$ and hence sections $G\z \to \pi\inv(G\z)$ with
functions $G\z \to \mbb{T}$, then where they are supported on $G\z$, the
$\sigma_{\alpha}$ are smooth functions into $\mbb{T}$. The letter U stands
for ``units." With \PropE\ in hand, we can measure relative smoothness of
sections when the products of their values land in $\pi\inv(G\z)$, and this
motivates the remaining properties:

Condition~\PropB\ corresponds to compatibility of the sections
$\sigma_{\alpha}$: if $\sigma_{\alpha}$ and $\sigma_{\alpha'}$ are both
smooth, then, in the Lie groupoid $\twist$, their difference $\sigma_{\alpha}
\sigma_{\alpha'}\inv$ must be smooth where defined; the S stands for
``smooth." Similarly, in the situation of \PropC, if $\sigma_{\alpha_{1}},
\sigma_{\alpha_{2}}$ and $\sigma_{\alpha}$ are smooth, then $(\gamma_{1},
\gamma_{2}) \mapsto \sigma_{\alpha}(\gamma_{1}\gamma_{2})$ and
$(\gamma_{1},\gamma_{2}) \mapsto
\sigma_{\alpha_{1}}(\gamma_{1})\sigma_{\alpha_{2}}(\gamma_{2})$ are smooth
maps; the M stands for ``multiplication." Finally, if $\sigma_{\alpha}$ and
$\sigma_{\alpha'}$ are smooth, then in the situation of \PropD, $\gamma
\mapsto \sigma_{\alpha}(\gamma)$ and $\gamma\mapsto
\sigma_{\alpha'}(\gamma\inv)\inv$ define smooth maps and hence, using
smoothness of the multiplication in $\twist$, so is their product; the I
stands for ``inverses." As discussed in Remark~\ref{rmk:inversion
auto-smooth}, smoothness of inversion follows from smoothness of the other
operations, so we typically do not need to verify~\PropD independently; but
to make the connection between properties of Lie groupoids and elements of
associated $C^*$-algebras later, it is useful to name the condition
nevertheless.

We shall see later not only that Conditions~\PropE, \PropB~and~\PropC are
necessary, but also that they are sufficient in the sense that any such
family of sections determines a unique smooth structure on $\twist$, with
respect to which it is a Lie twist  over $G$ in the sense of
Definition~\ref{dfn:Lie twist}.
\end{remark}

Condition~\PropC\ looks a little unwieldy because the domain
$\big\{(\gamma_{1}, \gamma_{2}) \in U_{\alpha_{1}} \fpsr U_{\alpha_{2}} :
\gamma_{1}\gamma_{2} \in U_{\alpha}\big\}$ of the map in question is not
easily expressed in terms of the sets $U_{\alpha}$; the point is that the
inverse image in $G\comp$ of a given subset of $G$ under the multiplication
map is hard to picture. Importantly, though, since multiplication is
continuous on the fibred product $U_{\alpha_{1}} \fpsr U_{\alpha_{2}}$, the
domain of the map is an open subset of $U_{\alpha_{1}} \fpsr U_{\alpha_{2}}$
and hence of $G\comp$ (see the proof of Lemma~\ref{lem:PropC matching}
below). Typically we will be interested in the situation where
$\{U_{\alpha}\}_{\alpha \in \mathfrak{A}}$ is a base for the topology on $G$,
and in this case the condition can be rephrased more nicely in terms of the
$U_{\alpha}$ as follows.

\begin{lemma}\label{lem:PropC matching}
Suppose that $ \ses $ is a topological twist over a Lie groupoid $G$ and that
$\{U_{\alpha}\}_{\alpha\in\mathfrak{A}}$ is a collection of open subsets of
$G$. If a given family $\{\sigma_{\alpha} \colon U_{\alpha} \to \pi\inv
(U_{\alpha})\}_{\alpha \in \mathfrak{A}}$
    of continuous sections satisfies \PropC, then it also satisfies:
\begin{itemize}
    \item[\namedlabel{property:M infty B}{\normalfont(M$^{\infty}_B$)}] For
        all $\alpha, \alpha_{1}, \alpha_{2} \in \mathfrak{A}$, the map
        $U_{\alpha_{1}} \fpsr U_{\alpha_{2}} \to \mbb{T} \times G\z$,
        $(\gamma_{1},\gamma_{2})\mapsto \iota\inv \bigl(
        \sigma_{\alpha_{1}} (\gamma_{1})\sigma_{\alpha_{2}}
        (\gamma_{2})\sigma_{\alpha}(\gamma_{1}\gamma_{2})\inv \bigr)$, is
        smooth.
\end{itemize}
If $\{U_{\alpha}\}_{\alpha \in \mathfrak{A}}$ is a base for the topology of
$G$ and the $\sigma_{\alpha}$ satisfy \PropB~and~\PropCC, then they
satisfy~\PropC.
\end{lemma}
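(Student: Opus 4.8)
The plan is to treat the two implications separately. The forward implication, that \PropC\ implies \PropCC, is essentially a tautology: \PropCC\ is just \PropC\ specialised to those triples $(\alpha,\alpha_{1},\alpha_{2})$ for which $U_{\alpha_{1}}U_{\alpha_{2}}\subseteq U_{\alpha}$, since for such triples the constraint $\gamma_{1}\gamma_{2}\in U_{\alpha}$ is automatic and the domain of the \PropC-map is all of $U_{\alpha_{1}}\fpsr U_{\alpha_{2}}$. Along the way I would record, to justify the remark preceding the lemma, that the domain $\{(\gamma_{1},\gamma_{2})\in U_{\alpha_{1}}\fpsr U_{\alpha_{2}}:\gamma_{1}\gamma_{2}\in U_{\alpha}\}$ of the \PropC-map is the preimage of the open set $U_{\alpha}$ under the continuous restriction of multiplication to $U_{\alpha_{1}}\fpsr U_{\alpha_{2}}$, and hence is open in $G\comp$.

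For the reverse implication, the key idea is that smoothness is local and that the base lets me replace the given indices by indices of small basic sets on which \PropCC\ already applies. Fix $(\alpha,\alpha_{1},\alpha_{2})$ and a point $(\gamma_{1}^{0},\gamma_{2}^{0})$ in the domain of the \PropC-map, and set $\mu=\gamma_{1}^{0}\gamma_{2}^{0}\in U_{\alpha}$. First I would choose a basic set $U_{\beta}$ with $\mu\in U_{\beta}\subseteq U_{\alpha}$. Then, using continuity of multiplication and the fact that the sets $U_{\beta_{1}}\fpsr U_{\beta_{2}}$ with $U_{\beta_{1}}\subseteq U_{\alpha_{1}}$ and $U_{\beta_{2}}\subseteq U_{\alpha_{2}}$ form a base for the fibred product $U_{\alpha_{1}}\fpsr U_{\alpha_{2}}$, I would pick basic sets $U_{\beta_{1}}\subseteq U_{\alpha_{1}}$ and $U_{\beta_{2}}\subseteq U_{\alpha_{2}}$ with $(\gamma_{1}^{0},\gamma_{2}^{0})\in U_{\beta_{1}}\fpsr U_{\beta_{2}}$ and $U_{\beta_{1}}U_{\beta_{2}}\subseteq U_{\beta}$. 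The point of these choices is that the triple $(\beta,\beta_{1},\beta_{2})$ now satisfies $U_{\beta_{1}}U_{\beta_{2}}\subseteq U_{\beta}$, so \PropCC\ applies and tells me that $(\gamma_{1},\gamma_{2})\mapsto\iota\inv\bigl(\sigma_{\beta_{1}}(\gamma_{1})\sigma_{\beta_{2}}(\gamma_{2})\sigma_{\beta}(\gamma_{1}\gamma_{2})\inv\bigr)$ is smooth on the neighbourhood $U_{\beta_{1}}\fpsr U_{\beta_{2}}$ of $(\gamma_{1}^{0},\gamma_{2}^{0})$.

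The computational heart of the argument is to express the \PropC-difference for the original indices through the \PropCC-difference for the basic indices, modulo factors passing through $\iota$ that are smooth by \PropB. Using the functions $f_{\alpha_{1},\beta_{1}}$, $f_{\alpha_{2},\beta_{2}}$ and $f_{\beta,\alpha}$ from Remark~\ref{rmk:T-conditions}, I would write $\sigma_{\alpha_{i}}(\gamma_{i})=\iota\bigl(f_{\alpha_{i},\beta_{i}}(\gamma_{i}),r(\gamma_{i})\bigr)\sigma_{\beta_{i}}(\gamma_{i})$ and $\sigma_{\alpha}(\mu)\inv=\sigma_{\beta}(\mu)\inv\iota\bigl(f_{\beta,\alpha}(\mu),r(\mu)\bigr)$, substitute into $\sigma_{\alpha_{1}}(\gamma_{1})\sigma_{\alpha_{2}}(\gamma_{2})\sigma_{\alpha}(\gamma_{1}\gamma_{2})\inv$, and use the centrality relation (T3) to slide every $\iota$-factor to the common base point $r(\gamma_{1})=r(\mu)$. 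Since all resulting factors then lie over the single unit $r(\gamma_{1})$, on which $\iota$ restricts to a group isomorphism onto $\pi\inv(r(\gamma_{1}))$, applying $\iota\inv$ collapses the expression to the pointwise product in $\mbb{T}$ of $f_{\alpha_{1},\beta_{1}}(\gamma_{1})$, $f_{\alpha_{2},\beta_{2}}(\gamma_{2})$, the $\mbb{T}$-component of the \PropCC-map for $(\beta,\beta_{1},\beta_{2})$, and $f_{\beta,\alpha}(\gamma_{1}\gamma_{2})$, paired with base point $r(\gamma_{1})$. Each factor is smooth on $U_{\beta_{1}}\fpsr U_{\beta_{2}}$ --- the $f$'s by \PropB\ (composed, in the last case, with the smooth multiplication $G\comp\to G$), the \PropCC-term by the previous paragraph, and $r$ because $G$ is a Lie groupoid --- and multiplication in $\mbb{T}$ is smooth, so the \PropC-map is smooth near $(\gamma_{1}^{0},\gamma_{2}^{0})$. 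As the point was arbitrary, \PropC\ follows.

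I expect the main obstacle to be organisational rather than conceptual. The delicate points are checking that all the sections and transition functions $f_{\alpha_{i},\beta_{i}}$ and $f_{\beta,\alpha}$ are genuinely defined on the chosen neighbourhood --- which is exactly why I insist on the honest containments $U_{\beta_{i}}\subseteq U_{\alpha_{i}}$ and $U_{\beta}\subseteq U_{\alpha}$, rather than merely fibred-product containments --- and keeping scrupulous track of base points and of the left-to-right order of factors when applying (T3), so that the final application of $\iota\inv$ really does reduce to a product in $\mbb{T}$ over the single unit $r(\gamma_{1})$. No individual step is hard, but the bookkeeping with the central extension and the nested choices of basic sets is where errors would most easily creep in.
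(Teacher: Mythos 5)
Your proposal is correct and takes essentially the same approach as the paper: localise at a point of the domain using the base, apply \PropCC\ on small basic sets $U_{\beta_1}\fpsr U_{\beta_2}$, and correct back to the original indices with \PropB\ transition functions, using centrality of the $\mbb{T}$-action and smoothness of multiplication in $\mbb{T}\times G\z$. The only (harmless) difference is that the paper applies \PropCC\ directly to the triple $(\alpha,\beta_1,\beta_2)$ --- the containment $U_{\beta_1}U_{\beta_2}\subseteq U_{\alpha}$ already suffices --- so it never needs your intermediate basic set $U_\beta\subseteq U_\alpha$ or the third transition function $f_{\beta,\alpha}$, and gets by with two \PropB-corrections instead of three.
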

\begin{proof}
If $\alpha_{1}, \alpha_{2}, \alpha$ satisfy $U_{\alpha_{1}}
U_{\alpha_{2}}\subseteq U_{\alpha}$, then $\big\{(\gamma_{1}, \gamma_{2}) \in
U_{\alpha_{1}} \fpsr U_{\alpha_{2}} \colon \gamma_{1}\gamma_{2} \in
U_{\alpha}\big\} = U_{\alpha_{1}} \fpsr U_{\alpha_{2}}$. So \PropC\ implies
\PropCC.

Now suppose that $\{U_{\alpha}\}_{\alpha\in\mathfrak{A}}$ is a base for the
topology of $G$ and that the $\sigma_{\alpha}$ satisfy \PropB\ and \PropCC.
Fix $\alpha, \alpha_{1}, \alpha_{2}$ such that $U_{\alpha_{1}} U_{\alpha_{2}}
\cap U_{\alpha} \not= \emptyset$. Fix a point $(\gamma_{1}, \gamma_{2}) \in
U_{\alpha_{1}} \fpsr U_{\alpha_{2}}$ such that $\gamma_{1}\gamma_{2} \in
U_{\alpha}$. Since the multiplication map $M \colon G\comp \to G$ is
continuous, the set $M\inv(U_{\alpha})$ is open. The set $U_{\alpha_{1}}
\fpsr U_{\alpha_{2}}$ is open by definition of the topology on $G\comp$, so
the intersection
\[
    M\inv(U_{\alpha})\cap (U_{\alpha_{1}} \fpsr U_{\alpha_{2}}) = \big\{(\gamma_{1}, \gamma_{2}) \in U_{\alpha_{1}} \fpsr U_{\alpha_{2}} : \gamma_{1}\gamma_{2} \in U_{\alpha}\big\}
\]
is open. Since the $U_{\alpha}$ are a base for the topology, there exist
$\beta_{1}, \beta_{2} \in \mathfrak{A}$ such that $U_{\beta_{i}}\subseteq
U_{\alpha_{i}}$ and $(\gamma_{1}, \gamma_{2}) \in U_{\beta_{1}} \fpsr
U_{\beta_{2}} \subseteq M\inv(U_{\alpha}) \cap (U_{\alpha_{1}} \fpsr
U_{\alpha_{2}})$. In particular, $U_{\beta_{1}}  U_{\beta_{2}} \subseteq
U_{\alpha}$. By \PropB, for $i = 1,2$, the map $\omega_i \colon \eta \mapsto
\iota\inv(\sigma_{\alpha_i}(\eta)\sigma_{\beta_i}(\eta)\inv )$ is smooth from
$U_{\beta_i}$ to $\mbb{T}\times G\z $. By definition, this map satisfies
$\omega_i(\eta) \cdot \sigma_{\beta_i}(\eta) = \sigma_{\alpha_i}(\eta)$ for
all $\eta \in U_{\alpha'_i}$. By \PropCC, the map $(\eta_{1},\eta_{2})\mapsto
    \iota\inv \bigl(
        \sigma_{\beta_{1}} (\eta_{1})\sigma_{\beta_{2}} (\eta_{2})
        \sigma_{\alpha}(\eta_{1}\eta_{2})\inv
    \bigr)$
is smooth from $U_{\beta_{1}} \fpsr U_{\beta_{2}}$ to $\mbb{T} \times G\z$.
On this same domain, using centrality of the $\mbb{T}$-action, we see that
\begin{align*}
\iota\inv \bigl(
            \sigma_{\alpha_{1}} (\eta_{1})\sigma_{\alpha_{2}} (\eta_{2})
            \sigma_{\alpha}(\eta_{1}\eta_{2})\inv
        \bigr)
    &=  \iota\inv \bigl(
            (\omega_{1}(\eta_{1})\cdot\sigma_{\beta_{1}} (\eta_{1}))\,
            (\omega_{2}(\eta_{2})\cdot \sigma_{\beta_{2}} (\eta_{2}))\,
            \sigma_{\alpha}(\eta_{1}\eta_{2})\inv
        \bigr)\\
    &= \big(\omega_{1}(\eta_{1})\omega_{2}(\eta_{2}), r(\eta_{1})\big)\,
        \iota\inv \bigl(
            \sigma_{\beta_{1}} (\eta_{1})\sigma_{\beta_{2}} (\eta_{2})
            \sigma_{\alpha}(\eta_{1}\eta_{2})\inv
        \bigr).
\end{align*}
Since multiplication in $\mbb{T} \times G\z$ is smooth, it follows that
$(\eta_{1},\eta_{2}) \mapsto
    \iota\inv\bigl(
        \sigma_{\alpha_{1}} (\eta_{1})\sigma_{\alpha_{2}} (\eta_{2})
        \sigma_{\alpha}(\eta_{1}\eta_{2})\inv
    \bigr)$
is smooth on the open neighbourhood $U_{\beta_{1}} \fpsr U_{\beta_{2}}$ of
$(\gamma_{1}, \gamma_{2})$ in $M\inv(U_{\alpha}) \cap U_{\alpha_{1}} \fpsr
U_{\alpha_{2}}$. Since the point $(\gamma_{1}, \gamma_{2}) \in
M\inv(U_{\alpha}) \cap (U_{\alpha_{1}} \fpsr U_{\alpha_{2}})$ was arbitrary,
the result follows.
\end{proof}

\begin{lemma}[Motivation]\label{lem:motivation}
    Suppose that $ \sestriple $ is a Lie twist over a Lie groupoid $G$. Then
    there exists a base $\mathcal{B} = \{U_{\alpha} \}_{\alpha\in\mathfrak{A}}$ for the topology on $G$ and a family  $\{\sigma_{\alpha} \colon U_{\alpha} \to \pi\inv (U_{\alpha})\}_{\alpha \in \mathfrak{A}}$ of smooth sections that satisfies~\PropE, \PropB, \PropC~and~\PropD. If $G$ is \etale, then $\mathcal{B}$ can be chosen to consist of open bisections.
\end{lemma}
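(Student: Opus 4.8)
The plan is to exploit the smooth principal-bundle structure on $\twist$ to manufacture local smooth sections over a base of $G$, and then to read off each of \PropE--\PropD\ from smoothness of the groupoid operations together with the fact that $\iota$ is a diffeomorphism onto $\pi\inv(G\z)$.

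First I would note that the standing hypotheses are precisely those of Lemma~\ref{lem:smooth principal bundle} (and are in any case automatic for a Lie twist by Lemma~\ref{lem:free stuff about Lie twists}), so $\PB{\twist}{\pi}{G}$ is a smooth principal $\mbb{T}$-bundle. In particular $\pi$ admits a smooth section through a neighbourhood of every point of $G$; fix an open cover $\{V_i\}$ of $G$ with smooth sections $\tau_i \colon V_i \to \pi\inv(V_i)$. Let $\mathcal{B}$ be the collection of all members $U$ of a fixed base for the topology of $G$ (a base of open bisections, when $G$ is \etale) with $U \subseteq V_i$ for some $i$, and for each such $U$ fix one such $i$ and set $\sigma_U \coloneqq \tau_i|_U$, again a smooth section of $\pi$. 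Given open $O \subseteq G$ and $\gamma \in O$, choosing $V_i \ni \gamma$ and then a base element $U$ with $\gamma \in U \subseteq O \cap V_i$ shows that $\mathcal{B}$ is a base for $G$, consisting of open bisections when $G$ is \etale, and that each associated section is smooth. Relabelling gives $\mathcal{B} = \{U_\alpha\}_{\alpha \in \mathfrak{A}}$ with smooth sections $\{\sigma_\alpha\}_{\alpha}$.

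It then remains to verify the four conditions, all by the same mechanism: in each case the relevant product of section values is composable in $\twist$ and projects under $\pi$ into $G\z$, so it lands in $\pi\inv(G\z)$, where $\iota\inv$ is defined and smooth (as $\iota$ is a diffeomorphism onto $\pi\inv(G\z)$). For \PropE, for $x \in U_\alpha \cap G\z$ we have $\pi(\sigma_\alpha(x)) = x$, whence $\iota\inv(\sigma_\alpha(x)) = (k_\alpha(x),x)$ with $k_\alpha$ the smooth $\mbb{T}$-component of $\iota\inv \circ \sigma_\alpha|_{U_\alpha \cap G\z}$. For \PropB, the factors of $\sigma_\alpha(\gamma)\sigma_{\alpha'}(\gamma)\inv$ project to $\gamma$ and $\gamma\inv$ and have matching source/range units, so the product is defined and projects to $r(\gamma) \in G\z$; since multiplication and inversion in the Lie groupoid $\twist$ are smooth and the $\sigma_\alpha$ are smooth, $\gamma \mapsto \iota\inv(\sigma_\alpha(\gamma)\sigma_{\alpha'}(\gamma)\inv)$ is a composite of smooth maps. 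Conditions \PropC\ and \PropD\ follow identically: the domain in \PropC\ is the open subset $(U_{\alpha_1}\fpsr U_{\alpha_2}) \cap M\inv(U_\alpha)$ of the manifold $G\comp$, on which the projections $\mathrm{pr}_i$, the multiplication $M$ and inversion of $G$, the operations of $\twist$, and $\iota\inv$ are all smooth, and the triple (respectively pair) product projects into $G\z$; composing yields the stated smoothness.

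The substance here is organisational rather than hard. The only genuine checks are that each product under consideration is composable in $\twist$ and lies in $\pi\inv(G\z)$, which I would confirm by applying the groupoid homomorphism $\pi$ and using that $\pi$ restricts to a homeomorphism $\twist\z \to G\z$; and that the domain in \PropC\ is a bona fide open submanifold of $G\comp$. The one point warranting care is arranging that $\mathcal{B}$ is \emph{simultaneously} a base for the topology, consists of open bisections in the \etale\ case, and carries genuinely smooth (not merely continuous) sections; this is exactly what the refinement of a cover by smooth trivialising neighbourhoods above accomplishes. Everything else is a routine chase of smooth maps through the compositions appearing in Definition~\ref{dfn:theProps}.
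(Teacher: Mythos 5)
Your proposal is correct and follows essentially the same route as the paper: both extract local smooth sections from the smooth principal $\mbb{T}$-bundle structure, restrict them to a refinement of a base (of open bisections in the \'etale\ case), and obtain \PropE--\PropD\ by composing the smooth groupoid operations of $\twist$ with the diffeomorphism $\iota\inv$. The only cosmetic difference is that you verify \PropC\ directly on the open subset $(U_{\alpha_{1}} \fpsr U_{\alpha_{2}}) \cap M\inv(U_{\alpha})$ of $G\comp$, whereas the paper checks the simpler condition \PropCC\ (where $U_{\alpha_{1}}U_{\alpha_{2}} \subseteq U_{\alpha_{3}}$) and upgrades it to \PropC\ via Lemma~\ref{lem:PropC matching}; both are valid and of comparable length.
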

\begin{proof}
    By  Condition~\ref{item:LT:principal bundle}, $\PB{\twist}{\pi}{G}$ is a smooth
    principal bundle. Therefore, for every $\gamma\in G$, there exists a neighbourhood $V_{\gamma}$ and a
    smooth map $\psi_{\gamma}\colon V_{\gamma}\to \twist$ such that $\pi\circ \psi_{\gamma}=\mathrm{id}_{V_{\gamma}}$.
    Fix a base $\mathcal{B}_{0}$ for the topology on $G$ (to prove the final statement when $G$ is \etale, take $\mathcal{B}_{0}$ to be the collection of all open bisections of $G$). Let
    \[
    \mathfrak{A} \coloneqq \{
        (W, \gamma) : W \in \mathcal{B}_{0}\text{ and } W \subseteq V_\gamma
    \}
    \subseteq  \mathcal{B}_{0}\times G,
    \]
    and for each $(W,  \gamma) \in \mathfrak{A}$ let $U_{(W,  \gamma)} \coloneqq W$.
    If $U$ is any open subset of $G$ and $ \gamma\in U$, then since $\mathcal{B}_{0}$ is a base, there exists  $W\in \mathcal{B}_{0}$ with $ \gamma\in W\subseteq V_{ \gamma}\cap U$, so $W = U_{(W, \gamma)} $ is an element of $\mathcal{B}\coloneqq \{U_{\alpha}\}_{\alpha\in\mathfrak{A}}$. This proves that $\mathcal{B}$ is also a base for the topology of $G$.

    By definition of $\mathcal{B}$, we can choose a function $\Gamma \colon \mathfrak{A} \to G$ such that $U_{\alpha} \subseteq V_{\Gamma (\alpha)}$ for all $\alpha \in \mathfrak{A}$.
    For each $\alpha \in \mathfrak{A}$, define $\sigma_{\alpha}\coloneqq \psi_{\Gamma (\alpha)}|_{U_{\alpha}}$;
    by construction, $\sigma_{\alpha}$ is a smooth section of $\pi$.
    Since $G$ and $\twist$ are Lie groupoids, multiplication and inversion on $G$ and $\twist$ are smooth. Since each $\psi_{ \gamma}$ is smooth and each $U_{\alpha}$ is open, and using Lemma~\ref{lem:PropC matching}, it follows that for all $\alpha,\alpha', \alpha_i \in \mathfrak{A}$ with $U_{\alpha_{1}} U_{\alpha_{2}}\subseteq U_{\alpha_{3}}$, the following maps are smooth:
    \begin{align*}
         U_{\alpha}\cap U_{\alpha'} &\to \twist, & \eta &\mapsto \sigma_{\alpha}(\eta)\sigma_{\alpha'}(\eta)\inv,
         \\
         U_{\alpha_{1}} \fpsr U_{\alpha_{2}} &\to \twist, & (\eta_{1},\eta_{2})&\mapsto
          \sigma_{\alpha_{1}}(\eta_{1})\sigma_{\alpha_{2}}(\eta_{2})\sigma_{\alpha_{3}}(\eta_{1}\eta_{2})\inv
         ,
         \text{ and }
         \\
          U_{\alpha}\cap U_{\alpha'}\inv
         &\to \twist, & \eta&\mapsto
          \sigma_{\alpha}(\eta) \sigma_{\alpha'}(\eta\inv)
         .
    \end{align*}
    Since $\iota\colon \mbb{T} \times G\z \to \pi\inv (G\z)$ is a diffeomorphism
    (Condition~\ref{item:LT:iota diffeo}), composing any
    of the above smooth maps with $\iota\inv\colon \pi\inv (G\z)\to \mbb{T} \times G\z$ yields a smooth map, giving \PropB, \PropC, and~\PropD. Also, again since $\iota\colon \mbb{T} \times G\z \to \pi\inv (G\z)$ is a diffeomorphism, for each $\alpha \in \mathfrak{A}$ such that $U_{\alpha} \cap G\z \not= \emptyset$, the map $\iota\inv \circ \sigma_{\alpha}|_{U_{\alpha} \cap G\z}$ is smooth, and so composing with the smooth projection map $\mbb{T} \times G\z \to \mbb{T}$ gives a smooth map $k_{\alpha}$ as required by \PropE.
\end{proof}

In Theorem~\ref{thm:E is mfd}, we prove that, given a {\em topological} twist
$\ses$ over a Lie groupoid $G$ and a suitably large collection of sections
that satisfies~\PropB, then we can lift the smooth structure of $G$ to one on
$\twist$. In order for us to be able to replace the collection with a
slightly different family of sections, we require the following lemma.

\begin{lemma}\label{lem:refinement}
    Suppose that $ \sestriple $ is a topological twist over a Lie groupoid $G$, that
$\{(U_{\alpha},\varphi_{\alpha})\}_{\alpha\in\mathfrak{A}}$ is an atlas of
$G$, and that $\{\sigma_{\alpha} \colon U_{\alpha} \to \pi\inv
(U_{\alpha})\}_{\alpha\in\mathfrak{A}}$ is a family of continuous sections.
Suppose further that  we are given an open {\em refinement}
$\{V_\beta\}_{\beta\in\mathfrak{B}}$ of $\{U_{\alpha}\}_{\alpha}$, i.e., an
open cover of $G$ with a map $F\colon \mathfrak{B}\to\mathfrak{A}$ such that
for all $\beta\in\mathfrak{B}$, we have $V_\beta\subseteq U_{F(\beta)}$.
    For each $\beta\in\mathfrak{B}$, let $\kappa_\beta \coloneqq  \sigma_{F(\beta)}|_{V_\beta} \colon V_\beta \to \pi\inv (V_\beta)$.
    \begin{enumerate}[label=\textup{(\arabic*)}]
        \item\label{item:refinments inherit PropBCD} If
            $\{(U_{\alpha},\sigma_{\alpha})\}_{\alpha}$ satisfies \PropE\
            (respectively \PropB, \PropC, \PropD), then so does
            $\{(V_\beta,\kappa_{\beta})\}_{\beta}$.
        \item\label{item:refinements inherit smoothness} Suppose that
            $\twist$ is a manifold and that   $\sigma_{\alpha}$ is smooth
            for each $\alpha$. Then  $\kappa_{\beta}$ is smooth for each
            $\beta$.
        \item\label{item:refinements preserve smoothness} Suppose that
            $\{(U_{\alpha},\sigma_{\alpha})\}_{\alpha}$ satisfies \PropB.
            Suppose further that $\twist$ is a manifold and that, with
            respect to that smooth structure, its multiplication is smooth,
            the map $\iota\colon \mbb{T} \times G\z \to \pi\inv (G\z)$ is a
            diffeomorphism, and $\kappa_{\beta}$ is smooth for each
            $\beta$. Then $\sigma_{\alpha}$ is smooth for each $\alpha$.
    \end{enumerate}
\end{lemma}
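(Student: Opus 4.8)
The plan is to treat the three parts in increasing order of difficulty; parts~\ref{item:refinments inherit PropBCD} and~\ref{item:refinements inherit smoothness} are essentially formal, and the genuine content sits in part~\ref{item:refinements preserve smoothness}.

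For part~\ref{item:refinments inherit PropBCD} I would simply observe that each of the four maps attached to the family $\{(V_\beta,\kappa_\beta)\}_\beta$ in Definition~\ref{dfn:theProps} is literally the restriction, to an open subset, of the corresponding map for $\{(U_{\alpha},\sigma_{\alpha})\}_\alpha$. Since $\kappa_\beta=\sigma_{F(\beta)}|_{V_\beta}$ with $V_\beta\subseteq U_{F(\beta)}$, the map $\gamma\mapsto\iota\inv(\kappa_\beta(\gamma)\kappa_{\beta'}(\gamma)\inv)$ on $V_\beta\cap V_{\beta'}$ is the restriction of the \PropB-map for the pair $(F(\beta),F(\beta'))$ to the open set $V_\beta\cap V_{\beta'}\subseteq U_{F(\beta)}\cap U_{F(\beta')}$; as restrictions of smooth maps to open subsets are smooth, \PropB\ passes to $\{\kappa_\beta\}$. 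The cases \PropE, \PropC, \PropD\ are identical (with \PropE\ furnished by $k_{F(\beta)}|_{V_\beta\cap G\z}$), and domains match because $V_\beta\fpsr V_{\beta'}\subseteq U_{F(\beta)}\fpsr U_{F(\beta')}$ and $V_\beta\cap V_{\beta'}\inv\subseteq U_{F(\beta)}\cap U_{F(\beta')}\inv$. Part~\ref{item:refinements inherit smoothness} is the same observation applied directly: $\kappa_\beta=\sigma_{F(\beta)}|_{V_\beta}$ is the restriction of a smooth map to an open set, hence smooth.

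For part~\ref{item:refinements preserve smoothness} I would argue locally, since smoothness of a map between manifolds is a local property. Fix $\alpha\in\mathfrak{A}$ and a point $\gamma_0\in U_{\alpha}$; as $\{V_\beta\}_\beta$ covers $G$, choose $\beta$ with $\gamma_0\in V_\beta$, so that $W\coloneqq V_\beta\cap U_{\alpha}$ is an open neighbourhood of $\gamma_0$ contained in $U_{\alpha}\cap U_{F(\beta)}$. By \PropB\ (in the reformulation of Remark~\ref{rmk:T-conditions}), the map $\omega\colon\gamma\mapsto\iota\inv(\sigma_{\alpha}(\gamma)\sigma_{F(\beta)}(\gamma)\inv)$ is smooth from $U_{\alpha}\cap U_{F(\beta)}$ into $\mbb{T}\times G\z$; here $\sigma_{\alpha}(\gamma)\sigma_{F(\beta)}(\gamma)\inv$ lands in $\pi\inv(G\z)$ because $\pi$ is a homomorphism and both sections project to $\gamma$, so $\iota\inv$ makes sense. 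A short computation with the groupoid unit and inverse axioms, using that the source units of $\sigma_{\alpha}(\gamma)$ and $\sigma_{F(\beta)}(\gamma)$ coincide (both lie over $s(\gamma)$), yields $\sigma_{\alpha}(\gamma)=\iota(\omega(\gamma))\,\kappa_\beta(\gamma)$ for all $\gamma\in W$.

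It then remains to see that the right-hand side is smooth on $W$, and here is the heart of the argument: I would factor $\sigma_{\alpha}|_W$ as the composite of $\gamma\mapsto(\iota(\omega(\gamma)),\kappa_\beta(\gamma))$, mapping $W$ into the manifold $\twist\comp$, followed by multiplication $\twist\comp\to\twist$. The first map is smooth because $\omega$ is smooth, $\iota$ is smooth into $\twist$ (being a diffeomorphism onto $\pi\inv(G\z)$ by hypothesis), and $\kappa_\beta$ is smooth by hypothesis; the second is smooth because multiplication on $\twist$ is smooth by hypothesis. The only bookkeeping is to confirm the pair genuinely lands in $\twist\comp$, which holds since $\iota(\omega(\gamma))$ lies over the unit $r(\gamma)$ and is therefore composable with $\kappa_\beta(\gamma)=\sigma_{F(\beta)}(\gamma)$. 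Hence $\sigma_{\alpha}$ is smooth on each such $W$, and these sets cover $U_{\alpha}$, so $\sigma_{\alpha}$ is smooth. The main obstacle is exactly this part: one must resist differentiating $\sigma_{\alpha}$ directly and instead recover its smoothness by pushing the \PropB-difference into $\pi\inv(G\z)$, where the diffeomorphism $\iota$ lets smoothness be measured, and then reassembling via the smooth multiplication on $\twist$.
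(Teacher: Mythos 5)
Your proposal is correct and follows essentially the same route as the paper: parts \ref{item:refinments inherit PropBCD} and \ref{item:refinements inherit smoothness} by restriction of smooth maps to open subsets, and part \ref{item:refinements preserve smoothness} by covering $U_{\alpha}$ with the sets $U_{\alpha}\cap V_\beta$, using \PropB\ together with the diffeomorphism $\iota$ to see that $\eta\mapsto\sigma_{\alpha}(\eta)\kappa_{\beta}(\eta)\inv$ is smooth into $\twist$, and then recovering $\sigma_{\alpha}=(\sigma_{\alpha}\kappa_{\beta}\inv)\,\kappa_{\beta}$ via the smooth multiplication. The paper's proof is the same factorisation, written without spelling out the intermediate map into $\twist\comp$ that you make explicit.
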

\begin{proof}
    \ref{item:refinments inherit PropBCD} Fix $\beta, \beta', \beta_i \in \mathfrak{B}$ with $V_{\beta_{1}} V_{\beta_{2}} \subseteq V_{\beta_3}$ and let $\alpha = F(\beta)$, $\alpha' = F(\beta')$ and $\alpha_i = F(\beta_i)$. Then the maps
    \begin{align*}
         V_\beta \cap G\z &\to \mbb{T}, & x &\mapsto k_{\alpha}(x),
         \\
         V_{\beta}\cap V_{\beta'} &\to \twist, & h &\mapsto \kappa_{\beta}(h)\kappa_{\beta'}(h)\inv,
         \\
         V_{\beta_{1}} \fpsr V_{\beta_{2}} &\to \twist, & (h_{1},h_{2})&\mapsto
            \kappa_{\beta_{1}}(h_{1})\kappa_{\beta_{2}}(h_{2})
            \kappa_{\beta_{3}}(h_{1}h_{2})\inv,
         \text{ and }
         \\
         V_{\beta}\cap V_{\beta'}\inv &\to \twist, & h&
         \mapsto
           \kappa_{\beta}(h) \kappa_{\beta'}(h\inv)
    \end{align*}
    are the restrictions of the corresponding maps $U_{\alpha}\cap G\z\to\mbb{T}$, $U_{\alpha}\cap U_{\alpha'} \to \twist$, $U_{\alpha_{1}} \fpsr U_{\alpha_{2}} \to \twist$, and $U_{\alpha}\cap U_{\alpha'}\inv \to \twist$ determined by $\{\sigma_{\alpha}\}_{\alpha}$, which are all smooth by assumption. Since multiplication in $G$ is open (Exercise~1.2.6 and its solution (p.~353) of \cite{Wil2019}) and since restriction of a smooth map to an open set is again smooth, we obtain~\ref{item:refinments inherit PropBCD}.

    \ref{item:refinements inherit smoothness} Take $\beta\in \mathfrak{B}$ and let $\alpha=F(\beta)$. Then $\sigma_{\alpha}\colon U_{\alpha}\to \pi\inv (U_{\alpha})$ is smooth by assumption. Since $V_\beta$ is an open subset of $U_{\alpha}$, it follows that $\sigma_{\alpha}|_{V_\beta}\colon V_\beta\to \pi\inv (U_{\alpha})$ is likewise smooth. As $\pi\inv (V_\beta)$ is an embedded
    submanifold of $\pi\inv (U_{\alpha})$ and as the image of $\sigma_{\alpha}|_{V_\beta}$ is contained in $\pi\inv (V_\beta)$, this implies that $\kappa_\beta =  \sigma_{\alpha}|_{V_\beta} \colon V_\beta \to \pi\inv (V_\beta)$ is smooth.

    \ref{item:refinements preserve smoothness} Pick $\alpha\in \mathfrak{A}$. To see that $\sigma_{\alpha}$ is smooth, fix $\gamma\in U_{\alpha}$; it suffices to check that $\gamma$ has a neighbourhood $U$ such that $\sigma_{\alpha}|_{U}$ is smooth.
    Since $\{V_\beta\}_{\beta}$ is a cover of $G$, there exists $\beta\in\mathfrak{B}$ such that $\gamma\in U_{\alpha}\cap V_\beta$. Let $U \coloneqq U_{\alpha} \cap V_\beta$. By definition of $F$, we have $\gamma \in U \subseteq U_{\alpha}\cap U_{F(\beta)}$. By \PropB\ applied to $U_{\alpha}, U_{F(\beta)}$ and by definition of $\kappa_{\beta}$, the map
    $U\to \mbb{T} \times G\z$ given by $ \eta\mapsto \iota\inv (\sigma_{\alpha}(\eta)\kappa_{\beta}(\eta)\inv )$ is smooth. Since $\iota$ is a diffeomorphism onto an open subset of $\twist$, it follows that the map $U\to \twist$ given by $ \eta\mapsto \sigma_{\alpha}(\eta)\kappa_{\beta}(\eta)\inv$ is smooth. Now, $\kappa_{\beta}$ and  multiplication on $\twist$ are smooth by assumption. Hence the map
    $\sigma_{\alpha}|_{U}\colon U\to \twist$ given by $ \eta\mapsto (\sigma_{\alpha}(\eta)\kappa_{\beta}(\eta)\inv)\kappa_{\beta}(\eta)$
    is also smooth. Since $\gamma$ was arbitrary, this proves that $\sigma_{\alpha}$ is smooth.
\end{proof}

\begin{lemma}\label{lem:phase change}
Suppose that $ \sestriple $ is a topological twist over a Lie groupoid $G$,
that $\twist$ is a manifold, and that the $\mbb{T}$-action on $\twist$ is
smooth. Suppose that $\{\sigma_{\alpha} \colon U_{\alpha} \to \pi\inv
(U_{\alpha})\}_{\alpha\in\mathfrak{A}}$ is a family of continuous sections
and that $\mathfrak{B}$ is a family of triples
\[
    \mathfrak{B} \subseteq \big\{(\alpha, U, \omega) : \alpha \in \mathfrak{A}, U\subseteq U_{\alpha}\text{ is open, and } \omega \in C^{\infty}(U, \mbb{T})\big\}
\]
such that $(\alpha, U_{\alpha}, 1_{U_{\alpha}}) \in \mathfrak{B}$ for all
$\alpha \in \mathfrak{A}$. Define $F \colon \mathfrak{B} \to \mathfrak{A}$ by
$F(\alpha,U,\omega) = \alpha$.  For each $\beta = (\alpha, U, \omega) \in
\mathfrak{B}$, let $V_\beta \coloneqq U$, let $\omega_\beta \coloneqq
\omega$, and define $\kappa_\beta \colon V_\beta \to \twist$ by
$\kappa_\beta(\gamma) = \omega(\gamma) \cdot \sigma_{\alpha}(\gamma)$.
    \begin{enumerate}[label=\textup{(\arabic*)}]
        \item\label{item:phase changes inherit PropBCD} If
            $\{(U_{\alpha},\sigma_{\alpha})\}_{\alpha}$ satisfies \PropE\
            (respectively \PropB,  \PropC, \PropD), then so does
            $\{(V_\beta,\kappa_{\beta})\}_{\beta}$.
        \item\label{item:phase changes inherit smoothness} For $\alpha \in
            \mathfrak{A}$, the section $\sigma_{\alpha}$ is smooth if and
            only if the sections $\{\kappa_\beta : F(\beta) = \alpha\}$ are
            all smooth.
    \end{enumerate}
\end{lemma}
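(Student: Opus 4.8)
The plan is to reduce everything to a single principle: multiplying a section by a smooth $\mbb{T}$-valued function (via the central $\mbb{T}$-action) changes every relevant product of sections only by a smooth $\mbb{T}$-valued factor, and applying $\iota\inv$ to such a scaled product simply multiplies the $\mbb{T}$-coordinate by that factor while leaving the $G\z$-coordinate untouched. Both parts of the lemma follow once this principle is made precise, using that $\twist$ carries a smooth structure with smooth $\mbb{T}$-action.

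First I would record the two algebraic identities that do all the work. For composable $e,f\in\twist$ and $z,w\in\mbb{T}$,
\[
    (z\cdot e)(w\cdot f) = (zw)\cdot(ef)
    \qquad\text{and}\qquad
    (z\cdot e)\inv = \bar z\cdot e\inv .
\]
Both are direct consequences of $\iota$ being a groupoid homomorphism together with centrality~(T3), $\iota(z,r(e))\,e = e\,\iota(z,s(e))$, which lets one slide the central factors $\iota(z,\cdot)$ freely across products and inverses. I would also record the transparency of $\iota\inv$: if $e=\iota(w,x)\in\pi\inv(G\z)$, then since $\iota$ is a homomorphism $z\cdot e=\iota(z,x)\iota(w,x)=\iota(zw,x)$, so $\iota\inv(z\cdot e)=(zw,x)$. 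That is, acting by $z$ multiplies the $\mbb{T}$-coordinate of $\iota\inv$ by $z$ and fixes the $G\z$-coordinate.

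With these in hand, part~\ref{item:phase changes inherit PropBCD} is a short computation for each of the four conditions, most cleanly phrased through the $\mbb{T}$-valued functions $f,g,h$ of Remark~\ref{rmk:T-conditions} applied to the family $\{(V_\beta,\kappa_\beta)\}_\beta$. Writing $\beta=(\alpha,U,\omega)$, $\beta'=(\alpha',U',\omega')$ and using $F(\beta)=\alpha$, the identities above give, for \PropB,
\[
    \kappa_\beta(\gamma)\kappa_{\beta'}(\gamma)\inv
    = \bigl(\omega(\gamma)\overline{\omega'(\gamma)}\bigr)\cdot\bigl(\sigma_{\alpha}(\gamma)\sigma_{\alpha'}(\gamma)\inv\bigr)
    = \iota\bigl(\omega(\gamma)\overline{\omega'(\gamma)}\,f_{\alpha,\alpha'}(\gamma),\,r(\gamma)\bigr),
\]
so the function measuring \PropB\ for the $\kappa$-family is $\gamma\mapsto\omega(\gamma)\overline{\omega'(\gamma)}f_{\alpha,\alpha'}(\gamma)$, a product of smooth maps. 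The same manipulation produces the factor $(\gamma_{1},\gamma_{2})\mapsto\omega_1(\gamma_1)\omega_2(\gamma_2)\overline{\omega(\gamma_1\gamma_2)}\,g_{\alpha,\alpha_1,\alpha_2}(\gamma_1,\gamma_2)$ for \PropC\ and $\gamma\mapsto\omega(\gamma)\omega'(\gamma\inv)h_{\alpha,\alpha'}(\gamma)$ for \PropD; for \PropE\ one uses $\sigma_\alpha(x)=\iota(k_\alpha(x),x)$ on $G\z$ to get $\kappa_\beta(x)=\iota(\omega(x)k_\alpha(x),x)$, giving the factor $\omega\,k_\alpha$. In each case the domain of the $\kappa$-combination is contained in that of the corresponding $\sigma$-combination, because $V_\beta\subseteq U_{F(\beta)}$; hence the relevant $f,g,h,k$ are defined and smooth there by hypothesis, the $\omega_\beta$ are smooth by the definition of $\mathfrak{B}$, and Remark~\ref{rmk:T-conditions} yields the asserted property for $\{(V_\beta,\kappa_\beta)\}_\beta$.

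For part~\ref{item:phase changes inherit smoothness}, the forward direction is a composition of smooth maps: if $\sigma_\alpha$ is smooth and $\beta=(\alpha,U,\omega)$, then $\gamma\mapsto(\omega(\gamma),\sigma_\alpha(\gamma))$ is smooth from the open set $U\subseteq U_\alpha$ into $\mbb{T}\times\twist$, and $\kappa_\beta$ is its composite with the smooth action $\mbb{T}\times\twist\to\twist$. The reverse direction is immediate from the standing requirement that $(\alpha,U_\alpha,1_{U_\alpha})\in\mathfrak{B}$: for this $\beta$ the constant function $1$ acts trivially, so $\kappa_\beta=\sigma_\alpha$, and smoothness of all $\kappa_\beta$ with $F(\beta)=\alpha$ forces $\sigma_\alpha$ to be smooth. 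The only place that requires real care is part~\ref{item:phase changes inherit PropBCD}: the calculations are routine once the action identities are in place, but one must track which unit each product lies over and keep the identification $\twist\z\cong G\z$ straight, so that the ``$\iota\inv$ multiplies the $\mbb{T}$-coordinate'' principle is applied to genuinely unit-fibred elements of $\pi\inv(G\z)$.
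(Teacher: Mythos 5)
Your proof is correct and follows essentially the same route as the paper's: part~(1) by showing each $\kappa$-combination equals the corresponding $\sigma$-combination multiplied (via the central action and the identity $z\cdot\iota(w,x)=\iota(zw,x)$) by a smooth $\mbb{T}$-valued factor built from the $\omega$'s, and part~(2) forward by composing $\sigma_{\alpha}$ and $\omega$ with the smooth action, reverse by noting that $\sigma_{\alpha}$ itself occurs as $\kappa_{(\alpha,U_{\alpha},1_{U_{\alpha}})}$. The only cosmetic difference is that you route the computations through the $\mbb{T}$-valued functions of Remark~\ref{rmk:T-conditions} rather than through $\iota\inv$ and multiplication in $\mbb{T}\times G\z$ directly, which the paper's remark shows is equivalent.
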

\begin{proof}
\ref{item:phase changes inherit PropBCD} For \PropE, fix $\beta = (\alpha, U,
\omega)\in \mathfrak{B}$. Since $\{(U_{\alpha},\sigma_{\alpha})\}_{\alpha}$
satisfies \PropE, there exists a smooth function $k_{\alpha} \colon
U_{\alpha} \cap G\z \to \mbb{T}$ such that $\sigma_{\alpha}(x) =
\iota(k_{\alpha}(x), x)$ for each $x \in U_{\alpha} \cap G\z$. Since
$k_{\alpha}$ is smooth, so is its restriction $l_\beta$ to the open set
$V_\beta \cap G\z$. Since $\omega_\beta$ is smooth, so is its restriction to
$V_\beta \cap G\z$. Thus the pointwise prodct $\omega_\beta l_\beta$ is
smooth. We have $\kappa_\beta(x) = \iota(\omega_\beta(x) l_\beta(x), x)$ by
definition of $\kappa_\beta$.

For \PropB, fix $\beta,\beta' \in \mathfrak{B}$, and let $\alpha = F(\beta)$
and $\alpha' = F(\beta')$. For $\gamma \in V_\beta \cap V_{\beta'} \subseteq
U_{F(\beta)} \cap U_{F(\beta')}$, we have
\[
\iota\inv(\kappa_\beta(\gamma)\kappa_{\beta'}(\gamma)\inv )
    = \iota\inv\Bigl(
        \omega_\beta(\gamma)\cdot\sigma_{\alpha}(\gamma)\,
        \bigl(\omega_{\beta'}\cdot\sigma_{\alpha'}(\gamma)\bigr)\inv
    \Bigr)
    = (\omega_\beta(\gamma)\overline{\omega_{\beta'}(\gamma)}, r(\gamma))\,
    \iota\inv \big(\sigma_{\alpha}(\gamma)\sigma_{\alpha'}(\gamma)\inv \big)
\]
because the $\mbb{T}$-action is central and $\iota$ intertwines the action of
$\mbb{T}$ on $\mbb{T} \times G\z$ by multiplication in the second coordinate
with the $\mbb{T}$-action on $\twist$. Since $\omega_\beta$ and
$\omega_{\beta'}$ are smooth, so is their product, and since
$\{(U_{\alpha},\sigma_{\alpha})\}_{\alpha}$ satisfies \PropB, the map $\gamma
\mapsto \sigma_{\alpha}(\gamma)\sigma_{\alpha'}(\gamma)\inv$ is smooth on the
open subset $V_{\beta} \cap V_{\beta'}$ of $U_{\alpha} \cap U_{\alpha'}$.
Since multiplication in $\mbb{T} \times G\z$ is smooth, it follows that
$\gamma \mapsto \iota\inv(\kappa_\beta(\gamma)\kappa_{\beta'}(\gamma)\inv )$
is smooth.

For \PropC, fix $\beta_i \in \mathfrak{B}$ such that
$V_{\beta_{1}}V_{\beta_{2}} \cap V_{\beta_3} \not= \emptyset$, and let
$\alpha_i \coloneqq F(\beta_i)$ for each $i$. Fix $\gamma_i \in V_{\beta_i}$
such that $\gamma_{1}\gamma_{2} = \gamma_3$ as above. Since the
$\mbb{T}$-action is central, and since $\iota$ intertwines the action of
$\mbb{T}$ on $G\z \times \mbb{T}$ (by multiplication in the second
coordinate) with the $\mbb{T}$ action on $\twist$, we have
\[
    \iota\inv \big(\kappa_{\beta_{1}}(\gamma_{1})\kappa_{\beta_{2}}(\gamma_{2})\kappa_{\beta_3} (\gamma_3)\inv \big)
    = \big(\omega_{\beta_{1}}(\gamma_{1})\omega_{\beta_{2}}(\gamma_{2})\overline{\omega_{\beta_3} (\gamma_{1}\gamma_{2})}, s(\gamma_{2})\big)\,
        \iota\inv \big(\sigma_{\alpha_{1}}(\gamma_{1})\sigma_{\alpha_{2}}(\gamma_{2})\sigma_{\alpha_3}(\gamma_3)\inv \big).
\]
Since multiplication in $G$ is smooth and $\omega_{\beta_3}$ is smooth, the
map $(\gamma_{1}, \gamma_{2}) \mapsto
\overline{\omega_{\beta_3}(\gamma_{1}\gamma_{2})}$ is smooth on the open
subset $V_{\beta_{1}} \fpsr V_{\beta_{2}}$ of $U_{\alpha_{1}} \fpsr
U_{\alpha_{2}}$. Since $\omega_{\beta_{1}}$ and $\omega_{\beta_{2}}$ are also
smooth, it follows that $(\gamma_{1}, \gamma_{2})
\mapsto\omega_{\beta_{1}}(\gamma_{1})\omega_{\beta_{2}}(\gamma_{2})
\overline{\omega_{\beta_3}(\gamma_{1}\gamma_{2})}$ is smooth. Similarly, that
$\{(U_{\alpha},\sigma_{\alpha})\}_{\alpha}$ satisfies \PropC\ ensures that
the map $(\gamma_{1},\gamma_{2}) \mapsto
    \iota\inv \big(
        \sigma_{\alpha_{1}}(\gamma_{1})
        \sigma_{\alpha_{2}}(\gamma_{2})
        \sigma_{\alpha_3}(\gamma_3)\inv
\big)$ is smooth on the open subset of $V_{\beta_{1}} \fpsr V_{\beta_{2}}$
where it is defined, and so \PropC\ again follows from smoothness of
multiplication in $\mbb{T} \times G\z$.

For \PropD, fix $\beta,\beta' \in \mathfrak{B}$ such that $V_\beta\inv \cap
V_{\beta'} \not= \emptyset$, and put $\alpha = F(\beta)$ and $\alpha' =
F(\beta')$. Then $V_{\beta} \cap V_{\beta'}\inv  \subseteq U_{\alpha} \cap
U_{\alpha'}\inv$ so that the latter is nonempty. For $\gamma \in V_{\beta}
\cap V_{\beta'}\inv$,
\[
    \iota\inv(\kappa_{\beta}(\gamma)\kappa_{\beta'}(\gamma\inv ))
        = \big(\omega_{\beta}(\gamma)\omega_{\beta'}(\gamma\inv )\big)\,
            \iota\inv \big(\sigma_{\alpha}(\gamma)\sigma_{\alpha'}(\gamma\inv ), s(\gamma)\big).
\]
Since $\omega_{\beta}$ and $\omega_{\beta'}$ are smooth and since inversion
is smooth in $G$, the map  $\gamma \mapsto
\omega_{\beta}(\gamma)\omega_{\beta'}(\gamma\inv )$ is smooth. Since
$\{(U_{\alpha},\sigma_{\alpha})\}_{\alpha}$ satisfies \PropD, the map $\gamma
\mapsto
    \iota\inv(\sigma_{\alpha}(\gamma)\sigma_{\alpha'}(\gamma\inv ))$
is smooth. So \PropD\ once again follows from smoothness of multiplication in
$\mbb{T} \times G\z$.

\ref{item:phase changes inherit smoothness} Since $\mathfrak{B}$ contains
each triple $(\alpha, U_{\alpha}, 1_{U_{\alpha}})$, the ``if'' implication is
trivial. For the ``only if'' implication, suppose that $\sigma_{\alpha}$ is
smooth, and fix $\beta \in \mathfrak{B}$ with $F(\beta) = \alpha$. Then by
definition, we have $V_\beta \subseteq U_{\alpha}$ and on $V_\beta$ we have
$\kappa_\beta(\gamma) = \omega_\beta(\gamma) \cdot \sigma_{\alpha}(\gamma)$.
Since $\omega_\beta$ and $\sigma_{\alpha}$ are smooth and the
$\mbb{T}$-action on $\twist$ is smooth, it follows that $\kappa_\beta$ is
smooth.
\end{proof}

\begin{thm}\label{thm:E is mfd}
Suppose that $ \ses $ is a  topological twist over a Lie groupoid $G$, that
$\{(U_{\alpha},\varphi_{\alpha})\}_{\alpha\in\mathfrak{A}}$ is an atlas of
$G$, and that $\{\sigma_{\alpha} \colon U_{\alpha} \to \pi\inv
(U_{\alpha})\}_{\alpha\in\mathfrak{A}}$ is a family of continuous sections.
If $\{\sigma_{\alpha}\}_{\alpha}$ satisfies~\PropB, then
\begin{enumerate}[label=\textup{(\arabic*)}]
    \item\label{item:E is mfd:principal bdl} there is a unique smooth
        structure on the topological space $\twist$ with respect to which
$\PB{\twist}{\pi}{G}$ is a smooth principal $\mbb{T}$-bundle and all the
$\sigma_{\alpha}$ are smooth;
    \item\label{item:E is mfd:pi inv Gz is submfd} $\pi\inv(G\z)$ is an
        embedded submanifold of $\twist$; and
    \item\label{item:E is mfd:smoothness of bundle maps} if $\PB{S}{p}{G}$
        is another smooth principal $\mbb{T}$-bundle  and if $f\colon
        \twist\to S$ is a continuous bundle map that is
        $\mbb{T}$-equivariant, then $f$ is smooth if only if
        $f\circ\sigma_{\alpha}\colon U_{\alpha}\to S$ is smooth for each
        $\alpha$.
\end{enumerate}
\end{thm}
\begin{proof}
By Lemma~\ref{lem:sections => triv's}, each $\sigma_{\alpha}$ defines a
topological trivialisation, namely the homeomorphism $\mbb{T} \times
U_{\alpha}\to \pi\inv (U_{\alpha})$ given by $(z, \gamma) \mapsto z\cdot
\sigma_{\alpha}(\gamma)=\iota(z, r(\gamma))\sigma_{\alpha}(\gamma)$. We want
to define an atlas for~$\twist$ using these local trivialisation maps. To be
precise, we will define charts that map onto (open subsets of)
$\mbb{R}^n\times\mbb{T}$, and we will leave it to the reader to modify these
charts to map onto (open subsets of) $\mbb{R}^{n+1}$.

For each $U_{\alpha}$,  define $W_{\alpha}  \coloneqq \pi\inv (U_{\alpha})$
and $\Phi_{\alpha} \colon W_{\alpha} \to \mbb{R}^n \times \mbb{T}$ by $z\cdot
\sigma_{\alpha}(\gamma) \mapsto (z, \varphi_{\alpha}(\gamma))$. To see that
the collection $\{(W_{\alpha}, \Phi_{\alpha})\}_{\alpha\in \mathfrak{A}}$ is
an atlas for a smooth structure on $\twist$ in the sense of
Definition~\ref{dfn:atlas}, note first that the $W_{\alpha}$ cover $\twist$
since the $U_{\alpha}$ cover $G$. Moreover, each $\Phi_{\alpha}$ is a
homeomorphism onto its image by construction, so we already have
Conditions~\ref{item:atlas:cover} and~\ref{item:atlas:homeo} of
Definition~\ref{dfn:atlas}. It remains to check~\ref{item:atlas:smooth}. So
suppose that $\alpha' \in \mathfrak{A}$ satisfies $U_{\alpha}\cap
U_{\alpha'}\neq \emptyset$. We must show that
\[
    \Phi_{\alpha'}\circ \Phi_{\alpha}\inv\colon\quad
    \Phi_{\alpha}(U_{\alpha}\cap U_{\alpha'}) \to  \Phi_{\alpha'}(U_{\alpha}\cap U_{\alpha'})
\]
is a diffeomorphism between open subsets of $\mbb{R}^n\times \mbb{T}$. By
\PropB, there exists a smooth map $f\coloneqq f_{\alpha,\alpha'}\colon
U_{\alpha}\cap U_{\alpha'}\to \mbb{T}$ such that the map
\[
    F\colon U_{\alpha}\cap U_{\alpha'}\to \mbb{T} \times G\z,
    \quad \gamma  \mapsto \iota\inv \bigl(\sigma_{\alpha}( \gamma  )
            \sigma_{\alpha'}( \gamma  )\inv\bigr),
\]
can be written as
\[
    F(\gamma )=(f( \gamma  ), r( \gamma  )).
\]
For $(z, \varphi_{\alpha} ( \gamma  )) \in \Phi_{\alpha}(U_{\alpha}\cap
U_{\alpha'})$, we compute
\begin{align*}
    \Phi_{\alpha}\inv
    (z, \varphi_{\alpha} ( \gamma  ))
    &=
    \iota(z, r( \gamma  ))\sigma_{\alpha}( \gamma  )
    =
    \iota(z, r( \gamma  ))
        \sigma_{\alpha}( \gamma  )
            \sigma_{\alpha'}( \gamma  )\inv\sigma_{\alpha'}( \gamma  )
    \\
    &=
   \iota(z, r( \gamma  ))
        \iota(F( \gamma  ))\sigma_{\alpha'}( \gamma  )
    =
    \iota(zf(\gamma ), r( \gamma  ))
    \sigma_{\alpha'}( \gamma  ).
    \intertext{Hence}
    (\Phi_{\alpha'}\circ\Phi_{\alpha}\inv)
    (z, \varphi_{\alpha} ( \gamma  ))
    &=
    (zf( \gamma  ), \varphi_{\alpha'}( \gamma  )).
\end{align*}
Writing $\varphi_{\alpha} ( \gamma  )=\vecrm{x}$, we obtain
\begin{align*}
    \bigl(\Phi_{\alpha'}\circ \Phi_{\alpha}\inv\bigr)(z, \vecrm{x})
    =
    \big(z\big(f \circ\varphi_{\alpha}\inv\big)(\vecrm{x}), \big(\varphi_{\alpha'}\circ\varphi_{\alpha}\inv\big)(\vecrm{x})\big).
\end{align*}
Since $\varphi_{\alpha'}\circ\varphi_{\alpha}\inv$,
$f\circ\varphi_{\alpha}\inv$, and multiplication on $\mbb{T}$ are smooth, we
conclude that $\Phi_{\alpha'}\circ \Phi_{\alpha}\inv$ is smooth. It is now
clear that the sections we started with are all smooth: The map
$\Phi_{\alpha}\circ\sigma_{\alpha}\circ \varphi_{\alpha}\inv\colon
\mbb{R}^n\to \mbb{T} \times \mbb{R}^{n}$ is given by $\vecrm{x}\to (1,
\vecrm{x})$ and hence smooth, which means that $\sigma_{\alpha}\colon
U_{\alpha}\to \twist$ is smooth.

It will be helpful to know what other smooth sections this manifold structure
on $\twist$ allows. By Lemma~\ref{lem:phase change}, Part~\ref{item:phase
changes inherit PropBCD}, the collection
\begin{equation}\label{eq:Theta}
     \Theta\coloneqq \{\omega\cdot (\sigma_{\alpha}|_U) : \alpha\in\mathfrak{A}, U\subseteq U_{\alpha} \text{ open, } \omega\in C^{\infty} (U, \mbb{T})\}
\end{equation}
also satisfies \PropB. Applying the procedure of the first two paragraphs of
the proof to $\Theta$, we obtain another atlas for a smooth structure on
$\twist$ with respect to which  each element of $\Theta$ is smooth, including
each $\sigma_{\alpha}$. By construction, this new atlas {\em contains all the
charts $\Phi_{\alpha}$} from the previously constructed atlas, and therefore
generates the same smooth structure as the one that we constructed from only
$\{\sigma_{\alpha}\}_{\alpha}$. To sum up, each element of $\Theta$ is smooth
with respect to the smooth structure with which we have equipped $\twist$.

To see that $\PB{\twist}{\pi}{G}$ is a smooth principal $\mbb{T}$-bundle, we
will apply Lemma~\ref{lem:smooth principal bundle}. The $\mbb{T}$-action
$\mu\colon \mbb{T}\times\twist\to\twist$ is smooth, since, in charts, it is
just given by multiplication in $\mbb{T}$. To be more precise, given any
$(w,e)\in \mbb{T}\times \twist$, choose any $\alpha\in \mathfrak{A}$ with
$\pi(e)\in U_{\alpha}$. Then
\[
    \Phi_{\alpha} \circ \mu \circ (\mathrm{id}_{\mbb{T}}\times \Phi_{\alpha})\inv \colon\; \mbb{T}\times (\mbb{T} \times \mbb{R}^n)
    \to \mbb{T} \times \mbb{R}^n,
    \quad
    (w,(z, \vecrm{x}))
    \mapsto
    (wz, \vecrm{x}),
\]
is a smooth map, which proves that $\mu$ is smooth around $(w,e)$. Next,
$\pi$ is smooth, since
\[
    \varphi_{\alpha} \circ \pi \circ \Phi_{\alpha}\inv\colon\;
    \mbb{T} \times \mbb{R}^n
    \to \mbb{R}^n,
    \quad
    (z, \vecrm{x})\mapsto \vecrm{x},
\]
is clearly smooth. Using these local charts, we also see that $\pi$ is a
submersion, since $\varphi_{\alpha} \circ \pi \circ \Phi_{\alpha}\inv$, being
a linear map, is its own derivative and clearly surjective. We have checked
that all the assumptions of Lemma~\ref{lem:smooth principal bundle} hold, and
we conclude that $\PB{\twist}{\pi}{G}$ is indeed a smooth principal
$\mbb{T}$-bundle. This proves \ref{item:E is mfd:principal bdl} save
uniqueness, which we will prove at the end.

Part~\ref{item:E is mfd:pi inv Gz is submfd} now follows from
Remark~\ref{rmk:submersion implies transverse} and
Theorem~\ref{thm:Generalized Preimage Thm}\ref{item:Generalized Preimage
Thm}, since $G\z$ is an embedded submanifold of the Lie groupoid $G$ and
since $\pi$ is a submersion.

We  can now invoke Lemma~\ref{lem:sections => triv's} to make our lives
easier. For {\em any} section $\sigma\colon U\to \pi\inv(U)$ of $\pi$ that is
smooth with respect to the smooth structure that we just constructed on
$\twist$, the map
\begin{equation}\label{eq:psi sigma}
    \psi_\sigma\colon \pi\inv(U) \to \mbb{T}\times U,\quad z\cdot \sigma(\gamma ) \mapsto (z,\gamma ),
\end{equation}
is a smooth local trivialisation of $\twist$. Thus, if $(U,\varphi)$ is a
smooth chart of $G$, then $\psi_\sigma$ determines a smooth chart
\begin{equation}\label{eq:Phi sigma varphi}
    \Phi_{\sigma,\varphi}\colon \pi\inv(U) \to \mbb{T}\times \mbb{R}^n,\quad z\cdot \sigma(\gamma ) \mapsto (z,\varphi(\gamma )),
\end{equation}
of $\twist$. Note that \(
    \Phi_{\alpha}=\Phi_{\sigma_{\alpha},\varphi_{\alpha}}.
\)

For \ref{item:E is mfd:smoothness of bundle maps}, suppose that
$\PB{S}{p}{G}$ is a smooth principal $\mbb{T}$-bundle and $f\colon \twist\to
S$ is  a continuous $\mbb{T}$-equivariant bundle map. For the forward
implication, assume that $f$ is smooth and fix $\alpha\in\mathfrak{A}$ and
$\gamma \in U_{\alpha}$. The set $\pi\inv(\{\gamma \}) \cap \Phi_{\alpha}\inv
(\{1\} \times \mbb{R}^n)$ contains a unique element, namely $e\coloneqq
\sigma_{\alpha} (\gamma )$. Smoothness and $\mbb{T}$-equivariance of $f$
imply that, for every chart $(V,\Phi^S)$ around $f(e)$ in $S$, the map
\[
   \Phi^S\circ f\circ \Phi_{\alpha}\inv\colon \Phi_{\alpha} (W_{\alpha} \cap f\inv (V)) \to \Phi^S(V), \quad
   (z, \varphi_{\alpha} (\eta)) \mapsto
        \Phi^S(f(z\cdot \sigma_{\alpha} (\eta))) = \Phi^S(z\cdot f(\sigma_{\alpha} (\eta))),
\]
is smooth as a map between neighbourhoods in Euclidean space. In other words,
\[
\Phi^S\circ f\circ \Phi_{\alpha}\inv\colon \quad
    (z, \vecrm{x}) \mapsto \Phi^S(z\cdot f(\sigma_{\alpha} (\varphi_{\alpha}\inv (\vecrm{x}))))
\]
is smooth. Since $\mbb{R}^n\to \mbb{T} \times \mbb{R}^n, \vecrm{x}\mapsto (1,
\vecrm{x})$, is smooth, it follows that the map
\[
    \vecrm{x} \mapsto
        (\Phi^S \circ [f \circ \sigma_{\alpha}] \circ \varphi_{\alpha}\inv) (\vecrm{x})
\]
is smooth, defined on the open subset
$U'\coloneqq\mathrm{pr}_{\mbb{R}^n}(\Phi_{\alpha} (W_{\alpha} \cap f\inv
(V)))$ of $\mbb{R}^n$. Note that the open set $U\coloneqq
\varphi_{\alpha}\inv ( U')$ in $G$ contains $\gamma $: since $f(e)\in V$ by
choice of $V$ and since $\pi(e)=\gamma \in U_{\alpha}$, we have $e\in
f\inv(V) \cap \pi\inv(U_{\alpha}) = f\inv(V) \cap W_{\alpha}$. Since
$e=\sigma_{\alpha} (\gamma )$, the definition of $\Phi_{\alpha}$ shows that
$\Phi_{\alpha} (e) =(1, \varphi_{\alpha}(\gamma ))$ and thus
$\varphi_{\alpha}(\gamma )\in U'$; that is, $\gamma \in U$ as claimed.
Moreover, since $\Phi^S$ was chosen as a map around $f(e)=(f\circ
\sigma_{\alpha})(\gamma )$, the proof that $\Phi^S \circ [f \circ
\sigma_{\alpha}] \circ \varphi_{\alpha}\inv$ is smooth implies that $f\circ
\sigma_{\alpha}$ is smooth around $\gamma $. As $\gamma $ was arbitrary, it
follows that $f \circ \sigma_{\alpha}$ is smooth on all of $U_{\alpha}$.

For the backwards implication of  \ref{item:E is mfd:smoothness of bundle
maps}, assume that $f\circ \sigma_{\alpha}$ is smooth for every
$\alpha\in\mathfrak{A}$. In order to prove that $f$ is smooth, fix $e\in
\twist$ and $\alpha\in\mathfrak{A}$ such that $\gamma \coloneqq \pi(e)\in
U_{\alpha}$. Then there exists a unique $z_{0}\in \mbb{T}$ such that
$e=z_{0}\cdot \sigma_{\alpha}(\gamma )$. Since the $\mbb{T}$-action on $S$
and the function $f\circ \sigma_{\alpha}$ are smooth, the map $\tau\colon
\mbb{T}\times U_{\alpha}\to S$, $(z,\eta)\mapsto z\cdot f(\sigma_{\alpha}
(\eta))$, is smooth. In particular, for any neighbourhood $U$ of $\gamma $ in
$U_{\alpha}$ and any smooth chart $(V,\Phi^S)$ around $e=z\cdot
f(\sigma_{\alpha}(\gamma ))$ in $S$ with $z\cdot f(\sigma_{\alpha}
(U))\subseteq V$, the map
\[
    \Phi^S\circ \tau \circ(\mathrm{id}_{\mbb{T}}\times \varphi_{\alpha})\inv\colon \quad
        \mbb{T}\times \varphi_{\alpha}(U) \to \Phi^S(V)
\]
is smooth. Now, $\mbb{T}$-equivariance of $f$ and the definition of
$\Phi_{\alpha}$ show that for any $(z,\eta)\in \mbb{T}\times U_{\alpha}$,
\[
    \tau(z,\eta)
        = f(z\cdot \sigma_{\alpha}  (\eta))
        = (f\circ \Phi_{\alpha}\inv) (z, \varphi_{\alpha} (\eta)).
\]
Hence for any $(z,\vecrm{x})\in \mbb{T}\times \varphi_{\alpha} (U)$, we have
\[
    \bigl[\Phi^S\circ \tau \circ(\mathrm{id}_{\mbb{T}}\times \varphi_{\alpha})\inv\bigr] (z,\vecrm{x})
        = (\Phi^S\circ f\circ \Phi_{\alpha}\inv) (z,\vecrm{x}).
\]
We have shown that the map
\[
    \Phi^S\circ f\circ \Phi_{\alpha}\inv\colon \quad
        \mbb{T} \times \varphi_{\alpha} (U) \to \Phi^S(V)
\]
is smooth. As $\gamma \in U$, we have $(z_{0}, \varphi_{\alpha}(\gamma )) \in
\mbb{T} \times \varphi_{\alpha} (U)$. Thus
\[
    e= z_{0}\cdot \sigma_{\alpha} (\gamma ) \in \Phi_{\alpha}(\mbb{T} \times \varphi_{\alpha} (U))
\]
and $f$ is smooth around $e\in \twist$. Since $e$ was arbitrary, this proves
that $f$ is smooth.

We can now prove that the smooth structure in Part~\ref{item:E is
mfd:principal bdl} is unique. Let $\tilde{\twist}$ be a copy of $\twist$
carrying a smooth structure such that
\begin{enumerate}[label={($\tilde{\twist}$\arabic*)}]
    \item\label{item:mcA:principal bdl} $\PB{\tilde{\twist}}{\pi}{G}$ is a
        smooth principal $\mbb{T}$-bundle, and
    \item\label{item:mcA:Theta smooth} for each $\alpha\in\mathfrak{A}$,
        the map $\tilde{\sigma}_{\alpha}\colon U_{\alpha}\to
        \tilde{\twist}$, $\gamma \mapsto \sigma_{\alpha}(\gamma )$, is
        smooth.
\end{enumerate}
We show that $f \colon \twist\to \tilde{\twist}$, defined as the identity map
on the underlying topological space, is a diffeomorphism, so the smooth
structure on $\tilde\twist$ coincides with the structure we gave $\twist$ by
hand. Clearly, $f$ is a continuous, $\mbb{T}$-equivariant bundle map. By
assumption \ref{item:mcA:Theta smooth},  each $\tilde{\sigma}_{\alpha} = f
\circ \sigma_{\alpha}$ is smooth. Because of Theorem~\ref{thm:E is
mfd}\ref{item:E is mfd:smoothness of bundle maps} combined with
Assumption~\ref{item:mcA:principal bdl}, this implies that $f$ is smooth. An
application of Lemma~\ref{lem:principal bundles are rigid} now shows that $f$
is a diffeomorphism.
\end{proof}

\begin{corollary}\label{p:PropCD}
Suppose that $ \ses $ is a Lie twist. Suppose that
$\{(U_{\alpha},\varphi_{\alpha})\}_{\alpha\in\mathfrak{A}}$ is an atlas of
$G$ and that  $\{\sigma_{\alpha} \colon U_{\alpha} \to \pi\inv
(U_{\alpha})\}_{\alpha\in\mathfrak{A}}$ is a family of {\em continuous}
sections satisfying~\PropB. Let $\tilde{\twist}$ be the topological space
$\twist$ equipped with the smooth structure induced by
$\{\sigma_{\alpha}\}_{\alpha}$ as in Theorem~\ref{thm:E is mfd}. Then
$\twist=\tilde{\twist}$ if and only if the all maps $\sigma_{\alpha}\colon
U_{\alpha}\to \twist$ are smooth.
\end{corollary}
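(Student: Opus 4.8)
The plan is to derive both implications from the uniqueness clause of Theorem~\ref{thm:Sigma is mfd}. That theorem asserts that $\tilde{\twist}$ carries the \emph{unique} smooth structure on the underlying topological space $\twist$ with respect to which $\PB{\twist}{\pi}{G}$ is a smooth principal $\mbb{T}$-bundle and every $\sigma_{\alpha}$ is smooth. Both smooth structures in play here—the original one coming from the Lie-twist hypothesis and the induced one $\tilde{\twist}$—live on the same underlying topological space, so the word ``unique'' applies directly, and the whole argument reduces to checking, for each implication, which of these two defining properties is automatic and which must be verified.

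For the forward implication, I would simply observe that if the two smooth structures coincide, that is, $\twist = \tilde{\twist}$, then since the $\sigma_{\alpha}$ are smooth as maps into $\tilde{\twist}$ by the construction in Theorem~\ref{thm:Sigma is mfd}, they are also smooth as maps into $\twist$. This direction requires nothing beyond the conclusion of that theorem.

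For the backward implication, I would assume that each $\sigma_{\alpha}$ is smooth with respect to the \emph{original} Lie-twist smooth structure on $\twist$, and then show that this original structure satisfies both of the properties characterising $\tilde{\twist}$. Smoothness of the $\sigma_{\alpha}$ is exactly the hypothesis; and that $\PB{\twist}{\pi}{G}$ is a smooth principal $\mbb{T}$-bundle with respect to the original structure is precisely Lemma~\ref{lem:free stuff about Lie twists}\ref{item:LT:principal bundle}, since $\ses$ is a Lie twist. Having verified both properties for the original structure, the uniqueness clause of Theorem~\ref{thm:Sigma is mfd} forces it to coincide with the smooth structure on $\tilde{\twist}$, that is, $\twist = \tilde{\twist}$.

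There is essentially no obstacle: the content is entirely contained in the uniqueness assertion of Theorem~\ref{thm:Sigma is mfd} and in Lemma~\ref{lem:free stuff about Lie twists}. The only point requiring care is to keep straight which smooth structure is in play at each step and to record explicitly that the principal-bundle property for the \emph{original} structure is supplied by the Lie-twist hypothesis via Lemma~\ref{lem:free stuff about Lie twists}\ref{item:LT:principal bundle}, so that the uniqueness statement can be invoked with both of its defining conditions in hand.
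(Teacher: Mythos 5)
Your proof is correct, and it rests on the same two pillars as the paper's: Lemma~\ref{lem:free stuff about Lie twists}\ref{item:LT:principal bundle} (the Lie-twist hypothesis makes $\PB{\twist}{\pi}{G}$ a smooth principal $\mbb{T}$-bundle for the canonical action) and Theorem~\ref{thm:Sigma is mfd}. The difference is one of packaging: you split the statement into two implications and dispatch the nontrivial one by appealing to the theorem's \emph{uniqueness clause}, whereas the paper obtains both directions at once as a chain of equivalences --- the identity map $f \colon \tilde{\twist} \to \twist$ is a $\mbb{T}$-equivariant bundle homeomorphism, so by Lemma~\ref{lem:principal bundles are rigid} the two structures coincide if and only if $f$ is smooth, which by Theorem~\ref{thm:Sigma is mfd}\ref{item:Sigma is mfd:smoothness of bundle maps} holds if and only if every $f \circ \sigma_{\alpha} = \sigma_{\alpha}$ is smooth. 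Since the uniqueness clause of Theorem~\ref{thm:Sigma is mfd} is itself proved by exactly this rigidity argument, the two proofs have the same mathematical content; yours is more compressed, while the paper's makes the mechanism (rigidity of principal-bundle maps plus the section-wise smoothness criterion) explicit. One point in your write-up worth retaining is the remark that both smooth structures live on the same underlying topological space --- the Lie-groupoid structure on $\twist$ is, by Definition~\ref{dfn:Lie gpd}, a smooth structure on its given topology --- since that is precisely what licenses the appeal to uniqueness, and likewise the implicit check that the principal-bundle condition in the uniqueness clause refers to the same canonical $\mbb{T}$-action $z \cdot e = \iota(z, r(e))e$ in both cases.
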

\begin{proof}
Since $\twist$ is a Lie twist, $\PB{\twist}{\pi}{G}$ is a smooth principal
$\mbb{T}$-bundle by Lemma~\ref{lem:free stuff about Lie twists},
\ref{item:LT:principal bundle}. With respect to the same $\mbb{T}$-action and
the same projection map, $\PB{\tilde{\twist}}{\pi}{G}$ is also a smooth
principal $\mbb{T}$-bundle. In particular, $f \colon \tilde{\twist}\to
\twist$, defined as the identity map on the underlying topological space, is
a $\mbb{T}$-equivariant bundle map. Since $\twist$ and $\tilde{\twist}$ have
the same topology by assumption, $f$ is a homeomorphism. Thus, by
Lemma~\ref{lem:principal bundles are rigid}, we have that
$\twist=\tilde{\twist}$ if and only if $f$ is smooth. By Theorem~\ref{thm:E
is mfd}, Part~\ref{item:E is mfd:smoothness of bundle maps}, this is
equivalent to $f\circ \sigma_{\alpha}\colon U_{\alpha}\to \twist$ being
smooth for all $\alpha$, as claimed.
\end{proof}

While we do not have an immediate use for the following corollary, it seems
worth recording a checkable condition under which two families of sections as
in Theorem~\ref{thm:E is mfd} determine the same smooth structure.

\begin{corollary}
Suppose that $ \ses $ is a  topological twist over an \etale\ Lie groupoid.
Suppose that $\{U_{\alpha}\}_{\alpha \in \mathfrak{A}}$ and
$\{V_\beta\}_{\beta \in \mathfrak{B}}$ are bases of open bisections for $G$
and that  $\{\sigma_{\alpha} \colon U_{\alpha} \to \pi\inv
(U_{\alpha})\}_{\alpha \in \mathfrak{A}}$ and $\{\tau_\beta \colon V_\beta
\to \pi\inv (V_\beta)\}_{\beta \in \mathfrak{B}}$ are families of continuous
sections that both satisfy~\PropB. Suppose that there is a common refinement
$\mathcal{R} = \{W_\delta \colon \delta \in \mathfrak{D}\}$ of
$\{U_{\alpha}\}_{\alpha}$ and $\{V_\beta\}_\beta$ (with refinement maps
$\mathfrak{a} \colon \mathfrak{D} \to \mathfrak{A}$ and $\mathfrak{b} \colon
\mathfrak{D} \to \mathfrak{B}$) that is a cover of $G$, such that for each
$\delta \in \mathfrak{D}$ the  map $\iota\inv
(\sigma_{\mathfrak{a}(\delta)}(\gamma)\tau_{\mathfrak{b}(\delta)}(\gamma)\inv)$
is smooth on~$W_\delta$. Then the unique smooth structures on $\twist$,
obtained from Theorem~\ref{thm:E is mfd}, for which the $\sigma_{\alpha}$ and
the $\tau_\beta$ are smooth coincide.
\end{corollary}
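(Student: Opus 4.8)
The plan is to realise the two smooth structures as the source and target of a single identity map and to show this map is a diffeomorphism. Write $\twist_{\sigma}$ and $\twist_{\tau}$ for the space $\twist$ equipped with the smooth structures that Theorem~\ref{thm:Sigma is mfd} produces from $\{\sigma_{\alpha}\}_{\alpha}$ and from $\{\tau_{\beta}\}_{\beta}$; by that theorem each is a smooth principal $\mbb{T}$-bundle over $G$ for the same $\mbb{T}$-action and the same projection $\pi$. Let $f \colon \twist_{\sigma} \to \twist_{\tau}$ be the identity on the underlying topological space, so that $f$ is automatically a continuous, $\mbb{T}$-equivariant bundle homeomorphism. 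The whole corollary reduces to showing that $f$ is smooth: once this is known, Lemma~\ref{lem:principal bundles are rigid} upgrades $f$ to a diffeomorphism, and hence the two smooth structures coincide. To prove smoothness of $f$ I will invoke Theorem~\ref{thm:Sigma is mfd}\ref{item:Sigma is mfd:smoothness of bundle maps} with $S = \twist_{\tau}$: since $f$ is a continuous $\mbb{T}$-equivariant bundle map, it is smooth exactly when each $f \circ \sigma_{\alpha} = \sigma_{\alpha} \colon U_{\alpha} \to \twist_{\tau}$ is smooth. So everything comes down to checking that each section of the first family is smooth for the \emph{second} smooth structure.

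The heart of the argument is a local, three-step phase comparison. Fix $\alpha$ and a point $\gamma \in U_{\alpha}$; since smoothness is local it suffices to produce a neighbourhood of $\gamma$ on which $\sigma_{\alpha}$ is smooth into $\twist_{\tau}$. As $\mathcal{R}$ covers $G$, choose $\delta \in \mathfrak{D}$ with $\gamma \in W_{\delta}$, and recall $W_{\delta} \subseteq U_{\mathfrak{a}(\delta)}$ and $W_{\delta} \subseteq V_{\mathfrak{b}(\delta)}$. On the open neighbourhood $U_{\alpha} \cap W_{\delta}$ of $\gamma$ I compare $\sigma_{\alpha}$ to $\tau_{\mathfrak{b}(\delta)}$ in two stages. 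First, \PropB\ applied to the pair $\sigma_{\alpha}, \sigma_{\mathfrak{a}(\delta)}$---both of which lie in the single family $\{\sigma_{\alpha}\}_{\alpha}$, so that \PropB\ is available---gives a smooth function $\eta \colon U_{\alpha} \cap U_{\mathfrak{a}(\delta)} \to \mbb{T}$ with $\sigma_{\alpha}(\cdot) = \eta(\cdot) \cdot \sigma_{\mathfrak{a}(\delta)}(\cdot)$. Second, the hypothesis that $\iota\inv(\sigma_{\mathfrak{a}(\delta)}(\cdot)\tau_{\mathfrak{b}(\delta)}(\cdot)\inv)$ is smooth on $W_{\delta}$ gives a smooth function $\omega \colon W_{\delta} \to \mbb{T}$ with $\sigma_{\mathfrak{a}(\delta)}(\cdot) = \omega(\cdot) \cdot \tau_{\mathfrak{b}(\delta)}(\cdot)$. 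Composing these and using that we have a genuine $\mbb{T}$-action yields $\sigma_{\alpha}(\cdot) = (\eta\omega)(\cdot) \cdot \tau_{\mathfrak{b}(\delta)}(\cdot)$ on $U_{\alpha} \cap W_{\delta}$.

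Now the payoff is immediate: $\tau_{\mathfrak{b}(\delta)}$ is one of the defining sections of $\twist_{\tau}$ and so is smooth as a map into $\twist_{\tau}$, the pointwise product $\eta\omega \colon U_{\alpha} \cap W_{\delta} \to \mbb{T}$ is smooth, and the $\mbb{T}$-action on $\twist_{\tau}$ is smooth; hence $\sigma_{\alpha}|_{U_{\alpha} \cap W_{\delta}}$ is smooth into $\twist_{\tau}$. As $\gamma \in U_{\alpha}$ was arbitrary, each $\sigma_{\alpha}$ is smooth into $\twist_{\tau}$, so $f$ is smooth and therefore a diffeomorphism, giving $\twist_{\sigma} = \twist_{\tau}$. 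I expect the only real obstacle to be the bookkeeping of this phase chain: one must be careful that the \emph{intra}-family comparison is handled by \PropB\ while the \emph{cross}-family comparison is supplied only on the refinement sets $W_{\delta}$ by the hypothesis, and that centrality of the $\mbb{T}$-action lets the three factors be amalgamated into a single smooth $\mbb{T}$-valued function; the remaining smoothness assertions are routine applications of smoothness of the action and of multiplication in $\mbb{T}$.
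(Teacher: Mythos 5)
Your proof is correct, but it takes a genuinely different route from the paper's. The paper argues entirely at the level of families of sections: it first invokes Lemma~\ref{lem:refinement} to replace $\{(U_{\alpha},\sigma_{\alpha})\}_{\alpha}$ and $\{(V_\beta,\tau_\beta)\}_{\beta}$ by the refined systems $\{(W_\delta,\sigma_{\mathfrak{a}(\delta)})\}_{\delta}$ and $\{(W_\delta,\tau_{\mathfrak{b}(\delta)})\}_{\delta}$, which generate the same two smooth structures, and then applies Lemma~\ref{lem:phase change} twice---once with the phases $\omega_\delta$ supplied by the hypothesis and once with their conjugates $\overline{\omega_\delta}$---to conclude that both refined systems generate the same smooth structure as the merged system $\{\tau_{\mathfrak{b}(\delta)}, \sigma_{\mathfrak{a}(\delta)}\}_{\delta}$, whence the two structures coincide by the uniqueness clause of Theorem~\ref{thm:Sigma is mfd}. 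You instead inline this machinery: you realise the comparison as the identity bundle map $f \colon \twist_{\sigma} \to \twist_{\tau}$, reduce its smoothness to smoothness of each $\sigma_{\alpha}$ into $\twist_{\tau}$ via Theorem~\ref{thm:Sigma is mfd}\ref{item:Sigma is mfd:smoothness of bundle maps}, verify this by the local phase chain $\sigma_{\alpha} = \eta \cdot \sigma_{\mathfrak{a}(\delta)} = (\eta\omega)\cdot\tau_{\mathfrak{b}(\delta)}$ on $U_{\alpha} \cap W_\delta$ (with $\eta$ coming from \PropB\ within the $\sigma$-family, $\omega$ from the hypothesis, and smoothness of the $\mbb{T}$-action on $\twist_{\tau}$ closing the argument), and finish with Lemma~\ref{lem:principal bundles are rigid}---the same identity-map-plus-rigidity device the paper itself uses inside the uniqueness part of Theorem~\ref{thm:Sigma is mfd}. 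Your applications of the cited results are legitimate: the identity is a continuous $\mbb{T}$-equivariant bundle homeomorphism between two smooth principal $\mbb{T}$-bundles over $G$, and Remark~\ref{rmk:T-conditions} justifies extracting the smooth $\mbb{T}$-valued phases. What the paper's route buys is brevity and reuse (the refinement and phase-change lemmas already contain all the local computations, and the proof exhibits the conceptually useful fact that the merged family again generates the common structure); what your route buys is directness---it bypasses Lemmas~\ref{lem:refinement} and~\ref{lem:phase change} entirely, needs \PropB\ explicitly only for the $\sigma$-family (the $\tau$-family's hypotheses enter only through the construction of $\twist_{\tau}$), and makes transparent that the whole question is the smoothness of a single identity map.
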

\begin{proof}
By Lemma~\ref{lem:refinement}, the system $\{(W_\delta,
\sigma_{\mathfrak{a}(\delta)} )\}_{\delta}$ yields the same smooth structure
as $\{(U_{\alpha}, \sigma_{\alpha})\}_{\alpha}$ and similarly $\{(W_\delta,
\tau_{\mathfrak{b}(\delta)} )\}_{\delta}$ yields the same smooth structure as
$\{(V_\beta, \tau_\beta)\}_{\beta}$. For each $\delta \in \mathcal{D}$, let
$\omega_\delta \colon W_\delta \to \mbb{T}$ be the function such that
$\iota\inv
(\sigma_{\mathfrak{a}(\delta)}(\gamma)\tau_{\mathfrak{b}(\delta)}(\gamma)\inv
) = \omega_\delta(\gamma)$ for all $\gamma \in W_\delta$. So $\omega_\delta$
is smooth by assumption. Now Lemma~\ref{lem:phase change} applied to the
system $\{(W_{\delta}, \tau_{\mathfrak{b}(\delta)})\}_{\delta}$ and with the
indexing set
\[
    \{(\delta, W_\delta, 1_{W_\delta}), (\delta, W_\delta, \omega_\delta) : \delta \in \mathfrak{D}\}
\]
shows that the system system $\{(W_{\delta}, \eta_{\delta,i}) : (\delta,i)
\in \mathfrak{D} \times \{0,1\}\}$ given by $\eta_{\delta,0} =
\tau_{\mathfrak{b}(\delta)}$ and $\eta_{\delta, 1} =
\sigma_{\mathfrak{a}(\delta)}$ determines the same smooth structure as
$\{(W_\delta, \tau_{\mathfrak{b}(\delta)})\}_{\delta}$. Lemma~\ref{lem:phase
change}, this time applied to the system $\{(W_\delta,
\sigma_{\mathfrak{a}(\delta)})\}_{\delta}$ and with the indexing set
\[
    \{(\delta, W_\delta, 1_{W_\delta}), (\delta, W_\delta, \overline{\omega_\delta}) : \delta \in \mathfrak{D}\},
\]
shows that the same system $\{(W_{\delta},\eta_{\delta,i})\}_{(\delta,i)}$
gives the same smooth structure as $\{(W_\delta,
\sigma_{\mathfrak{a}(\delta)})\}_{\delta}$, and we are done.
\end{proof}

\begin{remark}
The previous result is not an ``if and only if'' statement: to conclude from
coincidence of the smooth structure on $\twist$ coming from the
$\sigma_{\alpha}$ and the $\tau_\beta$ that functions of the form $\iota\inv
(\sigma_{\mathfrak{a}(\delta)}(\gamma)\tau_{\mathfrak{b}(\delta)}(\gamma)\inv)$
are smooth requires that multiplication and inversion in $\twist$ are smooth
maps; as we will see in Theorem~\ref{thm:E is Lie}, this is the case
precisely if either of the families $\{\sigma_{\alpha}\}_{\alpha}$ or
$\{\tau_\beta\}_{\beta}$ satisfies \PropE~and~\PropC. The correct general
``if and only if'' statement is that two families $\sigma_{\alpha}$ and
$\tau_\beta$ give the same smooth structure if and only if there are smooth
functions $\omega_{\alpha,\beta} \colon U_{\alpha} \cap V_\beta \to \mbb{T}$
such that $\omega_{\alpha,\beta}(\gamma) \cdot \sigma_{\alpha}(\gamma) =
\tau_\beta(\gamma)$; but this an instance of a general statement about smooth
principle $\Gamma$-bundles (it could be deduced, for example, using
Lemma~\ref{lem:sections => triv's}) rather than about twists over Lie
groupoids.
\end{remark}

\begin{prop}\label{prp:E:PropB and PropE}
Suppose that $ \ses $ is a  topological twist over a  Lie groupoid $G$, that
$\{(U_{\alpha},\varphi_{\alpha})\}_{\alpha\in\mathfrak{A}}$ is an atlas of
$G$, and that $\{\sigma_{\alpha} \colon U_{\alpha} \to \pi\inv
(U_{\alpha})\}_{\alpha\in\mathfrak{A}}$ is a family of continuous sections
that satisfies  \PropB~and~\PropE. With respect to  the smooth structure on
$\twist$ from Theorem~\ref{thm:E is mfd}, the following hold.
\begin{enumerate}[label=\textup{(\arabic*)}]
    \item\label{item:E:constant 1} For each $x\in G\z$, there exists an
        open neighbourhood $U$ of $x$ in $G$ and a smooth section
        $\sigma\colon U\to\pi\inv(U)$ such that $\sigma(y)=\iota(1, y)$ for
        all $y\in U\cap G\z$.
    \item\label{item:E:Ez submfd} $\twist\z$ is an embedded submanifold of
        $\twist$.
    \item\label{item:E:iota diffeo} $\iota\colon
        G\z\times\mbb{T}\to\pi\inv(G\z)$ is a diffeomorphism.
    \item\label{item:E:r,s,pi submersions} The range and source maps
        $r,s\colon \twist\to \twist\z$ of $\twist$ and the projection map
        $\pi\colon \twist\to G$ are submersions.
\end{enumerate}
\end{prop}

\begin{proof}
\ref{item:E:constant 1} Fix $x_{0}\in G\z$ and $\alpha\in\mathfrak{A}$ such
that $x_{0}\in U_{\alpha}$, and let $k_{\alpha}\colon U_{\alpha}\cap G\z\to
\mathbb{T}$ be the map implicitly defined by
$\sigma_{\alpha}(x)=\iota(k_{\alpha}(x),x)$; by \PropE, this map is smooth.
Let $z = -k_{\alpha}(x_{0}) \in \mbb{T}$ (the point antipodal to
$k_{\alpha}(x_{0})$), and consider $W\coloneqq  k_{\alpha}\inv
(\mbb{T}\setminus \{z\})$. Since $W$ is open in $G\z$ and $G\z$ has the
subspace topology, there exists an open $U\subseteq G$ such that $W=U\cap
G\z$. Since $W\subseteq U_{\alpha}\cap G\z$, we can assume without loss of
generality that $U\subseteq U_{\alpha}$. Let $\psi\colon \mbb{T}\setminus
\{z\} \approx \mbb{R}$ be a smooth chart for this subset of~$\mbb{T}$. Since
$k_{\alpha}$ maps $W$ to $\mbb{T}\setminus \{z\}$, we may consider $\psi\circ
k_{\alpha}|_{W}\colon W\to \mbb{R}$.

Since $G\z$ is properly embedded in $G$ by \cite[Proposition~5.5]{Lee:Intro},
it follows that $W$ is properly embedded in $U$ (see Lemma~\ref{lem:proper
embedding when intersecting}). So \cite[Lemma~5.34(b)]{Lee:Intro} yields a
smooth function $\tilde{\omega}\colon U\to \mbb{R}$ such that
$\tilde{\omega}|_{W}\equiv \psi\circ k_{\alpha}|_{W}$; let $\omega\coloneqq
\psi\inv \circ \tilde{\omega}\colon U\to \mbb{T}\setminus\{z\}$. Since
$\overline{\omega}\colon \gamma \mapsto \overline{\omega(\gamma )}$ is an
element of $C^{\infty}(U,\mbb{T})$, we may consider the section
$\sigma\coloneqq \overline{\omega}\cdot (\sigma_{\alpha}|_{U})$, an element
of the set $\Theta$ as defined in Equation~\eqref{eq:Theta}. For $x\in U\cap
G\z= W$, it follows from $\tilde{\omega}|_{W}\equiv \psi\circ
k_{\alpha}|_{W}$ that
\begin{align}\label{eq:omega equiv k_alpha}
    \omega(x)
    &= (\psi\inv \circ \tilde{\omega})(x)
    = k_{\alpha}(x).
\end{align}
Hence
\begin{align*}
    \sigma(x)
    &= \overline{\omega(x)} \cdot \sigma_{\alpha}(x)
    = \overline{k_{\alpha}(x)}\cdot \iota(x,k_{\alpha}(x))
    = \iota(1, x)
\end{align*}
as claimed.

\ref{item:E:Ez submfd} Fix $y_{0}\in \twist\z$, and let $x_{0}\coloneqq
\pi(y_{0})\in G\z$. Let $m$ be the manifold dimension of $G\z$; note that,
since $G$ is not necessarily \etale, $m$ may be strictly smaller than the
manifold dimension $n$ of $G$. As $G\z$ is an embedded submanifold of $G$, we
can pick a chart  $(U,\varphi)$  of $G$ around $x_{0}$ such that
\begin{equation}\label{eq:varphi on Gz}
\varphi ( U\cap G\z)=\varphi(U)\cap (\mbb{R}^{m}\times\{0\}^{n-m}).
\end{equation}
By potentially shrinking $U$, we may assume by Part~\ref{item:E:constant 1}
that there exists a section $\sigma\colon U\to\pi\inv(U)$ such that
$\sigma(x)=\iota(1,x)$ for all $x\in U\cap G\z$. We claim that the smooth
chart $\Phi_{\sigma,\varphi}$ of $\twist$ maps $\pi\inv(U)\cap \twist\z $
onto $\{1\} \times \mbb{R}^{m}\times\{0\}^{n-m}$:

Given $y\in \pi\inv(U)\cap \twist\z$, we have $\pi(y)\in U\cap G\z$ and thus
$y=\iota(1,\pi(y))= 1\cdot \sigma(\pi(y))$ by choice of~$\sigma$. Thus, \(
    \Phi_{\sigma,\varphi} (y)
    = \bigl(
        1, \varphi (\pi(y))
    \bigr)
\) by definition of $\Phi_{\sigma,\varphi}$ (see \eqref{eq:Phi sigma varphi}
in the proof of Theorem~\ref{thm:E is mfd}). Then since $\pi(y)\in
\pi(\pi\inv(U)\cap\twist\z) = U\cap G\z$, it follows from the choice of
$\varphi$ (Equation~\eqref{eq:varphi on Gz}) that \(
    \Phi_{\sigma,\varphi} (y)
        \in \{1\} \times \mbb{R}^{m}\times\{0\}^{n-m}.
\)

Now suppose that $e\in \pi\inv(U)$ satisfies \(
    \Phi_{\sigma,\varphi}(e)
    = (1, \vecrm{x}, 0)
    \in \{1\} \times \mbb{R}^{m}\times\{0\}^{n-m}
\); we must argue that $e\in \twist\z$. As $\pi(e)\in U$, there exists a
unique $z\in\mbb{T}$ such that $e=z\cdot \sigma(\pi(e))$, so that
\[
(1, \vecrm{x},0)
    = \Phi_{\sigma,\varphi}(e)
    \overset{\eqref{eq:Phi sigma varphi}}{=} (z , \varphi(\pi(e))).
\]
By Equation~\eqref{eq:varphi on Gz}, $(\vecrm{x},0)=\varphi(\pi(e))$ implies
that $\pi(e)=x\in G\z$. The above computation further implies that $z=1$, so
that \(
    e
    = \sigma(x)
    = \iota(1, x)\in \twist\z,
\) as claimed. This finishes the proof that $\twist\z$ is an embedded
submanifold of $\twist$.

\ref{item:E:iota diffeo} We already know from the theory of topological
twists that $\iota$ is a homeomorphism. To see that $\iota$ is smooth, fix
$(z, x_{0}) \in \mbb{T} \times G\z$. By Part~\ref{item:E:constant 1}, there
is an open neighbourhood $U$ of $x_{0}$ in $G$ and a smooth section $\sigma
\colon U \to \pi\inv(U)$ such that $\sigma(x) = \iota(1, x)$ for all $x \in
U_{0}\coloneqq U \cap G\z$. It suffices to show that $\iota$ is smooth on the
open neighbourhood $\mbb{T} \times U_{0}$ of $(z,x_{0})$ in $\mbb{T} \times
G\z$. As explained earlier, the map $\psi_\sigma$ in \eqref{eq:psi sigma} in
the proof of Theorem~\ref{thm:E is mfd} is a diffeomorphism of $\pi\inv(U)$
onto $\mbb{T} \times U$. For $x \in U_{0}$, we have $\psi_{\sigma} \circ
\iota(z,x) = \psi_{\sigma}(z \cdot \iota(1,x)) = \psi_{\sigma}(z \cdot
\sigma(x)) = (z,x)$, so $\psi_{\sigma} \circ \iota$ is the identity on, and
in particular a diffeomorphism of, $\mbb{T} \times U_{0}$. Hence
$\iota|_{\mbb{T} \times U_{0}} = \psi_{\sigma}\inv \circ (\psi_{\sigma} \circ
\iota|_{\mbb{T} \times U_{0}})$ is smooth. Now, $\PB{\pi\inv(G\z)}{\pi}{G\z}$
is a principal $\mbb{T}$-bundle (see Remark~\ref{rmk:pi submersion, restr
bdl}) and, by construction of the $\mbb{T}$-action on $\twist$, $\iota$ is a
$\mbb{T}$-equivariant bundle map. It now follows from
Lemma~\ref{lem:principal bundles are rigid} that the  smooth homeomorphism
$\iota$ is a diffeomorphism onto $\pi\inv(G\z)$.

\ref{item:E:r,s,pi submersions} We already know that $\pi$ is a submersion
because $\PB{\twist}{\pi}{G}$ is a smooth principal bundle
(Theorem~\ref{thm:E is mfd}\ref{item:E is mfd:principal bdl}). We prove that
$r_{\twist}$ is a submersion; a similar argument shows that $s_{\twist}$ is a
submersion. To see that $r_{\twist}$ is smooth, note that it is given by
\[
    r_{\twist}(e) = \iota(1, r_{G}(\pi(e))).
\]
Since the maps $\pi\colon \twist\to G$, $r_{G}\colon G\to G\z$, $\iota\colon
\mbb{T} \times G\z \to \twist$, and $G\z\to \mbb{T} \times G\z, x\mapsto (1,
x)$, are all smooth, it follows that $r_{\twist}\colon \twist\to \twist$ is
smooth with image contained in $\twist\z$. Since we have shown in
Part~\ref{item:E:Ez submfd} that $\twist\z$ is an embedded submanifold of
$\twist$, it now follows from \cite[Corollary~5.30]{Lee:Intro} that
$r_{\twist}$ is also smooth as a map $\twist\to \twist\z$, as claimed.

To see that $r_{\twist}$ is a submersion, fix $e$ in $\twist$. We have
\begin{align*}
    dr_{\twist}|_{e} = d(r_{G}\circ \pi)|_{e} = dr_{G}|_{\pi(e)} \circ d\pi|_{e}.
\end{align*}
Hence if $\pi(e)\in U_{\alpha}$, then
\[
dr_{\twist}|_{e} \circ d(\sigma_{\alpha}|_{\pi(e)})
    = dr_{G}|_{\pi(e)} \circ d\pi|_{e} \circ d(\sigma_{\alpha}|_{\pi(e)})
    = d(r_{G}\circ \pi \circ \sigma_{\alpha})|_{\pi(e)}
    = dr_G|_{\pi(e)}.
\]
This is surjective because $G$ is a Lie groupoid, so we conclude that
$dr_{\twist}|_{e}$ is surjective as well.
\end{proof}

\begin{remark}\label{rmk:PropB-D when E is already Lie}
Condition~\PropE\ in Proposition~\ref{prp:E:PropB and PropE} is necessary. To
see why, observe that since the topological group $\mbb{T}$ admits only one
nontrivial continuous automorphism, namely $z \mapsto \overline{z}$, if $G\z$
is connected, then there are just two possible continuous injective
homomorphisms $\iota \colon \mbb{T} \times G\z \to \twist$ that satisfy $\pi
\circ \iota = \operatorname{id}_{G\z}$. Now suppose that $G = G\z$ is a
(connected) manifold viewed as a groupoid consisting entirely of units, and
that $\twist = \mbb{T} \times  G$ viewed as a (topologically) trivial group
bundle over $G$, with $\pi \colon \twist \to G$ the projection map. Then
$\iota \colon \mbb{T} \times G\z \to \twist$ is either $\iota(z, x) = (z, x)$
or $\iota(z, x) = (\overline{z}, x)$. Let $f \colon G \to \mbb{T}$ be a
continuous function that is not differentiable. Fix a maximal atlas
$\mathcal{A} = \{(U_{\alpha}, \varphi_{\alpha}) : \alpha \in \mathfrak{A}\}$
for $G$; so the $U_{\alpha}$ are a base for the topology on $G$. Then
$\mathcal{A}$ is a base of bisections of $G$. For each $\alpha \in
\mathfrak{A}$, define $\sigma_{\alpha} \colon U_{\alpha} \to \twist$ by
$\sigma_{\alpha}(x) \coloneqq (f(x), x)$. Then $\{\sigma_{\alpha}\}_{\alpha
\in \mathfrak{A}}$ It also satisfies \PropB, because for any $\alpha,\alpha'
\in \mathfrak{A}$ and any $x \in U_{\alpha} \cap U_{\alpha'}$, we have
$\sigma_{\alpha}(x)\sigma_{\alpha'}(x)\inv = (f(x), x)(f(x), x)\inv = (1, x)
= \iota(1, x)$. As in Theorem~\ref{thm:E is mfd}, there is a unique smooth
structure on $\twist$ that makes the maps $\sigma_{\alpha}$ smooth.
Specifically, we identify $\twist$ with $\mbb{T} \times G$ by the map $h
\colon \mbb{T} \times G \to \twist$ defined by $h(z, x) \coloneqq (zf(x),
x)$, and then charts $\psi_{\alpha} \colon U_{\alpha} \times \mbb{T} \to
\pi\inv(U_{\alpha})$ are given by $\psi_{\alpha}(z,x) = h(z,x) = (zf(x), x)$.
In particular, we have $\sigma_{\alpha}(x) = \iota(f(x), x)$ or
$\iota(\overline{f(x)}, x)$ (depending on the choice of $\iota$ above) for
each $\alpha \in \mathfrak{A}$ and $x \in U_{\alpha}$. That is, the map
$k_{\alpha}$ in \PropE\ is either $f$ or $\overline{f}$, so is not smooth.
Hence $\mathcal{A}$ does not satisfy \PropE. Multiplication and inversion in
$\twist$ are smooth with respect to the smooth structure induced by
$\mathcal{A}$, and each $\sigma_{\alpha} \colon U_{\alpha} \to \twist$ is by
definition smooth in this smooth structure. So since the composition of the
map $x \mapsto \iota(1, x)\inv \sigma_{\alpha}(x)$ with the projection from
$\mbb{T} \times G$ onto $\mbb{T}$ is either $f$ or $\overline{f}$, neither of
which is smooth, we deduce that $\iota|_{\{1\} \times G}$ is not smooth as a
map into $\twist$ with the smooth structure $\mathcal{A}$.

That is, in order for the smooth structure on $\twist$ induced by
$\mathcal{A}$ to be compatible through $\iota$ with the canonical
differential structure on $\mbb{T}\times G$, we {\em must} assume \PropE.

Note that this example also satisfies \PropC\ and \PropD\ because $x \mapsto
(x,x)$ and $(x,x) \mapsto x^2 = x$ are mutually inverse diffeomorphisms
between $G\comp$ and $G$, and $x \mapsto x\inv$ is the identity map on $G$.
So even given all of \PropB--\PropD, the condition~\PropE is a necessary
additional assumption.
\end{remark}

The following companion result to Theorem~\ref{thm:E is mfd} and
Proposition~\ref{prp:E:PropB and PropE} shows that Condition~\PropC~is the
additional ingredient required to ensure that the twist $\twist$ is a Lie
groupoid under the smooth structure it obtains from the theorem.

\begin{thm}\label{thm:E is Lie}
Suppose that $ \ses $ is a topological twist over a Lie groupoid $G$, that
$\{(U_{\alpha},\varphi_{\alpha})\}_{\alpha\in\mathfrak{A}}$ is an atlas of
$G$, and that  $\{\sigma_{\alpha} \colon U_{\alpha} \to \pi\inv
(U_{\alpha})\}_{\alpha\in\mathfrak{A}}$ is a family of continuous sections.
    If $\{\sigma_{\alpha}\}_{\alpha}$ satisfies \PropB, \PropE, and~\PropC, then
    $\twist$ is a Lie groupoid with respect to  the smooth structure from Theorem~\ref{thm:E is mfd}
    and
    $ \ses $ is a Lie twist.
\end{thm}
\begin{proof}
To check that multiplication is smooth, fix a point $(e_{1},e_{2}) \in
\twist\comp$. Choose $\alpha \in \mathfrak{A}$ such that $\pi(e_{1} e_{2})
\in U_{\alpha}$. Since  we have a base for the topology on $G$ and
multiplication is continuous, there exist $\alpha_{1}, \alpha_{2} \in
\mathfrak{A}$ such that $\pi(e_{i}) \in U_{i}\coloneqq U_{\alpha_{i}}$ and
$U_{1} U_{2} \subseteq U_{\alpha}$. Since $\pi\inv(U_{1}) \fpsr
\pi\inv(U_{2})$ is a neighbourhood of $(e_{1}, e_{2})$ in $\twist\comp$, it
suffices to show that multiplication is smooth on $\pi\inv(U_{1}) \fpsr
\pi\inv(U_{2})$.

As before, each $\beta\in\mathfrak{A}$ gives rise to a diffeomorphism
$\psi_{\beta}\colon\pi\inv(U_\beta) \to \mbb{T} \times U_\beta$ such that $z
\cdot \sigma_\beta(\gamma) \mapsto (z, \gamma)$; we write $\psi_{i}\coloneqq
\psi_{\alpha_{i}}$ for $i=1,2$. The map $\psi_{1} \times \psi_{2} \colon
\pi\inv(U_{1}) \times \pi\inv(U_{2}) \to \mbb{T} \times U_{1} \times \mbb{T}
\times U_{2}$ is a diffeomorphism. Let $\psi \colon \pi\inv(U_{1}) \times
\pi\inv(U_{2}) \to \mbb{T}^2 \times U_{1} \times U_{2}$  be the
diffeomorphism obtained by composing the map $(z_{1}, \gamma_{1}, z_{2},
\gamma_{2}) \mapsto (z_{1}, z_{2}, \gamma_{1}, \gamma_{2})$ with $\psi_{1}
\times \psi_{2}$. Then $\psi$ carries $\pi\inv(U_{1}) \fpsr \pi\inv(U_{2})$
to $\mbb{T}^2 \times (U_{1} \fpsr U_{2}$). Since $r, s\colon \twist \to
\twist\z$ are submersions (Proposition~\ref{prp:E:PropB and
PropE}\ref{item:E:r,s,pi submersions}), the subset $\pi\inv(U_{1}) \fpsr
\pi\inv(U_{2})$ is an embedded submanifold of $\pi\inv(U_{1}) \times
\pi\inv(U_{2})$ (Proposition~\ref{prop:transverse maps implies fibred product
is mfd}). Likewise $U_{1} \fpsr U_{2}$ is an embedded submanifold of $U_{1}
\times U_{2}$ and hence $\mbb{T}^2 \times (U_{1} \fpsr U_{2})$ is an embedded
submanifold of $\mbb{T}^2 \times U_{1} \times U_{2}$. By \cite[Theorem~5.27
and Corollary~5.30]{Lee:Intro}, $\psi$  therefore restricts to a
diffeomorphism $\psi \colon \pi\inv(U_{1}) \fpsr \pi\inv(U_{2}) \to \mbb{T}^2
\times (U_{1} \fpsr U_{2})$. The trivialisation map $\psi_{\alpha} \colon
\pi\inv(U_{\alpha}) \to \mbb{T} \times U_{\alpha}$ restricts to a
trivialisation, also denoted $\psi_{\alpha}$, of the open $\mbb{T}$-invariant
subset $\pi\inv(U_{1} U_{2})$. Let $M  \colon  \twist\comp \to \twist$ be the
multiplication map. Using that the $\mbb{T}$-action is central at the second
step, we calculate:
\[
    \psi_{\alpha}(M(\psi\inv(z_{1}, z_{2}, \gamma_{1}, \gamma_{2})))
        = \psi_{\alpha}(M(\psi_{1}\inv(z_{1}, \gamma_{1}), \psi_{2}\inv(z_{2}, \gamma_{2})))
        = \psi_{\alpha}\big(z_{1}z_{2} \cdot (\sigma_{\alpha_{1}}(\gamma_{1})\sigma_{\alpha_{2}}(\gamma_{2}))\big).
\]
By \PropEC, the function $g = g_{\alpha,\alpha_{1},\alpha_{2}}  \colon  U_{1}
\fpsr U_{2} \to \mbb{T}$ determined by
$\sigma_{\alpha_{1}}(\gamma_{1})\sigma_{\alpha_{2}}(\gamma_{2}) =
g(\gamma_{1},\gamma_{2})\cdot \sigma_{\alpha}(\gamma_{1}\gamma_{2})$ is
smooth. We have
\[
    \psi_{\alpha}\big(z_{1}z_{2} \cdot (\sigma_{\alpha_{1}}(\gamma_{1})\sigma_{\alpha_{2}}(\gamma_{2}))\big)
            = \psi_{\alpha}\big(z_{1}z_{2} \cdot(g (\gamma_{1},\gamma_{2})\cdot \sigma_{\alpha}(\gamma_{1}\gamma_{2}))\big)
            = (z_{1}z_{2}g(\gamma_{1},\gamma_{2}), \gamma_{1}\gamma_{2}).
\]
Since the (iterated) multiplication map $\mbb{T}^3 \to \mbb{T}$, the map $g$
and the multiplication map from $G\comp$ to $G$ are all smooth, we deduce
that multiplication in $\twist$ is smooth.

The twist $\twist$ is by assumption a topological groupoid. By
Proposition~\ref{prp:E:PropB and PropE}\ref{item:E:Ez submfd}, $\twist\z$ is
an embedded submanifold; this takes care of~\ref{item:Liegpd:mfds} and
\ref{item:Liegpd:inclusion map}. By Proposition~\ref{prp:E:PropB and
PropE}\ref{item:E:r,s,pi submersions}, $r$ and $s$ are submersions, so
\ref{item:Liegpd:s and r} holds. We have further proved above that the
multiplication of $\twist$ is smooth, so \ref{item:Liegpd:multiplication map}
holds. By \cite[Proposition 1.1.5]{Mackenzie:2005:book}, $\twist$ is a Lie
groupoid. In Part~\ref{item:E:iota diffeo} and the remainder of
Part~\ref{item:E:r,s,pi submersions} of Proposition~\ref{prp:E:PropB and
PropE}, we have further shown that \ref{item:LT:iota smooth} respectively
\ref{item:LT:pi submersion} of Definition~\ref{dfn:Lie twist} hold, so that
$\ses$ is indeed a Lie twist.
\end{proof}

\subsection{Lie twists over \etale\ groupoids}\label{ssec:Lie over etale}

In this subsection, we revisit Conditions \PropB--\PropD\ in the situation of
a twist $\twist$ over an {\em \etale} Lie groupoid $G$. The point is that
while Condition~\PropE\ is phrased purely in terms of the manifold structure
on $G\z$, the remaining conditions depend on the manifold structure of $G$ as
a whole. We know from Section~\ref{sec:etale smooth units} that for \etale\
groupoids $G$, the manifold structure of $G$ is completely determined by that
of $G\z$; and critically, this is the data that is encoded by a Cartan pair
$B\subseteq A$ of $C^{*}$-algebras together with a smooth subalgebra
$B^{\infty}$ of $B$. So our work in this section, specifically
Corollary~\ref{cor:etale twist from sections}, is the articulation point
between Conditions \PropE--\PropD\ and algebraic conditions on a Cartan pair
of $C^{*}$-algebras (see Section~\ref{sec:Cstar}).

\begin{defn}\label{dfn:theProps:etale case}
Suppose that $ \ses $ is a topological twist over a groupoid $G$, that $G\z$
is a manifold, that $\{U_{\alpha}\}_{\alpha\in\mathfrak{A}}$ is a collection
of open subsets of $G$, and that  $ \{\sigma_{\alpha} \colon U_{\alpha} \to
\pi\inv (U_{\alpha})\}_{ \alpha\in\mathfrak{A}}$ is a family of continuous
sections  of $\pi$. We define the following conditions for the family
$\{\sigma_{\alpha}\}_{\alpha\in\mathfrak{A}}$.
\begin{itemize}
    \item[\namedlabel{property:S
        infty,etale}{\normalfont(S$^{\infty}_{\z}$)}] For all
        $\alpha,\alpha' \in \mathfrak{A}$, the map $r(U_{\alpha} \cap
        U_{\alpha'}) \to \mbb{T} \times G\z$, $r(\gamma) \mapsto
        \iota\inv(\sigma_{\alpha}(\gamma)\sigma_{\alpha'}(\gamma)\inv)$ for
        $\gamma \in U_{\alpha} \cap U_{\alpha'}$, is smooth;
    \item[\namedlabel{property:M
        infty,etale}{\normalfont(M$^{\infty}_{\z}$)}] For all $\alpha,
        \alpha_{1}, \alpha_{2} \in \mathfrak{A}$, the map $r(U_{\alpha_{1}}
        U_{\alpha_{2}} \cap U_{\alpha}\inv) \to \mbb{T} \times G\z$,
        $r(\gamma_{1}) \mapsto
        \iota\inv\big(\sigma_{\alpha_{1}}(\gamma_{1})\sigma_{\alpha_{2}}(\gamma_{2})\sigma_{\alpha}(\gamma_{1}\gamma_{2})\inv\big)$
        for all $(\gamma_{1},\gamma_{2}) \in U_{\alpha_{1}} \fpsr
        U_{\alpha_{2}}$ with $\gamma_{1}\gamma_{2} \in U_{\alpha}$, is
        smooth; and
    \item[\namedlabel{property:I
        infty,etale}{\normalfont(I$^{\infty}_{\z}$)}] For all
        $\alpha,\alpha' \in \mathfrak{A}$, the map $r(U_{\alpha} \cap
        U_{\alpha'}\inv) \to \mbb{T} \times G\z$, $r(\gamma) \mapsto
        \iota\inv(\sigma_{\alpha}(\gamma)\sigma_{\alpha'}(\gamma\inv))$ for
        $\gamma \in U_{\alpha} \cap U_{\alpha'}\inv$, is smooth.
    \end{itemize}
\end{defn}
\begin{remark}
Condition~\PropE\ is already given in terms of functions defined on $G\z$. So
similarly to (U$^{\infty}_{\mathbb{T}}$), any reasonable definition of a
Condition~(U$^{\infty}_{\z}$) would coincide with \PropE.
\end{remark}

The above definition makes no reference to a smooth structure on $G$. The
smooth structure that is induced on an \etale\ $G$ from $G\z$ is built in a
way that the above conditions translate to Conditions~\PropB, \PropC, \PropD\
in Definition~\ref{dfn:theProps}. To be precise:

\begin{lemma}\label{lem:etalisation of Props}
Let $G$ be an \etale\ groupoid such that $G\z$ is a manifold and such that
$G$ acts smoothly on $G\z$. Suppose that $ \ses $ is a topological twist. Let
$\{U_{\alpha} \}_{\alpha\in\mathfrak{A}}$ be an open cover of $G$, consisting
of bisections, and for each $\alpha \in \mathfrak{A}$, let $\sigma_{\alpha}
\colon U_{\alpha} \to \pi\inv (U_{\alpha})$ be a continuous section. Suppose
that $\{\sigma_{\alpha}\}_{\alpha}$ satisfies~\PropE. With respect to the
Lie-groupoid structure on $G$ obtained from Proposition~\ref{prop:lcdiff iff
etale Lie gpd}, the following hold:
\begin{enumerate}[label=\textup{(\arabic*)}]
    \item $\{\sigma_{\alpha}\}_{\alpha}$ satisfies \PropB\ if and only if
        it satisfies \PropBz;
    \item $\{\sigma_{\alpha}\}_{\alpha}$ satisfies \PropC\ if and only if
        it satisfies \PropCz; and
    \item $\{\sigma_{\alpha}\}_{\alpha}$ satisfies \PropD\ if and only if
        it satisfies \PropDz.
\end{enumerate}
\end{lemma}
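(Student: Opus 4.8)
The plan is to handle all three equivalences by a single mechanism. In each case, the map appearing in the un-subscripted condition (\PropB, \PropC, or \PropD) and the map appearing in its $\z$-subscripted counterpart (\PropBz, \PropCz, or \PropDz) are given by the \emph{same} formula $\iota\inv(\cdots)$ but differ in their domain: the former lives on an open subset of $G$ (or of $G\comp$), the latter on the image of that subset under the range map inside $G\z$. So it suffices to produce, in each case, a diffeomorphism between the two domains that intertwines the two maps; smoothness of one then holds if and only if smoothness of the other does, since composing with a diffeomorphism both preserves and reflects smoothness.

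First I would record the standing geometric facts. By Proposition~\ref{prop:lcdiff iff etale Lie gpd} the chosen smooth structure makes $G$ an \etale\ Lie groupoid, and by Lemma~\ref{lem:lcdiff => all bisections are smooth} \emph{every} open bisection is a smooth bisection; in particular $r|_B\colon B\to r(B)$ is a diffeomorphism onto an open subset of $G\z$ for every open bisection $B$. I would also use that any open subset of a bisection is a bisection and that (by \cite[Proposition~3.8]{Exel:BBMS}) the product $U_{\alpha_1}U_{\alpha_2}$ of bisections is a bisection. With these in hand, parts (1) and (3) are immediate: $U_\alpha\cap U_{\alpha'}$ is an open subset of the bisection $U_\alpha$, hence a smooth bisection, so $r|_{U_\alpha\cap U_{\alpha'}}$ is a diffeomorphism onto $r(U_\alpha\cap U_{\alpha'})$; the map of \PropB\ is exactly the map of \PropBz\ precomposed with this diffeomorphism, which gives the equivalence. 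Part (3) is identical after replacing $U_{\alpha'}$ by the bisection $U_{\alpha'}\inv$, so that $U_\alpha\cap U_{\alpha'}\inv$ is again a smooth bisection and the map of \PropD\ equals the map of \PropDz\ precomposed with $r|_{U_\alpha\cap U_{\alpha'}\inv}$.

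Part (2) is the one substantive step. Here the domain of the \PropC-map is $D=\{(\gamma_1,\gamma_2)\in U_{\alpha_1}\fpsr U_{\alpha_2}:\gamma_1\gamma_2\in U_\alpha\}$, an open subset of the submanifold $U_{\alpha_1}\fpsr U_{\alpha_2}$ of $G\comp$, whereas the \PropCz-map lives on the $r$-image of the bisection $U_{\alpha_1}U_{\alpha_2}\cap U_\alpha$ of those products lying in $U_\alpha$, and the two maps agree under the assignment $(\gamma_1,\gamma_2)\mapsto r(\gamma_1)=r(\gamma_1\gamma_2)$. I would show this assignment is a diffeomorphism by factoring it as $D \xrightarrow{\,M\,} U_{\alpha_1}U_{\alpha_2}\cap U_\alpha \xrightarrow{\,r\,} r(U_{\alpha_1}U_{\alpha_2}\cap U_\alpha)$. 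The second arrow is a diffeomorphism by the preliminary, being a restriction of $r$ to a bisection. For the first, the multiplication map $M$ is smooth because $G$ is a Lie groupoid; it is bijective because every element of a product of bisections factors uniquely (the factor $\gamma_1$ is the unique preimage of $r(\gamma_1\gamma_2)$ under $r|_{U_{\alpha_1}}$, and then $\gamma_2=\gamma_1\inv(\gamma_1\gamma_2)$); and its inverse $\delta\mapsto\bigl((r|_{U_{\alpha_1}})\inv(r(\delta)),\,(r|_{U_{\alpha_1}})\inv(r(\delta))\inv\,\delta\bigr)$ is smooth since $r$, the inverse of $r|_{U_{\alpha_1}}$, and the inversion and multiplication of $G$ are all smooth. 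Hence $M$ is a diffeomorphism, the composite $(\gamma_1,\gamma_2)\mapsto r(\gamma_1)$ is a diffeomorphism, and \PropC\ $\iff$ \PropCz\ follows as before.

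The main obstacle is precisely this verification in part (2) that multiplication restricts to a \emph{diffeomorphism} $D\to U_{\alpha_1}U_{\alpha_2}\cap U_\alpha$, and in particular that its set-theoretic inverse is smooth; the hypothesis that the $U_\alpha$ are bisections is what makes the factorisation unique and lets one express the inverse through the diffeomorphism $(r|_{U_{\alpha_1}})\inv$. I expect Condition~\PropE\ to play no active role in these equivalences beyond placing us in the setting of Definition~\ref{dfn:theProps:etale case}: each product inside $\iota\inv(\cdots)$ has range map landing in $G\z$, so $\iota\inv$ is defined and each map is automatically a well-defined continuous map into $\mbb{T}\times G\z$ whose smoothness is the only thing at issue.
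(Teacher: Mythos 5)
Your proposal is correct and takes essentially the same route as the paper: the paper's entire proof is that, by Lemma~\ref{lem:lcdiff => all bisections are smooth}, every open bisection of $G$ is a smooth bisection, so each relevant restriction of $r$ is a diffeomorphism intertwining the paired conditions, and ``the result follows.'' Your added detail for part~(2)---that multiplication restricts to a diffeomorphism from $\{(\gamma_{1},\gamma_{2}) \in U_{\alpha_{1}} \fpsr U_{\alpha_{2}} : \gamma_{1}\gamma_{2}\in U_{\alpha}\}$ onto the bisection $U_{\alpha_{1}}U_{\alpha_{2}}\cap U_{\alpha}$, with inverse expressed through $(r|_{U_{\alpha_{1}}})\inv$---is precisely the step the paper leaves implicit, and it is argued correctly.
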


\begin{proof}
    By Lemma~\ref{lem:lcdiff => all bisections are smooth}, every bisection of $G$ is a smooth bisection. In particular, for all $\alpha\in\mathfrak{A}$,  $r|_{U_{\alpha}}\colon U_{\alpha} \to r(U_{\alpha})$ is a diffeomorphism. The result follows.
\end{proof}

We can now combine all of our previous results.
\begin{corollary}\label{cor:etale twist from sections}
Let $G$ be an \etale\ groupoid such that $G\z$ is a manifold and let $\ses$
be  a topological twist. Let $\{U_{\alpha} \}_{\alpha \in \mathfrak{A}}$ be a
base for the topology on $G$ consisting of bisections, and for each $\alpha
\in \mathfrak{A}$, let $\sigma_{\alpha} \colon U_{\alpha} \to \pi\inv
(U_{\alpha})$ be a continuous section. Suppose that
$\{\sigma_{\alpha}\}_{\alpha}$ satisfies~\PropE, \PropBz~and~\PropCz. If $G$
acts smoothly on $G\z$, then there are unique smooth structures on $\twist$
and $G$ with respect to which the $\sigma_{\alpha}$ are all smooth,
$\PB{\twist}{\pi}{G}$ is a smooth principal $\mbb{T}$-bundle, and $G$ is an
\etale\ Lie groupoid. With respect to these smooth structures, $\sestriple$
is a Lie twist.
\end{corollary}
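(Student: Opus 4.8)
The plan is to assemble the statement from the four structural results of this section, fixing first the smooth structure on $G$ and then lifting it to $\twist$, and using the refinement machinery of Lemma~\ref{lem:refinement} to bridge the gap between the given \emph{base} of bisections and the \emph{atlas} hypothesis of Theorem~\ref{thm:Sigma is mfd}. First I would equip $G$ with its smooth structure: since $G$ acts smoothly on $G\z$, Proposition~\ref{prop:lcdiff iff etale Lie gpd} supplies a unique smooth structure on $G$ making it an \etale\ Lie groupoid, and it is this structure that I fix throughout. With respect to it, Lemma~\ref{lem:etalisation of Props}---whose hypotheses hold because $\{U_{\alpha}\}$ is an open cover by bisections on which $\{\sigma_{\alpha}\}$ satisfies \PropE---converts the \etale\ conditions \PropBz, \PropCz, \PropDz\ into the conditions \PropB, \PropC, \PropD\ of Definition~\ref{dfn:theProps}. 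Thus $\{\sigma_{\alpha}\}$ satisfies all of \PropE--\PropD.

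Next I would pass to an atlas. Because $\{U_{\alpha}\}$ is a base of bisections and the chart domains of $G\z$ cover $G\z$, I can form the refinement $\{V_{\beta}\}$ whose members are the sets $U_{\alpha}\cap r\inv(W)$ for $W$ the domain of a chart of $G\z$; each such $V_{\beta}$ is a bisection with $r(V_{\beta})$ inside a single chart, so (using that $r|_{V_{\beta}}$ is a diffeomorphism, Lemma~\ref{lem:lcdiff => all bisections are smooth}) it carries a chart $\varphi_{\beta}$, and $\{(V_{\beta},\varphi_{\beta})\}$ is an atlas that is simultaneously a base of bisections. Setting $\kappa_{\beta}\coloneqq\sigma_{F(\beta)}|_{V_{\beta}}$ along the refinement map $F$, Lemma~\ref{lem:refinement}\ref{item:refinments inherit PropBCD} shows $\{\kappa_{\beta}\}$ inherits \PropE--\PropD. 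Theorem~\ref{thm:Sigma is mfd}, applied to $\{(V_{\beta},\varphi_{\beta})\}$ and $\{\kappa_{\beta}\}$ via \PropE\ and \PropB, then furnishes the smooth structure on $\twist$ under which $\PB{\twist}{\pi}{G}$ is a smooth principal $\mbb{T}$-bundle, all $\kappa_{\beta}$ are smooth, $\iota$ is a diffeomorphism onto $\pi\inv(G\z)$ (part~\ref{item:Sigma is mfd:iota diffeo}, hence in particular smooth), and $\pi,r,s$ are submersions (part~\ref{item:Sigma is mfd:r,s,pi submersions}). Since $\{V_{\beta}\}$ is a base of bisections on which $\{\kappa_{\beta}\}$ additionally satisfies \PropC\ and \PropD, Proposition~\ref{prp:Sigma is Lie} upgrades $\twist$ to a Lie groupoid, so in particular multiplication in $\twist$ is smooth.

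With smooth multiplication and $\iota$ a diffeomorphism now available, I would finally lift smoothness back to the original family: Lemma~\ref{lem:refinement}\ref{item:refinements preserve smoothness}, using \PropB\ for $\{\sigma_{\alpha}\}$, the smoothness of the $\kappa_{\beta}$, and these two just-established facts, shows that every $\sigma_{\alpha}$ is smooth. Collecting the conclusions, $\twist$ is a Lie groupoid, $\pi$ is a submersion, and $\iota$ is smooth, which is precisely Definition~\ref{dfn:Lie twist}; hence $\sestriple$ is a Lie twist. For uniqueness, the structure on $G$ is the unique \etale\ Lie structure by Proposition~\ref{prop:lcdiff iff etale Lie gpd}, while any smooth structure on $\twist$ making it a smooth principal bundle with all $\sigma_{\alpha}$ smooth makes each $\kappa_{\beta}$ smooth by restriction (Lemma~\ref{lem:refinement}\ref{item:refinements inherit smoothness}), and so coincides with ours by the uniqueness clause of Theorem~\ref{thm:Sigma is mfd} applied to $\{\kappa_{\beta}\}$. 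The only genuine friction I anticipate is organisational rather than mathematical: keeping straight which results demand a \emph{base} of bisections (Proposition~\ref{prp:Sigma is Lie}) as against an \emph{atlas} (Theorem~\ref{thm:Sigma is mfd}), and sequencing the two invocations of Lemma~\ref{lem:refinement} so that smoothness of multiplication in $\twist$ is in hand \emph{before} smoothness of the $\sigma_{\alpha}$ is deduced.
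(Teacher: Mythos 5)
Your proposal is correct and follows essentially the same route as the paper: fix the unique smooth structure on $G$ via Proposition~\ref{prop:lcdiff iff etale Lie gpd}, translate \PropBz, \PropCz, \PropDz\ into \PropB, \PropC, \PropD\ via Lemma~\ref{lem:etalisation of Props}, refine $\{U_{\alpha}\}_{\alpha}$ to a family that is simultaneously an atlas and a base of bisections, and then invoke Lemma~\ref{lem:refinement}\ref{item:refinments inherit PropBCD} together with Theorem~\ref{thm:Sigma is mfd} and Proposition~\ref{prp:Sigma is Lie}. If anything, you spell out steps that the paper compresses into ``the result now follows from Theorem~\ref{thm:Sigma is mfd}''---the explicit appeal to Proposition~\ref{prp:Sigma is Lie} for the Lie-groupoid structure on $\twist$, the use of Lemma~\ref{lem:refinement}\ref{item:refinements preserve smoothness} to recover smoothness of the original $\sigma_{\alpha}$, and the uniqueness argument---so your write-up is a more detailed rendering of the same argument rather than a different one.
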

\begin{proof}
By Proposition~\ref{prop:lcdiff iff etale Lie gpd}, since $G$ acts smoothly
on $G\z$, $G$ has a unique smooth structure with respect to which it is an
\etale\ Lie groupoid. Moreover, a chart around any $\gamma \in G$ is given by
$(V, \varphi \circ r|_V)$, where $V$ is a (sufficiently small) open bisection
containing $\gamma$ and $(r(V), \varphi)$ is a chart around $r(\gamma)$ in
$G\z$; see Proposition~\ref{prop:cR}. Using that $\{U_{\alpha}\}_{\alpha}$ is
a cover of open bisections, we may therefore let $\{(V_\beta,
\psi_\beta)\}_{\beta \in \mathfrak{B}}$ be an atlas of $G$ for which there
exists a refinement map $F \colon \mathfrak{B} \to \mathfrak{A}$ such that
$V_{\beta} \subseteq U_{F(\beta)}$ for each $\beta$.

Lemma~\ref{lem:etalisation of Props} states that the assumed properties on
$\sigma_{\alpha}$ translate to the three properties~\PropE, \PropB\
and~\PropC\ with respect to the Lie-groupoid structure on $G$. Thus, by
Lemma~\ref{lem:refinement}\ref{item:refinments inherit PropBCD}, the sections
$\kappa_\beta \coloneqq  \sigma_{F(\beta)}|_{V_\beta}$ also satisfy the three
properties~\PropE, \PropB\ and \PropC. Since the domains of these refined
sections coincide with the domains of the charts of an atlas for $G$, the
result now follows from Theorems \ref{thm:E is mfd}~and~\ref{thm:E is Lie}.
\end{proof}

\begin{remark}
If we {\em start} with a Lie twist $ \ses $ where $G$ is \'etale, then we can
choose a family $\{(U_{\alpha}, \sigma_{\alpha})\}_{\alpha}$ of smooth local
sections of $\pi$ supported on bisections; see Lemma~\ref{lem:motivation}.
These then satisfy the conditions of Lemma~\ref{lem:etalisation of Props},
and the uniqueness assertion in Corollary~\ref{cor:etale twist from sections}
implies that the resulting smooth structure is the same as the one we started
with.
\end{remark}

\section{Smooth Cartan triples}\label{sec:Cstar}

In this section we prove our main $C^{*}$-algebraic theorem. Given a Cartan
pair $B \subseteq A$ in which $\widehat{B}$ is a manifold, we describe
algebraic conditions---phrased purely in terms of the smooth subalgebra
$B^{\infty}$ of $B$, and $C^{*}$-algebraic data intrinsic to the pair $B
\subseteq A$---that are equivalent to the corresponding Weyl groupoid acting
smoothly on its unit space, and the sections of the Weyl twist corresponding
to the given normalisers satisfying \PropE--\PropD. 
We incorporate all of this data into our definition of a smooth Cartan triple
$(A, B, \mathscr{N})$. Our main theorem says, roughly speaking, that every
smooth Cartan triple arises from a Lie twist over an effective \etale\ Lie
groupoid, and conversely that every such twist gives rise to a smooth Cartan
triple. We briefly discuss, in two concluding remarks, how our result can be
combined with Connes' reconstruction theorem, and also the dependence of the
smooth structure on the Weyl twist coming from our theorem on the choice of a
family $\mathscr{N}$ of normalisers in a smooth Cartan triple.

We need a little background on twisted groupoid $C^*$-algebras. Given a twist
$\ses$, we write $C_{c}(G; \twist)$ for the space of compactly supported
functions $f \colon \twist \to \mbb{C}$ that are $\mbb{T}$-equivariant in the
sense that $f(z\cdot e) = zf(e)$ for all $e \in \twist$ and $z \in \mbb{C}$.
The twisted $C^*$-algebra $C^*_r(G; \twist)$ is the completion of $C_{c}(G;
\twist)$ in the norm determined by a natural family of regular
representations \cite{Kum:Diags, Sims:gpds}. For $f \in C_{c}(G\z)$ the
formula $\tilde{f}(\iota(z, x)) = zf(x)$ defines the unique $\tilde{f} \in
C_{c}(G;\twist)$ such that $\tilde{f}|_{\twist\z} = f$. The map $f \mapsto
\tilde{f}$ extends to an isomorphism of $C_{0}(G\z)$ onto an abelian
subalgebra of $C^*_r(G; \twist)$, and we use this isomorphism to regard
$C_{0}(G\z)$ as a subalgebra of $C^*_r(G; \twist)$. We can make this
identification precise as follows. The identity map on $C_{c}(G; \twist)
\subseteq C^*_r(G; \twist)$ extends to a norm-decreasing linear map from
$C^*_r(G; \twist)$ to the space $C_{0}(G; \twist)$ of $\mbb{T}$-equivariant
elements of $C_{0}(\twist)$, and we frequently use this map to identify
elements of $C^*_r(G;\twist)$ with functions in $C_{0}(G; \twist)$. For more
detail see \cite[Chapter~11]{Sims:gpds} and \cite[Theorem~1.1]{DWZ:jmap}.

We also need a little background about Renault's construction. A \emph{Cartan
subalgebra} $B$ of a $C^{*}$-algebra $A$ is a maximal abelian subalgebra such
that there is a faithful conditional expectation $\condExp \colon A \to B$
and such that the set $N(B) \coloneqq \{n \in A : nBn^{*} \cup n^{*}Bn
\subseteq B\}$ of normalisers of $B$ densely spans $A$
\cite[Definition~5.1]{Renault:Cartan}. (Renault's definition requires, in
addition, that $B$ contains an approximate unit for $A$, but Pitts proved
\cite[Theorem~2.6]{pitts2022normalizers} that this is a consequence of the
remaining conditions.) Let $\widehat{B}$ be the spectrum of $B$, so that we
may identify $B$ with $C_{0} (\widehat{B})$ via the Gelfand isomorphism.

We will explain how  $B\subseteq A$ gives rise to an effective groupoid
$G_{A, B}$ and a twist $\twist_{A, B}$ (cf.~\cite[1.6]{Kum:Diags} or
\cite[Proposition~4.7]{Renault:Cartan}). Recall that $\suppo(k)$ denotes the
\emph{open support}  $\{x \in \widehat{B}\,\vert\, k(x) \neq 0\}$ of $k \in
C_{0}(\widehat{B})$.

If $B \subseteq A$ is any abelian $C^*$-subalgebra of a $C^*$-algebra $A$
that contains an approximate unit for $A$, and if $n \in A$ satisfies $n^*Bn,
nBn^* \subseteq B$, then $n^{*}n, nn^{*} \in B$, and there exists a unique
*-isomorphism
\begin{equation}\label{eq:def-theta}
    \theta_{n} \colon C_{0} (\suppo  (n^{*}n)) \to C_{0} (\suppo  (nn^{*}))
    \quad
    \text{ that satisfies }
    \quad
    n f n^{*} = \theta_{n}(f) \, n n^{*}
\end{equation}
for all $ f \in C_{0}(\widehat{B})$
\cite[Lemma~2.3]{CRST:2021:Reconstruction}. This formula applied to $f =
(n^*n)^p$ shows that $\theta_n((n^*n)^p) = (nn^*)^p$ for all $p \ge 0$. Since
$n (n^*n)^p = (nn^*)^p n$ for all $p$, it follows that
\begin{equation}\label{eq:push through n}
    n h = \theta_n(h) n\quad\text{ for all $h \in C^*(n^*n) = C_{0}(\supp^0(n^*n))$;}
\end{equation}
see also \cite[Lemma 4.2]{DGNRW:Cartan}. Recall that, if $n,m\in N(B)$, then
$\theta_{n} \circ \theta_m = \theta_{nm}$ and $\theta_{n^{*}} =
\theta_{n}\inv$  (cf.~\cite[Corollary~1.7]{Kum:Diags}). We point out that, in
the literature, the focus is usually on the homeomorphism
\begin{equation}\label{eq:def-theta-hat}
    \widehat{\theta}_{n}\colon \suppo  (n^{*}n) \to \suppo  (nn^{*})
        \quad
        \text{ determined by }
        \quad
        \theta_{n} (f) = f\circ \widehat{\theta}_{n}\inv.
\end{equation}

Now suppose that $B\subseteq A$ is a Cartan pair in the sense of
\cite{Renault:Cartan}. The {\em Weyl groupoid} $G_{A, B}$ is the quotient of
\[
    \left\{(n, x) : n\in N(B), x\in \suppo  (n^{*}n) \right\}
\]
by the equivalence relation
\[\begin{split}
       & (n, x) \sim (m, y) \iff \\
       &x=y \text{ and }
        \theta_{n}\vert_{C_{0}(U)} = \theta_m\vert_{C_{0}(U)}
        \text{ for an open neighbourhood $U\subseteq\widehat{B}$ of $x$.}
\end{split}\]
This groupoid has unit space $\{[k,x] : k \in B, k(x)\not= 0\}$ homeomorphic
to $\widehat{B}$ via $[k,x] \mapsto x$, and groupoid operations $r([n,x]) =
\widehat{\theta}_{n}(x)$, $s([n,x]) = x$, $[n,\widehat{\theta}_m(x)][m, x] =
[nm,x]$ and $[n,x]\inv = [n^{*}, \widehat{\theta}_{n}(x)]$. The sets $\{[n,x]
: (n^{*}n)(x) \not= 0\}$ indexed by $n \in N(B)$ constitute a base of open
bisections for the topology on~$G_{A, B}$.

Let $\twist_{A, B}$ denote the quotient of the same set $\{(n,x) : n \in
N(B), x \in \suppo(n^{*}n)\}$ by the more stringent equivalence relation
$\approx$ given by
\[
    (n,x) \approx (m,y) \quad\iff\quad (n,x)\sim(m,y)\text{ and } \condExp(n^{*}m)(x) > 0.
\]
We write $\llbracket n, x\rrbracket$ for the equivalence class of $(n, x)$
under $\approx$. Then the quotient
\[
    \twist_{A, B} = \left\{(n, x) : n\in N(B), x\in \suppo(n^{*}n) \right\}/{\approx}
\]
is a groupoid with structure maps identical to the ones for $G_{A, B}$ above,
with $[\,, \,]$ replaced with $\llbracket\,, \,\rrbracket$. The sets
$\{\llbracket zn,x\rrbracket : (n^{*}n)(x) \not= 0, z \in U\}$ indexed by
normalisers $n \in N(B)$ and open sets $U \subseteq \mbb{T}$ constitute a
base for the topology on $\twist_{A, B}$. There is a map $\pi \colon
\twist_{A, B} \to G_{A, B}$ given by $\pi(\llbracket n, x\rrbracket) = [n,
x]$, and there is an inclusion $i \colon \mbb{T} \times G_{A, B}\z \to
\twist_{A, B}$ given by $i((z, x)) = [k, x]$ for any $k \in B$ such that
$k(x) = z$. The sequence
\[
    \mbb{T} \times G_{A, B}\z  \overset{\iota}{\to} \twist_{A, B} \overset{\pi}{\to} G_{A, B}
\]
is a twist, called the \emph{Weyl twist} of $B\subseteq A$.

Renault proves in \cite{Renault:Cartan} that
there is an isomorphism $a \mapsto \hat{a}$ from $A$ to $C^*_r(G_{A, B};
\twist_{A, B})$ such that, identifying elements of the reduced $C^*$-algebra
$C^*_r(G_{A, B}; \twist_{A, B})$ with  functions in $C_{0}(\twist_{A, B})$ as
above, $\hat{a}$ is given by the formula
\begin{equation}\label{eq:evaluation map}
\hat{a}(\llbracket n, x\rrbracket) = \frac{P(n^*a)(x)}{\sqrt{n^*n(x)}},
\end{equation}
and this isomorphism restricts to the Gelfand isomorphism $B \mapsto
C_{0}(\widehat{B}) = C_{0}(G_{A,B}\z)$; in particular, the latter is a Cartan
subalgebra of $C^{*}_{r}(G_{A, B};\twist_{A, B})$. Under this identification,
\cite[Proposition~4.8.]{Renault:Cartan} (see also the proof of
\cite[Proposition~11.1.4]{Sims:gpds}) shows that every $\mbb{T}$-equivariant
function on $\twist_{A, B}$ whose support in $\twist_{A,B}$ is the preimage
of a bisection in $G_{A, B}$ is a normaliser of $C_{0}(G_{A, B}\z) \cong B$
in $C^{*}_{r}(G_{A, B}; E_{A, B}) \cong A$, and every normaliser $n\in  N(B)$
has this form.

The following lemma describes how to recognise when the \'etale groupoid
underlying a twist can be made into a Lie groupoid consistent with the given
smooth structure on its unit space solely in terms of $C^*$-algebraic
information. We are most interested in this when $G$ is effective, or
equivalently $C_{0}(G\z)$ is a masa in $C^{*}_{r}(G;\twist)$\footnote{If $G$
is not effective, then it admits an open bisection $U \subseteq G \setminus
G\z$ consisting of isotropy, and any nonzero $f \in C_{c}(U)$ belongs to $B'
\setminus B$, so $B$ is not a masa. If $G$ is effective, then, for example
\cite[Corollary~5.3]{CRST:2021:Reconstruction} shows that $B$ is maximal
abelian.}, but it is not difficult to prove without this additional
hypothesis.

\begin{ntn}
Throughout this section if $M$ is a manifold, we will write $C^\infty_0(M)
\coloneqq C_0(M) \cap C^\infty(M)$ and $C_c^\infty(M) \coloneqq C_c(M) \cap
C^\infty(M) $.
    In particular, if $M$ is a manifold and $U \subseteq M$ is an open subset, then $C^\infty_0(U)$ is the space of smooth $C_0$-functions on $U$ regarded as a manifold in its own right---it is not, in general, identifiable with $\{f \in C^\infty_0(M) : \suppo(f) \subseteq U\}$ because the extension of a function $f \in C^\infty_0(U)$ by zero off $U$ need not be smooth on $M$. However, $C^\infty_c(U)$ \emph{can} be identified with $\{f \in C^\infty_c(M) : \supp(f) \subseteq U\}$, and we will frequently make this identification.%
    \footnote{%
        Defining $C^\infty_0(M) $ to be
        \(
        \{f\in C^{\infty}(M) : f\text{ and all of its derivatives vanish at infinity}\}
        \)
        may seem more natural, but it was pointed out to us by Sven Raum and Jonathan Taylor that this algebra is only defined if the tangent bundle of the manifold $M$ carries additional structure.
    }
\end{ntn}

\begin{lemma}\label{lem:C*-interpretation of lcdiff}
Suppose that $G$ is an \etale\ groupoid, $\ses$ is a topological twist, and
$G\z$ is a manifold. Let $A=C^{*}_{r}(G;\twist)$, and let $B  = C_{0}(G\z)
\subseteq A$. Then the following are equivalent:
\begin{enumerate}[label=\textup{(\arabic*)}]
    \item\label{item:C*-interpretation:G acts smoothly} $G$ is an \etale\
        Lie groupoid;
    \item\label{item:C*-interpretation:all normalisers smooth} for every
        normaliser $n \in N(B)$, the isomorphism $\theta_{n} \colon
        C_{0}(\suppo (n^{*}n)) \to C_{0}(\suppo (nn^{*}))$
        of~\eqref{eq:def-theta} restricts to a *-isomorphism from
        $C_{0}^{\infty}(\suppo (n^{*}n))$ to $C_{0}^{\infty}(\suppo
        (nn^{*}))$; and
    \item\label{item:C*-interpretation:enough normalisers smooth} there is
        a family $\mathscr{N} \subseteq N(B)$ of normalisers of $B$ that
        densely span $A$ such that for each $n \in \mathscr{N}$, the
        isomorphism $\theta_{n} \colon C_{0}(\suppo (n^{*}n)) \to
        C_{0}(\suppo (nn^{*}))$ of~\eqref{eq:def-theta} restricts to a
        *-isomorphism from $C_{0}^{\infty}(\suppo (n^{*}n))$ to
        $C_{0}^{\infty}(\suppo (nn^{*}))$.
\end{enumerate}
\end{lemma}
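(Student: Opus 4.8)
The plan is to translate both $C^{*}$-algebraic conditions into the groupoid language of Section~\ref{sec:etale smooth units} by exploiting the dictionary between normalisers of $B$ and sections of the line bundle. First I would record, for a normaliser $n \in N(B)$, the relationship between $\theta_{n}$ and the open bisection $U_{n} \coloneqq \suppo(\mathfrak{s}_{n})$ on which the associated section $\mathfrak{s}_{n}$ of \eqref{eq:normaliser section def} is supported. By \cite[Proposition~4.8]{Renault:Cartan} every normaliser arises this way, and the sets $U_{n}$ form a base of open bisections for $G$. Routine computations in the twisted groupoid algebra give $\suppo(n^{*}n) = s(U_{n})$, $\suppo(nn^{*}) = r(U_{n})$, and $\widehat{\theta}_{n} = r|_{U_{n}} \circ (s|_{U_{n}})\inv$, so that $\widehat{\theta}_{n}\inv = s|_{U_{n}} \circ (r|_{U_{n}})\inv$ is exactly the map whose being a diffeomorphism is, by definition, what it means for $U_{n}$ to act smoothly on $G\z$.

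Next I would prove the key local equivalence: $\theta_{n}$ restricts to a $^{*}$-isomorphism from $C_{0}^{\infty}(\suppo(n^{*}n))$ to $C_{0}^{\infty}(\suppo(nn^{*}))$ if and only if $\widehat{\theta}_{n}$ is a diffeomorphism, i.e.\ if and only if $U_{n}$ acts smoothly. Since $\theta_{n}(f) = f \circ \widehat{\theta}_{n}\inv$, and since $\theta_{n}\inv = \theta_{n^{*}}$ with $\widehat{\theta}_{n^{*}} = \widehat{\theta}_{n}\inv$, this reduces to the standard differential-geometric fact that a homeomorphism of manifolds pulls back smooth functions to smooth functions precisely when it is itself smooth (tested against coordinate charts). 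Applying this to $\widehat{\theta}_{n}\inv$ shows that $\theta_{n}$ maps $C_{0}^{\infty}$ into $C_{0}^{\infty}$ iff $\widehat{\theta}_{n}\inv$ is smooth, and applying it to $\widehat{\theta}_{n}$ (via $\theta_{n}\inv = \theta_{n^{*}}$) handles surjectivity onto the smooth subalgebra; the $C_{0}$/support constraints are automatically preserved by the homeomorphism, so only smoothness is at issue.

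With this dictionary in hand the equivalences fall out quickly. For \ref{item:C*-interpretation:all normalisers smooth}${}\Leftrightarrow{}$\ref{item:C*-interpretation:G acts smoothly}: condition \ref{item:C*-interpretation:all normalisers smooth} says every $U_{n}$ with $n \in N(B)$ acts smoothly, and since these cover $G$ we obtain \ref{item:C*-interpretation:G acts smoothly}; conversely, Lemma~\ref{lem:lcdiff holds for every bisection} upgrades \ref{item:C*-interpretation:G acts smoothly} to the statement that \emph{every} open bisection acts smoothly, in particular each $U_{n}$, giving \ref{item:C*-interpretation:all normalisers smooth}. The implication \ref{item:C*-interpretation:all normalisers smooth}${}\Rightarrow{}$\ref{item:C*-interpretation:enough normalisers smooth} is immediate on taking $\mathscr{N} = N(B)$, which densely spans $A$ by the definition of the Weyl construction.

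The one step that needs a genuine argument, and which I expect to be the main obstacle, is \ref{item:C*-interpretation:enough normalisers smooth}${}\Rightarrow{}$\ref{item:C*-interpretation:G acts smoothly}: from a family $\mathscr{N}$ that is only assumed to densely span, I must recover a genuine \emph{cover} of $G$ by smoothly acting bisections. Here the plan is to use that the canonical section map $a \mapsto \mathfrak{s}_{a}$ into $\Gamma_{0}(G;L)$ is linear and contractive for the supremum norm, so that $\|\mathfrak{s}_{a}\|_{\infty} \le \|a\|$ and hence $a \mapsto |\mathfrak{s}_{a}(\gamma)|$ is continuous for each fixed $\gamma \in G$. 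If some $\gamma$ lay outside every $U_{n} = \suppo(\mathfrak{s}_{n})$ with $n \in \mathscr{N}$, then $\mathfrak{s}_{n}(\gamma) = 0$ for all $n \in \mathscr{N}$, whence by linearity and continuity $\mathfrak{s}_{a}(\gamma) = 0$ for every $a \in \overline{\operatorname{span}}\,\mathscr{N} = A$; but since the $U_{m}$ with $m \in N(B)$ form a base, some $\mathfrak{s}_{m}(\gamma) \neq 0$, a contradiction. Thus $\{U_{n} : n \in \mathscr{N}\}$ covers $G$, and each such bisection acts smoothly by the dictionary and condition \ref{item:C*-interpretation:enough normalisers smooth}, yielding \ref{item:C*-interpretation:G acts smoothly}. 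The delicate point is precisely this conversion of the global, analytic density hypothesis into the pointwise covering statement, and it is the contractivity of the section map that makes the conversion work.
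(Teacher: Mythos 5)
Your proposal is correct and follows essentially the same route as the paper's proof: the dictionary from \cite[Proposition~4.8]{Renault:Cartan} identifying normalisers with open bisections $U_{n}$ and $\widehat{\theta}_{n}$ with $r|_{U_{n}}\circ (s|_{U_{n}})\inv$, Lemma~\ref{lem:lcdiff holds for every bisection} for the implication from smooth action to the condition on all normalisers, the trivial passage to a densely spanning subfamily, and the observation that the open supports of a densely spanning family must cover $G$ for the remaining implication. The two steps you spell out in detail---testing $\widehat{\theta}_{n}$ and its inverse against cut-off coordinate functions to pass between smoothness of the homeomorphism and the restriction property of $\theta_{n}$, and using linearity and contractivity of $a\mapsto\mathfrak{s}_{a}$ to convert dense spanning into a pointwise covering statement---are precisely the details the paper leaves implicit, so there is no substantive difference in approach.
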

\begin{proof}
First observe that, by the proof of \cite[Lemma
6.6]{CRST:2021:Reconstruction} (which goes through for the twisted case),  if
$n$ is a normaliser of $B$, regarded as a function on $\twist$, and if
$n(e)\not= 0$, then $r(e) = \widehat{\theta}_n(s(e))$. In particular,
$U_n\coloneqq \pi(\suppo(n))$ is covered by open bisections $\{B_{n,i}\}_{i}$
with the property that $\widehat{\theta}_n$ agrees with $r \circ
(s|_{B_{n,i}})\inv$ on $s(B_{n,i})$.

Now, for \mbox{\ref{item:C*-interpretation:G acts
smoothly}\;$\implies$\;\ref{item:C*-interpretation:all normalisers smooth}},
suppose that $G$ is an \'etale Lie groupoid. By  Proposition~\ref{prop:lcdiff
iff etale Lie gpd} and Lemma~\ref{lem:lcdiff holds for every bisection}, for
every open bisection $U$ of $G$, the map $s|_U \circ (r|_{U})\inv$ is a
diffeomorphism. So~\ref{item:C*-interpretation:all normalisers smooth}
follows from the observation of the first paragraph.

\mbox{\ref{item:C*-interpretation:all normalisers
smooth}\;$\implies$\;\ref{item:C*-interpretation:enough normalisers smooth}}
is trivial: every $n \in C_{c}(G; \twist)$ such that $\pi(\suppo(n))$ is a
bisection is a normaliser of $B$. A partition-of-unity argument shows that
such elements span $C_{c}(G;\twist)$. So they densely span $C^{*}_{r}(G; E)$.

For \mbox{\ref{item:C*-interpretation:enough normalisers
smooth}\;$\implies$\;\ref{item:C*-interpretation:G acts smoothly}}, note that
since the elements of $\mathscr{N}$ densely span $A$, their supports cover
$G$. So the observation of the first paragraph shows that $G$ is covered by
open bisections that act smoothly on $G\z$; that is, $G$ acts smoothly on
$G\z$. So Proposition~\ref{prop:lcdiff iff etale Lie gpd} shows that $G$ is
an \'etale Lie groupoid.
\end{proof}

It will be convenient to be able to move back and forth between sections of a
Lie twist over a Lie groupoid and $\mbb{T}$-equivariant functions on the
twist. The technical result Lemma~\ref{lem:f<->sigma} below shows that we can
do so.

\begin{defn}\label{dfn:phase}
For $X$ a topological space and $f\in C_{0}(X)$, we define $\Ph(f) \in
C_b(\suppo (f))$ to be the unique function such that $f = \Ph(f)|f|$ on
$\suppo (f)$. We write $\Ph(f)(x)=\Ph(f(x))$ for $x\in \suppo (f)$.
\end{defn}

\begin{lemma}\label{lem:f<->sigma}
Suppose $ \ses $ is a topological twist over an \etale\ groupoid $G$. Fix an
open bisection $U\subseteq G$ and a continuous section $\sigma\colon U \to
\twist$  of $\pi$.
\begin{enumerate}[label=\textup{(\arabic*)}]
    \item\label{item:t_sigma:cts} There exists a unique continuous
        $\mbb{T}$-equivariant function $t_{\sigma}\colon
        \pi\inv(U)\to\mbb{T}$ such that $t_{\sigma}(e)\cdot
        \sigma(\pi(e))= e$ for all $e\in \pi\inv(U)$.
    \item\label{item:T-equivariant fct gives pi-section} Any $n\in C_{0}
        (G;\twist)$ with $\suppo(n)=\pi\inv(U)$ gives rise to a continuous
        section $\sigma_{n}\colon U\to \pi\inv(U)$ of $\pi$ given by
        \begin{equation}\label{eq:sigma_n formula}
            \sigma_n (\pi(e)) = \overline{\Ph(n(e))}\cdot e
            .
        \end{equation}
   In this case, the continuous function from Part~\ref{item:t_sigma:cts}
   is given by $t_{\sigma_{n}}=\Ph(n)$.

    \item\label{item:pi-section gives T-equivariant fcts} The  map
        \[
            C_{0}\big(r(U), [0,\infty)\big) \to \{n\in C_{0} (G;\twist) : \suppo(n) \subseteq \pi\inv(U), \sigma|_{\suppo(n)} = \sigma_n\}
        \]
        that sends $\varphi \in C_{0}(r(U),[0,\infty))$ to the
        $\mbb{T}$-equivariant function
        \[
            n_{\varphi}\colon \qquad e \mapsto
            \begin{cases}
                \varphi(r(e))\, t_{\sigma}(e) & \text{if } e\in \pi\inv(U),
                \\
                0& \text{otherwise},
            \end{cases}
        \]
        is  bijective. Its inverse carries $n$ to the function
         \[
            \varphi_{n}\colon \qquad x \mapsto
                \bigl|\bigl(n\circ \sigma\circ r|_{U}\inv\bigr)(x)\bigr|
            .
        \]
        Moreover, $\suppo(n_{\varphi})=\pi\inv(U\cap
        r\inv(\suppo(\varphi)))$.
\end{enumerate}

Now suppose $ \ses $ is a Lie twist, $G$ is an \etale\ Lie groupoid, and
$\sigma$ is smooth. Then the following hold.
\begin{enumerate}[label=\textup{(\arabic*)}, resume]
    \item\label{item:t_sigma:smooth} The function $t_{\sigma}$ in
        Part~\ref{item:t_sigma:cts} is smooth.
    \item\label{item:T-equivariant fct gives pi-section:smooth} If $n$ as
        in Part~\ref{item:T-equivariant fct gives pi-section} is smooth,
        then the corresponding section $\sigma_{n}$ is likewise smooth.
    \item\label{item:pi-section gives T-equivariant fcts:smooth} The map
        $\varphi \mapsto n_\varphi$ from~\ref{item:pi-section gives
        T-equivariant fcts} restricts to a bijection
        \[
            C^{\infty}_{c}\big(r(U), [0,\infty)\big) \to \{n\in C^{\infty}_{c} (G;\twist) : \supp(n) \subseteq \pi\inv(U), \sigma|_{\suppo(n)} = \sigma_n\};
        \]
        that is, given a smooth compactly supported $\varphi$, the function
        $n_{\varphi}$ is smooth and compactly supported, and given a smooth
        compactly supported $n$, the function $\varphi_{n}$ is smooth and
        compactly supported.
\end{enumerate}
\end{lemma}

\begin{proof}
\ref{item:t_sigma:cts} (respectively \ref{item:t_sigma:smooth}):
 As explained in \cite[Lemma 2.4(c)]{Armstrong:2022:Uniqueness}, the map $t_{\sigma}$ is uniquely determined by the equality $\iota(t_{\sigma}(e), r(e))=e\sigma(\pi(e))\inv$ for all $e\in \pi\inv(U)$. It is thus continuous (smooth) since $\sigma$ and  inversion and multiplication in $\twist$ are continuous (smooth), and  since $\iota$ is a homeomorphism  (respectively diffeomorphism; see Condition~\ref{item:LT:iota diffeo}).

\ref{item:T-equivariant fct gives pi-section} (respectively
\ref{item:T-equivariant fct gives pi-section:smooth}): To see that
$\sigma_{n}$ is well-defined, note first that $n(e)\neq 0$ if $\pi(e)\in U$,
so it makes sense to consider $\Ph(n(e))$. Moreover, if $\pi(e)=\pi(e')$,
then there exists $z\in \mbb{T}$ with $e'=z\cdot e$, so that
$\mbb{T}$-equivariance of $n$ implies
\[
 \frac{\overline{ n (e')}}{| n (e')|}\cdot e'
 =
  \frac{\overline{ n (z\cdot e)}}{| n (z\cdot e)|}\cdot (z\cdot e)
  =
  \frac{\overline{z}\ \overline{ n (e)}}{|z \ n(e)|}\cdot (z\cdot e)
  =
 \frac{\overline{ n (e)}}{| n (e)|}\cdot e,
\]
which proves that $\sigma_{n}(\pi(e))$ does not depend on the chosen
representative $e$ of $\pi(e)$.

The map $\pi\inv(U)\to \twist, e\mapsto \overline{\Ph(n(e))}\cdot e$, is
continuous (smooth) since $n$, inversion of complex numbers, conjugation of
non-zero complex numbers, and the $\mathbb{T}$-action on $\twist$ are all
continuous (respectively smooth; see Condition~\ref{item:LT:smooth action}).
Since $\pi$ is a topological principal $\mathbb{T}$-bundle (respectively a
smooth such bundle; see Condition~\ref{item:LT:principal bundle}), we know
that the bundle $\pi\inv(U)$ is locally homeomorphic (diffeomorphic) to the
topological (smooth) product bundle $\mathbb{T}\times U$. Since the map
$\gamma\mapsto (1,\gamma)$ is a  continuous (smooth) map from $U$ to
$\mathbb{T}\times U$, it follows that $\sigma_{n}$ is  continuous (smooth) as
a concatenation of  continuous (smooth) maps.

Lastly, for $e\in\pi\inv(U)$, we have
\begin{align*}
    t_{\sigma_{n}}(e)\cdot r(e)
    &=
    e\sigma(\pi(e))\inv
    &&\text{by choice of } t_{\sigma_{n}}
    \\
    &=
    e \bigl(\overline{\Ph (n(e))} \cdot e\bigr)\inv
    &&\text{by definition of }\sigma_n
    \\
    &=
    \Ph(n(e))\cdot r(e),
\end{align*}
so $t_{\sigma_{n}}=\Ph(n)$.

\ref{item:pi-section gives T-equivariant fcts}
 Fix  $\varphi\in C_{0} (r(U), [0,\infty))$.
 We want to show that $n_{\varphi}$ is continuous, so suppose $\{e_{\lambda}\}_{\lambda}$ is a net in $\twist$ that converges to $e$. If $n_\varphi(e)\neq 0$, then eventually $e_\lambda\in \pi\inv(U)$, and so continuity of the map $e\mapsto \varphi(r(e)) t_\sigma (e)$ on $\pi\inv(U)$ implies $n_{\varphi}(e_\lambda)\to n_\varphi(e)$. If $n_\varphi(e)= 0$, then $r(e)\notin
 \suppo(\varphi)$, so $0=\varphi(r(e))=\lim_{\lambda} \varphi(r(e_{\lambda}))$. Hence $|\varphi(r(e_{\lambda})) t_\sigma (e_{\lambda})|=\varphi(r(e_{\lambda}))\to 0$, which again implies $n_{\varphi}(e_\lambda)\to 0=n_\varphi(e)$. This proves that $n_{\varphi}$ is continuous.

 If $K_{0}\subseteq r(U)$ is compact such that $\varphi|_{r(U)\setminus K_{0}} \leq \epsilon$, then since $G$ is \etale, the subset $K_{1}\coloneqq r\inv (K_{0})\cap U$ of $U\subseteq G$ is homeomorphic to $K_{0}$ and hence also compact. Because $\sigma\colon U\to \pi\inv(U)$ is a continuous section, Lemma~\ref{lem:sections => triv's} implies that $\mathbb{T}\times U$ is homeomorphic to $\pi\inv(U)$ via $(z,\gamma)\mapsto z\cdot \sigma(\gamma)$. This homeomorphism maps  the compact set $\mathbb{T}\times K_{1}$ to $K\coloneqq\pi\inv(K_{1})$, so $K$ is compact. Now pick any $e=z\cdot \sigma(\gamma)\in \pi\inv(U)$. If $r(e)\in K_{0}$, then $\gamma\in r\inv (K_{0})\cap U=K_{1}$, which implies $e\in K$. In other words,
 \begin{align*}
     \| n_{\varphi}|_{\twist\setminus K} \|_{\infty}
     =
     \| n_{\varphi}|_{ \pi\inv(U)\setminus K } \|_{\infty}
     =
     \| \varphi|_{ r(U)\setminus K_{0} } \|_{\infty}
     \leq \epsilon.
 \end{align*}
 We conclude that $n_{\varphi}\in C_{0}(\twist)$.

 From the equality
\[
\sigma(\pi(z\cdot e))\inv (z\cdot e)
=
z\cdot \sigma(\pi(e))\inv e
\]
it follows that $t_{\sigma}(z\cdot e)=z t_{\sigma}(e)$, so $n_{\varphi}\in
C_{0}(G;\twist)$. Since $n_{\varphi}(e)\neq 0$ if and only if  $r(e)\in
\suppo(\varphi)$, we have $\suppo(n_{\varphi})\subseteq \pi\inv(U)$, or
equivalently, $\pi(\suppo(n_{\varphi})) \subseteq U$. Since $\varphi$ is
positive-valued and since $t_{\sigma}$ is $\mbb{T}$-valued, for $e \in
\suppo(n_\varphi)$, we have
\[
\sigma_{n} (\pi(e))
    \overset{\eqref{eq:sigma_n formula}}{=} \frac{\overline{\varphi(r(e)) t_{\sigma}(e)}}{|\varphi(r(e)) t_{\sigma}(e)|} \cdot e
    = \overline{t_{\sigma}}(e)\cdot e
    \overset{\ref{item:t_sigma:cts}}{=} \sigma(\pi(e)),
\]
so indeed $\sigma_{n} = \sigma|_{\suppo(n)}$. All in all, we have shown that
$n_{\varphi}$ is of the claimed form.

Conversely, fix $n\in C_{0}(G;\twist)$ with open support contained in
$\pi\inv(U)$ and $\sigma|_{\suppo(n)} = \sigma_{n}$. Since $G$ is \etale,
$r\colon U\to r(U)$ is a homeomorphism. As a composition of continuous maps,
$\varphi_{n}$ is therefore continuous. By construction, $\varphi_{n}$ is
defined on $r(U)$ and takes positive values.

To see that $\varphi_n$ vanishes at infinity, fix $\epsilon>0$. Since $n$
vanishes at infinity, there exists $K\subseteq \pi\inv(U)$ compact such that
$n|_{\twist\setminus K}$ is bounded by $\epsilon$. To show that $\varphi_{n}$
is bounded by $\epsilon$ outside of the compact set $K_{0}\coloneqq
r_{\twist}(K)=r(\pi(K))$, pick $x\in r(U)\setminus K_{0}$, and let
$\gamma\coloneqq r|_{U}\inv (x)$. By choice of $x$, it is not in
$r_{\twist}(\mbb{T}\cdot K)$, so $\sigma(\gamma)\notin \mathbb{T}\cdot K$. In
other words, $\sigma(\gamma)\in \twist\setminus (\mathbb{T}\cdot K)$. Since
this set is contained in $ \mbb{T}\cdot (\twist \setminus K)$, there exists
$z\in\mbb{T}$ and $e\in \twist\setminus K$ with $\sigma(\gamma)=z\cdot e$.
Since $n$ is equivariant, we conclude
\begin{align*}
    \varphi_{n}(x)
    &=
    \bigl|n(\sigma(\gamma))\bigr|
    =
    \bigl|n(z\cdot e)\bigr|
    =
    \bigl|n(e)\bigr|,
\end{align*}
which is bounded by $\epsilon$ since $e\notin K$. As $x\in r(U)\setminus
K_{0}$ was arbitrary, this proves the claim.

It remains to show that the two maps are inverse to one another. Clearly,
$\varphi_{n_{\psi}}=\psi$. To see that $n_{\varphi_{m}}=m$, fix
$e\in\pi\inv(U)$; note that $\gamma\coloneqq \pi(e)$ is the unique element of
$U$ with range $r_{\twist}(e)$. Therefore,
\begin{align*}
    n_{\varphi_{m}} (e)
    &
    =
    \varphi_{m}(r(e))\, t_{\sigma}(e)
    \\
    &=
    |m(\sigma(\gamma))|\, t_{\sigma}(e)
    &&\text{since $r|_{U}\inv(r(e))=\gamma$}
    \\
    &=
    |m(e)|\, t_{\sigma}(e)
    &&\text{since $m$ is $\mbb{T}$-equivariant and $\pi(\sigma(\gamma))=\gamma=\pi(e)$.}
\end{align*}
If $m(e) = 0$ then both sides of this equation are zero; and if $m(e) \not=
0$, then since $\sigma|_{\suppo(m)} = \sigma_m$,
applying Part~\ref{item:t_sigma:cts} at the step labelled~$(*)$, we have
\begin{align*}
    n_{\varphi_{m}} (e)
    &
    =
    |m(e)|\, t_{\sigma}(e)
    \overset{(*)}{=}
    |m(e)|\, \Ph(m(e))
    =
    m(e).
\end{align*}
Thus, $n_{\varphi_{m}}=m$.

\ref{item:pi-section gives T-equivariant fcts:smooth}
    If $M$ is a manifold and $U \subseteq M$ is an open set, then for any $f \in C^{\infty}_{c}(U)$ the extension $\tilde{f} \in C_{c}(M)$ of $f$ such that $\tilde{f}|_{M \setminus U} = 0$ is smooth: since $f \equiv 0$ on the open set $U \setminus \supp(f)$, all of its derivatives are also zero, so $\tilde{f}$ has continuous derivatives of all orders. So it suffices to show that $n$ is smooth on $\suppo(n)$ if and only if $\varphi_{n}$ is smooth on $r(\suppo(n))$.
    For the ``only if" implication, replace every instance of ``continuous'' with ``smooth'' in the proof that $\varphi_{n}$ is continuous, using Lemma~\ref{lem:lcdiff => all bisections are smooth} to see that $r|_{U}$ is a diffeomorphism. For the ``if" implication, suppose that $\varphi$ is smooth on its open support. Then, as above, the extension $\tilde{\varphi}$ of $\varphi$ to $r(U)$ by 0 is smooth too. Since $\sigma$ is smooth, the map $e\mapsto \tilde{\varphi}(r(e)) t_\sigma (e)$ is smooth on $\pi\inv(U)$ by Part~\ref{item:t_sigma:smooth}, completing the proof.
\end{proof}

\begin{lemma}\label{lem:C*-interpretation of Props}
Suppose that $G$ is an \etale\ groupoid and $\ses$ is a topological twist.
Suppose further that $G\z$ is a manifold and that $G$ acts smoothly on $G\z$;
equip $G$ with the manifold structure constructed in
Proposition~\ref{prop:lcdiff iff etale Lie gpd}. Let $A=C^{*}_{r}(G;\twist)$,
$B  = C_{0}(G\z) \subseteq A$, and $\condExp\colon A\to B$ the conditional
expectation of \cite[Proposition~4.3]{Renault:Cartan}. Suppose that
$\mathscr{N} \subseteq A$ is a family of $\mbb{T}$-equivariant functions on
$\twist$ such that, for each $n\in \mathscr{N}$, the set $U_{n}\coloneqq
\pi(\suppo(n))$ is a bisection of $G$. For each $n\in\mathscr{N}$, let
$\sigma_{n}\colon U_{n}\to \twist$ be as in Lemma~\ref{lem:f<->sigma}. Then
the following dictionary holds.
\begin{enumerate}[label=\textup{(\arabic*)}]
    \item\label{item:dict:PropE} $\{\sigma_{n}\}_{n\in\mathscr{N}}$
        satisfies \PropE\ if and only if for all $n\in\mathscr{N}$, the
        function $\Ph(\condExp(n))$ is smooth.
    \item\label{item:dict:PropB} $\{\sigma_{n}\}_{n\in\mathscr{N}}$
        satisfies \PropB\ if and only if for all $m,k\in\mathscr{N}$, the
        function $\Ph(\condExp(mk^{*}))$ is smooth.
    \item\label{item:dict:PropC} $\{\sigma_{n}\}_{n\in\mathscr{N}}$
        satisfies \PropC\ if and only if for all $n,m,k\in\mathscr{N}$, the
        function $\Ph(\condExp(nmk^{*}) )$ is smooth.
    \item\label{item:dict:PropD} $\{\sigma_{n}\}_{n\in\mathscr{N}}$
        satisfies \PropD\ if and only if for all $n,m\in\mathscr{N}$, the
        function $\Ph(\condExp(nm))$ is smooth.
\end{enumerate}
\end{lemma}

\begin{proof}
For $n\in\mathscr{N}$, recall the definition of $\sigma_{n}$:
\begin{equation*}
    \sigma_{n} (\pi(e)) = \frac{\overline{n(e)}}{|n(e)|}\cdot e
    =
    \overline{\Ph(n(e))}\cdot e.
\end{equation*}

\ref{item:dict:PropE} Since $\condExp(n)=n|_{\twist\z}$, we have for $x\in
U_{n} \cap G\z= \pi\inv(U_{n})\cap \twist\z$ that
\[
    \iota(\overline{\Ph(\condExp(n))(x)},x)
    =
    \overline{\Ph(n)(x)}\cdot x
    =
    \sigma_{n} (x),
\]
showing that \ref{item:dict:PropE} holds if and only if all maps
$\Ph(\condExp(n))$ are smooth.

For the other conditions, fix $n,m,k\in\mathscr{N}$ and define the following
functions implicitly:
\begin{align*}
    f_{m,k}&\colon U_{m}\cap U_{k}
    \to \mbb{T},
    &&
    \sigma_{m}(\gamma)\sigma_{k}(\gamma)\inv
       = \iota(f_{m,k}(\gamma), r(\gamma)),
       \\
    g_{n,m,k}&\colon
    \big\{(\gamma_{1}, \gamma_{2}) \in U_{ n } \fpsr U_{ m } :
        \gamma_{1}\gamma_{2} \in U_{ k }\big\}
        \to \mbb{T} ,
    &&
    \sigma_{ n } (\gamma_{1})\sigma_{ m } (\gamma_{2}) \sigma_{ k }(\gamma_{1}\gamma_{2})\inv
        = \iota\bigl(
                g_{n,m,k} (\gamma_{1},\gamma_{2}), r(\gamma_{1})
            \bigr) ,
            \\
        h_{n,m}&\colon U_{ n }\cap  U_{ m }\inv
    \to \mbb{T}  ,
    &&
    \sigma_{ n }(\gamma)\sigma_{ m }(\gamma\inv)
       = \iota(h_{n,m}(\gamma), r(\gamma)).
\end{align*}
As explained in Remark~\ref{rmk:T-conditions}, the following dictionary
holds:
\begin{align*}
    \{\sigma_{n}\}_{n}&\text{ satisfies \PropB}
    &&\iff&&
    \forall m,k\in\mathscr{N}, &f_{m,k} \text{ is smooth},
    \\
    \{\sigma_{n}\}_{n}&\text{ satisfies \PropC}
    &&\iff&&
    \forall n,m,k\in\mathscr{N}, &g_{n,m,k} \text{ is smooth},
    \\
    \{\sigma_{n}\}_{n}&\text{ satisfies \PropD}
    &&\iff&&
    \forall n,m\in\mathscr{N}, &h_{n,m} \text{ is smooth}.
\end{align*}

\ref{item:dict:PropB} We compute for $\gamma=\pi(e)\in U_m\cap U_k$,
\begin{align*}
    \sigma_{m}(\gamma)\sigma_{k}(\gamma)\inv
    &=
    \bigl(\overline{\Ph(m(e))}\cdot e\bigr)\,
    \bigl(\overline{\Ph(k(e))}\cdot e\bigr)\inv
    =
    \Ph\bigl(\overline{m(e)}\ k(e)\bigr) \cdot r_{\twist}(e).
\end{align*}
Note that $m (e)\, \overline{k(e)}$ is exactly the value of the function $m
k^{*}$ at $r_{\twist}(e)=r_{G}(\gamma)$, so we have shown that
\[
    \overline{f_{m,k}} = \Ph(\condExp(mk^{*})) \circ r_{G}
        \quad \text{ on the domain of } f_{m,k}.
\]
Since $U_{m}\cap U_{k}$ is a smooth bisection by Lemma~\ref{lem:lcdiff holds
for every bisection}, we conclude that $\Ph(\condExp(mk^{*}))$ is smooth if
and only if $f_{m,k}$ is smooth, as claimed.

\ref{item:dict:PropC} We compute for
$(\gamma_{1},\gamma_{2})=(\pi(e_{1}),\pi(e_{2}))$ in the domain of
$g_{n,m,k}$:
\begin{align*}
    \sigma_{ n } (\gamma_{1})\sigma_{ m } (\gamma_{2}) \sigma_{ k }(\gamma_{1}\gamma_{2})\inv
    &=
    \bigl(
    \overline{\Ph(n(e_{1}))}\cdot e_{1}
    \bigr)
    \bigl(
    \overline{\Ph(m(e_{2}))}\cdot e_{2}
    \bigr)
    \bigl(
    \overline{\Ph(k(e_{1}e_{2}))}\cdot e_{1}e_{2}
    \inv\bigr)
    \\
    &=
    \Ph\bigl(
        \overline{n(e_{1})\ m(e_{2})}
        \
        k(e_{1}e_{2})
    \bigr)
    \cdot r_{\twist}(e_{1}e_{2})\end{align*}
A quick computation reveals that $n(e_{1})m(e_{2}) \overline{    k(e_{1}e_{2})}$ is the value of $nmk^*$ at  $r_{\twist}(e_{1})=r_{G}(\gamma_{1})$, so we have shown that
\[
    \overline{g_{n,m,k}} =
        \Ph(\condExp( n   m  k^{*})) \circ r_{G}
        \quad
        \text{ on the domain of } g_{n,m,k}.
\]
This proves that $\Ph(\condExp( n   m  k^{*}))$ is smooth if and only if $g_{n,m,k}$ is smooth.

\ref{item:dict:PropD}
For $\gamma=\pi(e)$ in $U_n\cap U_m\inv$, we have
\[
    \sigma_{ n }(\gamma)\sigma_{ m }(\gamma\inv)
    =
    \bigl(
        \overline{\Ph(n(e))}\cdot e
    \bigr)
    \bigl(
        \overline{\Ph(m(e\inv))}\cdot e\inv
    \bigr)
    =
    \Ph
    \bigl(
        \overline{n(e)} m(e\inv)
    \bigr)
    \cdot
    r_{\twist}(e).
\]
As before, a quick computation reveals that $n(e) \overline{m(e\inv)}$ is the value of $nm$ at $r_{\twist}(e)=r(\gamma)$, so we have shown that
\[
    \overline{h_{n,m}} = \Ph(\condExp( n   m )) \circ r_{G}
        \quad \text{ on the domain of }h_{n,m}.
\]
Again, we conclude that $\Ph(\condExp( n   m ))$ is smooth if and only if $h_{n,m}$ is smooth.
\end{proof}

\begin{remark}\label{rmk:I* is superfluous, part 1}
We have included Statement~\ref{item:dict:PropD} of Lemma~\ref{lem:C*-interpretation of Props} mainly for completeness of our characterisation of the relationship between algebraic properties of $A, B$ and the geometric properties of sections of $\twist$.  For the purposes of our main results,  Statement~\ref{item:dict:PropD} is redundant. Specifically, suppose that $\mathscr{N}$ is a family as in Lemma~\ref{lem:C*-interpretation of Props} with the additional property that the sets $U_n$ cover $G$. If $\Ph(\condExp(n))$,  $\Ph(\condExp(mk^*))$ and $\Ph(\condExp(nmk^*))$ are smooth for all $n,m,k \in \mathscr{N}$, then Lemma~\ref{lem:C*-interpretation of Props} shows that the corresponding sections $\{\sigma_n\}_{n \in \mathscr{N}}$ satisfy~\PropE, \PropB~and~\PropC. So we could deduce from Theorem~\ref{thm:E is Lie}
and Theorem~\ref{thm:E is mfd}\ref{item:E is mfd:principal bdl} that $\ses$ is a Lie twist and that the sections $\sigma_n$ are all smooth.
As explained in Remark~\ref{rmk:PropB-D when E is already Lie}, it then it follows that they also satisfy~\PropD, and therefore that
$\Ph(\condExp(nm))$ is smooth for all $n,m \in \mathscr{N}$.

This is a round-about way to deduce that the  $\Ph(\condExp(nm))$ are smooth from the other three conditions, and it is natural to ask for a direct argument. The idea is as follows: fix $n,m \in \mathscr{N}$ and a point $x$ in $\suppo(\condExp(nm))$. Since the $U_k$ cover $G$ we can find $k \in \mathscr{N}$ such that $x \in \suppo(k)$ and therefore $x \in \suppo(k)\cap G\z = \suppo(\condExp(k))$. Now both $\Ph(\condExp(nmk^*))$ and $\Ph(\condExp(k))$ are smooth, so $\Ph(\condExp(nmk^*))\Ph(k)$ is smooth. Since $\condExp$ corresponds to restriction of functions to the copy of $\{1\} \times G\z$ in the Weyl twist, there is a neighbourhood of $x$ where
$\condExp(nmk^*)$ agrees with $\condExp(nm)\condExp(k)^*=\condExp(nm)\overline{\condExp(k)}$. So on this neighbourhood, $\Ph(\condExp(nmk^*))\Ph(\condExp(k))$ and $\Ph(\condExp(nm))\overline{\Ph(\condExp(k))}\Ph(\condExp(k)) = \Ph(\condExp(nm))$ agree. So $\Ph(\condExp(nm))$ is also smooth at $x$.
\end{remark}

\begin{defn}\label{def:Iinftyc(U)}
Let $B$ be a commutative $C^{*}$-algebra, and let $B^{\infty}$ be a subalgebra of~$B$. Let $U \subseteq \widehat{B}$ be an open set.
Let $I_{c}(U) \cong C_{c}(U)$ be the algebraic ideal of $B$ consisting of elements whose Gelfand transforms have compact support contained in $U$, and let $I^{\infty}_{c}(U) \coloneqq B^{\infty} \cap I_{c}(U)$. We say that $f \in C_b(U)$
\emph{multiplies $I^{\infty}_{c}(U)$}
if $f\cdot I^{\infty}_{c}(U) \coloneqq\{fg : g \in I^{\infty}_{c}(U)\} \subseteq I^{\infty}_U$.
\end{defn}

The above definition depends implicitly on the choice of $B^{\infty}$. It is motivated by the following simple lemma which shows that a function $f$ multiplies $C_{c}^{\infty}(U)$ if and only if it is smooth.

\begin{lemma}\label{lem:local multipliers smooth}
Let $M$ be a manifold, let $B \coloneqq  C_{0}(M)$ and let  $B^{\infty} \coloneqq C^{\infty}(M)\cap C_{0}(M)\subseteq B$.  Fix an open subset $U \subseteq M$ and a function $f \in C_b(U)$. Then $f$ is smooth if and only if $f$ multiplies $C_{c}^{\infty} (U)$.
\end{lemma}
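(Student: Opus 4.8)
The plan is to prove the two implications separately, using throughout the explicit description $I^{\infty}_U = \{g \in C^{\infty}_0(M) : g|_{M \setminus U} \equiv 0\}$ of the smooth part of the ideal associated with $U$, together with the existence of smooth bump functions on a manifold (for example \cite[Proposition~2.25]{Lee:Intro}).

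I would first establish the implication that motivates the definition: if $f$ is a local multiplier of $B^{\infty}$, then $f$ is smooth. Fix $x_0 \in U$. Since $M$ is a manifold and $U$ is an open neighbourhood of $x_0$, I can choose a smooth bump function $g \in C^{\infty}_0(M)$ whose support is a compact subset of $U$ and which is identically $1$ on some open neighbourhood $W$ of $x_0$. Then $g$ vanishes off $U$, so $g \in I_U \cap B^{\infty} = I^{\infty}_U$. Because $f$ is a local multiplier, $fg \in I^{\infty}_U \subseteq C^{\infty}_0(M)$, so $fg$ is smooth on all of $M$; and on $W$ we have $g \equiv 1$, whence $f = fg$ is smooth on $W$, in particular at $x_0$. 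As $x_0 \in U$ was arbitrary, $f$ is smooth on $U$. The only ingredient here is a bump function compactly supported strictly inside $U$, so no boundary behaviour intervenes.

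For the converse, suppose $f \in C_b(U)$ is smooth and fix $g \in I^{\infty}_U$. On the open set $U$ the product $fg$ is a product of smooth functions, hence smooth, and since $g$ vanishes on $M \setminus U$ so does $fg$; thus $fg \in I_U = C_0(U)$. It remains to check that $fg$ is smooth as a function on all of $M$, equivalently that it is smooth at each point of $\partial U$, since at interior points of $M \setminus U$ the product is locally zero. This boundary step is where I expect the real work to concentrate, and it is the main obstacle: at a point of $\partial U$ a general element $g \in I^{\infty}_U$ need not vanish to infinite order, so mere smoothness and boundedness of $f$ on $U$ do not by themselves force $fg$ to be smooth across $\partial U$. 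I would isolate this as a local Euclidean statement, choosing charts meeting $\partial U$ and a subordinate partition of unity, and there control $fg$ by playing the decay of $g$ and its derivatives against the bounded factor $f$; this is precisely the point at which any regularity of $U$, or of the elements of $I^{\infty}_U$ arising in the intended applications (where $U = \suppo(k)$ for a suitable $k$ and $f = \Ph(k)$), must be brought to bear. Combining the two implications then yields the asserted equivalence.
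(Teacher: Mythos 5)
Your proof of the implication ``local multiplier $\Rightarrow$ smooth'' is correct and is essentially the paper's own argument; the only cosmetic difference is that the paper produces its bump function from normality of $M$ together with the extension lemma \cite[Lemma~2.26]{Lee:Intro}, whereas you take a compactly supported bump from \cite[Proposition~2.25]{Lee:Intro}. That half needs no changes.

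The other implication is where your proposal has a genuine gap, and the boundary analysis you defer cannot close it. You have committed to the literal reading $I^{\infty}_U = \{g \in C_{0}^{\infty}(M) : g|_{M\setminus U} \equiv 0\}$, so that $fg$ must be shown to be smooth \emph{across} $\partial U$; under that reading the implication is false, not merely delicate. Take $M = \mathbb{R}$, $U = \mathbb{R}\setminus\{0\}$, $f(x) = \sin(1/x)$, which is bounded and smooth on $U$, and $g(x) = x^{2}e^{-x^{2}}$, which is smooth on $\mathbb{R}$, vanishes at $0$, and hence lies in $C_{0}^{\infty}(M)\cap I_U$. The function $fg$, extended by $0$, is differentiable at $0$, but for $x \neq 0$ its derivative is $e^{-x^{2}}\bigl((2x-2x^{3})\sin(1/x) - \cos(1/x)\bigr)$, which has no limit as $x \to 0$; so $fg \notin C^{\infty}(M)$ and $f$ is not a local multiplier in your sense. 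Regularity of $U$ does not rescue this: for $U = (0,\infty)$ every $g \in C_{0}^{\infty}(M)\cap I_U$ is flat at $0$, yet $f(x) = \sin\bigl(e^{e^{1/x}}\bigr)$ is bounded and smooth on $U$ and satisfies $fg \notin C^{1}(\mathbb{R})$ for $g(x) = e^{-1/x}e^{-x^{2}}$, because the derivatives of $f$ outgrow the decay of any fixed $g$. The paper's proof of this direction is instead a triviality because it works with the \emph{relative} ideal: membership of $fg$ in $I^{\infty}_U$ is taken to mean that $fg$ is smooth on $U$ and lies in $C_{0}(U)$, i.e.\ $I^{\infty}_U = C^{\infty}(U)\cap C_{0}(U) = C_{0}^{\infty}(U)$. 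This is also the reading used where the lemma is applied: in the proof of Theorem~\ref{thm:reconstruction}\ref{item:thm:from twist to triple}, the local-multiplier conditions are verified against $g \in C_{0}^{\infty}(\suppo(f))$. With that interpretation your stuck step disappears entirely---$fg$ is smooth on $U$ as a product of functions smooth on $U$, and lies in $C_{0}(U)$ since $f$ is bounded and $g \in C_{0}(U)$---and your argument for the reverse implication goes through verbatim, since your bump function, restricted to $U$, lies in $C_{0}^{\infty}(U)$. So the missing ingredient is not an estimate at $\partial U$ but the identification of which smooth ideal the lemma is about; under your reading no such estimate can exist.
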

\begin{proof}
If $f$ is smooth, then for any $g \in C^{\infty}(U)$, the product $fg$ is smooth. In particular, if $g\in I^{\infty}_{c}(U)$, then $fg$ is smooth. It belongs to $C_{c}(U)$ because its support is contained in that of $g$. So $f$  multiplies $C_{c}^{\infty} (U)$. Conversely, suppose that $f$   multiplies $C_{c}^{\infty} (U)$. Fix a point $x \in U$; we must show that $f$ is smooth at $x$. Since $M$ is a normal topological space, there exists a precompact open set $V$ such that $x \in V \subseteq \overline{V} \subseteq U$, and then by the extension lemma \cite[Lemma~2.26]{Lee:Intro} for smooth functions, there is a smooth function $g \colon M \to \mbb{R}$ such that $g|_{\overline{V}} \equiv 1$ and $\supp(g) \subseteq U$.  In particular, $g \in I^{\infty}_{c}(U)$. Since $f$ multiplies $C_{c}^{\infty} (U)$, we have $fg \in B^{\infty}$. Since $fg \equiv f$ on $V$, we deduce that $f$ is smooth on the neighbourhood $V$ of $x$, and we are done.
\end{proof}

If $B$ is a commutative $C^{*}$-algebra with $\widehat{B}$ a manifold, we  will always choose $B^{\infty}\subseteq B$ to be the image of
\(
C^{\infty}(\widehat{B})\cap C_{0}(\widehat{B})
\)
under the Gelfand transform, and we write $B_{c}^{\infty}$ for the image of
$C_{c}^{\infty} (\widehat{B})
\coloneqq
C_{c} (\widehat{B})\cap C^{\infty} (\widehat{B})$.
If $B$ is a Cartan subalgebra of $A$, then for $n\in N(B)$, we have $\suppo(n^*n)\subseteq G\z$. We write  $B^{\infty}_{c}(n) \coloneqq I^{\infty}_{c}(\suppo(n^*n))$ as defined in Definition~\ref{def:Iinftyc(U)}.

\begin{defn}\label{dfn:smooth Cartan triple}
Suppose that $B\subseteq A$ is a Cartan pair with conditional expectation $\condExp\colon A\to B$, and that $\widehat{B}$ is a manifold.  Let $\mathscr{N}$ be a family of normalisers of $B$ that densely spans $A$. We say that $(A, B, \mathscr{N})$ is a \emph{smooth Cartan triple} if
$B_{c}^{\infty} \subseteq \mathscr{N}\cap B \subseteq B^{\infty}$, and the family $\mathscr{N}$ satisfies the following. %
\begin{enumerate}[leftmargin=1.5cm, label=\textup{(\arabic*)}]
    \item[\namedlabel{property:N,C*}{\normalfont(N$^*$)}]
         For each $n\in\mathscr{N}$,
            we have $nn^*, n^*n \in B^{\infty}$ and $n$ normalises $B^{\infty}$ in the sense that $nB^{\infty} n^{*} \subseteq B^{\infty}$ and $n^{*} B^{\infty} n \subseteq B^{\infty}$.
    \item[\namedlabel{property:U,C*}{\normalfont(U$^*$)}]
        For all $n\in\mathscr{N}$, $\Ph (\condExp(n))$ multiplies $B^{\infty}_{c}(n)$.
    \item[\namedlabel{property:S,C*}{\normalfont(S$^*$)}]
        For all $m,k\in\mathscr{N}$,
        $\Ph (\condExp(mk^{*}))$ multiplies $B^{\infty}_{c}(mk^*)$.
    \item[\namedlabel{property:M,C*}{\normalfont(M$^*$)}]
        For all $n,m,k\in\mathscr{N}$, $\Ph (\condExp(nmk^{*}))$ multiplies $B^{\infty}_{c}(nmk^*)$.
\end{enumerate}
\end{defn}

As seen in Lemma~\ref{lem:C*-interpretation of Props} (using Lemma~\ref{lem:local multipliers smooth}), Conditions~\ref{property:U,C*}, \ref{property:S,C*}, and \ref{property:M,C*} in the definition of a smooth Cartan triple  correspond to Conditions~\ref{property:U infty}, \ref{property:S infty}, and \ref{property:M infty}, respectively, in Definition~\ref{dfn:theProps}. There is a natural fifth condition that we could ask $\mathscr{N}$ to satisfy, namely
\begin{enumerate}
\item[\namedlabel{property:I,C*}{\normalfont(I$^*$)}]
        For all $n,m\in\mathscr{N}$, $\Ph (\condExp(nm))$ multiplies $B^{\infty}_{c}(nm)$.
\end{enumerate}
But as explained in Remark~\ref{rmk:I* is superfluous, part 1}, it follows from the others. Lastly, Condition~\ref{property:N,C*} is motivated by Corollary~\ref{cor:smooth thetas by conjugation} below, which we deduce from the following technical lemma.

\begin{lemma}\label{lem:thetas of smooth normalisers}
Let $B \subseteq A$ be a Cartan pair
and suppose that $\widehat{B}$ is a manifold. For a normaliser $n$ of $B$, the following conditions \ref{item:theta-hat smooth}~and~\ref{item:theta smooth} are equivalent.
\begin{enumerate}[label=\textup{(\arabic*)}]
    \item\label{item:theta-hat smooth}
        The homeomorphism $\widehat{\theta}_{n} \colon \suppo (n^{*}n) \to \suppo (nn^{*})$ of~\eqref{eq:def-theta-hat} has a smooth inverse.
    \item\label{item:theta smooth}
        The isomorphism $\theta_{n} \colon C_{0}(\suppo (n^{*}n)) \to C_{0}(\suppo (nn^{*}))$ of~\eqref{eq:def-theta} maps $B^{\infty}_{c}(n)$ into $B^{\infty}_{c}(n^*)$.
\end{enumerate}
The  condition
\begin{enumerate}[label=\textup{(\arabic*)}, resume]
    \item\label{item:n smooth}
        $n
        B^{\infty}_{c}(n)
        n^{*} \subseteq B^{\infty}$
\end{enumerate}
implies both \ref{item:theta-hat smooth}~and~\ref{item:theta smooth}, and also that $nn^{*}$ is smooth on its open support. If $nn^{*}$ is assumed to be smooth on its open support,  then~\ref{item:theta-hat smooth}, \ref{item:theta smooth}~and~\ref{item:n smooth} are equivalent.
\end{lemma}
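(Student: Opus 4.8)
The plan is to treat $\theta_{n}$ as the pullback operator along $\widehat{\theta}_{n}\inv$: by \eqref{eq:def-theta-hat} we have $\theta_{n}(f)=f\circ\widehat{\theta}_{n}\inv$ on $\suppo(nn^{*})$, so the lemma is essentially an instance of the principle that a continuous map between manifolds is smooth exactly when it pulls back smooth functions to smooth functions. Two algebraic identities will do the heavy lifting. First, substituting $f=n^{*}n$ into \eqref{eq:def-theta} gives $(nn^{*})^{2}=\theta_{n}(n^{*}n)\,nn^{*}$, and cancelling one factor of $nn^{*}$ on its full support $\suppo(nn^{*})$ yields $\theta_{n}(n^{*}n)=nn^{*}$. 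Second, since $\theta_{n}$ is multiplicative, \eqref{eq:def-theta} rewrites as $n f n^{*}=\theta_{n}(f)\,nn^{*}=\theta_{n}(f\cdot n^{*}n)$ for every $f\in B$, where now $f\cdot n^{*}n\in C_{0}(\suppo(n^{*}n))=B_{n}$ genuinely.

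For \ref{item:theta smooth}$\Rightarrow$\ref{item:theta-hat smooth} I would work at an interior point: given $y_{0}\in\suppo(nn^{*})$ with $x_{0}=\widehat{\theta}_{n}\inv(y_{0})$, I feed $\theta_{n}$ compactly supported bumps $g\in C_{c}^{\infty}(\suppo(n^{*}n))\subseteq B_{n}^{\infty}$ that agree near $x_{0}$ with the coordinate functions of a chart about $x_{0}$; then $\theta_{n}(g)\in B^{\infty}$ is smooth and its local expression is exactly the coordinate representation of $\widehat{\theta}_{n}\inv$ near $y_{0}$, so $\widehat{\theta}_{n}\inv$ is smooth. To prove \ref{item:n smooth}$\Rightarrow$\ref{item:theta-hat smooth} (hence, via the equivalence, also \ref{item:theta smooth}), I would first show $nn^{*}$ is smooth on $\suppo(nn^{*})$: for $y_{0}\in\suppo(nn^{*})$, choose $f\in C_{c}^{\infty}(\widehat{B})\subseteq B^{\infty}$ with $f\equiv 1$ near $x_{0}$, so that $\theta_{n}(f)\equiv 1$ near $y_{0}$ and hence $nn^{*}=n f n^{*}\in B^{\infty}$ on a neighbourhood of $y_{0}$; thus $(nn^{*})\inv$ is smooth and positive on $\suppo(nn^{*})$, and for $g\in C_{c}^{\infty}(\suppo(n^{*}n))$ the formula $\theta_{n}(g)=(n g n^{*})(nn^{*})\inv$ exhibits $\theta_{n}(g)$ as smooth on $\suppo(nn^{*})$, after which the bump/coordinate argument again gives smoothness of $\widehat{\theta}_{n}\inv$.

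The reverse implication under the extra hypothesis $n^{*}n\in B^{\infty}$ is then immediate and clean: assuming \ref{item:theta smooth}, for any $f\in B^{\infty}$ the product $f\cdot n^{*}n$ lies in $B^{\infty}\cap B_{n}=B_{n}^{\infty}$, so $n f n^{*}=\theta_{n}(f\cdot n^{*}n)\in B^{\infty}$, which is \ref{item:n smooth}; moreover $nn^{*}=\theta_{n}(n^{*}n)\in B^{\infty}$ directly. Together with the equivalence \ref{item:theta-hat smooth}$\Leftrightarrow$\ref{item:theta smooth} and the implication \ref{item:n smooth}$\Rightarrow$\ref{item:theta-hat smooth}, this closes the cycle and gives the last assertion.

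The step I expect to be the main obstacle is the passage from smoothness on the \emph{open} set $\suppo(nn^{*})$ to membership in $B^{\infty}$, i.e.\ smoothness across the boundary $\partial\suppo(nn^{*})$; this is exactly what is needed for \ref{item:theta-hat smooth}$\Rightarrow$\ref{item:theta smooth} with a general $g\in B_{n}^{\infty}$, and for concluding $nn^{*}\in B^{\infty}$ from \ref{item:n smooth} alone. For a \emph{compactly} supported $g$ there is no difficulty, since $\suppo(\theta_{n}(g))=\widehat{\theta}_{n}(\suppo(g))$ is compact inside $\suppo(nn^{*})$ and so $\theta_{n}(g)$ extends by zero to an element of $B^{\infty}$. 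I would promote this to general $g$ by invoking the multiplier characterisation of smoothness (Lemma~\ref{lem:local multipliers smooth}), which reduces $\theta_{n}(g)\in B^{\infty}$ to the multiplier condition $\theta_{n}(g)\,B^{\infty}\subseteq B^{\infty}$, combined with the flatness of elements of $B_{n}^{\infty}$ along $\partial\suppo(n^{*}n)$; handling this boundary behaviour carefully is the crux of the argument.
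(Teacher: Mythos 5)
Your arguments for \ref{item:theta smooth}$\implies$\ref{item:theta-hat smooth} and for \ref{item:n smooth}$\implies$\ref{item:theta-hat smooth} coincide in substance with the paper's: chart coordinates cut off by smooth Urysohn bumps to manufacture elements of $B_{n}^{\infty}$, and, under \ref{item:n smooth}, a bump $f\equiv 1$ near $\widehat{\theta}_{n}\inv(y_{0})$ so that $nn^{*}=nfn^{*}$ is smooth near $y_{0}$ and one may divide by it. Your handling of the final assertion, by contrast, takes a genuinely different route: where the paper passes to $A^{**}$, writes the polar decomposition $n=v\sqrt{n^{*}n}$, and computes $nhn^{*}=v(h\,n^{*}n)v^{*}$, you use the identities $\theta_{n}(n^{*}n)=nn^{*}$ and $nfn^{*}=\theta_{n}(f)\,nn^{*}=\theta_{n}(f\cdot n^{*}n)$ with $f\cdot n^{*}n\in B_{n}^{\infty}$. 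This is correct, avoids the bidual entirely, and is arguably cleaner than the published argument.

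The genuine gap is the implication \ref{item:theta-hat smooth}$\implies$\ref{item:theta smooth}, which you never prove: you explicitly defer ``handling this boundary behaviour carefully'' as an unresolved crux, and this implication is load-bearing both for the stated equivalence of \ref{item:theta-hat smooth} and \ref{item:theta smooth} and for closing your cycle in the last assertion. For comparison, the paper settles this step in one sentence---for $f\in B_{n}^{\infty}$, $\theta_{n}(f)=f\circ\widehat{\theta}_{n}\inv$ is a composition of smooth maps---that is, it works exclusively on the open set $\suppo(nn^{*})$ and never engages with smoothness of the zero-extension across the boundary (the same remark applies to its claim that \ref{item:n smooth} gives $nn^{*}\in B^{\infty}$). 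So you have isolated a subtlety the paper passes over silently; but your proposed repair does not close it. Lemma~\ref{lem:local multipliers smooth} characterises smoothness of a bounded function \emph{on the open set} only, so the multiplier condition cannot by itself yield membership in $B^{\infty}$; and flatness of $g\in B_{n}^{\infty}$ along the boundary of $\suppo(n^{*}n)$ is not preserved under composition with $\widehat{\theta}_{n}\inv$, because hypothesis \ref{item:theta-hat smooth} gives no control of $\widehat{\theta}_{n}\inv$ near the boundary of $\suppo(nn^{*})$: it may approach the boundary of $\suppo(n^{*}n)$ arbitrarily slowly. Concretely, if $\widehat{\theta}_{n}\inv(y)=-1/\log y$ near $0$ (smooth on $(0,1)$) and $g(x)=e^{-1/x}$ near $0$ (flat at $0$), then $g\bigl(\widehat{\theta}_{n}\inv(y)\bigr)=y$, whose extension by zero is not even $C^{1}$. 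So as written your attempt is incomplete at exactly the point where it demands more than the paper does: either you need a substantially different argument there, or you must read the smoothness assertions on open supports, as the paper implicitly does, in which case the one-line composition argument finishes the proof.
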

\begin{proof}
The implication \ref{item:theta-hat smooth}$\implies$\ref{item:theta smooth} is trivial: the composition of smooth functions is smooth,
and the composition of a compactly supported function with a homeomorphism has compact support.
For \ref{item:theta smooth}$\implies$\ref{item:theta-hat smooth}, fix $y\in \suppo(nn^{*})$, and let  $(V,\psi)$ be a smooth chart around $x=\widehat{\theta}_{n}\inv (y)$
such that $\overline{V}$ is a compact subset of $\suppo(n^*n)$.
Write $\psi=(\psi_{1},\ldots,\psi_{k})$ where $k=\dim(\widehat{B})$ and $\psi_{i}\colon V\to \mbb{R}$. Fix an open neighbourhood $V'$ of $x$ with $\overline{V'}\subseteq V$ and, using the smooth Urysohn Lemma \cite[Proposition~2.25]{Lee:Intro}, choose
a smooth function $h\colon \widehat{B}\to [0,1]$ that is constant $1$ on $V'$ and vanishes off $V\cap \suppo(n^{*}n)$. Note that $h$ is compactly supported, so $h\psi_{i}\in B^{\infty}_{c}(n)$ and by \ref{item:theta smooth}, the map $\theta_{n}(h\psi_{i})\colon \suppo (nn^{*})\to\mbb{R}$ is smooth. By definition of $\theta_{n}$, this implies that $(h\psi_{i}) \circ \widehat{\theta}_{n}\inv \colon \widehat{\theta}_{n}(V') \to \mbb{R}$ is smooth. By choice of $h$, we have $(h\psi_{i}) \circ \widehat{\theta}_{n}\inv=\psi_{i} \circ \widehat{\theta}_{n}\inv$ on $\widehat{\theta}_{n}(V')$, so we have proved that each of the coordinates of $\psi\circ \widehat{\theta}_{n}\inv$ is smooth around $y$, meaning that $\widehat{\theta}_{n}\inv$ is smooth.

To see that \ref{item:n smooth} implies that $nn^*$ is smooth on its open support, fix $x\in\suppo(nn^*)$. Choose
precompact open sets $V, V'$ such that $\widehat{\theta}_n\inv(x) \in V \subseteq \overline{V} \subseteq V' \subseteq \overline{V'} \subseteq \suppo(n^*n)$. By the smooth Urysohn Lemma \cite[Proposition~2.25]{Lee:Intro} there exists a smooth function $h\colon \widehat{B}\to[0,1]$
that is identically~$1$ on $\overline{V}$ and vanishes off $V'$. Since the support of $h$ is contained in the compact set $\overline{V'}\subseteq \suppo(n^*n)$, $h$ is an element of $B_{c}^{\infty} (n)$, and so our assumption implies that $nhn^{*} \in B^{\infty}_{c}(n^*)$.
Equation~\eqref{eq:def-theta} shows that $nhn^{*} = \theta_n(h) nn^{*}$. We have $\theta_n(h) = h\circ\widehat{\theta}_n\inv$, which is identically~$1$ on $W\coloneqq\widehat{\theta}_n(V)$. Hence the smooth function $\theta_n(h) nn^{*}$ agrees with $nn^{*}$ on the neighbourhood~$W$ of~$x$.  In particular, $nn^{*}$ is smooth around~$x$.

For \ref{item:n smooth}$\implies$\ref{item:theta smooth}, fix $f \in B^{\infty}_{c}(n)$ and $x \in \suppo(nfn^{*}) =\suppo(\theta_n(f)) \subseteq \suppo(nn^*)$. It suffices to show that $\theta_n(f)$ is smooth  around~$x$, so let $h\in B^{\infty}$ and $W\subseteq G\z$ be as above. We have $nfn^{*} \in B^{\infty}_{c}(n^*)$ by assumption and $nfn^{*} = \theta_n(f)nn^{*}$ by Equation~\eqref{eq:def-theta}. Since $1/(nn^{*})$ is smooth on~$W$, the function $\theta_n(f)|_W = (\theta_n(f) nn^{*})/(nn^{*})$ is smooth on $W$. In particular, $\theta_n(f)$ is smooth around $x \in W$.

Lastly, assume that $nn^*$ is smooth on its open support; it remains to show that \ref{item:theta smooth}$\implies$\ref{item:n smooth}. Fix $f \in
B^{\infty}_{c}(n)$, so that $\theta_{n}(f)\in B^{\infty}_{c}(n^*)$ by assumption. As a product of a smooth compactly supported function and a smooth function, the element $\theta_{n}(f) nn^*=nfn^*$ of $B$ is smooth and compactly supported. In particular, it is in $B^{\infty}$. Since $f$ was arbitrary, the claim follows.
\end{proof}

\begin{corollary}\label{cor:smooth thetas by conjugation}
Let $B \subseteq A$ be a Cartan pair and suppose that $\widehat{B}$ is a manifold. Suppose that $n$ is a normaliser  of $B$ such that $nn^*,n^*n\in B^{\infty}$. Then the following are equivalent.
\begin{enumerate}[label=\textup{(\roman*)}]
    \item\label{item:theta-hat smooth:two sided both}
        $\widehat{\theta}_{n}$ is a diffeomorphism.
    \item\label{item:theta smooth:two sided both}
        $\theta_{n}$ restricts to a $*$-isomorphism from
        $B^{\infty}_{c}(n)$ to $B_{c}^{\infty}(n^{*})$.
    \item\label{item:n smooth:two sided}
    $n B^{\infty}_{c}(n) n^*$ and $n^* B^{\infty}_{c}(n^*)n$ are contained in $B^{\infty}$.
\end{enumerate}
If $nB^{\infty} n^* \cup n^*B^{\infty} n \subseteq B^{\infty}$, then \ref{item:theta-hat smooth:two sided both}--\ref{item:n smooth:two sided} all hold.
\end{corollary}
\begin{proof}
If $nn^*\in B^{\infty}$, then in particular $nn^*$ is smooth on its open support. Therefore, the equivalence of~\ref{item:theta-hat smooth:two sided both},  \ref{item:theta smooth:two sided both}~and~\ref{item:n smooth:two sided} is exactly the equivalence in Lemma~\ref{lem:thetas of smooth normalisers} of~\ref{item:theta-hat smooth}, \ref{item:theta smooth}~and~\ref{item:n smooth}  for both $n$ and $n^*$, under the extra assumption that $n^*n,nn^*\in B^{\infty}$.

For the final statement, observe that $B^{\infty}_{c}(n)$ and $B^{\infty}_{c}(n^*)$ are subsets of $B^{\infty}$, so if $n B^{\infty} n^*$ and $n^* B^{\infty} n$ are contained in $B^{\infty}$, then in particular $n B^{\infty}_{c}(n) n^*$ and $n^* B^{\infty}_{c}(n^*)n$ are contained in $B^{\infty}$.
\end{proof}

\begin{remark}
It is not clear to us whether the condition that $nB^{\infty} n^* \cup n^*B^{\infty} n \subseteq B^{\infty}$ implies that $ nn^*,n^*n \in B^\infty$; certainly the usual approximate-identity argument will not work since $B^\infty$ is not norm-closed.
\end{remark}

The situation is even nicer for normalisers with compact support. We state the relevant result in terms of sections of a concrete twist because the equivalent formulation in terms of normalisers in an abstract Cartan pair is a little awkward. The formulation of this result and its subsequent use were suggested to us by Tristan Bice; we thank him for the suggestion.

\begin{lemma}\label{lem:n w cpct supp}
    Suppose $\ses$ is a Lie twist over an \etale\ Lie groupoid, and suppose $n\in C^{\infty}_{c}(G;\twist)$ is such that the closure of $\pi(\suppo(n))$ is contained in a precompact bisection.
    Then $nC^{\infty} (G\z) = C^{\infty} (G\z) n\subseteq C_{c}^{\infty}(E)$.
\end{lemma}
\begin{proof}
    If we can show that
    $n C^{\infty} (G\z) \subseteq C^{\infty} (G\z)  n$, then the same argument applied to $n^*$ shows $n^* C^{\infty} (G\z) \subseteq C^{\infty} (G\z)  n^*$ and thus
    \[
        C^{\infty} (G\z)  n
        =
        (n^* C^{\infty} (G\z) )^*
        \subseteq
        (C^{\infty} (G\z)  n^*)^*
        =
        nC^{\infty} (G\z)
        \subseteq C^{\infty} (G\z)  n,
    \]
    giving equality throughout.

    Fix
    $f\in C^{\infty} (G\z)$. It suffices to find $g\in C^{\infty} (G\z)$
    such that $nf=gn$.
    Let $V$ be a precompact open bisection of $G$ such that
    \[
        U\coloneqq \pi(\suppo(n))
        \subseteq \overline{U}
        \subseteq V.
    \]
    On $V$, the map $f\circ s\circ r|_{V}\inv$ is smooth.
    By the smooth Urysohn Lemma, there exists a $\psi \in C^{\infty}(G\z, [0,1])$ that is identically~1 on $r(U)$ and vanishes outside of $r(V)$. Then
    \[
        g \coloneqq (f\circ s\circ r|_{V}\inv) \, \psi
    \]
    is a globally defined smooth function with support contained in the compact set $r(\overline{V})$, so
    $g\in C_{c}^{\infty} (G\z)$.
    Note that $\psi(r(e)) n(e)=n(e)$ for all $e\in \twist$, since $r(\suppo(n))\subseteq r(U)$ where $\psi$ is constant 1. For $e\in \twist$,
    \begin{align*}
        (gn) (e)
        &
        =
        g(r(e))n(e)
        =
        \begin{cases}
            0 & \text{if } e\notin \suppo(n),
            \\
            (f\circ s\circ r|_{V}\inv)(r(e)) n(e)
            & \text{if } e\in \suppo(n).
        \end{cases}
    \end{align*}
    If $e\in \suppo(n)\subseteq \pi\inv(V)$, then $\pi(e)$ is the unique element of $V$ such that $r(\pi(e))=r(e)$, so that
    \[
    (f\circ s\circ r|_{V}\inv)(r(e))
    =
    (f\circ s)(\pi(e))
    =
    f(s(e)).
    \]
    Hence $(gn)(e)=(nf)(e)$, so
    $nf=gn\in C_{c}^{\infty} (G\z) n$.
\end{proof}

\begin{remark}
In the situation of Lemma~\ref{lem:n w cpct supp}, since the closure of $\pi(\suppo(n^*n))$ is contained in a compact open bisection, the smooth Urysohn Lemma \cite[Proposition~2.25]{Lee:Intro} gives functions $f,g \in C^\infty_c(G\z)$ that are identically~1 on $\supp(nn^*)$ and $\supp(n^*n)$ respectively. So $n = fn = ng$, and we deduce that $nC^{\infty} (G\z) = n gC^\infty(G\z) \subseteq n C^\infty_c(G\z) \subseteq nC^\infty_0(G\z) \subseteq n C^\infty(G\z)$, giving equality throughout, and similarly $C_c^\infty(G\z)n = C_0^\infty(G\z)n = C^{\infty} (G\z) n$. So we can replace $C^\infty(G\z)$ with either $C_c^\infty(G\z)$ or $C_0^\infty(G\z)$ in the statement of Lemma~\ref{lem:n w cpct supp}.
\end{remark}

\begin{remark}\label{rmk:new N,C*}
Lemma~\ref{lem:n w cpct supp} is related to the final condition in Corollary~\ref{cor:smooth thetas by conjugation} and consequently to Assumption~\ref{property:N,C*} in the definition of a smooth Cartan triple (Definition~\ref{dfn:smooth Cartan triple}): If $\ses$ is a Lie twist with $B\subseteq A$ the associated Cartan pair, if $B^{\infty} = C^{\infty}(G\z)\cap B$, and if $n$ is as in Lemma~\ref{lem:n w cpct supp}, then
\[
    n B^{\infty} n^*
    \cup
    n^* B^{\infty} n
    = (B^{\infty} n) n^* \cup n^*(n B^{\infty}) \subseteq  B^{\infty} B^{\infty} \subseteq B^{\infty}.
\]
In other words, any collection $\mathscr{N}$ of normalizers $n\in C_{c}^{\infty} (G;\twist)$  of $C_{0} (G\z)$ for which $\overline{\pi\inv(\suppo(n))}$ is compact satisfies Assumption~\ref{property:N,C*}.
\end{remark}

\begin{example}
It may seem at first glance that, in the setting of Lemma~\ref{lem:thetas of smooth normalisers},
if $n B^{\infty}_{c}(n) n^* \subseteq B^{\infty}$ (and in particular $nn^{*}$ is smooth on its open support), then $n^{*}n$ should be smooth on its open support too. This is not the case. Define $h \colon \mbb{T} \to \mbb{T}$ by $h(e^{\pi i t}) = e^{\pi i t^3}$ for $-1 \le t \le 1$. Then $h$ is a homeomorphism and is smooth, but it is not a diffeomorphism since the cubed-root function is not differentiable at 0. Let $G$ be the groupoid equal as a topological space to $\{0,1\}^2 \times \mbb{T}$ with structure maps
\[
    r((i,j),x) = ((i,i), h^{i-j}(x))\quad s((i,j),x) = ((j,j), x)\quad\text{and}\quad
    ((i,j), h^{j-k}(x))((j,k), x) = ((i,k), x).
\]
Then $C^{*}_{r}(G) \cong M_{2}(C(\mbb{T}))$ and $C_{0}(G\z) \cong C(\mbb{T}) \oplus C(\mbb{T})$ embedded as diagonal matrices. So $\widehat{B} = \mbb{T} \sqcup \mbb{T}$ is a manifold, and $B^{\infty} = C^{\infty}(\mbb{T}) \oplus C^{\infty}(\mbb{T})$. Let $n_{0} \in C^{*}_{r}(G)$ be the indicator function of $\{(0,1)\} \times \mbb{T}$. One can compute that
\begin{equation}\label{eq:conj by n_{0}}
    (n_{0} \varphi n_{0}^*)((i,j),x)
    =
    \begin{cases}
        0&\text{ if } ij\neq 0,
        \\
        \varphi((1,1),h(x))&\text{ if } i=j=0.
    \end{cases}
\end{equation}
    and
\begin{equation}\label{eq:conj by n_{0}^*}
    (n_{0}^* \varphi n_{0})((i,j),x)
    =
    \begin{cases}
        0&\text{ if } ij\neq 1,
        \\
        \varphi((0,0),h\inv(x))&\text{ if } i=j=1.
    \end{cases}
\end{equation}
Thus, $n_{0}n_{0}^*$ is the indicator function of $\{(0,0)\}\times \mathbb{T}\subseteq G\z$, while $n_{0}^*n_{0}$ is the indicator function of $\{(1,1)\}\times \mathbb{T}\subseteq G\z$. It further follows from \eqref{eq:def-theta-hat} that $\widehat{\theta}_{n_{0}}\inv = h$ is smooth, but $\widehat{\theta}_{n_{0}} = h\inv$ is not.

Fix $f_{0} \in C^{\infty}(\mbb{T})^+$ such that $f_{0} \circ \widehat{\theta}_{n_{0}} \not\in C^{\infty}(\mbb{T})$. Let $f=f_{0}\oplus 0$, the copy of $f_{0}$ supported on $\{(0,0)\} \times \mbb{T} \subseteq G\z$, and define $n \coloneqq \sqrt{f} n_{0}$. Then for $g = g_{0} \oplus g_{1} \in B^{\infty}$, we have
\[
n g n^{*} = \sqrt{f} n_{0} g
n^{*}_{0} \sqrt{f} \overset{\eqref{eq:conj by n_{0}}}{=} \sqrt{f} (g_{1} \circ
h
) 1_{(0,0)\times\mbb{T}} \sqrt{f} = [f_{0} (g_{1} \circ h)]\oplus 0 \in B^{\infty}.
\]
So $nB^{\infty} n^* \subseteq B^{\infty}$, and in particular $nn^* \in B^{\infty}$.
However, \[
n^{*}n = n_{0}^{*} f n_{0} \overset{\eqref{eq:conj by n_{0}^*}}{=}
0\oplus [f_{0}\circ h\inv]
\notin B^{\infty}.
\]
\end{example}

\pagebreak[3]
\begin{thm}\label{thm:reconstruction}
\begin{enumerate}[label=\textup{(\arabic*)}]
\item\label{item:thm:from twist to triple}
    Suppose  $ \ses $ is  a Lie twist over an effective \etale\ Lie groupoid.
    Then there exists a collection $\{n_{\alpha}\}_{\alpha\in\mathfrak{A}}$ of elements of $C_{c}^{\infty} (G;\twist)$ such that
    \begin{enumerate}[label=\textup{(\roman*)}]
        \item\label{item:thm:U_alpha bisection} the closure of each $U_{\alpha}\coloneqq \pi(\suppo(n_{\alpha}))$ is contained in a precompact bisection,
        \item\label{item:thm:U_alpha base} the collection $\{U_{\alpha}\}_{\alpha}$ is a base for the topology on $G$, and
        \item\label{item:thm:sigmas} the collection $\{\sigma_{n_{\alpha}}\}_{\alpha}$ of sections of $\pi$ satisfies \PropE, \PropB\ and \PropC.
    \end{enumerate}
    For any collection $\{n_\alpha\}_{\alpha \in \mathfrak{A}} \subseteq C_{c}^{\infty} (G;\twist)$ satisfying \ref{item:thm:U_alpha bisection}--\ref{item:thm:sigmas}, the set \[
    \mathscr{N}=\{f n_{\alpha} : \alpha \in \mathfrak{A}\text{ and } f \in
        C_{c}^{\infty}(G\z)
    \} \cup
        C_{c}^{\infty}(G\z)
        \subseteq
        C_{c}^{\infty} (G;\twist)
    \]
    makes $(C^{*}_{r}(G; \twist), C_{0}(G\z), \mathscr{N})$ a smooth Cartan triple.
\item\label{item:thm:from triple to twist}
    Suppose that $(A, B, \mathscr{N})$ is a smooth Cartan triple and $ \ses $ is the Weyl twist associated to $B\subseteq A$.
    Give $G\z$ the smooth structure that makes the dual homeomorphism $G\z \to \widehat{B}$ a diffeomorphism.
    Then there is a unique smooth structure on $G$ extending that on $G\z$ under which $G$
    is a Lie groupoid, and there is a unique smooth structure on $\twist$ with respect to which $\mathscr{N}\subseteq C^{\infty} (E)$
    and $\PB{\twist}{\pi}{G}$ is a smooth principal $\mbb{T}$-bundle.
    With
    respect to these smooth structures, $\sestriple $ is a Lie twist. Moreover, for each $n \in \mathscr{N}$ and each $h \in B^{\infty}_{c}(n)$, the function  $\widehat{nh} \in C^*_r(G; \twist)$ defined by~\eqref{eq:evaluation map} belongs to $C^{\infty}_{c}(G; \twist)$.
\end{enumerate}
\end{thm}

\begin{remark}
    One might wonder what happens if one applies the two parts of Theorem~\ref{thm:reconstruction} successively.

    First suppose that $\ses$ is a Lie twist and that $\mathscr{N}$ is any family as constructed in~\ref{item:thm:from twist to triple}. We apply~\ref{item:thm:from triple to twist} to the triple $(A,B,\mathscr{N})$ to get an ({\em a priori} new) smooth structure on the (topological) twist~$\twist$; write~$\tilde{\twist}$ for~$\twist$ with this new smooth structure. By~\ref{item:thm:from triple to twist}, it is unique such that $\mathscr{N} \subseteq C^\infty (\tilde{\twist})$ and such that $\PB{\tilde{\twist}}{\pi}{G}$ is a smooth principal $\mbb{T}$-bundle. But by our assumption on $\mathscr{N}$ in~\ref{item:thm:from twist to triple}, we also have  $\mathscr{N} \subseteq C^\infty (\twist)$, and we know from \ref{item:LT:principal bundle} that $\PB{\twist}{\pi}{G}$ is a smooth principal $\mathbb{T}$-bundle; thus, $\twist=\tilde{\twist}$. In other words, applying~\ref{item:thm:from triple to twist} after~\ref{item:thm:from twist to triple} yields exactly the Lie twist we started with.

    However, if we start with a Cartan triple $(A,B,\mathscr{M})$ as in~\ref{item:thm:from triple to twist} and apply~\ref{item:thm:from twist to triple} to the resulting Lie twist $\ses$, then the new Cartan triples $(A,B,\mathscr{N})$ obtained by applying~\ref{item:thm:from twist to triple} typically satisfy neither $\mathscr{N}\subseteq \mathscr{M}$ nor $\mathscr{N}\supseteq \mathscr{M}$: we might not have $\mathscr{N}\subseteq \mathscr{M}$ because $\{\pi(\suppo(n)):n\in\mathscr{N}\}$ is a base for the topology of $G$ by \ref{item:thm:U_alpha base}, while $\{\pi(\suppo(m)):m\in\mathscr{M}\}$ is only assumed to be a cover; and we might not have $\mathscr{N}\supseteq \mathscr{M}$ because the elements of $\mathscr{N}$ were constructed to be compactly supported, while those in $\mathscr{M}$ are allowed not to be. But since $\mathscr{M}\subseteq C^\infty (\twist)$ by construction of the smooth structure on $\twist$ and since we chose $\mathscr{N}\subseteq C^\infty (\twist)$ in~\ref{item:thm:from twist to triple}, it follows from the first paragraph that the triples $(A,B,\mathscr{M})$ and $(A,B,\mathscr{N})$ encode the same smooth structure on $\twist$.

 Moreover, given a smooth Cartan triple $(A, B, \mathscr{M})$, if we define
    \[
        \mathscr{M}_c \coloneqq
        \{
            fm : m\in \mathscr{M}, f\in C_c^\infty (\suppo(m^*m))
        \},
    \]
    then $(A, B, \mathscr{M}_c)$ and $(A, B, \mathscr{M} \cup \mathscr{M}_c)$ are also smooth Cartan triples. The uniqueness assertion in~\ref{item:thm:from triple to twist} implies that $(A, B, \mathscr{M} \cup \mathscr{M}_c)$ induces the same smooth structure on $E$ as each of $(A, B, \mathscr{M})$ and $(A, B, \mathscr{M}_c)$. The family $\{n_\alpha\}_{\alpha \in \mathfrak{A}} = \mathscr{M}_c$ is contained in $C^\infty_c(G; \twist)$ and satisfies \ref{item:thm:U_alpha bisection}--\ref{item:thm:sigmas}. Applying~\ref{item:thm:from twist to triple} to this family, the resulting collection $\mathscr{N}$ is precisely $\mathscr{M}_c$. So the smooth Cartan triples $(A, B, \mathscr{M})$ in which elements of $\mathscr{M}$ have compact support contained in an open bisction are the ones for which the constructions in parts \ref{item:thm:from twist to triple}~and~\ref{item:thm:from triple to twist} are mutually inverse, and $\mathscr{M} \mapsto \mathscr{M}_c$ is a canonical method for replacing a given triple $(A, B, \mathscr{M})$ with a triple of this form.
\end{remark}

\begin{proof}[Proof of Theorem~\ref{thm:reconstruction}]
\ref{item:thm:from twist to triple}
To see that there exists a family $\{n_{\alpha}\}_{\alpha}$ with the desired property, first apply Lemma~\ref{lem:motivation} to obtain a base $\{U_{\alpha} \}_{\alpha\in \mathfrak{A}}$ of bisections for the topology on $G$ and a family  $\{\sigma_{\alpha} \colon U_{\alpha} \to \pi\inv (U_{\alpha})\}_{\alpha\in \mathfrak{A}}$ of smooth sections of $\pi$ that satisfies~\PropE, \PropB~and~\PropC. By Lemma~\ref{lem:refinement}, Parts~\ref{item:refinments inherit PropBCD} and~\ref{item:refinements inherit smoothness}, we may
assume without loss of generality that the closure of each $U_{\alpha}$ is contained in a precompact open bisection $V_{\alpha}$. For each $\alpha$, fix a smooth function $\varphi_{\alpha}\colon r(V_{\alpha})\to [0,\infty)$ with open support equal to $r(U_\alpha)$. By Lemma~\ref{lem:f<->sigma}\ref{item:pi-section gives T-equivariant fcts:smooth}, the element $n_{\alpha}\coloneqq n_{\varphi_{\alpha}}\in C_{c}^{\infty}(G;\twist)$ satisfies $\suppo(n_{\alpha})=\pi\inv (U_{\alpha})$ and $\sigma_{n_{\alpha}} = \sigma_{\alpha}$. So $\{n_\alpha\}_{\alpha \in \mathfrak{A}}$ is contained in $C^{\infty}_{c}(G; \twist)$ and satisfies~\ref{item:thm:U_alpha bisection}, \ref{item:thm:U_alpha base}~and~\ref{item:thm:sigmas}.

Now let $\{n_{\alpha}\}_{\alpha\in\mathfrak{A}}$ be \emph{any} subset of $C^{\infty}_{c}(G; \twist)$ satisfying~\ref{item:thm:U_alpha bisection}, \ref{item:thm:U_alpha base}~and~\ref{item:thm:sigmas}. Let
$\mathscr{N} = \{f n_{\alpha} : \alpha \in \mathfrak{A} \text{ and }f \in
        C_{c}^{\infty}(G\z)
    \} \cup
        C_{c}^{\infty}(G\z)
$. We must show that $(C^{*}_{r}(G; \twist), C_{0}(G\z), \mathscr{N})$ is a smooth Cartan triple. To see that $\mathscr{N}$ densely spans $A$, fix $\alpha \in \mathfrak{A}$. For each open set $U$ such that $U \subseteq \overline{U} \subseteq U_\alpha$, the final statement of Lemma~\ref{lem:f<->sigma} and the smooth Urysohn lemma yields an element $f_{0} \in C^{\infty}_{c}(r(U_\alpha), [0,\infty))$ such that $|f_{0} n_\alpha(e)| = 1$ for all $e \in \pi\inv (U)$. Let $\sigma \colon U \to \pi\inv (U)$ be the section defined by Lemma~\ref{lem:f<->sigma}\ref{item:T-equivariant fct gives pi-section} applied to $f_{0} n_\alpha$.  Then $\varphi \colon (z, \gamma) \mapsto z \cdot \sigma(\gamma)$ is a homeomorphism of $\mathbb{T} \times U$ onto $\pi\inv (U)$, and for $f \in C_{c}(r(U))$, we have $(ff_{0}n_\alpha)(\varphi(z,\gamma)) = z f(r(\gamma))$. Since the $C^*$-norm on $C^*_{r}(G; \twist)$ agrees with the supremum norm on functions supported on  preimages under $\pi$ of bisections, we deduce that $f \mapsto f f_{0} n_\alpha$ is an isometric linear isomorphism of $C_{c}(r(U))$ onto $C_{c}(U; \pi\inv (U))$ that restricts to an isomorphism between the subspaces of smooth functions. Hence  $C_{c}^{\infty}(U_\alpha; \pi\inv (U_\alpha)) \subseteq \{f n_\alpha : f \in  C_{c}^{\infty}(G\z)\} \subseteq \mathscr{N}$. A standard partition of unity argument shows that $C_{c}(G; \twist)$ and hence also $C^*_{r}(G; \twist)$ is spanned by the subspaces $C_{c}^{\infty}(U_\alpha; \pi\inv (U_\alpha))$
and thus by $\mathscr{N}$, as claimed.

As discussed at the start of the section, \cite[Proposition~4.8]{Renault:Cartan} implies that the normalisers of $C_{0}(G\z)$ in $ C^{*}_{r}(G; \twist)$ are precisely the elements that, when viewed as $\mathbb{T}$-equivariant functions on $\twist$, are supported on the preimage under $\pi$ of bisections. Since each $U_{\alpha}$ is a bisection, the collection $\mathscr{N}$ therefore consists of normalizers of $C_{0} (G\z)$. Moreover, since each $n_{\alpha}$ is smooth and compactly supported, we have $\mathscr{N}\cap C_{0}(G\z) = C_{c}^{\infty}(G\z)$, and in particular $C_{c}^{\infty}(G\z) \subseteq \mathscr{N}\cap C_{0}(G\z) \subseteq C^{\infty}(G\z)$. Consequently, $B^{\infty}_{c} \subseteq \mathscr{N}\cap B \subseteq B^{\infty}$ for $B = C_{0}(G\z)$ and $B^{\infty} = C^{\infty} (G\z)\cap B$ as needed.

Since $(n_{\alpha}n_{\alpha}^*)|_{r(V_{\alpha})} =  \varphi_{\alpha}$ for each $\alpha$, the compactly supported function $n_{\alpha}n_{\alpha}^*$ is smooth on $r(V_{\alpha})$ and hence an element of $B_{c}^{\infty} = C_{c}^{\infty}(G\z)$. We claim that $n_{\alpha}^* n_{\alpha}\in B_{c}^{\infty}$. Since $\twist$ is a Lie groupoid and $n_\alpha$ is smooth, so is $n_{\alpha}^*$. Since both $n_{\alpha}$ and $n^*_\alpha$ are compactly supported, Lemma~\ref{lem:f<->sigma}\ref{item:pi-section gives T-equivariant fcts:smooth} implies that the compactly supported function $\varphi_{n_\alpha^*} = n_{\alpha}^*n_{\alpha}$ is smooth on  $r(V_{\alpha}\inv)$. Hence $n_{\alpha}^*n_{\alpha}\in B_{c}^{\infty}$. Since $B_{c}^{\infty}$ is closed under involution and multiplication, we conclude that every $n\in \mathscr{N}$ satisfies $nn^*,n^*n\in B_{c}^{\infty}$.

Remark~\ref{rmk:new N,C*} implies that $fn_{\alpha}$ normalises $C^{\infty}_0(G\z)$. Thus $\mathscr{N}$ satisfies \ref{property:N,C*}.

Let $\condExp$ be the conditional expectation of the Cartan pair $C_{0}(G\z)\subseteq C^{*}_{r}(G; \twist)$. Since $\{\sigma_{n_\alpha}\}_{\alpha}$ satisfies~\PropE, \PropB~and~\PropC, for any three elements $n,m,k\in\{n_{\alpha}\}_{\alpha\in\mathfrak{A}}$, the functions $\Ph(\condExp(n))$, $\Ph(\condExp(mk^*))$, $\Ph(\condExp(nmk^*))$, and $\Ph(\condExp(nm))$ are smooth by Lemma~\ref{lem:C*-interpretation of Props} and Remark~\ref{rmk:I* is superfluous, part 1}. If $f\in B_{c}^{\infty}$, then $\Ph(\condExp(f)) = \Ph(f)$ is smooth; using what we have just showed and using $C_{0}(G\z)$-linearity of $\condExp$, this implies that  for $n,m,k\in\mathscr{N}$, all of the following are smooth as well:
\begin{align*}
    \Ph(\condExp(mf^*))&=\Ph(\condExp(m))\overline{\Ph(f)}
    \\
    \Ph(\condExp(fk^*))&=\Ph(f)
    \Ph(\condExp(k^*))=\Ph(f)\overline{\Ph(\condExp(k))}
    \\
    \Ph(\condExp(fmk^*))&=
    \Ph(f)\Ph(\condExp(mk^*))
    \\
    \Ph(\condExp(nmf^*))
    &=
    \Ph(\condExp(nm))\Ph(f^*)
\end{align*}
By Lemma~\ref{lem:n w cpct supp}, we have $nf=gn$ for some  $g\in C^{\infty} (G\z)$,
and so the function $$\Ph(\condExp(nfk^*)) = \Ph(g)\Ph(\condExp(nk^*))$$ is smooth. By Lemma~\ref{lem:local multipliers smooth}, it follows that $\mathscr{N}$ satisfies~\ref{property:U,C*},  \ref{property:S,C*}~and~\ref{property:M,C*}. So $(C^{*}_{r}(G; \twist), C_{0}(G\z), \mathscr{N})$ is a smooth Cartan triple.

\ref{item:thm:from triple to twist}
Because $\mathscr{N}$ satisfies Condition~\ref{property:N,C*}, any $n\in\mathscr{N}$ satisfies $n^{*}n \in B^{\infty}$  and normalises $B^{\infty}$. So Corollary~\ref{cor:smooth thetas by conjugation} \ref{item:n smooth:two sided}$\implies$\ref{item:theta-hat smooth:two sided both} shows that $\widehat{\theta}_{n}$ is a diffeomorphism. If $U_{n}\coloneqq \pi(\suppo(n))$, then $\widehat{\theta}_{n}=r\circ (s|_{U_{n}})\inv$. Since $\mathscr{N}$ densely spans $A$, the collection $\{U_{n}\}_{n\in\mathscr{N}}$ is a cover of $G$, and so Proposition~\ref{prop:lcdiff iff etale Lie gpd} \ref{item:TFAE:lcdiff}$\implies$\ref{item:TFAE:explicit Lie gpd} shows
that $G$ carries a unique \etale\ Lie groupoid structure  for which $G\z$ with its given manifold structure is an embedded submanifold.

Since $\mathscr{N}$ satisfies \ref{property:U,C*}, \ref{property:S,C*} and \ref{property:M,C*}, Lemma~\ref{lem:local multipliers smooth} shows that  for all $n,m,k\in\mathscr{N}$, the phases of the functions $\condExp(n)$, $\condExp(mk^{*} )$, and $\condExp(nmk^{*})$ are smooth functions. Hence, Lemma~\ref{lem:C*-interpretation of Props} shows that the family $\{\sigma_{n}\colon \suppo(n) \to \twist\}_{n\in\mathscr{N}}$ of sections of $\pi$ that is induced  by the given family $\mathscr{N}$ satisfies~\PropE, \PropB~and~\PropC\
from Definition~\ref{dfn:theProps} with respect to the given smooth structure on $G$.  By~\PropB, Theorem~\ref{thm:E is mfd} yields a unique smooth structure on $\twist$ with respect to which $\pi$ is a smooth principal $\mbb{T}$-bundle and each $\sigma_n$ is smooth. Properties~\PropE~and~\PropC\ combined with Theorem~\ref{thm:E is Lie} show that $\ses$ is a Lie twist.

To see that for each $m \in \mathscr{N}$ and $h \in B^{\infty}_{c}(m)$ the element $\widehat{mh}$ is smooth, fix such a pair $m, h$. The final assertion of Lemma~\ref{lem:f<->sigma}\ref{item:T-equivariant fct gives pi-section} combined with the formula for $n_\varphi$ shows that under the bijection  $\varphi \mapsto n_\varphi$ of Lemma~\ref{lem:f<->sigma}\ref{item:pi-section gives T-equivariant fcts:smooth} obtained from the section $\sigma_m$, we have $\widehat{m} = n_{\sqrt{mm^*}}$. Since $G\z$ is normal, we can find an open set $V$ such that $\supp(h) \subseteq V \subseteq \overline{V} \subseteq \suppo(m^*m)$, and then by the smooth Urysohn lemma \cite[Proposition~2.25]{Lee:Intro}, we can choose $h' \in C^{\infty}_{c}(G\z, [0,1])$ such that $h'$ is identically~1 on $\supp(h)$ and is identically~0 on $G\z \setminus V$; in particular $h' \in B^{\infty}_{c}(m)$. Identifying $C_{0}(G\z)$ with the corresponding subalgebra of $C^*_r(G; \twist)$ as usual, Equation~\eqref{eq:push through n} shows that $\widehat{mh'} = \widehat{m} h' = \theta_m(h')\widehat{m} = n_{\theta_m(h')\sqrt{mm^*}}$. Lemma~\ref{lem:thetas of smooth normalisers} shows that $\theta_m(h') \in C^{\infty}_{c}(G\z,[0,\infty))$, and since $\sqrt{mm^*} \in B^{\infty}$ is positive-valued, we deduce that $\theta_m(h')\sqrt{mm^*} \in  C^{\infty}_{c}(G\z,[0,\infty))$ as well. So Lemma~\ref{lem:f<->sigma}\ref{item:pi-section gives T-equivariant fcts:smooth} shows that $\widehat{mh'}$ is smooth. Since $h'$ is identically~1 on $\supp(h)$, we have $mh = mh'h$ and hence $\widehat{mh} = \widehat{mh'} h$. The function $\widehat{mh'} h$ is equal to the pointwise product of the smooth functions $\widehat{mh'}$ and $h \circ s$ on $\twist$, so it is smooth as claimed.
\end{proof}

\begin{remark}\label{rmk:use Connes}
Our main theorem is phrased so as to provide a correspondence between Lie twists over effective \etale\ Lie groupoids and triples  $(A, B, \mathscr{N}
)$ in which we are \emph{given} a manifold structure on $\widehat{B}$ for which $\mathscr{N}$ consists of smooth sections and contains the compactly supported smooth functions on $\widehat{B}$.
We can, of course, combine this with Connes' reconstruction theorem \cite[Theorem~1.1]{Connes:manifolds} (see also \cite{Rennie:manifolds, RV:reconstruction}): the algebra of smooth functions on a compact oriented smooth manifold is characterised by the functional-analytic data of a spectral triple. Putting Theorem~\ref{thm:reconstruction} together with Connes' result, we obtain a correspondence between Lie twists over \etale\ Lie groupoids whose unit spaces are compact oriented smooth manifolds, and purely functional-analytic tuples $(A, B, \mathscr{N}
, \mathcal{H}, D)$ such that, putting $B^{\infty} \coloneqq \mathscr{N} \cap B$,
\begin{itemize}[label=--]
    \item the triple $(B^{\infty}, \mathcal{H}, D)$ is a spectral triple satisfying Axioms (1)--(5) of \cite{Connes:gravity},
    \item $B$ is the $C^{*}$-completion of $B^{\infty}$, and
    \item the triple $(A, B, \mathscr{N}
)$  is a smooth Cartan triple.
\end{itemize}
However, since we are using Connes' formidable result here as a black box, we have chosen to emphasise that once the smooth structure on $\widehat{B}$ is given, the remaining smooth structure on $G$ and $\twist$ can be recovered from functional-analytic information.
\end{remark}

\begin{remark}\label{rmk:how many smooth structures}
Given a Cartan pair $B\subseteq A$ and  the algebra  $B^{\infty} = C^{\infty}(\widehat{B})\cap B$
of smooth functions for a manifold structure on $\widehat{B}$, it is an interesting question to what extent the smooth structure on the associated Weyl twist $\twist$ is unique. The point is that the smooth structure on $\twist$ is determined by a \emph{choice} of a family $\mathscr{N}$ extending $B^{\infty}$ and satisfying the conditions in Definition~\ref{dfn:smooth Cartan triple}. To see why this is non-unique, consider the situation where $G = X \rtimes \mbb{Z}/2\mbb{Z}$ is the transformation groupoid for a fixed-point-free order-2 diffeomorphism of a compact manifold $X$ and $\twist = \mbb{T} \times G$ is the trivial twist. Then continuous functions from $G$ to $\mbb{T}$ can be identified with sections of $\twist$ via $f \mapsto [\gamma \mapsto (f(\gamma), \gamma)]$. We can then obtain families $\mathscr{N}$ as in Definition~\ref{dfn:smooth Cartan triple} as follows: Let $n_{0} \colon G\z = X \times \{0\} \to \mbb{T}$ be the constant function $n_{0}(x, 0) = 1$; this is the identity element of $C^{*}_{r}(G; \twist )$ regarded as a normaliser of $C_{0}(G\z)$. Fix any \emph{continuous} function $f \colon X \to \mbb{T}$, and let $n_{1} \colon X \times \{1\} \to \mbb{T}$ be the function $n_{1}(x, 1) = f(x)$, again regarded as a normaliser. Now take $\mathscr{N} = \{h n_{0}, h n_{1} : h \in C^{\infty}(X)\}$. It is routine to check that these normalisers have the required properties because the continuous phase of $f$ cancels with its own conjugate wherever it appears in the conditions in Definition~\ref{dfn:smooth Cartan triple}. Since $n_{1}$ is, by definition, smooth with respect to the smooth structure on $\twist $ obtained from Theorem~\ref{thm:reconstruction} for $\mathscr{N}$, this smooth structure only agrees with the standard smooth structure on $\mbb{T} \times G$ if $f$ is itself smooth. So the smooth structure is not unique.

It should be possible to parameterise the possible choices of smooth structure in terms of cohomological data. Given any two families $\mathscr{M}$ and $\mathscr{N}$ as in Definition~\ref{dfn:smooth Cartan triple}, by passing to a common refinement and making appropriate use of Lemma~\ref{lem:refinement}, we can assume that the corresponding families of sections $\{\sigma_m : m \in \mathscr{M}\}$ and $\{\tau_{n} : n \in \mathscr{N}\}$ have the same supports. More precisely we may assume without loss of generality that $\mathscr{M} = \{m_{\alpha} : \alpha \in \mathfrak{A}\}$ and $\mathscr{N} = \{n_{\alpha} : \alpha \in \mathfrak{A}\}$ and that for each $\alpha$, the corresponding sections $\sigma_{\alpha} \coloneqq \sigma_{n_{\alpha}}$ and $\tau_{\alpha} \coloneqq \sigma_{m_{\alpha}}$ coming from Lemma~\ref{lem:f<->sigma} have the same support $U_{\alpha}$. For each $\alpha \in U$ we therefore obtain a continuous function $c_{\alpha} \colon U_{\alpha} \to \mbb{T}$ such that $\sigma_{\alpha}(\gamma)\tau_{\alpha}(\gamma)\inv = i(c_{\alpha}(\gamma), r(\gamma))$ for all $\gamma \in U_{\alpha}$. On double overlaps $U_{\alpha,\beta} \coloneqq U_{\alpha} \cap U_\beta$, the difference $c_{\alpha} \overline{c}_\beta$ is a smooth function by \PropB. So the $c_{\alpha}$ should determine a continuous \v{C}ech 0-cocycle on the \emph{space} $G$ with smooth transition functions. The smooth structures corresponding to $\mathscr{M}$ and $\mathscr{N}$ should coincide exactly when the functions $c_{\alpha}$ are themselves all smooth. So the possible distinct smooth structures should be parameterised by the group of continuous \v{C}ech cohomology classes of 0-cocycles with smooth transition functions modulo the subgroup of cohomology classes of smooth \v{C}ech 0-cocycles.

As a final note, however, we point out that there is, up to \emph{equivalence} only one possible smooth structure on $\twist$ that can arise from Theorem~\ref{thm:reconstruction}. Specifically, \cite[Proposition~I.13]{MW:Equiv} shows that the smooth structure on a given topological principal $\mbb{T}$-bundle over a manifold is unique \emph{up to equivalence} and so the smooth structures on $\twist$ obtained from different choices of $\mathscr{N}$ all coincide up to equivalence. We have not investigated whether the diffeomorphisms implementing these equivalences can be expected to be groupoid homomorphisms and/or morphisms of topological twists, but this seems related to the fact that the space $\mbb{T}$ admits many smooth structures, all of which are equivalent, but only one for which it is a Lie group.
\end{remark}

\appendix

\section{Differential Geometry}\label{sec:appendix} Since a significant proportion of the imagined audience of this paper---and all of its authorship---consists of $C^{*}$-algebraists who may not have at their mental fingertips all of the differential-geometric concepts and terminology used throughout, we have included this brief appendix collecting the key relevant information. Our primary reference is \cite{Lee:Intro}, and we recommend it to those, like us, trying to find the fundamentals of differential geometry that they need all in one well-organised place.

\subsection{Manifolds and submanifolds}
To understand and prove the (seemingly well-known) fact that the composable pairs in a Lie groupoid form an embedded submanifold (see the discussion following Definition~\ref{dfn:Lie gpd} on page~\pageref{page:G2 submanifold}), we need some definitions and  lemmas. Our notation follows that used in \cite{Conlon:DiffMa}.

\begin{defn}[{\cite[p.\ 2ff]{Lee:Intro}}]
Suppose that $M$ is a topological space. We say that $M$ is a {\em topological manifold of
dimension $m$}
if $M$ is Hausdorff, second-countable, and {\em locally Euclidean of dimension $m$}, meaning that each point of $M$ has a neighbourhood
that is homeomorphic to an open subset of $\mbb{R}^m$. A {\em topological chart} of $M$ is a pair $(U,\varphi)$, where $U$ is an open subset of $M$ and $\varphi\colon U\to \varphi(U)$ is a homeomorphism onto an open subset $\varphi(U)$ of $\mbb{R}^m$.
\end{defn}

\begin{defn}\label{dfn:atlas}
Suppose that $M$ is a topological manifold of dimension $m$. A {\em topological atlas}  of $M$ is a collection $\cA=\{(W_{\alpha}, \psi_{\alpha})\}_{\alpha\in \mathfrak{A}}$ where
\begin{enumerate}[label=(A\arabic*)]
    \item\label{item:atlas:cover}
        $\{W_{\alpha}\}_{\alpha\in\mathfrak{A}}$ is an open cover of $M$ and
    \item\label{item:atlas:homeo}
        each $\psi_{\alpha}\colon W_{\alpha} \to \mbb{R}^m$ is a topological chart.
\end{enumerate}
We call $\cA$ a {\em smooth atlas} (or just {\em atlas})  if we furthermore have
\begin{enumerate}[resume, label=(A\arabic*)]
    \item\label{item:atlas:smooth}
        for all $\alpha,\alpha'\in\mathfrak{A}$, either the set $W_{\alpha,\alpha'}\coloneqq W_{\alpha}\cap W_{\alpha'}$ is empty or the map
        \[
            \psi_{\alpha,\alpha'}\coloneqq  \psi_{\alpha}|_{W_{\alpha,\alpha'}} \circ (\psi_{\alpha'}|_{W_{\alpha,\alpha'}})\inv\colon\quad
            \psi_{\alpha'} \left( W_{\alpha,\alpha'}\right)
            \to
            \psi_{\alpha} \left( W_{\alpha,\alpha'}\right)
        \]
        is a diffeomorphism (between open subsets of $\mbb{R}^m$).
\end{enumerate}
In this case, we will call one of the topological charts  $\psi_{\alpha}\colon W_{\alpha} \to \mbb{R}^m$ a {\em smooth chart} (or just {\em chart}).

We say that two atlases are {\em compatible} if their union is another atlas.
An atlas is called {\em maximal} if it is not properly contained in any larger smooth atlas. A topological manifold $M$ is called a {\em smooth manifold} (or just {\em manifold}) if it comes with a chosen maximal smooth atlas $\cA$.
We will often refer to a choice of an atlas as a {\em smooth structure on $M$}. Two smooth structures $\cA,\cB$ on $M$ are called {\em equivalent} if there exists a diffeomorphism $f\colon (M,\cA)\to (M,\cB)$.
\end{defn}

\begin{remark}\label{rmk:equivalent vs compatible atlases}
Recall that every topological manifold $M$ that admits a smooth structure $\mathcal{A}$, admits uncountably many incompatible smooth structures $\mathcal{A}_{t}, t\in\mbb{R}$, that are built on top of the same, fixed topology on $M$ \cite[Problem 1-6]{Lee:Intro}. In other words, the identity map $\mathrm{id}_{M}$ is not a diffeomorphism between $(M,\mathcal{A}_{t})$ and $(M,\mathcal{A}_{s})$ for any two distinct real numbers $s,t$. However, there are many manifolds $M$ for which any two smooth structures are equivalent, i.e., the smooth structure is {\em unique up to diffeomorphism}.
An example of such a manifold is the circle $\mbb{T}$ (see \cite[Exercise~15-13]{Lee:Intro}).
\end{remark}

\begin{defn}[{\cite[p.\ 77]{Lee:Intro}}]
Given two (smooth) manifolds $M, N$, a smooth map $f\colon M \to N$ is
\begin{itemize}
    \item
        a {\em submersion} if its differential $\dd f_x$ is surjective for each $x\in M$ (i.e., $f$ has constant rank $\dim (N)$),
    \item
        an {\em immersion} if its differential $\dd f_x$ is injective for each $x\in M$ (i.e., $f$ has constant rank $\dim(M)$), and
    \item
        an {\em embedding} if it is an immersion and a homeomorphism onto its image.
\end{itemize}
\end{defn}

\begin{defn}[{\cite[p.\ 98ff]{Lee:Intro}}]
Suppose that $M$ is a smooth manifold. An {\em embedded submanifold} of $M$ is a subset $K\subseteq M$ that is a manifold in the subspace topology, endowed with a smooth structure with respect to which the inclusion map
$K\hookrightarrow M$ is a smooth embedding. Embedded submanifolds are also called {\em regular
submanifolds} by some authors.
\end{defn}

We will often think of embedded submanifolds in terms of the {\em local slice condition}:

\begin{lemma}[{\cite[Theorem~5.8]{Lee:Intro}}]
Suppose that $M$ is a manifold of dimension $m$. A subset $K$ is an embedded submanifold of dimension~$k$ if and only if it satisfies the {\em local $k$-slice condition}: for every $y\in K$, there exists a chart $(U,\varphi)$ around $y$ in $M$ such that $\varphi(U\cap K) = \varphi(U)\cap \mbb{R}^k$, where $\mbb{R}^k\subseteq \mbb{R}^m$ is (the image of) the standard inclusion.
\end{lemma}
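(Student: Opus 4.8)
The plan is to prove the two implications of this standard ``local slice criterion'' separately: in the direction where we are handed the slice condition I would construct the smooth structure on $K$ by hand, while for the converse I would appeal to the canonical form for immersions and then use the topological-embedding hypothesis to pin down the slice exactly.

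Suppose first that $K$ satisfies the local $k$-slice condition. I would give $K$ the subspace topology, which is automatically Hausdorff and second countable because $M$ is, and then build a smooth atlas from the slice charts: if $(U,\varphi)$ is a chart with $\varphi(U\cap K)=\varphi(U)\cap\mbb{R}^k$, then $\pi\circ\varphi|_{U\cap K}$, where $\pi\colon\mbb{R}^m\to\mbb{R}^k$ is projection onto the first $k$ coordinates, is a homeomorphism of $U\cap K$ onto an open subset of $\mbb{R}^k$. For two such charts the transition map factors as $\pi\circ(\varphi'\circ\varphi\inv)\circ(x\mapsto(x,0))$, a composite of the smooth transition map of $M$ with the smooth linear inclusion and projection, hence is smooth; so these charts form a smooth atlas of dimension $k$ on $K$. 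Reading the inclusion $K\hookrightarrow M$ in a slice chart together with its associated chart on $K$ exhibits it as the standard linear inclusion $\mbb{R}^k\hookrightarrow\mbb{R}^m$, so it is an immersion, and it is a topological embedding since $K$ carries the subspace topology; hence $K$ is an embedded submanifold of dimension $k$.

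For the converse, suppose $K$ is an embedded submanifold, so the inclusion $\iota\colon K\hookrightarrow M$ is a smooth immersion. Fixing $y\in K$, I would apply the canonical form for immersions (the rank theorem, \cite[Theorem~4.12]{Lee:Intro}) to obtain a chart $(V,\psi)$ for $K$ centred at $y$ and a chart $(U_0,\varphi_0)$ for $M$ centred at $y$ with $\iota(V)\subseteq U_0$ and coordinate representation $\varphi_0\circ\iota\circ\psi\inv(x)=(x,0)$; thus $\varphi_0(\iota(V))=\psi(V)\times\{0\}$, an open subset of the slice $\mbb{R}^k\times\{0\}$. Shrinking $U_0$ to a coordinate box $C_1\times C_2$ with $C_1\subseteq\psi(V)$ open in $\mbb{R}^k$, I obtain $\varphi_0(U_0)\cap\mbb{R}^k=C_1\times\{0\}=\varphi_0(\iota(V_1))$ for the correspondingly shrunk open set $V_1\subseteq V$.

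The remaining, and crux, step is to rule out the possibility that the $M$-chart meets $K$ in points outside $\iota(V_1)$---otherwise $\varphi_0(U_0\cap K)$ could be strictly larger than the slice $C_1\times\{0\}$. Here I would invoke the topological-embedding hypothesis: $\iota(V_1)$ is open in the subspace topology on $K$, so $\iota(V_1)=K\cap W$ for some open $W\subseteq M$, and putting $U\coloneqq U_0\cap W$ forces $U\cap K=\iota(V_1)$ and hence $\varphi_0(U\cap K)=C_1\times\{0\}=\varphi_0(U)\cap\mbb{R}^k$, giving the desired slice chart. I expect this interplay to be the main obstacle: the canonical form only guarantees that $\iota(V_1)$ lands inside the slice, and it is precisely the assumption that $\iota$ is a homeomorphism onto its image---rather than merely an immersion---that prevents other sheets of $K$ from entering the chart and forces the slice to be filled exactly. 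This is exactly the hypothesis whose absence causes the criterion to fail for immersed (non-embedded) submanifolds.
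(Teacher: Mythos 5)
Your proposal is correct, and the paper offers no proof of its own here—it simply cites the result as \cite[Theorem~5.8]{Lee:Intro}—so the relevant comparison is with the cited textbook argument, which yours matches in essentially every detail: slice charts assemble into a smooth atlas making the inclusion an embedding for the ``if'' direction, and the rank theorem combined with the topological-embedding hypothesis (choosing open $W \subseteq M$ with $\iota(V_1) = K \cap W$ and setting $U = U_0 \cap W$) for the ``only if'' direction. You also correctly identify the crux, namely that embeddedness rather than mere immersion is what forces $U \cap K$ to equal exactly the slice, which is precisely where the criterion fails for immersed submanifolds.
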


We were unable to locate an explicit reference for the following standard fact, so we provide a brief proof.

\begin{lemma}\label{lem:proper embedding when intersecting}
If $S$ is an embedded submanifold of $M$ and $U\subseteq M$ is open, then $S\cap U$ is an embedded submanifold of $U$. If $S$ is properly embedded in $M$, then $S\cap U$ is properly embedded in $U$.
\end{lemma}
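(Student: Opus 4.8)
The plan is to establish both assertions directly from the local slice criterion quoted above (which cites \cite[Theorem~5.8]{Lee:Intro}), together with the standard characterisation that an embedded submanifold is properly embedded precisely when it is a closed subset \cite[Proposition~5.5]{Lee:Intro}.

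First I would prove that $S\cap U$ is an embedded submanifold of $U$. Write $k=\dim S$ and $m=\dim M$. Since $U$ is open in $M$, it is itself an $m$-dimensional manifold, and its charts are exactly the restrictions to open subsets of $U$ of the charts of $M$. Fix $y\in S\cap U$. As $S$ is an embedded submanifold of $M$, the local slice condition supplies a chart $(V,\varphi)$ of $M$ around $y$ with $\varphi(V\cap S)=\varphi(V)\cap\mbb{R}^k$. Put $V'\coloneqq V\cap U$, which is open in $U$ and contains $y$, so $(V',\varphi|_{V'})$ is a chart of $U$ around $y$. Using that $\varphi$ is injective (so that $\varphi$ distributes over intersections of subsets of $V$) and that $\varphi(V')\subseteq\varphi(V)$, I would compute
\[
\varphi\bigl(V'\cap(S\cap U)\bigr)
= \varphi(V'\cap S)
= \varphi(V\cap S)\cap\varphi(V')
= \bigl(\varphi(V)\cap\mbb{R}^k\bigr)\cap\varphi(V')
= \varphi(V')\cap\mbb{R}^k,
\]
which is exactly the local $k$-slice condition for $S\cap U$ inside $U$ at the point $y$. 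Since $y\in S\cap U$ was arbitrary, the local slice criterion gives that $S\cap U$ is an embedded submanifold of $U$.

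For the second assertion, I would invoke the equivalence \cite[Proposition~5.5]{Lee:Intro} that an embedded submanifold is properly embedded if and only if it is closed in the ambient manifold. The hypothesis that $S$ is properly embedded in $M$ therefore says $S$ is closed in $M$, whence $S\cap U$ is closed in the subspace topology of $U$. Combining this with the first part---that $S\cap U$ is an embedded submanifold of $U$---the same characterisation, now applied with ambient manifold $U$, shows that $S\cap U$ is properly embedded in $U$.

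I do not anticipate any serious obstacle. The only step requiring care is the set-theoretic image manipulation in the first paragraph, where injectivity of the chart map $\varphi$ is what licenses distributing $\varphi$ over the intersection; the supporting observation that restrictions of charts of $M$ to open subsets of $U$ serve as charts of $U$ is what makes ``embedded submanifold of $U$'' meaningful and lets a single slice chart for $M$ double as a slice chart for $U$.
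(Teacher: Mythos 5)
Your proof is correct. For the first assertion you take a genuinely different route from the paper: you verify the local $k$-slice condition directly, taking a slice chart $(V,\varphi)$ for $S$ in $M$, restricting it to $V'=V\cap U$, and checking (via injectivity of $\varphi$) that the restriction is a slice chart for $S\cap U$ in $U$. The paper instead argues definitionally: since $U$ is open in $M$, the set $S\cap U$ is open in $S$ in the subspace topology, hence an open submanifold of $S$, and the inclusion of $S\cap U$ into $U$ is then a smooth embedding, so $S\cap U$ is an embedded submanifold of $U$. The two arguments are of comparable length; the paper's has the small advantage that it exhibits the smooth structure on $S\cap U$ as the open-submanifold structure inherited from $S$ (which is the structure actually used later in the paper), whereas your slice-chart construction produces an embedded-submanifold structure that agrees with it only via the uniqueness of the smooth structure on an embedded submanifold \cite[Theorem~5.31]{Lee:Intro} --- a standard point, but worth a word if one cares which structure is meant. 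For the second assertion your argument coincides with the paper's: both apply the characterisation of properly embedded submanifolds as the closed ones \cite[Proposition~5.5]{Lee:Intro} twice, once in the ambient manifold $M$ and once in $U$.
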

\begin{proof}
The first statement follows almost immeditately from the definition of an embedded submanifold: Since $U \cap S$ is open in $M$, it is an open subset of $S$ in the subspace topology, and hence an open submanifold. Since $i$ restricts to a smooth embedding $i \colon U\cap S \to M$, we deduce that $U \cap S$ is an embedded submanifold.

For the second statement, if $S$ is properly embedded in $M$, then it is closed in $M$ by \cite[Proposition~5.5]{Lee:Intro}. Hence $S \cap U$ is closed in the subspace topology on $U$. So $S \cap U$ is properly embedded in $U$ by a second application of \cite[Proposition~5.5]{Lee:Intro}.
\end{proof}

\begin{lemma}[{\cite[Propositions 5.4 and 5.7]{Lee:Intro}}]\label{lem:graph is mfd}
Suppose that $M$ and $N$ are smooth manifolds. Let $f\colon M\to N$ be a smooth map. Then its graph $\Gamma_f\subseteq M\times N$ is a properly embedded submanifold of the same dimension as $M$.
\end{lemma}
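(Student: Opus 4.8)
The plan is to realise $\Gamma_f$ as the image of a smooth embedding and then invoke the closedness criterion for proper embeddings. First I would consider the map $F\colon M\to M\times N$ defined by $F(x)=(x,f(x))$. This is smooth because its two component functions, $\mathrm{id}_M$ and $f$, are both smooth. It is injective, since the first coordinate of $F(x)$ recovers $x$; and its differential $\dd F_x=(\mathrm{id}_{T_xM},\dd f_x)$ is injective at every point because its first component already is, so $F$ is an immersion.

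Next I would check that $F$ is a homeomorphism onto its image $\Gamma_f$, which is the only non-formal point in showing that $\Gamma_f$ is embedded. The map $F$ is continuous by construction, and the restriction of the projection $\mathrm{pr}_1\colon M\times N\to M$ to $\Gamma_f$ is a continuous two-sided inverse for $F\colon M\to\Gamma_f$, since $\mathrm{pr}_1(x,f(x))=x$. Hence $F$ is a smooth embedding, and its image $\Gamma_f$ is an embedded submanifold of $M\times N$ whose dimension equals $\dim(M)$.

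Finally, for properness I would argue that $\Gamma_f$ is a closed subset of $M\times N$. Since $N$ is Hausdorff, the diagonal $\Delta_N\subseteq N\times N$ is closed; and the map $M\times N\to N\times N$, $(x,y)\mapsto(f(x),y)$, is continuous. As $\Gamma_f$ is precisely the preimage of $\Delta_N$ under this map, it is closed. An embedded submanifold is properly embedded exactly when it is a closed subset (\cite[Proposition~5.5]{Lee:Intro}, already invoked in Lemma~\ref{lem:proper embedding when intersecting}), so $\Gamma_f$ is properly embedded. I do not expect a genuine obstacle here; the only two points demanding care are the homeomorphism claim, which is what distinguishes an embedding from a mere injective immersion, and the use of Hausdorffness of $N$ to obtain closedness of the graph—both entirely standard.
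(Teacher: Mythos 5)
Your proof is correct, but it takes a genuinely different route from the paper's. The paper proves the embedding claim by hand: around each point $(x,f(x))$ it takes charts $\varphi$ on $M$ and $\psi$ on $N$ and composes the product chart with the shear map $\Omega(\vec{v},\vec{w}) = \bigl(\vec{v}, \vec{w} - \psi(f(\varphi^{-1}(\vec{v})))\bigr)$, producing an explicit slice chart that flattens $\Gamma_f$ onto $\mbb{R}^m\times\{\vec{0}\}$; this verifies the local $m$-slice condition directly. You instead realise $\Gamma_f$ as the image of the map $F(x)=(x,f(x))$, check that $F$ is a smooth injective immersion and a homeomorphism onto its image (via $\mathrm{pr}_1$), and then invoke the general fact that images of smooth embeddings are embedded submanifolds. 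Both arguments are sound; the paper's is more self-contained (it constructs the slice charts rather than citing the embedding-image fact), while yours is shorter and has one concrete advantage: you actually prove the \emph{properly} embedded part, by observing that $\Gamma_f$ is the preimage of the closed diagonal $\Delta_N$ under the continuous map $(x,y)\mapsto(f(x),y)$ and then applying the closed-iff-properly-embedded criterion \cite[Proposition~5.5]{Lee:Intro}. The paper's written proof stops at ``submanifold of dimension $m$'' and leaves properness to the cited propositions of \cite{Lee:Intro}, so on this point your argument is the more complete of the two.
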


\begin{proof}
Let $m$ and $n$ denote the manifold dimensions of $M$ and $N$. Fix $(x,f(x))$ in $\Gamma_f$, and let $\psi\colon V\approx \mbb{R}^n$ be a chart around $f(x)$ in $N$. Since $f$ is continuous, $f\inv (V)$ is a neighbourhood around $x$. So there is a chart  $\varphi\colon U\approx\mbb{R}^m$  around $x$ in $M$ with $U\subseteq f\inv (V).$
Consider the smooth bijective map
\[
    \Omega\colon U\times\mbb{R}^n \to U\times\mbb{R}^n,
        (\vec{v}, \vec{w}) \mapsto
            \biggl(
                \vec{v}, \vec{w} - \psi\Bigl(f\bigl(\varphi\inv(\vec{v})\bigr)\Bigr)
            \biggr).
\]
Let $\Lambda\coloneqq \Omega\circ (\varphi\times \psi)$. Then $(U\times V, \Lambda)$ is a chart around $(x,f(x))$ in $M\times N$, and
\[
    \Lambda\bigl(U\times V \cap \Gamma_{f}\bigr)
        = \Omega \Bigl\{\bigl(\varphi(y),\psi(f(y))\bigr) : y\in U\Bigr\}
        = \Bigl\{\bigl(\varphi(y),\vec{0}\bigr) : y\in U\Bigr\}
        = \mbb{R}^m \times \{\vec{0}\}.
\]
This proves that $\Gamma_f$ is a submanifold of dimension $m$.
\end{proof}

\begin{defn}[{\cite[p.\ 143]{Lee:Intro}}]
Two embedded submanifolds $K_{1},K_{2}\subseteq M$ are
said to be {\em transverse} if for each $x\in K_{1}\cap K_{2}$, the tangent spaces $T_x K_{1}$ and $T_x K_{2}$ together span $T_x M$.
If $f\colon M \to N$ is a smooth map and $K\subseteq N$ is an embedded submanifold, we say that $f$ is {\em transverse to $K$} if for every $x\in  f\inv(K)$, the spaces $T_{f(x)}K$ and $\mathrm{im} (\dd f_{x})$ together span $T_{f(x)}N$.
A pair of maps $f \colon M \to K$ and $g \colon N \to K$ are {\em transverse} if whenever $f(x) = g(y) = z \in K$, the images $\mathrm{im}(\dd f_x)$ and $\mathrm{im}(\dd g_y)$ together span $T_z(K)$.
\end{defn}

\begin{remark}\label{rmk:submersion implies transverse}
If $f\colon M\to N$ is a submersion, then it is transverse to {\em any} embedded submanifold of $N$, since $\mathrm{im} (\dd f_{x})$ alone already spans all of $T_{p}N$.
\end{remark}

\begin{thm}[{\cite[Theorem~6.30]{Lee:Intro}}]\label{thm:Generalized Preimage Thm}
Suppose that $M$ and $N$ are smooth manifolds, that $K$ is an embedded submanifold of $N$ of dimension $k$, and that $f\colon M\to N$ is smooth.
\begin{enumerate}[label=\textup{(\arabic*)}]
    \item\label{item:Generalized Preimage Thm}
        If $f$  is transverse to $K$, then $f\inv (K)$ is a submanifold of $M$ whose codimension in $M$ coincides with the codimension of $K$ in~$N$.
    \item\label{item:intersec of trans submfds is submfd}
        If $K'$ is an embedded submanifold of $N$ that is transverse to $K$, then $K\cap K'$ is also a submanifold and its codimension is the sum of the codimensions of $K$ and $K'$.
\end{enumerate}
\end{thm}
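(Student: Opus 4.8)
The plan is to prove both parts by reducing to the Regular Level Set Theorem, using local defining submersions for the embedded submanifold $K$ and letting transversality supply exactly the regularity that is needed. Throughout, write $n = \dim N$ and let $c = n - k$ be the codimension of $K$ in $N$; I will produce, at each point, a local slice chart, so that the conclusion follows from the local characterisation of embedded submanifolds recalled above (the Lemma citing \cite[Theorem~5.8]{Lee:Intro}).

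For Part~\ref{item:Generalized Preimage Thm}, the statement is local, so fix $p \in f\inv(K)$ (if $f\inv(K) = \emptyset$ there is nothing to prove). Using the local slice condition, choose a chart $(V, \psi)$ of $N$ around $f(p)$ with $\psi(V \cap K) = \psi(V) \cap (\mbb{R}^{k} \times \{0\})$. Let $\Pi \colon \mbb{R}^{n} \to \mbb{R}^{c}$ be projection onto the last $c$ coordinates and set $g \coloneqq \Pi \circ \psi \colon V \to \mbb{R}^{c}$. Then $g$ is a submersion, $V \cap K = g\inv(0)$, and for each $y \in V \cap K$ the kernel of $\dd g_{y}$ is exactly $T_{y}K$. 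On the open set $f\inv(V)$ we have $f\inv(K) \cap f\inv(V) = (g \circ f)\inv(0)$. The key step is to check that $0$ is a regular value of $g \circ f$: fix $q \in (g\circ f)\inv(0)$, so that $f(q) \in K$, and note $\dd(g\circ f)_{q} = \dd g_{f(q)} \circ \dd f_{q}$. Since $\dd g_{f(q)}$ is surjective with kernel $T_{f(q)}K$, applying it to the transversality relation $\mathrm{im}(\dd f_{q}) + T_{f(q)}K = T_{f(q)}N$ gives
\[
    \dd g_{f(q)}\bigl(\mathrm{im}(\dd f_{q})\bigr) = \dd g_{f(q)}\bigl(T_{f(q)}N\bigr) = \mbb{R}^{c},
\]
because $\dd g_{f(q)}(T_{f(q)}K) = 0$; hence $\dd(g\circ f)_{q}$ is surjective. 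By the Regular Level Set Theorem \cite[Corollary~5.14]{Lee:Intro}, $(g\circ f)\inv(0)$ is an embedded submanifold of the open set $f\inv(V)$ of codimension $c$, and in particular satisfies the local slice condition there. As this holds near every point of $f\inv(K)$ and being an embedded submanifold is a local condition, $f\inv(K)$ is an embedded submanifold of $M$ of codimension $c = \mathrm{codim}_{N} K$.

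For Part~\ref{item:intersec of trans submfds is submfd}, apply Part~\ref{item:Generalized Preimage Thm} to the inclusion $\iota \colon K' \hookrightarrow N$. For each $x \in K'$ we have $\mathrm{im}(\dd\iota_{x}) = T_{x}K'$, so the hypothesis that the submanifolds $K$ and $K'$ are transverse, namely $T_{x}K + T_{x}K' = T_{x}N$ for every $x \in K \cap K'$, is precisely the statement that $\iota$ is transverse to $K$. Hence $\iota\inv(K) = K \cap K'$ is an embedded submanifold of $K'$ whose codimension in $K'$ equals $\mathrm{codim}_{N} K$. Since $K'$ is embedded in $N$ and embeddings compose, $K \cap K'$ is an embedded submanifold of $N$, and
\[
    \mathrm{codim}_{N}(K \cap K') = \mathrm{codim}_{K'}(K\cap K') + \mathrm{codim}_{N} K' = \mathrm{codim}_{N} K + \mathrm{codim}_{N} K',
\]
as claimed.

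The main obstacle is the single computation at the heart of Part~\ref{item:Generalized Preimage Thm}: correctly identifying $\ker(\dd g_{y})$ with $T_{y}K$ for the local defining submersion $g$, and then chasing the transversality relation through $\dd g$ to obtain surjectivity of $\dd(g \circ f)$. Once that is in place, everything else---the local slice reduction, the appeal to the regular level set theorem, and the reduction of Part~\ref{item:intersec of trans submfds is submfd} to Part~\ref{item:Generalized Preimage Thm}---is bookkeeping with dimensions and the transitivity of embedded submanifolds.
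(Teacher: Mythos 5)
Your proof is correct in every step. Note, though, that the paper offers no proof of this statement at all---it is imported directly from \cite[Theorem~6.30]{Lee:Intro}---so the only thing to compare against is the cited source, and your argument (a local defining submersion $g$ built from a slice chart with $\ker \dd g_y = T_yK$, transversality making $0$ a regular value of $g \circ f$, local-to-global patching via the slice condition, and Part~(2) obtained by applying Part~(1) to the inclusion $K' \hookrightarrow N$) is precisely the standard proof given there.
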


The following two results are well-known, but since we were unable to track a published references for them, we provide proofs.
\begin{prop}\label{prop:transverse maps implies fibred product is mfd}
Suppose that $M,N,K$ are smooth manifolds of dimensions $m,n,k$ respectively. If $f\colon M\to K$, $g\colon N\to K$ are transverse, then the fibred product $M\fp{f}{g}N$ is a submanifold of $M\times N$ of dimension $m+n-k$.
\end{prop}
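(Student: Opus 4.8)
The plan is to realise the fibred product as the preimage of a diagonal and then apply the Generalized Preimage Theorem (Theorem~\ref{thm:Generalized Preimage Thm}). First I would form the product map $F \coloneqq f \times g \colon M \times N \to K \times K$, $(x,y) \mapsto (f(x), g(y))$, together with the diagonal $\Delta \coloneqq \{(z,z) : z \in K\} \subseteq K \times K$. Since $\Delta$ is the graph of the identity map $\mathrm{id}_K \colon K \to K$, Lemma~\ref{lem:graph is mfd} shows that it is a properly embedded submanifold of $K \times K$ of dimension $k$, hence of codimension $k$. The key observation is that $M \fp{f}{g} N = F\inv(\Delta)$, so it suffices to show that $F$ is transverse to $\Delta$ and then invoke Theorem~\ref{thm:Generalized Preimage Thm}\ref{item:Generalized Preimage Thm}: this yields that $F\inv(\Delta)$ is an embedded submanifold of $M \times N$ whose codimension equals that of $\Delta$, namely $k$. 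Since $\dim(M \times N) = m + n$, its dimension is $m + n - k$, as claimed.

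The substance of the argument is the transversality check, which is a pointwise linear-algebra computation. Fix $(x,y) \in F\inv(\Delta)$ and set $z \coloneqq f(x) = g(y)$. Under the canonical identifications $T_{(x,y)}(M \times N) = T_x M \oplus T_y N$ and $T_{(z,z)}(K \times K) = T_z K \oplus T_z K$, the differential of $F$ is $\dd F_{(x,y)}(v,w) = (\dd f_x(v), \dd g_y(w))$, so that $\mathrm{im}(\dd F_{(x,y)}) = \mathrm{im}(\dd f_x) \oplus \mathrm{im}(\dd g_y)$, while the tangent space to the diagonal is $T_{(z,z)}\Delta = \{(u,u) : u \in T_z K\}$. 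I would then show that the hypothesis that $f$ and $g$ are transverse---that is, $\mathrm{im}(\dd f_x) + \mathrm{im}(\dd g_y) = T_z K$---forces these two subspaces to span $T_z K \oplus T_z K$. Indeed, given an arbitrary $(a,b) \in T_z K \oplus T_z K$, transversality of $f$ and $g$ lets us write $a - b = p - q$ with $p \in \mathrm{im}(\dd f_x)$ and $q \in \mathrm{im}(\dd g_y)$; setting $u \coloneqq a - p = b - q \in T_z K$ gives $(a,b) = (p,q) + (u,u)$, the required decomposition. This establishes transversality of $F$ to $\Delta$ and completes the proof.

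I expect the only real subtlety to be the bookkeeping with tangent spaces of products and making sure that the definition of transversality for the pair $(f,g)$ recorded in the excerpt is exactly the spanning condition $\mathrm{im}(\dd f_x) + \mathrm{im}(\dd g_y) = T_z K$ needed to run the argument; once the fibred product is rewritten as $F\inv(\Delta)$, no step is genuinely difficult, and the Generalized Preimage Theorem does the heavy lifting of producing both the submanifold structure and the dimension count.
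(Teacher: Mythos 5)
Your proof is correct, but it takes a genuinely different route from the paper's. You realise the fibred product inside $M\times N$ itself, as the preimage $F\inv(\Delta)$ of the diagonal $\Delta\subseteq K\times K$ under $F \coloneqq f\times g$, verify that $F$ is transverse to $\Delta$, and apply part~\ref{item:Generalized Preimage Thm} of Theorem~\ref{thm:Generalized Preimage Thm} (the preimage statement). The paper instead works in the larger space $(M\times K)\times(N\times K)$: it forms the product of graphs $\Gamma_f\times\Gamma_g$, intersects it with the partial diagonal $H = \{(x,k,y,k) : x\in M,\, k\in K,\, y\in N\}$, and applies part~\ref{item:intersec of trans submfds is submfd} (the intersection statement); the linear-algebra verification of transversality there is essentially the same computation as yours, decomposing an arbitrary tangent vector using the hypothesis $\mathrm{im}(\dd f_x) + \mathrm{im}(\dd g_y) = T_z K$. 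The price the paper pays for its detour is that the resulting submanifold $L$ lives in $(M\times K)\times(N\times K)$ rather than in $M\times N$, so it needs an extra transfer step---showing that the projection $\iota\colon L\to M\times N$ is a constant-rank topological embedding, citing \cite[Theorem~4.14(b)]{Lee:Intro} and \cite[Theorem~11.13]{Tu:Intro}---to conclude that its image $M\fp{f}{g}N$ is a submanifold. Your argument avoids that transfer entirely, since the preimage already \emph{is} the fibred product sitting in $M\times N$. Both proofs lean on Lemma~\ref{lem:graph is mfd} (you, to see that $\Delta$, being the graph of $\mathrm{id}_K$, is a properly embedded $k$-dimensional submanifold; the paper, for $\Gamma_f$ and $\Gamma_g$), and both extract the dimension $m+n-k$ from the codimension bookkeeping in Theorem~\ref{thm:Generalized Preimage Thm}. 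Your route is shorter and arguably cleaner; your only stated worry---that the paper's definition of transverse maps might differ from the spanning condition you need---is unfounded, since that definition is verbatim the condition you use.
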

\begin{proof}
We follow the idea of the proof of \cite[Proposition~3.3]{Gualtieri:Mfds}.
By Lemma~\ref{lem:graph is mfd}, the graphs  $\Gamma_f\subseteq M\times K$ and  $\Gamma_g\subseteq N\times K$ are submanifolds of dimension $m$ and $n$. We claim that the submanifolds $ \Gamma_f \times \Gamma_g $ and $H\coloneqq\{(x,k,y,k) : x\in M, k\in K, y\in N\}$ of $(M\times K)\times (N\times K)$ are transverse.
So for any $p=(x,f(x),y,g(y))=(x,k,y,k)\in (\Gamma_f \times \Gamma_g) \cap H$, we must show that
\[
    T_p (\Gamma_f \times \Gamma_g) + T_{p} H = T_{p} ((M\times K)\times (N\times K)).
\]
Pick a tangent vector at $(M\times K)\times (N\times K)$ at $p$, say $({a}, {b}, {c}, {d})$. Since $f$ and $g$ are transverse and since $f(x)=k=g(y)$, there exist $\eta\in T_{x}M$ and $\mu\in T_{y}N$ such that the element ${d}-{b}$ of $T_{k}K$ can be written as $\dd f_{x}(\eta)+\dd g_{y}(\mu)$. Thus,  $d= b + \dd f_{x}(\eta)+\dd g_{y}(\mu)$, and hence
\begin{align*}
    ({a}, {b}, {c}, {d})
        ={}&  \big({a}+\eta,\; {b}+\dd f_{x}(\eta),\;{c}-\mu,\;{b}+\dd f_{x}(\eta)\big)\\
            &{}+ \big(-\eta,\; -\dd f_{x}(\eta),\;0,\;0\big)\\
            &{}+ \big(0,\; 0,\; \mu,\; \dd g_{y}(\mu)\big).
\end{align*}

The first line is an element of $T_{p} H$, the second line is an element of $T_{(x,f(x))} \Gamma_f$, and the third line is an element of $T_{(y,g(y))} \Gamma_g$. Hence their sum is an element of $T_p (\Gamma_f \times \Gamma_g)$. This proves our claim.

It now follows from Theorem~\ref{thm:Generalized Preimage Thm}\ref{item:intersec of trans submfds is submfd} that the intersection $L$ of $ \Gamma_f \times \Gamma_g $ and $H$ is also a submanifold, namely of dimension
\begin{align*}
    \dim (L)
        &= \dim (\Gamma_f \times \Gamma_g) + \dim H - \dim \bigl((M\times K)\times (N\times K)\bigr)\\
        &= [m + n] + [m+k+n] - [m+k+n+k]
         = m + n - k.
\end{align*}
The map $\iota\colon L\to M\times N$ given by $\iota(x,f(x),y,g(y)) = (x,y),$ has image exactly $M\fp{f}{g}N$ and  has constant rank $\dim(L)$. \label{p:spot for roundabout argument}
Since $\iota$ is a topological homeomorphism onto its image, it is an embedding by \cite[Theorem~4.14(b)]{Lee:Intro}. By \cite[Theorem~11.13]{Tu:Intro}, this implies that $\iota(L)=M\fp{f}{g}N$ is a submanifold of $M\times N$ of dimension $\dim(L)$, which we computed earlier as $m+n-k$.
\end{proof}

\begin{lemma}\label{lem:local diffeo is submersion}
A local diffeomorphism  $f\colon M\to N$ between two smooth manifolds is a submersion.
\end{lemma}
\begin{proof}
We need to check that each differential $\dd f_{p}\colon T_p M\to T_{f(p)}N$ is surjective, so fix $p \in M$ and let $D\in T_{f(p)}N$ be an arbitrary derivative of $\mathfrak{G}_{f(p)}.$

For any fixed germ $[h]_{p}$ at $p$, let $h\colon U\to\mbb{R}$ be a representative of the germ. By assumption on $f$, there is a neighbourhood $V$ of $p$ such that $f(V)$ is open in $N$ and $f|_V\colon V\to f(V)$ is a diffeomorphism. We may shrink $V$ to an even smaller neighbourhood $W$ of $p$ such that $W\subseteq U$. Since the map $h\circ (f|_{W})\inv\colon f(W)\to\mbb{R}$ is a smooth function defined around $f(p)$, we can evaluate $D$ at $[h\circ (f|_{W})\inv]_{f(p)}$. The resulting value does not depend on the choices of $W$ or $h$: if $W'\subseteq U$ is another open neighbourhood of $p$ and $h'$ is another representative of $[h]_p$, then there is an open neighbourhood $W''$ of $p$ such that $h|_{W''} = h'|_{W''}$, and then $f(W \cap W' \cap W'')$ is an open neighbourhood of $f(p)$ on which $h\circ (f|_{W})\inv$ and $h'\circ (f|_{W'})\inv$ coincide. Hence
\[
    [h\circ (f|_{W})\inv]_{f(p)}
        = [h'\circ (f|_{W'})\inv]_{f(p)}.
\]

As the value of $D$ at $[h\circ (f|_{W})\inv]_{f(p)}$ depends neither on the choice of $W$ nor of $h$, we may define
\[
    \tilde{D} [h]_p \coloneqq  D [h\circ (f|_{W})\inv]_{f(p)}
\]
To see that $\tilde{D}$ is a derivative of $\mathfrak{G}_p$, we calculate:
\begin{align*}
    \tilde{D} ([h_{1}]_p [h_{2}]_p)
        &= \tilde{D} ([h_{1}h_{2}]_p) \\
        &= D [(h_{1}h_{2})\circ (f|_{W})\inv]_{f(p)} \\
        &= D [(h_{1}\circ (f|_{W})\inv) (h_{2}\circ (f|_{W})\inv)]_{f(p)} \\
        &= D [(h_{1}\circ (f|_{W})\inv)]_{f(p)} e_{f(p)}\big(h_{2}\circ (f|_{W})\inv\big)
            + e_{f(p)}\big(h_{1}\circ (f|_{W})\inv\big)D [(h_{2}\circ (f|_{W})\inv)]_{f(p)}\\
        &= \tilde{D} ([h_{1}]_p) e_p([h_{2}]) + e_p([h_{1}]) \tilde{D} ([h_{2}]_p).
\end{align*}
For any germ $[g]_{f(p)}$ of $f(p),$ we have
\[
    \dd f_p (\tilde{D})[g]_{f(p)}
        = \tilde{D}[g\circ f]_{p}
        = D [(g\circ f)\circ (f|_{W})\inv]_{f(p)}
        = D[g]_{f(p)},
\]
proving that $\dd f_p (\tilde{D})=D,$ as claimed.
\end{proof}

\subsection{Principal bundles} Here we collect the facts about principal bundles, and in particular smooth principal bundles, that we use throughout the paper, for ease of reference.

\begin{defn}[{\cite[Definition 11.2.2]{Conlon:DiffMa}}]\label{dfn:principal bdl}
Let $M$ and $P$ be topological spaces, $\Gamma$ a topological group, $\pi\colon P\to M$ a continuous  map, and $\Gamma\times P\to P$, $(s,x) \mapsto s \cdot x$, a continuous left action that is free and transitive on each fibre $\pi\inv (m)$. Define a left action of $\Gamma$ on $\Gamma \times P$ by
\begin{align*}
    \Gamma\times (\Gamma \times U_{\alpha})
        &\to  \Gamma \times U_{\alpha}\\
    (s, (t,x))
        &\mapsto (st,x).
\end{align*}
Suppose that $\{U_{\alpha}\}_{\alpha\in\mathfrak{A}}$ is an open cover of $M$ and that for each $\alpha$, the map $\psi_{\alpha} \colon \pi\inv(U_{\alpha}) \to \Gamma \times U_{\alpha}$ is a $\Gamma$-equivariant homeomorphism such that the diagram
\[
\begin{tikzcd}
    \pi\inv (U_{\alpha}) \ar[rr, "\psi_{\alpha}"] \ar[dr, "\pi"']&&  \Gamma\times  U_{\alpha} \ar[dl, "\mathrm{pr}_{2}"] \\
    &U_{\alpha} &
\end{tikzcd}
\]
commutes. Then, $\PB{P}{\pi}{M}$, together with the given $\Gamma$-action, is called a {\em topological principal
$\Gamma$-bundle over $M$}, $\Gamma$ is called the {\em structure group} of the bundle, and each $(U_{\alpha}, \psi_{\alpha})$ is called a {\em topological local trivialisation}.

We call $\PB{P}{\pi}{M}$ a {\em smooth principal bundle} if  $M,P$ are manifolds, $\Gamma$ is a Lie group, $\pi$ is smooth, the $\Gamma$-action on $P$ is smooth, and the $\psi_{\alpha}$ can be chosen to be diffeomorphisms, in which case we call them {\em smooth local trivialisations}.
\end{defn}

As with manifolds, if the term `principal bundle' appears without an adjective, we will implicitly mean that it is smooth.

\begin{remark}\label{rmk:pi submersion, restr bdl}
If $\PB{P}{\pi}{M}$ is a smooth principal $\Gamma$-bundle, then $\pi$ is a submersion \cite[Exercise~10-19]{Lee:Intro}. If $N\subseteq M$ is an embedded submanifold, then $P_{N}\coloneqq \pi\inv(N)$ is therefore an embedded submanifold of $P$ \cite[Corollary 6.31]{Lee:Intro}. One can further show that $\PB{P_{N}}{\pi|_{P_{N}}}{N}$ is a smooth principal $\Gamma$-bundle and that the inclusion $P_{N}\hookrightarrow P$ is a principal-$\Gamma$-bundle morphism, covering the inclusion $N\hookrightarrow M$. %
\end{remark}

\begin{lemma}\label{lem:principal bundles are rigid}
Suppose that $\PB{P_i}{\pi_i}{M}$, $i = 1,2$ are smooth principal $\Gamma$-bundles and that $\Psi\colon P_{1}\to P_{2}$ is a $\Gamma$-equivariant bundle map. If $\Psi$ is a smooth homeomorphism, then $\Psi$ is a diffeomorphism.
\end{lemma}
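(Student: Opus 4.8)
The plan is to reduce everything to local trivialisations, in which a $\Gamma$-equivariant bundle map covering the identity becomes a smooth ``gauge transformation'' whose inverse is manifestly smooth. Since $\Psi$ is a homeomorphism it is bijective, so $\Psi\inv$ exists and is continuous; the whole content of the lemma is to upgrade continuity of $\Psi\inv$ to smoothness, and this is a local question on the base.

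First I would fix $p_0 \in P_1$ and set $x_0 = \pi_1(p_0)$. Using that both $\PB{P_i}{\pi_i}{M}$ are smooth principal $\Gamma$-bundles, I choose an open neighbourhood $U$ of $x_0$ together with smooth, $\Gamma$-equivariant local trivialisations $\psi_i \colon \pi_i\inv(U) \to \Gamma \times U$ commuting with the projections (shrinking $U$ to the intersection of two trivialising neighbourhoods if necessary). Because $\Psi$ is a bundle map over $M$ we have $\pi_2 \circ \Psi = \pi_1$, so $\Psi$ restricts to a map $\pi_1\inv(U) \to \pi_2\inv(U)$, and the composite $\Theta \coloneqq \psi_2 \circ \Psi \circ \psi_1\inv \colon \Gamma \times U \to \Gamma \times U$ satisfies $\mathrm{pr}_2 \circ \Theta = \mathrm{pr}_2$; hence $\Theta(s, x) = (\theta(s, x), x)$ for some continuous $\theta \colon \Gamma \times U \to \Gamma$.

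Next I would pin down $\theta$ using equivariance. As $\psi_1, \psi_2$ intertwine the $\Gamma$-actions and $\Psi$ is $\Gamma$-equivariant, $\Theta$ is equivariant for the action $t \cdot (s, x) = (ts, x)$, i.e.\ $\theta(ts, x) = t\,\theta(s, x)$. Putting $s = e$ and writing $g(x) \coloneqq \theta(e, x)$ yields $\theta(s, x) = s\,g(x)$ for all $(s,x)$. The map $g$ is the composite of the smooth maps $x \mapsto (e, x)$, $\psi_1\inv$, $\Psi$, $\psi_2$, and the projection $\Gamma \times U \to \Gamma$, so $g$ is smooth. Consequently $\Theta(s, x) = (s\,g(x), x)$ is a bijection of $\Gamma \times U$ with inverse $(s, x) \mapsto (s\,g(x)\inv, x)$, which is smooth because multiplication and inversion in the Lie group $\Gamma$ are smooth and $g$ is smooth. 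Thus $\Theta$, and therefore $\Psi|_{\pi_1\inv(U)}$, is a diffeomorphism onto its image. Letting $U$ range over a trivialising cover of $M$, the sets $\pi_2\inv(U)$ cover $P_2$, so $\Psi\inv$ is smooth on each of them, hence smooth everywhere; therefore $\Psi$ is a diffeomorphism.

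I do not expect a genuine obstacle here. The only steps requiring any care are the passage from $\Gamma$-equivariance to the normal form $\theta(s,x) = s\,g(x)$, and the observation that smoothness of $\Psi\inv$ reduces entirely to smoothness of inversion in $\Gamma$; the remaining work is the routine bookkeeping of assembling local diffeomorphisms into a global one.
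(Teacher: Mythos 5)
Your proof is correct, and no step fails: the normal form $\Theta(s,x) = (s\,g(x),x)$ follows from equivariance exactly as you say, $g$ is smooth because $\Psi$ is, and the explicit inverse $(s,x)\mapsto (s\,g(x)\inv,x)$ is smooth because inversion and multiplication in the Lie group $\Gamma$ are. The core computation is the same one the paper uses --- an equivariant bundle map over the identity is, in trivialisations, a gauge transformation $(\gamma,m)\mapsto(\gamma\, t(m),m)$ with smooth transition function, inverted via the Lie group structure --- but your organisation is genuinely different and more direct. The paper first reduces to the special case $\Psi = \mathrm{id}$ between two smooth structures on the same underlying topological bundle: it transports the smooth structure of the codomain back through the homeomorphism $\Psi$, verifies that the transported structure is again a smooth principal $\Gamma$-bundle, and only then runs the local trivialisation argument comparing the two structures. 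You skip this reduction entirely by conjugating $\Psi$ itself by the two trivialisations, which shortens the proof and removes the bookkeeping of the auxiliary bundle. A further payoff of your route: you never actually need the hypothesis that $\Psi$ is a homeomorphism. Bijectivity falls out of the normal form ($\Theta$ is visibly a bijection of $\Gamma\times U$, and fibrewise injectivity plus the covering argument give global bijectivity), so your argument proves the stronger statement that every smooth $\Gamma$-equivariant bundle map over the identity between smooth principal $\Gamma$-bundles is a diffeomorphism. The paper's proof, by contrast, uses the homeomorphism hypothesis essentially, since transporting a smooth structure through $\Psi$ requires $\Psi$ to be a homeomorphism.
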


\begin{proof}
First consider the case in which  $P_{1}=P_{2}$ as topological spaces (not necessarily as manifolds) and that $\Psi=\mathrm{id}_{P_{1}\to P_{2}}$ is the identity map; in particular, since we are asking for this map to be a bundle map, we must have $\pi_{1}=\pi_{2}$. Fix an arbitrary $e_{2}\in P_{2}$, and write $e_{1}$ for the same point regarded as an element of $P_{1}$. Recall that $\pi_{i}$ is an open map, so for $i=1,2$, there is an open neighbourhood $V_i$ of $\pi_{i}(e_{i})$ such that there is a $\Gamma$-equivariant diffeomorphism $\psi_{i}$ making the diagram
\[
\begin{tikzcd}
    P_{i}\supseteq&[-3em]\pi_{i}\inv (V_{i}) \ar[rr, "\psi_{i}"] \ar[dr, "\pi_{i}"']&& \Gamma \times V_{i}
    \ar[dl, "\mathrm{pr}_{2}"] &[-3em]\subseteq \Gamma\times M \\
    &&V_{i} &&
\end{tikzcd}
\]
commute. Now, $\pi_{1}(e_{1})=\pi_{2}(e_{2})$, so  $V\coloneqq V_{1}\cap V_{2}$ is an open neighbourhood of $\pi_i(e)$ for each $i$. Consider
\[
\begin{tikzcd}
    \Gamma \times V
    \ar[r,"\psi_{1}\inv", yshift = 1mm]
        & \ar[l,"\psi_{1}", yshift = -1mm]
            \pi_{1}\inv (V)
            \ar[r,"\mathrm{id}", yshift = 1mm, <->]
        & \pi_{2}\inv (V)
            \ar[r,"\psi_{2}", yshift = 1mm]
        &  \ar[l,"\psi_{2}\inv", yshift = -1mm]
        \Gamma \times V.
\end{tikzcd}
\]
Since $\mathrm{pr}_{2}\circ \psi_{i} = \pi_{i}$, there is a unique map $T\colon \Gamma \times V \to \Gamma$ such that
\[
    (\psi_{2}\circ \psi_{1}\inv)(\gamma,m) = (T(\gamma,m),m).
\]
Since $\psi_{i}$ and $\Psi$ are $\Gamma$-equivariant, it follows that
\[
    T(\gamma,m)=\gamma T(1_{\Gamma},m),
\]
so the map $t\colon V\to \Gamma$ given by $t(m)=T(1_{\Gamma},m)$ satisfies
\[
    (\psi_{2}\circ \psi_{1}\inv)(\gamma,m) = (\gamma t(m),m).
\]
An easy computation now shows that interchanging the roles of $\psi_{1}$ and $\psi_{2}$ amounts to composing $t$ with the inversion map in $\Gamma$:
\[
    (\psi_{1}\circ \psi_{2}\inv) (\gamma, m)=(\psi_{2}\circ \psi_{1}\inv)\inv (\gamma,m)
        = (\gamma t(m)\inv,m).
\]
By assumption, the map $\mathrm{id}\colon \pi_{1}\inv (V) \to \pi_{2}\inv(V)$ is smooth, so $\psi_{2}\circ \psi_{1}\inv$ is smooth, and thus so are $T$ and $t$. Since $\Gamma$ is a Lie group, the map $\Gamma\times V\to \Gamma$ given by $(\gamma, m)\mapsto \gamma t(m)\inv$ is smooth. In particular, $\psi_{1}\circ \psi_{2}\inv$ is smooth. Hence $\mathrm{id}\colon \pi_{2}\inv (V) \to \pi_{1}\inv(V)$ is smooth around $e_{2}$. In other words, $\mathrm{id}\colon P_{1}\to P_{2}$ is a diffeomorphism.

We now deal with the general situation; for reasons that will become clear in a moment, we stray from the notation in the lemma. Fix given smooth principal $\Gamma$-bundles $\PB{P_{1}}{\pi_{1}}{M}$ and $\PB{Q}{\pi^Q}{M}$ and a $\Gamma$-equivariant bundle map $\Psi\colon P_{1}\to Q$ which is a smooth homeomorphism. We show that $\Psi\inv$ is smooth.

Since $\Psi$ is a {\em homeomorphism}, we may let $P_{2}$ be the topological space $P_{1}$ endowed with the smooth structure for which $\Psi$ is a diffeomorphism.
In other words, if we again write $e_{1}$ for the point $e_{2}\in P_{2}$ but regarded as a point of $P_{1}$, then the map $\Psi_{2}\colon P_{2}\to Q,e_{2}\mapsto \Psi(e_{1})$, is a smooth map between manifolds with smooth inverse. We equip $P_{2}$ with the projection map $\pi_{2}\colon P_{2}\to M$ given by $e_{2}\mapsto \pi^Q (\Psi_{2}(e_{2}))$ and with the $\Gamma$-action $\gamma e_{2} \coloneqq \Psi_{2}\inv (\gamma \cdot \Psi_{2}(e_{2}))$. Since $\PB{Q}{\pi^Q}{M}$ is a smooth principal $\Gamma$-bundle, so is $\PB{P_{2}}{\pi_{2}}{M}$, and by construction, $\Psi_{2}$ is an isomorphism of smooth principal bundles:
\[
\begin{tikzcd}[column sep = large, row sep = large]
    P_{2} \ar[d, "\pi_{2}"] \ar[dr, phantom, very near start, "{\circlearrowleft}" description]\ar[r, "{\Psi_{2}, \cong}"] &Q\ar[dl, "\pi^{Q}"]\\
    M & {}
\end{tikzcd}
\]

Since $\Psi$ is a bundle map,
\[
\pi_{2}(e_{2})
    = \pi^Q (\Psi_{2}(e_{2}))
    = \pi^Q (\Psi(e_{1}))
    = \pi_{1}(e_{1}).
\]
Moreover, since $\Psi$ is $\Gamma$-equivariant,
\[
\gamma \cdot e_{2}
    = \Psi_{2}\inv (\gamma \cdot \Psi_{2}(e_{2}))
    = \Psi_{2}\inv (\gamma \cdot \Psi(e_{1}))
    = \Psi_{2}\inv (\Psi(\gamma \cdot e_{1}))
    = \gamma \cdot e_{1}.
\]
In other words, the diagram
\[\begin{tikzcd}[column sep = large, row sep = large]
    P_{1}\ar[r, "\mathrm{id}"] \ar[dr, "\pi_{1}"']\ar[rr, "\Psi" {name=Psi}, bend left=50] &
    \ar[dl, phantom, very near start, "{\circlearrowleft}" description]
    \ar[to=Psi, phantom, "{\circlearrowleft}" description]
    P_{2} \ar[d, "\pi_{2}" description] \ar[dr, phantom, very near start, "{\circlearrowleft}" description]\ar[r, "{\Psi_{2}}"] &
    Q\ar[dl, "\pi^{Q}"]
    \\
    {} & M & {}
\end{tikzcd}\]
of principal $\Gamma$-bundles commutes.
By construction of the smooth structure and by the assumption that $\Psi$ is smooth, the map  $\mathrm{id}_{P_{1}\to P_{2}}=\Psi_{2}\inv \circ \Psi$ is smooth. We are now in the setting of the first paragraph, and conclude that $\mathrm{id}$ is a diffeomorphism. Thus, $\Psi\inv = \Psi_{2}\circ \mathrm{id}_{P_{2}\to P_{1}}$ is smooth, as claimed.
\end{proof}

\begin{lemma}\label{lem:principal bundles:uniquely induced}
Suppose that $\PB{P_{1}}{\pi_{1}}{M}$ is a smooth principal $\Gamma$-bundle, that $\PB{P_{2}}{\pi_{2}}{M}$ is a {\em topological} principal $\Gamma$-bundle, and  that $\Psi\colon P_{1}\to P_{2}$ is a $\Gamma$-equivariant bundle map.
If $\Psi$ is homeomorphism and $P_{2}$ is given the unique smooth structure that makes $\Psi$ a diffeomorphism, then $\pi_{2}$ is a smooth principal bundle.
\end{lemma}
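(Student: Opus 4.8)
The plan is to verify directly that, once $P_{2}$ carries the transported smooth structure, all the requirements of Definition~\ref{dfn:principal bdl} for a \emph{smooth} principal $\Gamma$-bundle are met, transporting each piece of structure from $P_{1}$ along $\Psi$. By construction of the smooth structure, $\Psi\colon P_{1}\to P_{2}$ is a diffeomorphism, and hence so is $\Psi\inv$; moreover $P_{2}$ is a genuine manifold, since Hausdorffness and second-countability are inherited from $P_{1}$ through the homeomorphism $\Psi$. The structure group $\Gamma$ is a Lie group and $M$ a manifold by hypothesis, so it remains only to check smoothness of $\pi_{2}$, smoothness of the $\Gamma$-action, and the existence of \emph{smooth} local trivialisations.

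First I would handle $\pi_{2}$. Since $\Psi$ is a bundle map we have $\pi_{2}\circ \Psi = \pi_{1}$, equivalently $\pi_{2}=\pi_{1}\circ \Psi\inv$. As $\Psi\inv$ is a diffeomorphism and $\pi_{1}$ is smooth (being the projection of a smooth principal bundle), $\pi_{2}$ is smooth. Next, for the action, $\Gamma$-equivariance of $\Psi$ gives $s\cdot x = \Psi\bigl(s\cdot \Psi\inv(x)\bigr)$, so the action map on $P_{2}$ factors as
\[
    \Gamma\times P_{2}\xrightarrow{\mathrm{id}\times\Psi\inv}\Gamma\times P_{1}\xrightarrow{\text{action}}P_{1}\xrightarrow{\Psi}P_{2},
\]
a composite of smooth maps, hence smooth. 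Finally, I would build the trivialisations: fix smooth local trivialisations $\psi^{1}_{\alpha}\colon \pi_{1}\inv(U_{\alpha})\to \Gamma\times U_{\alpha}$ for $P_{1}$ over a cover $\{U_{\alpha}\}_{\alpha}$ of $M$, and set $\psi^{2}_{\alpha}\coloneqq \psi^{1}_{\alpha}\circ \Psi\inv|_{\pi_{2}\inv(U_{\alpha})}$. Since $\pi_{1}=\pi_{2}\circ\Psi$ we have $\Psi\inv\bigl(\pi_{2}\inv(U_{\alpha})\bigr)=\pi_{1}\inv(U_{\alpha})$, so $\psi^{2}_{\alpha}$ is well-defined on $\pi_{2}\inv(U_{\alpha})$; it is a composite of the restriction of the diffeomorphism $\Psi\inv$ to these open sets and the diffeomorphism $\psi^{1}_{\alpha}$, hence a diffeomorphism onto $\Gamma\times U_{\alpha}$.

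The last point requiring care is that $\psi^{2}_{\alpha}$ really is a trivialisation in the sense of Definition~\ref{dfn:principal bdl}: it is $\Gamma$-equivariant because it is a composite of the $\Gamma$-equivariant maps $\Psi\inv$ and $\psi^{1}_{\alpha}$, and the trivialisation triangle commutes because $\mathrm{pr}_{2}\circ\psi^{2}_{\alpha}=\mathrm{pr}_{2}\circ\psi^{1}_{\alpha}\circ\Psi\inv=\pi_{1}\circ\Psi\inv=\pi_{2}$. I do not expect any genuine obstacle here: the entire argument is bookkeeping, the one item worth stating carefully being that $\Psi\inv$ restricts to a diffeomorphism $\pi_{2}\inv(U_{\alpha})\to\pi_{1}\inv(U_{\alpha})$ between the relevant open sets, which is immediate from the bundle-map identity $\pi_{1}=\pi_{2}\circ\Psi$ together with the fact that restrictions of diffeomorphisms to open subsets are diffeomorphisms.
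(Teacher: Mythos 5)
Your proof is correct and follows essentially the same route as the paper's: transport the smooth structure of $P_{1}$ along the diffeomorphism $\Psi$, observing that $\pi_{2}=\pi_{1}\circ\Psi\inv$ is smooth, that the $\Gamma$-action on $P_{2}$ is a composite of smooth maps via equivariance, and that the smooth trivialisations of $P_{1}$ pull back to smooth trivialisations of $P_{2}$. The paper states this in two sentences; you have merely written out the same bookkeeping in full.
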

\begin{proof}
Since $\Gamma$ acts smoothly on $P_{1}$ and $\Psi$ is $\Gamma$-equivariant, the action on $P_{2}$ is automatically smooth. Since $\pi_{1}$ is smooth and $\Psi$ is a bundle map, $\pi_{2}$ is smooth, and since $\pi_{1}$ allows smooth trivialisations, so does $\pi_{2}$.
\end{proof}

\begin{lemma}\label{lem:sections => triv's}
Suppose that $\PB{P}{\pi}{M}$ is a topological (respectively smooth) principal $\Gamma$-bundle  for which the $\Gamma$-action is proper. Then for any $U\subseteq M$ open  and any continuous (smooth)  section $\sigma\colon U\to\pi\inv(U)$, the map
\[
    \Gamma\times U \to \pi\inv(U),
        \quad (\gamma, m) \mapsto \gamma\cdot \sigma(m),
\]
is a homeomorphism (respectively diffeomorphism)  and thus  defines a topological (smooth) local trivialisation $\psi_\sigma\colon \pi\inv(U) \to \Gamma\times U$ of $P$.
\end{lemma}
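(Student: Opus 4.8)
The plan is to show that the map $\Psi \colon \Gamma \times U \to \pi\inv(U)$, $(\gamma, m) \mapsto \gamma \cdot \sigma(m)$, is a continuous (respectively smooth) bijection with continuous (respectively smooth) inverse, and then to observe that its inverse is a local trivialisation in the sense of Definition~\ref{dfn:principal bdl}. First I would check that $\Psi$ is well defined and continuous: since $\sigma$ is a section, $\pi(\sigma(m)) = m$, so $\gamma\cdot\sigma(m)$ lies in the fibre over $m$ and hence in $\pi\inv(U)$; continuity follows because $\Psi$ is the composite of $(\gamma,m)\mapsto(\gamma,\sigma(m))$ with the continuous action. For bijectivity, injectivity follows from freeness of the action on each fibre together with the fact that $\pi(\gamma\cdot\sigma(m)) = m$ recovers $m$, while surjectivity follows from transitivity on each fibre: any $p\in\pi\inv(U)$ lies in the same fibre as $\sigma(\pi(p))$, so $p = \gamma\cdot\sigma(\pi(p))$ for a (necessarily unique) $\gamma$.

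The hard part will be continuity of the inverse $\psi_\sigma \colon p \mapsto (\gamma, \pi(p))$, where $\gamma$ is the unique group element with $p = \gamma\cdot\sigma(\pi(p))$. I would handle this locally, using the trivialisations that come with the bundle structure. Fix $p_0 \in \pi\inv(U)$ and a trivialisation $\psi_\alpha \colon \pi\inv(U_\alpha) \to \Gamma \times U_\alpha$ with $\pi(p_0) \in U_\alpha$, and set $V \coloneqq U \cap U_\alpha$. Since $\psi_\alpha$ covers the identity, writing $\psi_\alpha(\sigma(m)) = (t(m), m)$ defines a continuous map $t \colon V \to \Gamma$ (namely $t = \mathrm{pr}_\Gamma \circ \psi_\alpha \circ \sigma$). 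For $p \in \pi\inv(V)$ with $\psi_\alpha(p) = (g, \pi(p))$, the $\Gamma$-equivariance of $\psi_\alpha$ gives $\psi_\alpha(\gamma\cdot\sigma(\pi(p))) = (\gamma\, t(\pi(p)), \pi(p))$, so the defining relation $p = \gamma\cdot\sigma(\pi(p))$ forces $\gamma = g\, t(\pi(p))\inv$. Hence on $\pi\inv(V)$ we have
\[
    \psi_\sigma(p) = \bigl((\mathrm{pr}_\Gamma\circ\psi_\alpha)(p)\cdot t(\pi(p))\inv,\ \pi(p)\bigr),
\]
which is a composite of continuous maps, using that multiplication and inversion in the topological group $\Gamma$ are continuous. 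As $p_0$ was arbitrary, $\psi_\sigma$ is continuous.

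For the smooth case I would run the identical computation with ``continuous'' replaced by ``smooth'': the $\psi_\alpha$ are diffeomorphisms and $\sigma$ is smooth, so $t$ is smooth, and since $\Gamma$ is a Lie group its group operations are smooth, whence $\psi_\sigma$ is smooth. Finally, to conclude that $\psi_\sigma$ is a genuine local trivialisation, I would record that $\mathrm{pr}_2\circ\psi_\sigma = \pi$ by construction and that $\psi_\sigma$ is $\Gamma$-equivariant, since $p = \gamma\cdot\sigma(m)$ implies $\delta\cdot p = (\delta\gamma)\cdot\sigma(m)$ and hence $\psi_\sigma(\delta\cdot p) = (\delta\gamma, m) = \delta\cdot\psi_\sigma(p)$. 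I note that properness of the action is not actually needed in this argument: the passage from a local section to a trivialisation is supplied entirely by the trivialisations built into the principal-bundle structure. Properness is nonetheless automatic in our main application, where $\Gamma = \mbb{T}$ is compact.
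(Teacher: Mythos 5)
Your proof is correct, but it takes a genuinely different route from the paper's. The paper never touches the local trivialisations that are built into Definition~\ref{dfn:principal bdl}: instead it checks that the map $f\colon (\gamma,m)\mapsto\gamma\cdot\sigma(m)$ is a continuous bijection and then proves it is \emph{open} by a net argument (via \cite[Proposition~1.1]{Wil2019}), lifting a convergent net $e_\lambda\to\gamma\cdot\sigma(m)$ to group elements $\gamma_\lambda$ and using \emph{properness} of the $\Gamma$-action to extract a convergent subnet of the $\gamma_\lambda$; in the smooth case it then upgrades the resulting homeomorphism to a diffeomorphism by invoking the rigidity result Lemma~\ref{lem:principal bundles are rigid}. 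You instead write down the inverse explicitly in a bundle chart, $\psi_\sigma(p) = \bigl((\mathrm{pr}_\Gamma\circ\psi_{\alpha})(p)\cdot t(\pi(p))\inv,\pi(p)\bigr)$, which is essentially the transition-function computation that the paper carries out \emph{inside} the proof of Lemma~\ref{lem:principal bundles are rigid}; by inlining it you handle the topological and smooth cases uniformly and, as you correctly observe, you never need properness of the action --- that hypothesis is redundant once local triviality is part of the definition of a principal bundle, and your aside that it holds automatically for $\Gamma=\mbb{T}$ compact is also right. What each approach buys: yours is more elementary and self-contained, and exposes that the lemma's properness hypothesis can be dropped; the paper's reuses machinery it needs anyway (the rigidity lemma), and its net/properness argument is the one that survives in settings where local triviality is not presupposed (free and proper actions), which is closer in spirit to how principal bundles arise from twists in the groupoid literature.
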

\begin{proof}
For this proof, let $f\colon \Gamma\times U \to \pi\inv(U)$ be the map in the displayed equation; once we have shown that $f$ is invertible, we will care more about its inverse $\psi_\sigma$ and thus write $f=\psi_\sigma\inv$.

Recall that $P_{U}\coloneqq \pi\inv(P)$ is an embedded submanifold of $P$ and $\PB{P_{U}}{\pi}{U}$ is itself a topological (smooth) principal $\Gamma$-bundle (Remark~\ref{rmk:pi submersion, restr bdl}). Likewise, with respect to the projection $q$ onto the second coordinate, $\PB{\Gamma\times U}{q}{U}$ is a topological (smooth) principal $\Gamma$-bundle. Note that $f$ is equivariant by construction and satisfies $\mathrm{pr}_{2}=\pi\circ f$. Thus, in the topological case, we only need to show that $f$ is a homeomorphism; in the smooth case, we further need to check that $f$ is smooth so that we can invoke Lemma~\ref{lem:principal bundles are rigid}

The map $f$ is injective since the $\Gamma$-action on $P$ is free and since $\sigma$ is injective, being a section, and $f$ is surjective since the $\Gamma$-action is transitive. To see that $f$ is open, we invoke \cite[Proposition~1.1]{Wil2019}, so suppose that $\{e_\lambda\}_\lambda$ is a net in $\pi\inv(U)$ such that $e_\lambda\to \gamma\cdot \sigma(m)$ for some $\gamma\in \Gamma$ and $m\in U$; we must show that (a subnet of) $\{e_\lambda\}_\lambda$ can be lifted to elements
in $\Gamma\times U$ which converge to $(\gamma,m)$. Let $m_\lambda\coloneqq \pi(e_\lambda)$; since $\pi$ is continuous and $M$ is Hausdorff, $m_\lambda\to m$. For each $\lambda$, we have $m_\lambda=\pi(\sigma(m_\lambda))$ since $\sigma$ is a section. Hence transitivity and freeness of the $\Gamma$-action implies that there exists a unique $\gamma_\lambda\in\Gamma$ such that $e_\lambda = \gamma_\lambda \cdot \sigma(m_\lambda)$. It remains to check that (a subnet of) $\{\gamma_\lambda\}_{\lambda}$ converges to $\gamma$. This follows immediately from properness of the $\Gamma$-action: both $\gamma_\lambda \cdot \sigma(m_\lambda)$ and $\sigma(m_\lambda)$ converge.

In the smooth case, $f$ is  smooth as a map $\Gamma\times U \to P$ since $\sigma$ is smooth and since the $\Gamma$-action on $P$ is smooth.
It follows that $f$ is also smooth as a map $\Gamma\times U \to P_{U}$ \cite[Corollary~5.30]{Lee:Intro}. By Lemma~\ref{lem:principal bundles are rigid}, it now follows that $f$ is a diffeomorphism.
\end{proof}

\emergencystretch=2.5em 
\printbibliography
\end{document}